    \definecolor{urlcolor}{rgb}{0,0,0}
    \definecolor{linkcolor}{rgb}{.7,0.10,0.2}
    \definecolor{citecolor}{rgb}{.12,.54,.11}
\numberwithin{equation}{section}
\newtheorem{theorem}{Theorem}[section]
\newtheorem{corollary}[theorem]{Corollary}
\newtheorem{proposition}[theorem]{Proposition}
\newtheorem{proposition-definition}[theorem]{Proposition-Definition}
\newtheorem{lemma}[theorem]{Lemma}
\newtheorem{conjecture}[theorem]{Conjecture}
\newtheorem{question}[theorem]{Question}
\theoremstyle{definition} 
\newtheorem{definition}[theorem]{Definition}
\newtheorem{theorem-definition}[theorem]{Theorem-Definition}
\theoremstyle{remark} 
\newtheorem{remark}[theorem]{Remark}
\renewcommand{\geq}{\geqslant}
\renewcommand{\leq}{\leqslant}
\renewcommand{\subset}{\subseteq}
\renewcommand{\setminus}{\smallsetminus}
\renewcommand{\tilde}{\widetilde}
\newcommand{\BD}{\mathbb{D}}
\newcommand{\BH}{\mathbb{H}}
\newcommand{\BT}{\mathbb{T}}
\newcommand{\BoA}{\mathbf{A}}
\newcommand{\BoC}{\mathbf{C}}
\newcommand{\BoL}{\mathbf{L}}
\newcommand{\BoN}{\mathbf{N}}
\newcommand{\BoP}{\mathbf{P}}
\newcommand{\BoQ}{\mathbf{Q}}
\newcommand{\BoZ}{\mathbf{Z}}
\newcommand{\CA}{\mathcal{A}}
\newcommand{\CB}{\mathcal{B}}
\newcommand{\CC}{\mathcal{C}}
\newcommand{\CD}{\mathcal{D}}
\newcommand{\CF}{\mathcal{F}}
\newcommand{\CG}{\mathcal{G}}
\newcommand{\CH}{\mathcal{H}}
\newcommand{\CK}{\mathcal{K}}
\newcommand{\CM}{\mathcal{M}}
\newcommand{\CO}{\mathcal{O}}
\newcommand{\CS}{\mathcal{S}}
\newcommand{\CT}{\mathcal{T}}
\newcommand{\CX}{\mathcal{X}}
\newcommand{\FM}{\mathfrak{M}}
\newcommand{\FX}{\mathfrak{X}}
\newcommand{\Fn}{\mathfrak{n}}
\newcommand{\SA}{\mathscr{A}}
\newcommand{\SC}{\mathscr{C}}
\newcommand{\SF}{\mathscr{F}}
\newcommand{\SG}{\mathscr{G}}
\newcommand{\SH}{\mathscr{H}}
\DeclareMathOperator{\Dol}{Dol}
\DeclareMathOperator{\ICA}{IH}
\newcommand{\Alg}{\mathrm{Alg}}
\newcommand{\an}{\mathrm{an}}
\newcommand{\rmB}{\mathrm{B}}
\newcommand{\rmBM}{\mathrm{BM}}
\newcommand{\rmBPS}{\mathrm{BPS}}
\newcommand{\ch}{\mathrm{ch}}
\newcommand{\cl}{\mathrm{cl}}
\newcommand{\cone}{\mathrm{cone}}
\newcommand{\dR}{\mathrm{dR}}
\newcommand{\DR}{\mathbf{DR}}
\newcommand{\rmDR}{\mathrm{DR}}
\newcommand{\dd}{\mathbf{d}}
\newcommand{\dSt}{\mathrm{dSt}}
\newcommand{\dg}{\mathrm{dg}}
\newcommand{\et}{\mathrm{\acute{e}t}}
\newcommand{\forg}{\operatorname{forg}}
\newcommand{\Free}{\operatorname{Free}}
\newcommand{\GL}{\mathrm{GL}}
\newcommand{\GSp}{\mathrm{GSp}}
\newcommand{\gr}{\mathrm{gr}}
\newcommand{\Hod}{\mathrm{Hod}}
\newcommand{\Hom}{\operatorname{Hom}}
\newcommand{\IC}{\mathcal{IC}}
\newcommand{\id}{\operatorname{id}}
\newcommand{\intHom}{\mathcal{H}om}
\newcommand{\JH}{\mathtt{JH}}
\newcommand{\Lie}{\mathrm{Lie}}
\newcommand{\Map}{\mathrm{Map}}
\newcommand{\MHM}{\mathrm{MHM}}
\newcommand{\MMHM}{\mathrm{MMHM}}
\newcommand{\mon}{\mathrm{mon}}
\newcommand{\Perf}{\operatorname{Perf}}
\newcommand{\Perv}{\operatorname{Perv}}
\newcommand{\pH}[1]{{^\mathfrak{p}\CH^{#1}}}
\newcommand{\pSH}[2]{{^{\mathfrak{p}/#1}\!\CH^{#2}}}
\newcommand{\pYH}[1]{{^{\mathfrak{p}/Y}\!\CH^{#1}}}
\newcommand{\pAH}[2]{{^{\mathfrak{p}/#1}\!\CH^{#2}}}
\newcommand{\pSD}[2]{^{\mathfrak{p}/{#1}}\!\mathcal{D}^{#2}_{\rmc}}
\newcommand{\pDc}[1]{{^{\mathfrak{p}}\mathcal{D}_{\rmc}^{#1}}}
\newcommand{\pr}{\mathrm{pr}}
\newcommand{\ptau}[1]{{^\mathfrak{p}\tau^{#1}}}
\newcommand{\pStau}[2]{{^{\mathfrak{p}/#1}\!\tau^{#2}}}
\newcommand{\pYtau}[1]{{^{\mathfrak{p}/Y}\!\tau^{#1}}}
\newcommand{\rank}{\operatorname{rank}}
\newcommand{\rat}{\textbf{rat}}
\newcommand{\rmd}{\text{d}}
\newcommand{\Sh}{\mathrm{Sh}}
\newcommand{\red}{\mathrm{red}}
\newcommand{\RHom}{\mathrm{RHom}}
\newcommand{\rmb}{\mathrm{b}}
\newcommand{\rmc}{\mathrm{c}}
\newcommand{\qcoh}{\mathrm{qcoh}}
\newcommand{\Spec}{\operatorname{Spec}}
\newcommand{\sing}{\mathrm{sing}}
\newcommand{\sm}{\mathrm{sm}}
\newcommand{\Tan}{\operatorname{T}}
\newcommand{\Tot}{\operatorname{Tot}}
\newcommand{\vir}{\mathrm{vir}}
\newcommand{\vrank}{\operatorname{vrank}}
\DeclareMathOperator{\Betti}{Betti}
\newcommand{\VHS}{\operatorname{VHS}}
\newcommand{\BPS}{\mathcal{BPS}} 
\DeclareMathOperator{\Sym}{Sym}
\newcommand{\pt}{\mathrm{pt}}
\DeclareMathOperator{\HO}{H}
\newcommand{\cms}{/\!\!/}
\title{Nonabelian Hodge isomorphisms for stacks and cohomological Hall algebras}
\date{\today}
\author{Lucien Hennecart}
\address{Laboratoire Ami\'enois de Math\'ematique Fondamentale et Appliqu\'ee, CNRS UMR 7352, Universit\'e de Picardie Jules Verne, 33 rue Saint Leu, 80000 Amiens, France}
\email{lucien.hennecart@u-picardie.fr}
\begin{document}

\begin{abstract}
In this paper, we complete the nonabelian Hodge theory (NAHT) triangle of isomorphisms for stacks between the Borel--Moore homologies of the Dolbeault, Betti, and de Rham moduli stacks. We first explain how to realise the category of connections on a smooth projective curve as a subcategory of a $2$-Calabi--Yau dg-category satisfying some appropriate geometric conditions. Then, we define a cohomological Hall algebra (CoHA) product on the Borel--Moore homology of the stack of connections on a smooth projective curve. This allows us to not only compare the Borel--Moore homologies of the stacks at the relative and absolute levels for the three sides of NAHT, but also to compare their CoHA structures: they all coincide. To compare the Dolbeault and de Rham sides, we define a relative CoHA for the Hodge--Deligne moduli space parametrising $\lambda$-connections. The Betti and de Rham sides are compared using the (derived) Riemann--Hilbert correspondence. The comparison of the Borel--Moore homologies of the Dolbeault and Betti moduli stacks was previously considered by the author with Davison and Schlegel Mejia (without taking the cohomological Hall algebra structures into account). This paper completes this study and provides a CoHA enhancement of the classical NAHT for curves.
\end{abstract}

\maketitle

\vspace{0.5cm}

\setcounter{tocdepth}{1}
\tableofcontents

\section{Introduction}

\subsection{Nonabelian Hodge correspondence}

Nonabelian Hodge theory (NAHT) is a deep and elegant non-algebraic relationship between three kinds of algebraic objects on a smooth projective variety: local systems, semistable Higgs bundles with vanishing Chern classes (of slope $0$), and integrable (flat) connections \cite{hitchin1987self,simpson1988constructing,corlette1988flat,donaldson1987twisted}. The non-algebraicity of this correspondence witnesses the rich structure of a (noncompact) hyperk\"ahler manifold. It takes its roots in \cite{narasimhan1965stable}, the Narasimhan--Seshadri theorem establishing a bijection between irreducible unitary representations of the fundamental group of the curve and stable vector bundles. This bijection was then extended to all semisimple representations of the fundamental group and all semistable Higgs bundles and this correspondence lies at the heart of NAHT. It involves harmonic bundles, which serve as a bridge between Higgs bundles and flat connections via deep analytic methods. The parabolic cases were also considered and still constitute a subject of active research with for example wild character varieties and wild NAHT \cite{biquard2004wild}.

Nonabelian Hodge theory should be considered as an analogue in higher dimensions (i.e. for nonabelian structure groups) of the natural comparison isomorphisms between the cohomological degree one Betti (=singular), de Rham and Dolbeault cohomologies of a smooth projective complex manifold (the so-called de Rham and Dolbeault theorems, see \cite{simpson1992higgs}, and \cite{simpson1990nonabelian} for an overview), which corresponds to the rank one case (i.e. to the structure group $\mathbf{G}_{\mathrm{m}}$).

This theory deals with three kinds of fundamental objects (bearing the name of the corresponding cohomology theories):
\begin{enumerate}
 \item semistable Higgs bundles of slope $0$ (with vanishing Chern classes), forming the \emph{Dolbeault side} and giving the Dolbeault moduli space $\CM^{\Dol}(X)$,
 \item local systems, forming the \emph{Betti side}, with the Betti moduli space $\CM^{\Betti}(X)$,
 \item flat connections, forming the \emph{de Rham side} to which one associates the de Rham moduli space $\CM^{\dR}(X)$.
\end{enumerate}

In this paper, we study a version of the nonabelian Hodge theory/correspondence for smooth projective curves involving the \emph{moduli stacks} rather than the moduli spaces. Moduli stacks were already present in the work of Simpson \cite{simpson1996hodge}, and lead to questions related to upgrades of the classical nonabelian Hodge isomorphisms. The interest in considering stacks is twofold. First, one recovers the corresponding moduli spaces by taking the good moduli spaces of the stacks. The stacks can therefore be viewed as enrichments of the moduli spaces. Second, the stacks enjoy more symmetries and structure than the moduli spaces. This point manifests itself through the cohomological Hall algebras one can built from them (\cite{minets2020cohomological,sala2020comological} for Higgs sheaves; \cite{davison2016cohomological,mistry2022cohomological,davison2022BPS} for local systems, and the present work for connections, and also \cite{porta2022two} for categorified versions). The constructions of these stacks appeared (even if sometimes implicitly as a scheme acted on by an algebraic group) in Simpson's foundational work \cite{simpson1994moduli}. The natural maps between the set of closed $\BoC$-points of these stacks coming from various equivalences of categories are not all continuous, although they become homeomorphisms after taking the good moduli spaces (see \eqref{equation:diagramNAHT}). Despite this disappointing fact, we proved in \cite{davison2022BPS} that (when $X$ is a smooth projective curve) the Borel--Moore homologies of the Betti and Dolbeault moduli stacks coincide, extending Davison's result in genera $\leq 1$ \cite{davison2023nonabelian} to curves of any genus. This is a very surprising fact given the lack of continuity of the natural maps between these spaces (in some sense, we constructed an isomorphism in Borel--Moore homology from non-continuous maps), but still natural to expect from the viewpoint of (dimensionally reduced) Donaldson--Thomas theory (given the strong structural results in this theory, \cite{davison2023nonabelian,davison2022BPS,davison2023bps}).

One may choose a point $p\in X$ on the smooth projective variety and consider framed objects (which is a way of rigidifying the moduli stacks). In this case, the moduli problems admit fine moduli spaces in the category of (finite type, separated, complex) schemes. This is the approach developped by Simpson in \cite{simpson1994moduli}. He obtains the \emph{representation spaces} $R^{\Dol}_r(X), R^{\Betti}_r(X)$ and $R^{\dR}_r(X)$. The change of framing induces $\GL_r$-actions on these three algebraic variety. Nonabelian Hodge theory gives $\GL_r$-equivariant set-theoretic bijections between the three spaces $R^{\sharp}(X)$, $\sharp\in\{\Dol,\Betti,\dR\}$ and after taking the categorical quotient (with respect to some natural linearisations), these bijections induce homeomorphisms. Simpson also proved that the $\GL_r$-equivariant map induced by the Riemann--Hilbert correspondence is a homeomorphism, already at the level of the representation spaces $R_r^{\dR}(X)$ and $R_r^{\Betti}(X)$. One can summarise this NAHT picture by the diagram \eqref{equation:diagramNAHT}.

In this diagram, all maps in the external triangle are $\GL_r$-equivariant, and the maps between $R^{\Dol}_r(X)$ and $R^{\dR}_r(X)$ on the one hand, and between $R^{\Dol}_r(X)$ and $R_r^{\Betti}(X)$ on the other hand, are only bijections between the sets of $\BoC$-points (not continuous in general). A counterexample to the continuity of these maps was explained by Simpson in \cite[Counterexample p.38]{simpson1994moduliII}.

From the Riemann--Hilbert correspondence, one obtains isomorphisms in equivariant cohomology and equivariant Borel--Moore homology
\[
 \HO^*_{\GL_r}(R^{\Betti}_r(X))\cong\HO^*_{\GL_r}(R^{\dR}_r(X)), \quad \HO^{\rmBM}_{*,\GL_r}(R^{\Betti}_r(X))\cong\HO^{\rmBM}_{*,\GL_r}(R^{\dR}_r(X)).
\]
A natural question, despite the lacking continuity of some maps in the outer triangle of \eqref{equation:diagramNAHT}, is then the following.

\begin{question}
\label{question:comparisonBMstacks}
 Can one compare the equivariant cohomologies $\HO^*_{\GL_r}(R_r^{\dR}(X))$ and $\HO^*_{\GL_r}(R_r^{\Dol}(X))$ or the equivariant Borel--Moore homologies $\HO^{\rmBM}_{*,\GL_r}(R^{\dR}_r(X))$ and $\HO^{\rmBM}_{*,\GL_r}(R^{\Dol}_r(X))$?
\end{question}
\begin{equation}
\label{equation:diagramNAHT}
 \begin{tikzcd}
	&&& {R^{\dR}_r(X)} \\
	&& {\CM^{\dR}_r(X)} \\
	{R^{\Dol}_r(X)} & {\CM^{\Dol}_r(X)} \\
	&& {\CM^{\Betti}_r(X)} \\
	&&& {R^{\Betti}_r(X)}\\
	\arrow["{\substack{\cong\\ \text{complex-analytic}}}", from=2-3, to=4-3]
	\arrow["{\substack{\cong\\ \text{real-analytic}}}"', from=2-3, to=3-2,<->]
	\arrow["{\substack{\cong\\ \text{real-analytic}}}"', from=3-2, to=4-3,<->]
	\arrow[from=3-1, to=3-2]
    \arrow["{\text{$1:1$ on sets of $\BoC$-points}}", from=3-1, to=1-4,bend left,<->]
    \arrow["{\substack{\text{Riemann--Hilbert}\\ \text{complex analytic}\\ \text{isomorphism}}}", from=1-4, to=5-4,bend left]
	\arrow["{\text{$1:1$ on sets of $\BoC$-points}}", from=5-4, to=3-1,bend left,<->]
	\arrow[from=5-4, to=4-3]
	\arrow[from=1-4, to=2-3]
\end{tikzcd}
\end{equation}
This is a very challenging problem, on which no progress has been made since Simpson formulated closely related questions \cite{simpson1994moduli,simpson1994moduliII} regarding the topological quotients $R^{\sharp}(X)/\GL_r$ (in general non-separated topological spaces). For the simplest of the nontrivial algebraic varieties, smooth projective curves, the NAHT correspondence is already highly nontrivial and gave rise to the celebrated $\mathrm{P}=\mathrm{W}$ conjecture \cite{de2012topology}, now proven thrice \cite{maulik2022p,hausel2022p,maulik2023perverse}. See \cite{felisetti2023p,hoskins2023twoproofs} for an exposition of these ideas. This conjecture relies on the homeomorphism between the Betti and Dolbeault moduli spaces to compare two filtrations one can define on the cohomology vector space: the perverse filtration, defined using the Hitchin map from the Dolbeault moduli space to the Hitchin base (a complete integrable system) and the weight filtration, defined using the mixed Hodge structure on the cohomology of the character variety (following the foundational work of Deligne on mixed Hodge structures, \cite{deligne1974theorie}). First stated for the smooth moduli spaces (coprime rank and degree, and twisted character varieties), it was then extended to the singular moduli spaces (involving the intersection cohomology) \cite{de2018perverse}, and then to the moduli stacks (with the BPS cohomology playing a central role) \cite{davison2023nonabelian}. It is worth noticing here that the $\mathrm{P}=\mathrm{W}$ conjectures for stacks and intersection cohomology in the case of singular moduli spaces are not yet settled, the crucial point being the $\chi$-independence for the BPS cohomology of the character variety.

The purpose of this paper is to answer positively the part of Question \ref{question:comparisonBMstacks} concerning Borel--Moore homology, when $X$ is a smooth projective curve, in a stronger sense: In this situation we have cohomological Hall algebra structures on the Borel--Moore homologies (when we take the direct sum over all ranks $r\in\BoN$), and we prove that through the nonabelian Hodge homeomorphisms, the (relative) cohomological Hall algebras all coincide. The starting point of this paper is the remark that the comparison between the Betti and de Rham moduli spaces is natural via the Riemann--Hilbert correspondence, and the comparison between the Dolbeault and de Rham moduli spaces is natural via the Hodge--Deligne moduli space (parametrising $\lambda$-connections), while the comparison between the Dolbeault and Betti moduli spaces is more indirect  (obtained by composing the two homeomorphisms $\Dol\rightarrow\dR\rightarrow\Betti$). This completes the picture initiated in \cite{davison2022BPS} concerning nonabelian Hodge isomorphisms for stacks on a smooth projective curve. This also completes the results of \emph{loc. cit.} by adding the rich cohomological Hall algebra structures into the game.

\subsection{Main results}

\subsubsection{CoHA structure for the de Rham stack}
Let $C$ be a smooth projective curve. We let $\JH^{\dR}\colon\mathfrak{M}^{\dR}(C)\rightarrow\CM^{\dR}(C)$ be the de Rham moduli stack, de Rham moduli space and the Jordan--H\"older map. The de Rham moduli stack parametrises (flat) connections on $C$ and the points of the good moduli space parametrise semisimple flat connections on $C$. We refer to \S\ref{section:connections} for more information on these objects.

As usual, $\underline{\BoQ}^{\vir}_{\mathfrak{M}^{\dR}(C)}$ denotes the constant mixed Hodge module on the stack $\FM^{\dR}(C)$, with some Tate twists by the virtual dimension (depending on the connected component). Namely, the restriction to the stack of rank $r$ connections is $(\underline{\BoQ}^{\vir}_{\mathfrak{M}^{\dR}(C)})_{|\mathfrak{M}_r^{\dR}(C)}=\underline{\BoQ}_{\mathfrak{M}_r^{\dR}(C)}\otimes\BoL^{(1-g)r^2}$ where $\BoL=\HO^*_{\rmc}(\BoA^1,\BoQ)$ is a pure mixed Hodge structure of weight $2$ placed in cohomological degree $2$ (seen as a complex of mixed Hodge modules, it is pure of weight zero).

\begin{theorem}[$\subset$ Theorem \ref{theorem:CoHAdeRham}]
\label{theorem:CoHAdeRhamintro}
 We have a relative cohomological Hall algebra structure on $\underline{\SA}^{\dR}(C)\coloneqq\JH_*^{\dR}\BD\underline{\BoQ}_{\FM^{\dR}(C)}^{\vir}$. It induces an absolute cohomological Hall algebra structure on $\HO^{\rmBM}_*(\FM^{\dR}(C),\BoQ^{\vir})$ by taking derived global sections.
\end{theorem}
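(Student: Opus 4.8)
The plan is to reduce the statement to the general construction of a relative cohomological Hall algebra attached to a $2$-Calabi--Yau dg-category, applied to the realisation of connections produced in the earlier sections. The key structural input is that the category of flat connections on $C$ is $2$-Calabi--Yau: for connections $(\CE_1,\nabla_1)$ and $(\CE_2,\nabla_2)$ the groups $\Ext^\bullet$ are the de Rham hypercohomology of $\intHom(\CE_1,\CE_2)$ equipped with its induced connection, and Poincar\'e duality on the curve (of real dimension $2$) furnishes functorial isomorphisms $\Ext^i\cong(\Ext^{2-i})^\vee$. Having realised $\mathrm{Conn}(C)$ as a subcategory of a $2$-Calabi--Yau dg-category satisfying the required geometric conditions, the stack $\FM^{\dR}(C)$ and its good moduli space map $\JH^{\dR}\colon\FM^{\dR}(C)\to\CM^{\dR}(C)$ fit the hypotheses of that construction, and I would specialise it to the de Rham components.

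Concretely, I would form the stack $\FM^{\dR,(2)}(C)$ of short exact sequences $0\to E_1\to E_2\to E_3\to 0$ of connections, together with the correspondence
\[
\FM^{\dR}(C)\times\FM^{\dR}(C)\xleftarrow{\,p\,}\FM^{\dR,(2)}(C)\xrightarrow{\,q\,}\FM^{\dR}(C),
\]
where $p$ records the pair (sub-object, quotient) $(E_1,E_3)$ and $q$ records the total object $E_2$. The map $p$ is smooth, with fibres the vector(-bundle) stack of extensions governed by $\Ext^\bullet(E_3,E_1)$; hence $p^!$ identifies the virtual dualizing complex $\BD\underline{\BoQ}^{\vir}$ of the base with that of $\FM^{\dR,(2)}(C)$. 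The map $q$ is proper: sub-connections of a fixed $E_2$ of prescribed rank and degree form a sub-bundle locus, closed inside a Quot scheme and hence proper, cut out further by the closed $\nabla$-invariance condition, so the counit $q_*q^!\to\id$ supplies a trace. The CoHA product is the composite $q_*\circ p^!$, and the symmetry $\chi(E_1,E_3)=\chi(E_3,E_1)$ of the Euler form, a consequence of the $2$-Calabi--Yau property, is exactly what makes the virtual Tate twists match on both sides so that the composite preserves the virtual normalisation.

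To obtain a morphism of complexes on $\CM^{\dR}(C)$ I would use that $\JH^{\dR}$ intertwines $q$ with the direct-sum map: the semisimplification of $E_2$ equals that of $E_1\oplus E_3$, so $\JH^{\dR}\circ q$ factors through $\oplus\circ(\JH^{\dR}\times\JH^{\dR})\circ p$. Pushing forward and applying base change turns $q_*\circ p^!$ into the desired map
\[
\oplus_*\bigl(\underline{\SA}^{\dR}(C)\boxtimes\underline{\SA}^{\dR}(C)\bigr)\longto\underline{\SA}^{\dR}(C),
\]
a product for the monoidal structure on $\MHM(\CM^{\dR}(C))$ induced by $\oplus$. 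Associativity is verified, as in the general $2$-Calabi--Yau framework, through the stack $\FM^{\dR,(3)}(C)$ of $3$-step filtrations, comparing the two bracketings; since connections are stable under sub-objects, quotients and extensions, every intermediate stack is again a de Rham stack and the coherences are inherited from the ambient category. Finally, $R\Gamma(\CM^{\dR}(C),-)$ is monoidal via the K\"unneth isomorphism for $\oplus$ and sends $\underline{\SA}^{\dR}(C)$ to $\HO^{\rmBM}_*(\FM^{\dR}(C),\BoQ^{\vir})$, so applying it to the product above yields the absolute CoHA.

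The main obstacle is not the formal pattern above but the verification of the geometric hypotheses that make the $2$-Calabi--Yau machinery applicable in the de Rham setting: one must know that $\FM^{\dR}(C)$ occurs as a well-behaved, extension-closed substack of the moduli stack of a genuine $2$-Calabi--Yau dg-category admitting a good moduli space, and that the correspondence $\FM^{\dR,(2)}(C)$ is compatible with this realisation. In contrast to coherent or Higgs sheaves, where boundedness and projectivity of Quot schemes are classical, here the compactness controlling properness of $q$ and, above all, the \'etale-local quiver-with-potential models underlying the duality and the associativity coherences must be imported through the embedding into the ambient $2$-Calabi--Yau category built earlier. Once that input is secured, the theorem follows by specialising the general construction, and the passage to derived global sections is then formal.
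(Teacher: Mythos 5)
There is a genuine gap at the heart of your construction: you assert that the map $p$ from the stack of short exact sequences to the pair $(E_1,E_3)$ is \emph{smooth}, with fibres vector-bundle stacks of extensions, so that the product can be defined as $q_*\circ p^!$ with an honest smooth pullback. That is the hereditary (quiver/curve) picture, and it fails here precisely because the category is $2$-Calabi--Yau: $\Ext^2(E_3,E_1)\cong\Hom(E_1,E_3)^{\vee}$ is nonzero in general and jumps (e.g.\ when $E_1$ and $E_3$ share a simple summand), so the fibre dimension of $p$ jumps and $p$ is only \emph{quasi-smooth}, not smooth. One therefore needs a \emph{virtual} pullback, and the actual content of the paper's proof is exactly the input you skip: Proposition \ref{proposition:resolutionRHomdR} produces a global presentation of $\RHom[1]$ over $\FM^{\dR}_r(C)\times\FM^{\dR}_s(C)$ by a $3$-term complex of vector bundles $\CC^{\dR}$ concentrated in degrees $[-1,1]$ (using locally projective resolutions of $\Lambda^{\dR}$-modules, Noetherianity of $\Lambda^{\dR}$, boundedness of the underlying vector bundles, and the local constancy of the Euler form), whose total space has classical truncation $\mathfrak{Exact}^{\dR}(C)$; the virtual pullback $\BD\underline{\BoQ}_{\FM^{\dR}\times\FM^{\dR}}\rightarrow q^{\dR}_*\BD\underline{\BoQ}_{\mathfrak{Exact}^{\dR}}\otimes\BoL^{\vrank(\CC^{\dR})}$ is then defined via refined Euler classes as in \cite{davison2022BPS}. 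This is also what makes the construction land in $\CD^+(\MHM(\CM^{\dR}(C)))$ rather than merely in constructible sheaves: the derived-algebraic-geometry pullback for quasi-smooth morphisms would only give the Borel--Moore version, whereas the theorem concerns the mixed Hodge module object $\underline{\SA}^{\dR}(C)$.

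Your remaining steps are broadly in line with the paper: properness of the map remembering the middle term (the paper's Assumption 1, via Quot schemes for $\Lambda^{\dR}$-modules), the factorisation of $\JH^{\dR}$ composed with that map through $\oplus$, and associativity via the stack of $2$-step filtrations. But the paper also needs the resolution property of $\FM^{\dR}(C)$ (Assumption 2, supplied by Simpson's linearisations of the representation spaces) and the compatibility of the global $3$-term presentations with the filtration stacks (Assumption 3) to get associativity at the level of mixed Hodge modules; neither is secured by an appeal to ``the general $2$-Calabi--Yau framework'' without the $3$-term presentation in hand.
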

To prove this theorem, we rely on the construction of relative cohomological Hall algebra products in \cite{davison2022BPS} using $3$-term complexes of vector bundles over the product $\FM^{\dR}(C)\times\FM^{\dR}(C)$. The $3$-term complexes allow us to define the virtual pullback from this product of stacks to the stack of short exact sequences. The application of such pullbacks in the study of cohomological Hall algebras was initiated by Kapranov and Vasserot in \cite{kapranov2019cohomological}. These pullbacks (when forgetting the mixed Hodge module strutures) appear to be particular cases of pullbacks by quasi-smooth morphisms as in \cite{khan2019virtual}. The methods of \cite{porta2022two} together with Khan's formalism give an other approach to Theorem \ref{theorem:CoHAdeRhamintro}, as explained to us by Francesco Sala. In \cite{davison2022BPS}, $3$-term complexes of vector bundles are used to define the CoHAs in categories of mixed Hodge modules, which the derived algebraic geometry approach does not seem to provide yet. One particular feature is that for $3$-term complexes, the virtual pullbacks may be constructed without any reference to derived algebraic geometry, in a purely classical way involving refined Euler classes. To carry over this construction, we need to check the technical Assumptions 1--3 of \cite{davison2022BPS}, which are respectively (1) the properness of the map from the stack of short exact sequence to the stack of objects keeping the middle term (i.e. properness of Quot schemes for connections); (2) the resolution property for the stack $\mathfrak{M}^{\dR}(C)$ (i.e. any coherent sheaf is quotient of a vector bundle); which follows from the existence of natural linearisations on the representation spaces; and the morphism from the stack of short exact sequences to the square of the stack of objects forgetting the middle term is globally presented (see \cite[Definition 4.5]{davison2022BPS}; in rough terms, this means that $\mathfrak{Exact}^{\dR}(C)$ is the total space of a complex of vector bundles over $\FM^{\dR}(C)\times\FM^{\dR}(C)$); and (3) the compatibility of global presentations with the stack of 2-step filtrations of objects (crucial in the proof of associativity of relative CoHA products in categories of mixed Hodge modules in \cite{davison2022BPS}).

Following the strategy developped in \cite{davison2022BPS}, we obtain the following structural results regarding the BPS algebra and the full CoHA.

We let $\underline{\BPS}_{\Alg}^{\dR}(C)\coloneqq\CH^0(\underline{\SA}^{\dR}(C))$. This is a mixed Hodge module on $\CM^{\dR}(C)$. It has an induced algebra structure (Theorem \ref{theorem:CoHAdeRham}) and is called \emph{relative BPS (associative) algebra}.

\begin{theorem}[=Theorem \ref{theorem:CoHAdeRham}]
\label{theorem:CoHAderhamstructure}
Let $C$ be a smooth projective curve of genus $g\geq 2$.
 \begin{enumerate}
  \item The relative BPS algebra is isomorphic to the free algebra generated by the intersection complexes of the good moduli space:
  \[
   \underline{\BPS}^{\dR}_{\Alg}(C)\cong\Free_{\boxdot-\Alg}\left(\bigoplus_{r\geq 1}\underline{\IC}(\CM_r^{\dR}(C))\right)
  \]
  \item We have a relative PBW isomorphism:
  \[
   \Sym_{\boxdot}\left(\underline{\BPS}_{\Lie}^{\dR}(C)\otimes\HO^*_{\BoC^*}\right)\rightarrow\underline{\SA}^{\dR}(C)
  \]
  where the relative BPS Lie algebra is defined as $\underline{\BPS}_{\Lie}^{\dR}(C)\coloneqq\Free_{\boxdot-\Lie}\left(\bigoplus_{r\geq 1}\underline{\IC}(\CM_r^{\dR}(C))\right)$ and we write $\HO^*_{\BoC^*}\coloneqq \HO^*_{\BoC^*}(\pt)$ to lighten the notations.
\end{enumerate}
\end{theorem}
One can obtain absolute versions of these isomorphisms by taking derived global section (Corollary \ref{corollary:absolutedR}). We also say a word on the case of genus one curves (Theorem \ref{theorem:genusone}).

\subsubsection{Hodge--Deligne moduli stack}
Let $X$ be a smooth projective variety. To interpolate between the de Rham and Dolbeault moduli spaces, Deligne suggested to consider flat $\lambda$-connections on the smooth projective variety $X$. The moduli space of semistable $\lambda$-connections was then used successfully by Simpson \cite{simpson1996hodge} to provide a construction of the twistor space describing the Hyperk\"ahler structure of the Betti/de Rham/Dolbeault moduli space. A $\lambda$-connection on $X$ is a pair $(\CF,\nabla)$ where $\CF$ is a vector bundle over $X$ and
\[
 \nabla\colon\CF\rightarrow\CF\otimes_{\CO_X}\Omega^1_X
\]
is a morphism of sheaves satisfying the $\lambda$-twisted Leibniz identity
\[
 \nabla(f\cdot s)=\lambda(s\otimes\rmd f)+f\nabla(s)
\]
for $s$ and $f$ local sections of $\CF$ and $\CO_X$ respectively. A $\lambda$-connection is said to be \emph{flat} is $\nabla\circ\nabla=0$ (This condition is empty when $\dim X=1$).
Let
\[
 \begin{tikzcd}
	{\FM^{\Hod}(X)} & {} & {\CM^{\Hod}(X)} \\
	& {\BoA^1}
	\arrow["\pi_{\FM^{\Hod}}"',from=1-1, to=2-2]
	\arrow["\pi_{\CM^{\Hod}}",from=1-3, to=2-2]
	\arrow["{\JH^{\Hod}}", from=1-1, to=1-3]
\end{tikzcd}
\]
be the Hodge--Deligne moduli stack and the Hodge-Deligne moduli space respectively parametrising all and semisimple flat semistable $\bm{\lambda}$-connections on $X$ and the Jordan--H\"older map between them (on $\BoC$-points, it sends a $\bm{\lambda}$-connection to its semisimplification, and we write $\bm{\lambda}$ instead of $\lambda$ when we do not want to specify the parameter). The map to $\BoA^1$ just remembers the parameter $\lambda$ of the $\bm{\lambda}$-connection. There are $\BoC^*$-actions on $\FM^{\Hod}(X)$ and $\CM^{\Hod}(X)$ covering the natural $\BoC^*$-action on $\BoA^1$ and the map $\JH^{\Hod}$ is $\BoC^*$-equivariant.

We have Cartesian squares
\[
 \begin{tikzcd}
	& {} \\
	{\FM^{\Dol}(X)} & {\CM^{\Dol}(X)} & {\{0\}} \\
	{\FM^{\Hod}(X)} & {\CM^{\Hod}(X)} & {\BoA^1}
	\arrow[from=2-3, to=3-3]
	\arrow["\imath_{\CM^{\Dol}}"',from=2-2, to=3-2]
	\arrow[from=3-2, to=3-3]
	\arrow[from=2-2, to=2-3]
	\arrow[from=2-1, to=3-1]
	\arrow[from=3-1, to=3-2]
	\arrow[from=2-1, to=2-2]
	\arrow["\lrcorner"{anchor=center, pos=0.125}, draw=none, from=2-1, to=3-2]
	\arrow["\lrcorner"{anchor=center, pos=0.125}, draw=none, from=2-2, to=3-3]
\end{tikzcd},
\quad\quad
 \begin{tikzcd}
	& {} \\
	{\FM^{\dR}(X)} & {\CM^{\dR}(X)} & {\{1\}} \\
	{\FM^{\Hod}(X)} & {\CM^{\Hod}(X)} & {\BoA^1}
	\arrow[from=2-3, to=3-3]
	\arrow["\imath_{\CM^{\dR}}"',from=2-2, to=3-2]
	\arrow[from=3-2, to=3-3]
	\arrow[from=2-2, to=2-3]
	\arrow[from=2-1, to=3-1]
	\arrow[from=3-1, to=3-2]
	\arrow[from=2-1, to=2-2]
	\arrow["\lrcorner"{anchor=center, pos=0.125}, draw=none, from=2-1, to=3-2]
	\arrow["\lrcorner"{anchor=center, pos=0.125}, draw=none, from=2-2, to=3-3]
\end{tikzcd}.
\]
For $\lambda\in\BoA^1$, we let $\JH_{\lambda}^{\Hod}\colon\FM_{\lambda}^{\Hod}(X)\rightarrow\CM_{\lambda}^{\Hod}(X)$ be the fiber of $\JH^{\dR}$ over $\lambda$. This is a good moduli space for the stack $\FM_{\lambda}^{\Hod}(X)$.

\begin{theorem}[=Theorem \ref{theorem:CoHAHodgeDeligne}]
\label{theorem:CoHAHodge}
 Let $C$ be a smooth projective curve. The complex of mixed Hodge modules $\underline{\SA}^{\Hod}(C)\coloneqq\JH^{\Hod}_*\BD\underline{\BoQ}_{\FM^{\Hod}(C)}^{\vir}$ has a relative cohomological Hall algebra structure for the relative monoidal product on $(\CD^+(\MHM(\CM^{\Hod}(C))),\boxdot_{\BoA^1})$ coming from the morphism $\CM^{\Hod}(C)\rightarrow\BoA^1$ (\S\ref{subsubsection:monoidalstroveralgvar}). It induces an absolute cohomological Hall algebra structure on $\HO^*(\underline{\SA}^{\Hod}(C))=(\pi_{\FM^{\Hod}(C)})_*\BD\underline{\BoQ}_{\FM^{\Hod}(C)}^{\vir}\in(\CD^+(\MHM(\BoA^1)),\stackrel{!}{\otimes}$).
 
 Moreover, we have canonical isomorphisms of algebra objects $\imath_{\CM^{\Dol}}^!\underline{\SA}^{\Hod}(C)\cong\underline{\SA}^{\Dol}(C)$ and $\imath_{\CM^{\dR}}^!\underline{\SA}^{\Hod}(C)\cong\underline{\SA}^{\dR}(C)$ in $(\CD^+(\MHM(\CM^{\Dol}(C))),\boxdot)$ and $(\CD^+(\MHM(\CM^{\dR}(C))),\boxdot)$ respectively.
\end{theorem}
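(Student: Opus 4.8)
\emph{Parts (1) and (2): existence of the relative and absolute products.} The plan is to run the construction of relative cohomological Hall algebras from \cite{davison2022BPS} verbatim, but over the base $\BoA^1$, working in the relative monoidal category $(\CD^+(\MHM(\CM^{\Hod}(C))),\boxdot_{\BoA^1})$ of \S\ref{subsubsection:monoidalstroveralgvar}. The observation that legitimises this is that a short exact sequence of flat $\bm\lambda$-connections has all three terms carrying the \emph{same} parameter $\lambda$; consequently the stack of short exact sequences $\mathfrak{Exact}^{\Hod}(C)$, the stack of $2$-step filtrations, and every correspondence used to define the product map to $\BoA^1$ compatibly with $\pi_{\FM^{\Hod}}$, so that the product only combines objects lying over the same $\lambda$ and is genuinely relative over $\BoA^1$. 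I would then verify the analogues of Assumptions 1--3 of \cite{davison2022BPS} for $\FM^{\Hod}(C)$: properness of the relevant Quot-type schemes for $\bm\lambda$-connections (Assumption 1), the resolution property coming from the natural linearisations on the Hodge--Deligne representation spaces (Assumption 2), and the compatibility of global presentations with the stack of filtrations (Assumption 3). With these in hand the $3$-term-complex formalism yields the relative product on $\underline{\SA}^{\Hod}(C)$ and its associativity exactly as in the de Rham case (Theorem \ref{theorem:CoHAdeRhamintro}). The absolute structure then follows formally by applying $(\pi_{\CM^{\Hod}})_*$, which carries $\boxdot_{\BoA^1}$ to $\overset{!}{\otimes}$ on $\CD^+(\MHM(\BoA^1))$ and sends $\underline{\SA}^{\Hod}(C)$ to $(\pi_{\FM^{\Hod}})_*\BD\underline{\BoQ}^{\vir}_{\FM^{\Hod}(C)}$.

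\emph{Part (3): the base-change identifications.} Here I would use the two Cartesian squares preceding the statement, which exhibit $\imath_{\CM^{\Dol}}$ (resp.\ $\imath_{\CM^{\dR}}$) as the base change along $\JH^{\Hod}$ of the fibre inclusion $\{0\}\hookrightarrow\BoA^1$ (resp.\ $\{1\}\hookrightarrow\BoA^1$), and identify $\FM^{\Dol}(C)$ (resp.\ $\FM^{\dR}(C)$) with the corresponding fibre of $\pi_{\FM^{\Hod}}$; write $\imath'$ for the fibre inclusion $\FM^{\Dol}(C)\hookrightarrow\FM^{\Hod}(C)$. The identification splits into two steps. First, base change along the Cartesian square whose vertical arrows are the Jordan--H\"older maps gives $\imath_{\CM^{\Dol}}^!\JH^{\Hod}_*\cong\JH^{\Dol}_*\,\imath'^!$. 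Second, as $\imath'$ is a regular closed immersion of complex codimension one (being pulled back from $\{0\}\hookrightarrow\BoA^1$), the shift and Tate twist produced by $\imath'^!$ match the difference of virtual-dimension normalisations between $\FM^{\Hod}(C)$ and its fibre; concretely, $\imath'^!$ preserves dualizing complexes, $\imath'^!\omega_{\FM^{\Hod}(C)}\cong\omega_{\FM^{\Dol}(C)}$, and the relative (over $\BoA^1$) virtual twist is chosen so that $\imath'^!\BD\underline{\BoQ}^{\vir}_{\FM^{\Hod}(C)}\cong\BD\underline{\BoQ}^{\vir}_{\FM^{\Dol}(C)}$ without residual twist. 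Composing the two steps yields $\imath_{\CM^{\Dol}}^!\underline{\SA}^{\Hod}(C)\cong\underline{\SA}^{\Dol}(C)$, and the verbatim argument over $\{1\}$ gives the de Rham identification.

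\emph{Upgrading to algebra isomorphisms.} To see that these isomorphisms respect the products, I would note that the fibre over $0$ (resp.\ $1$) of the Hodge--Deligne correspondence stack $\mathfrak{Exact}^{\Hod}(C)$ is exactly the Dolbeault (resp.\ de Rham) correspondence stack, and that the $3$-term complexes of vector bundles defining the virtual pullbacks restrict to the corresponding ones over the fibre. Hence every diagram entering the definition of the product, together with its refined Euler classes, is compatible with $\imath^!$, so the complex-level isomorphism above intertwines the two relative products and is an isomorphism of algebra objects.

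\emph{Main obstacle.} The delicate point is the base-change isomorphism $\imath_{\CM^{\Dol}}^!\JH^{\Hod}_*\cong\JH^{\Dol}_*\,\imath'^!$: because $\JH^{\Hod}$ is a good moduli space morphism and is \emph{not} proper, the Beck--Chevalley transformation is not invertible for formal reasons, and its invertibility must be extracted from the specific geometry --- namely that we restrict along a regular immersion pulled back from $\BoA^1$ and that the squares in question are good-moduli-space squares (the fibre product $\FM^{\Hod}(C)\times_{\CM^{\Hod}(C)}\CM^{\Dol}(C)$ is again a good moduli space, equal to $\FM^{\Dol}(C)$). A secondary, more bookkeeping-heavy difficulty is ensuring that the \emph{entire} virtual-pullback apparatus --- not merely the underlying complexes of mixed Hodge modules --- base-changes compatibly, so that the resulting map is a genuine morphism of algebra objects.
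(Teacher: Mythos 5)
Your proposal follows essentially the same route as the paper: the product is built by running the $3$-term-complex/virtual-pullback formalism of \cite{davison2022BPS} relatively over $\BoA^1$ (the paper's Lemma \ref{lemma:3termHodge} supplies the global presentation $\CC^{\Hod}$ uniformly in $\lambda$), and the fibre identifications over $0$ and $1$ come from the fact that the correspondence diagram and $\CC^{\Hod}$ restrict to the Dolbeault and de Rham data. One correction to your ``main obstacle'': the base change $\imath_{\CM^{\Dol}}^!\,\JH^{\Hod}_*\cong\JH^{\Dol}_*\,\imath'^{\,!}$ \emph{is} invertible for formal reasons, being the Verdier dual of the unconditional $(\imath^*,f_!)$ base change --- properness of $\JH^{\Hod}$ would only be needed for the $(\imath^*,f_*)$ version --- and the paper accordingly invokes it without further argument (cf.\ the proof of Lemma \ref{lemma:HodgeCoHAnonnegative}); the genuinely substantive input is the one you list as secondary, namely that $\CC^{\Hod}$ and the whole virtual-pullback package restrict compatibly, which is exactly what Lemma \ref{lemma:3termHodge} is designed to guarantee.
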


For any $r\geq 1$, the morphism $\CM^{\Hod}_r(C)\rightarrow\BoA^1$ is a trivial topological fibration (it is real-analytically trivialisable, \S\ref{subsubsection:preferredtrivialization}). Therefore, there exists a constructible complex $\IC(\CM^{\Hod}_r(C)/\BoA^1)$ on $\CM_r^{\Hod}(C)$ such that for any $\imath_{\lambda}\colon\{\lambda\}\rightarrow \BoA^1$, $\imath_{\lambda}^!\IC(\CM_r^{\Hod}(C)/\BoA^1)\cong \IC(\CM_{\lambda}^{\Hod}(C))$ (by \cite[Proposition 2.5.1]{de2018compactification}). We actually have $\IC(\CM_r^{\Hod}/\BoA^1)=\IC(\CM_{r}^{\Hod})[1]$. We also define the (Tate twisted) mixed Hodge module $\underline{\IC}(\CM_r^{\Hod}(C)/\BoA^1)\coloneqq(\jmath_r)_{!*}\underline{\BoQ}_{\CM_{r}^{\Hod,\circ}}\otimes\BoL^{(1-g)r^2-2}$ where $\jmath_r\colon\CM_r^{\Hod,\circ}\rightarrow\CM_r^{\Hod}$ is the inclusion of the smooth locus. It is pure of weight zero and satisfies $\imath_{\CM_{\lambda}^{\Hod}}^!\underline{\IC}(\CM_r^{\Hod}(C)/\BoA^1)= \underline{\IC}(\CM_{\lambda}^{\Hod}(C))$ (this is easy to check on the smooth locus). We let
\[
 \begin{matrix}
  \Delta_r&\colon&\CM_1^{\Hod}(C)&\rightarrow&\CM_r^{\Hod}(C)\\
  &&x&\mapsto&x^{\oplus r}
 \end{matrix}
\]
be the small diagonal in the Hodge moduli space. This is a closed immersion.

As usual, we let $\HO^*_{\BoC^*}\coloneqq \HO^*(\pt/\BoC^*)$.

\begin{theorem}[=Theorem \ref{theorem:relativBPSalg}]
\label{theorem:CoHAHodgestructure}
\begin{enumerate}
 \item Let $C$ be a curve of genus $g=1$. We have a canonical isomorphism of complexes of mixed Hodge modules
 \[
 \Sym_{\boxdot_{\BoA^1}}\left(\left(\bigoplus_{r\geq 1}(\Delta_r)_*\underline{\IC}(\CM_1^{\Hod}(C)/\BoA^1)\right)\otimes\HO^*_{\BoC^*}\right)\rightarrow\underline{\SA}^{\Hod}(C)\in\CD^+(\MHM(\CM^{\Hod}(C))).
 \]
 \item  Let $C$ be a curve of genus $g\geq 2$. We have a canonical isomorphism of complexes of mixed Hodge modules
 \[
  \Sym_{\boxdot_{\BoA^1}}\left(\Free_{\boxdot_{\BoA^1}-\Lie}\left(\bigoplus_{r\geq 0}\underline{\IC}(\CM^{\Hod}_r(C)/\BoA^1)\right)\otimes\HO^*_{\BoC^*}\right)\rightarrow\underline{\SA}^{\Hod}(C)\in\CD^+(\MHM(\CM^{\Hod}(C))).
 \]
\end{enumerate}
\end{theorem}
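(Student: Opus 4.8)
The plan is to reduce both statements to the already-established absolute structural results on the fibers and to glue them along $\BoA^1$, using that $\CM^{\Hod}_r(C)\to\BoA^1$ is real-analytically trivial. First I would build the comparison morphism over the whole base, then prove it is an isomorphism by restricting to each fiber $\CM^{\Hod}_\lambda(C)$ via $\imath^!_{\CM^{\Hod}_\lambda}$, and finally invoke recollement for the stratification $\{0\}\hookrightarrow\BoA^1\hookleftarrow\BG_m$ to upgrade the fiberwise isomorphisms to a global one.

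Following the BPS formalism of \cite{davison2022BPS} exactly as in the de Rham case (Theorem \ref{theorem:CoHAderhamstructure}), the relative CoHA $\underline{\SA}^{\Hod}(C)$ contains a relative BPS Lie subalgebra $\underline{\BPS}_{\Lie}^{\Hod}(C)$ and carries a relative PBW morphism $\Sym_{\boxdot_{\BoA^1}}\!\big(\underline{\BPS}_{\Lie}^{\Hod}(C)\otimes\HO^*_{\BoC^*}\big)\to\underline{\SA}^{\Hod}(C)$. The relative intersection complexes $\underline{\IC}(\CM^{\Hod}_r(C)/\BoA^1)$ map into $\underline{\BPS}_{\Lie}^{\Hod}(C)$, and extending by the free-Lie and symmetric functors for $\boxdot_{\BoA^1}$ produces the candidate morphisms in (1) and (2); in genus $1$ the relevant generators are their pushforwards $(\Delta_r)_*\underline{\IC}(\CM_1^{\Hod}(C)/\BoA^1)$ along the small diagonals.

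To see these are isomorphisms, I would apply $\imath^!_{\CM^{\Hod}_\lambda}$. The essential inputs are: monoidality of $\imath^!_\lambda$ for $(\boxdot_{\BoA^1},\boxdot)$, so that it intertwines the relative and absolute $\Sym$ and $\Free_{\Lie}$ functors; the recorded restriction $\imath^!_{\CM^{\Hod}_\lambda}\underline{\IC}(\CM^{\Hod}_r(C)/\BoA^1)=\underline{\IC}(\CM^{\Hod}_\lambda(C))$; and the algebra identifications $\imath^!_{\CM^{\Dol}}\underline{\SA}^{\Hod}(C)\cong\underline{\SA}^{\Dol}(C)$, $\imath^!_{\CM^{\dR}}\underline{\SA}^{\Hod}(C)\cong\underline{\SA}^{\dR}(C)$ of Theorem \ref{theorem:CoHAHodge}. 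By naturality of the BPS inclusion under these base-change isomorphisms, over $\lambda=0$ the morphism becomes the Dolbeault isomorphism of \cite{davison2022BPS}, and over $\lambda=1$ the de Rham isomorphism of Theorem \ref{theorem:CoHAderhamstructure} (for $g\geq2$) or Theorem \ref{theorem:genusone} (for $g=1$). Since the whole construction is $\BoC^*$-equivariant and $\BoC^*$ acts transitively on $\BG_m=\BoA^1\setminus\{0\}$, the same holds over every nonzero $\lambda$, so $\imath^!_\lambda$ of the morphism is an isomorphism on both strata.

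As the morphism is a fiberwise isomorphism over the closed point $0$ and over the open locus $\BG_m$, the recollement triangle $\imath_{0,*}\imath_0^!\to\id\to\jmath_*\jmath^*$ for $\{0\}\hookrightarrow\BoA^1\hookleftarrow\BG_m$ forces it to be an isomorphism of complexes of mixed Hodge modules on $\CM^{\Hod}(C)$, proving (1) and (2). The step I expect to be the main obstacle is ensuring that these fiberwise isomorphisms genuinely assemble over $\BoA^1$: one must verify that $\imath^!_\lambda$ is monoidal for the relative product $\boxdot_{\BoA^1}$ and commutes with the relative $\Sym$ and $\Free_{\Lie}$ constructions, and that the mixed Hodge module structure does not jump between the generic and special fibers. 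This is exactly where the real-analytic triviality of $\CM^{\Hod}_r(C)\to\BoA^1$ and the clean definition of $\underline{\IC}(\CM^{\Hod}_r(C)/\BoA^1)$ (pure of weight zero, with prescribed $!$-restrictions to all fibers) are indispensable, guaranteeing that the recollement argument upgrades fiberwise agreement to a global identification.
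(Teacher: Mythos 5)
Your proposal follows essentially the same route as the paper: construct the candidate morphism globally out of the relative intersection complexes, check via $\imath^!_{\CM^{\Hod}_\lambda}$ (which is monoidal for $\boxdot_{\BoA^1}$ by base change, Proposition \ref{proposition:compexternalS}) that over $\lambda=0$ and $\lambda=1$ it becomes the known Dolbeault and de Rham PBW isomorphisms, use $\BoC^*$-equivariance for the remaining fibers, and conclude that the cone vanishes because its $!$-restriction to every fiber does — the paper phrases this last step as conservativity of the family $\{\imath^!_{\CM^{\Hod}_\lambda}\}$ rather than your recollement triangle, but the two arguments are the same. The one step you defer to ``exactly as in the de Rham case'' — the existence of the canonical morphism $\underline{\IC}(\CM_r^{\Hod}/\BoA^1)\to\underline{\SA}^{\Hod}$ restricting to the fiberwise generator maps — is where the paper invests its technical effort: Lemma \ref{lemma:ICsubobject} builds it over the locus of simple $\bm{\lambda}$-connections using the $\mathbf{G}_{\mathrm{m}}$-gerbe structure, the bounds that $\underline{\SA}^{\Hod}$ is concentrated in cohomological degrees $\geq -1$ (Lemma \ref{lemma:degreegeqminus2Hodge}) and has weights $\geq 0$ (Lemma \ref{lemma:weightsHodge}, propagated from the fibers via the punctual-weight criterion of the appendix), and semisimplicity of the weight-zero part to extend over all of $\CM^{\Hod}$; this is precisely the family-level issue you flag at the end, so your outline is correct but that construction would need to be supplied for the proof to be complete.
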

\begin{corollary}[$\subset$ Proposition \ref{proposition:trivializationCoHA}]
 The complex of constructible sheaves $(\pi_{\FM^{\Hod}})_*\BD\BoQ_{\FM^{\Hod}(C)}^{\vir}\in\CD^+_{\rmc}(\BoA^1)$ is locally constant.
\end{corollary}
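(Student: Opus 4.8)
The plan is to deduce the statement from the structure theorem for $\underline{\SA}^{\Hod}(C)$ (Theorem~\ref{theorem:CoHAHodgestructure}) together with the real-analytic triviality of the good moduli spaces over $\BoA^1$. First I would rewrite the complex. Since $\pi_{\FM^{\Hod}}=\pi_{\CM^{\Hod}}\circ\JH^{\Hod}$, we have
\[
(\pi_{\FM^{\Hod}})_*\BD\BoQ^{\vir}_{\FM^{\Hod}(C)}=(\pi_{\CM^{\Hod}})_*\JH^{\Hod}_*\BD\BoQ^{\vir}_{\FM^{\Hod}(C)}=(\pi_{\CM^{\Hod}})_*\underline{\SA}^{\Hod}(C),
\]
the underlying constructible complex of the absolute Hodge CoHA of Theorem~\ref{theorem:CoHAHodge}, pushed to $\BoA^1$. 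It thus suffices to show the latter is locally constant.

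Next I would reduce to the generators. The functor $(\pi_{\CM^{\Hod}})_*$ is monoidal, carrying the relative product $\boxdot_{\BoA^1}$ (\S\ref{subsubsection:monoidalstroveralgvar}) to the $!$-tensor product $\stackrel{!}{\otimes}$ on $\BoA^1$; this is precisely how the absolute product is obtained in Theorem~\ref{theorem:CoHAHodge}. The full subcategory of $\CD^+_{\rmc}(\BoA^1)$ of complexes with locally constant cohomology is stable under $\stackrel{!}{\otimes}$ (as $\BD$ and $\otimes$ preserve local constancy on the smooth variety $\BoA^1$), under arbitrary direct sums, shifts and Tate twists, under $-\otimes\HO^*_{\BoC^*}$, and under the symmetric and free-Lie operations, the latter being exact functors over $\BoQ$ built from tensor powers and $\mathfrak{S}_n$-(anti)invariants. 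Applying $(\pi_{\CM^{\Hod}})_*$ to the isomorphisms of Theorem~\ref{theorem:CoHAHodgestructure}, it therefore suffices to prove that $(\pi_{\CM^{\Hod}_r})_*\underline{\IC}(\CM^{\Hod}_r(C)/\BoA^1)$ is locally constant for every $r$. In the genus-one case the generators are the $(\Delta_r)_*\underline{\IC}(\CM^{\Hod}_1(C)/\BoA^1)$; since $\Delta_r$ lies over $\BoA^1$ one has $(\pi_{\CM^{\Hod}_r})_*(\Delta_r)_*=(\pi_{\CM^{\Hod}_1})_*$, so this case reduces to the single generator $r=1$.

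For the generator I would invoke the triviality of $\pi\colon\CM^{\Hod}_r(C)\to\BoA^1$ (\S\ref{subsubsection:preferredtrivialization}): fix a real-analytic trivialization $\Phi\colon\BoA^1\times F_r\isoto\CM^{\Hod}_r(C)$ over $\BoA^1$, with central fiber $F_r=\CM^{\Dol}_r(C)$. Under $\Phi$ the relative intersection complex pulls back, up to the fixed Tate twist, to the external product $\BoQ_{\BoA^1}\boxtimes\IC(F_r)$: both sides are intermediate extensions from the relatively smooth locus and restrict on each fiber to $\IC(F_r)$ (using $\imath^!_{\CM^{\Hod}_\lambda}\IC(\CM^{\Hod}_r(C)/\BoA^1)\cong\IC(\CM^{\Hod}_\lambda(C))$ and that $\Phi$ identifies every fiber with $F_r$), hence they agree by uniqueness of the relative intermediate extension \cite[Proposition 2.5.1]{de2018compactification}. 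The Künneth formula for the projection $p\colon\BoA^1\times F_r\to\BoA^1$ then yields
\[
(\pi_{\CM^{\Hod}_r})_*\underline{\IC}(\CM^{\Hod}_r(C)/\BoA^1)\cong\BoQ_{\BoA^1}\otimes(\ICA^*(F_r)\otimes\BoL^{\bullet}),
\]
the constant sheaf on $\BoA^1$ with stalk a fixed graded vector space, in particular locally constant. Combining with the previous step proves the corollary; as the chosen trivialization is global, one in fact obtains a constant complex.

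The step I expect to be the main obstacle is the compatibility of the relative intersection complex with the real-analytic trivialization, i.e. the identification $\Phi^*\IC(\CM^{\Hod}_r(C)/\BoA^1)\cong\BoQ_{\BoA^1}\boxtimes\IC(F_r)$. The subtlety is that $\Phi$ is only a real-analytic homeomorphism, not an algebraic one, so one must know that the relative $\IC$ is genuinely characterized by its fiber restrictions and is transported by any topological trivialization preserving the fibration over $\BoA^1$ — which is exactly what the construction through \cite[Proposition 2.5.1]{de2018compactification} guarantees. A secondary point to verify carefully is the monoidality $(\pi_{\CM^{\Hod}})_*(\boxdot_{\BoA^1})\simeq\stackrel{!}{\otimes}$, so that the reduction to generators via Theorem~\ref{theorem:CoHAHodgestructure} respects local constancy; this is the relative Künneth formula along $\CM^{\Hod}_r(C)\times_{\BoA^1}\CM^{\Hod}_{r'}(C)\to\BoA^1$ and is compatible with the definition of the absolute product in Theorem~\ref{theorem:CoHAHodge}.
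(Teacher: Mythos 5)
Your argument is correct and is essentially the paper's own proof (Proposition \ref{proposition:Hodgelocconstant}): the paper likewise reduces, via the relative PBW isomorphism of Theorem \ref{theorem:relativeCoHA}, to the simple summands $\IC(\CM_r^{\Hod}/\BoA^1)$ of the Hodge CoHA and concludes from the topological triviality of $\CM_r^{\Hod}\rightarrow\BoA^1$ supplied by the preferred trivialization of \S\ref{subsubsection:preferredtrivialization}. The extra verifications you flag (monoidality of the pushforward along $\BoA^1$ and transport of the relative $\IC$ under the real-analytic trivialization) are precisely the points the paper leaves implicit there and in Proposition \ref{proposition:trivializationCoHA}.
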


\begin{corollary}
Let $C$ be a smooth projective curve of genus $\geq 2$. The relative BPS algebra and relative BPS Lie algebra of the Hodge moduli stack can be upgraded to (shifted) mixed Hodge modules. Namely, we set
\[
 \underline{\BPS}_{\Alg}^{\Hod}(C)\coloneqq \Free_{\boxdot_{\BoA^1}-\Alg}\left(\bigoplus_{r\geq 1}\underline{\IC}(\CM_r^{\Hod}(C)/\BoA^1)\right), \quad \underline{\BPS}_{\Lie}^{\Hod}(C)\coloneqq \Free_{\boxdot_{\BoA^1}-\Lie}\left(\bigoplus_{r\geq 1}\underline{\IC}(\CM_r^{\Hod}(C)/\BoA^1)\right)
\]
Both belong to $\MHM(\CM^{\Hod}(C))[2]$ (complexes of mixed Hodge modules concentrated in cohomological degree $-2$).
\end{corollary}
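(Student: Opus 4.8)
The plan is to deduce the Corollary from Theorem \ref{theorem:CoHAHodgestructure}(2), which in genus $g\geq 2$ already identifies $\underline{\SA}^{\Hod}(C)$ with a symmetric algebra built on $\underline{\BPS}_{\Lie}^{\Hod}(C)=\Free_{\boxdot_{\BoA^1}-\Lie}\left(\bigoplus_{r\geq 1}\underline{\IC}(\CM_r^{\Hod}(C)/\BoA^1)\right)$. So the only thing left is to show that the free Lie algebra and the free associative algebra on the generators $\underline{\IC}(\CM_r^{\Hod}(C)/\BoA^1)$ are concentrated in a single cohomological degree. Both objects are, by construction, direct sums of direct summands of iterated $\boxdot_{\BoA^1}$-products of the generators (the free Lie algebra being a direct summand of the tensor algebra), so both statements reduce to two inputs: that each generator is concentrated in a single degree, and that $\boxdot_{\BoA^1}$ is exact for an appropriate $t$-structure.

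The first input is essentially the definition: each $\underline{\IC}(\CM_r^{\Hod}(C)/\BoA^1)$ is a Tate-twisted pure Hodge module relative to $\BoA^1$, hence concentrated in a single degree. The normalization $\IC(\CM_r^{\Hod}/\BoA^1)=\IC(\CM_r^{\Hod})[1]$ together with the twist $\BoL^{(1-g)r^2-2}$ places it uniformly in degree $-2$; the fiberwise identity $\imath_{\CM_\lambda^{\Hod}}^!\underline{\IC}(\CM_r^{\Hod}(C)/\BoA^1)=\underline{\IC}(\CM_\lambda^{\Hod}(C))$ pins this down, since after the codimension-one Gysin shift along the fiber inclusion one recovers the Dolbeault/de Rham objects, which are genuine (degree $0$) mixed Hodge modules by the results of the previous sections.

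For the second input I would work with the relative perverse $t$-structure on $\CD^+(\MHM(\CM^{\Hod}(C)))$ attached to $\pi_{\CM^{\Hod}}\colon\CM^{\Hod}(C)\to\BoA^1$, whose heart contains the relative $\IC$ sheaves, and factor $\boxdot_{\BoA^1}=\oplus_*\circ(-\boxtimes_{\BoA^1}-)$, checking each factor is $t$-exact. The pushforward $\oplus_*$ along $\CM_{r_1}^{\Hod}\times_{\BoA^1}\CM_{r_2}^{\Hod}\to\CM_{r_1+r_2}^{\Hod}$ is $t$-exact because this direct-sum map is finite: a semisimple object of rank $r_1+r_2$ admits only finitely many decompositions into a rank-$r_1$ and a rank-$r_2$ summand. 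The relative external product $\boxtimes_{\BoA^1}$ is the absolute (Künneth-exact) $\boxtimes$ followed by a Gysin restriction along the diagonal $\BoA^1\hookrightarrow\BoA^1\times\BoA^1$, and the point is that this restriction is $t$-exact on the objects at hand because the families $\CM_r^{\Hod}(C)\to\BoA^1$ are real-analytically trivial, so the generators are $\pi$-smooth and their $!$- and $*$-restrictions to a diagonal fiber agree up to the expected shift and Tate twist. This is exactly what makes $\boxtimes_{\BoA^1}$ preserve the relative heart.

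Granting these, the conclusion is immediate: in each fixed rank $r$ only finitely many Lie words (resp. tensor words) contribute, since every generator has rank $\geq 1$, so $\underline{\BPS}_{\Lie}^{\Hod}(C)$ and $\underline{\BPS}_{\Alg}^{\Hod}(C)$ are, in each component, finite iterated $\boxdot_{\BoA^1}$-products and direct sums of the degree-$(-2)$ generators; $t$-exactness keeps them in degree $-2$, and purity of weight zero is inherited because the relative product carries pure objects to pure objects. Hence both lie in $\MHM(\CM^{\Hod}(C))[2]$. The step I expect to be the main obstacle is the $t$-exactness of $\boxtimes_{\BoA^1}$: controlling the Gysin restriction along the diagonal of $\BoA^1$ so that no higher relative cohomology sheaves appear requires the $\pi$-smoothness of the Hodge families, which is where the relative $\IC$ formalism and the preferred real-analytic trivialization of $\CM_r^{\Hod}(C)\to\BoA^1$ enter decisively.
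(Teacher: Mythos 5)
Your overall strategy is the right one and matches the paper's (largely implicit) justification: the generators $\underline{\IC}(\CM_r^{\Hod}/\BoA^1)$ are single shifted mixed Hodge modules, $\boxdot_{\BoA^1}$ preserves this property, in each rank only finitely many Lie/tensor words contribute since every generator has rank $\geq 1$, and the free Lie algebra is a direct summand of the tensor algebra. The one step where your justification does not go through as written is the $t$-exactness of $\boxtimes_{\BoA^1}$ \emph{at the level of mixed Hodge modules}. You argue via the preferred real-analytic trivialization of $\CM_r^{\Hod}\rightarrow\BoA^1$ and a Gysin restriction along the diagonal of $\BoA^1$; this is fine for the underlying constructible complexes (it is essentially Lemma \ref{lemma:propertyrelative}), but a real-analytic homeomorphism does not transport mixed Hodge modules, so it cannot by itself control the MHM cohomological degree of $\iota^!(\underline{\IC}\boxtimes\underline{\IC})$.

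The paper's route avoids this. For the relative perverse degree one uses the fiberwise characterization of the $!$-relative perverse $t$-structure together with Proposition \ref{proposition:compexternalS}, namely $\imath_{\CM_\lambda}^!(\SF\boxdot_{\BoA^1}\SG)\cong(\imath_{\CM_\lambda}^!\SF)\boxdot(\imath_{\CM_\lambda}^!\SG)$, which is a single perverse sheaf because $\oplus$ is finite on fibers (Proposition \ref{proposition:monoidalstructures}) — no smoothness of the family over $\BoA^1$ is needed, so the obstacle you single out as the main one is in fact dissolved by the design of the relative $t$-structure. For the MHM statement one identifies $\underline{\IC}(\CM_{r_1}^{\Hod}/\BoA^1)\boxtimes_{\BoA^1}\underline{\IC}(\CM_{r_2}^{\Hod}/\BoA^1)$ with $\underline{\IC}\bigl(\CM_{r_1}^{\Hod}\times_{\BoA^1}\CM_{r_2}^{\Hod}/\BoA^1\bigr)$, an identification of intermediate extensions checked on the smooth locus exactly as in the proof that $\underline{\SA}^{\Hod}$ is pure; this is a single shifted MHM, and $\oplus_*$ then preserves that since $\oplus$ is finite. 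With that substitution your argument is complete. (A minor point: your placement of the generators "in degree $-2$" simply follows the normalization in the statement; since $\IC(\CM_r^{\Hod}/\BoA^1)=\IC(\CM_r^{\Hod})[1]$, the essential content — concentration in a single cohomological degree, uniform in $r$ — is what your argument establishes, independently of which shift convention is fixed.)
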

For curves of genus one, given that the corresponding category is totally isotropic (\S\ref{subsection:Eulerformconnections}), we may also upgrade the relative BPS algebra and BPS Lie algebra to shifted mixed Hodge modules by setting
\[
  \underline{\BPS}_{\Alg}^{\Hod}(C)\coloneqq \Sym_{\boxdot_{\BoA^1}}\left(\bigoplus_{r\geq 1}(\Delta_r)_*\underline{\IC}(\CM_1^{\Hod}(C)/\BoA^1)\right), \quad \underline{\BPS}_{\Lie}^{\Hod}(C)\coloneqq \bigoplus_{r\geq 1}(\Delta_r)_*\underline{\IC}(\CM_1^{\Hod}(C)/\BoA^1),
\]
both being commutative. We refer to \cite[\S3.3]{davison2023bps} for an extended discussion on Lie algebras in monoidal categories of perverse sheaves/MHM.

\subsubsection{Compatibility of the CoHA structures}

We let $\Psi\colon \CM^{\dR}(C)\rightarrow\CM^{\Dol}(C)$ be the homeomorphism given by nonabelian Hodge theory and $\Phi\colon \CM^{\dR}(C)\rightarrow \CM^{\Betti}(C)$ be the homeomorphism provided by the Riemann--Hilbert correspondence.

\begin{theorem}[=Corollary \ref{corollary:reldRdolcoincide}+Theorem \ref{theorem:reldRBetticoincide}]
\label{theorem:stackyNAHiso}
We have canonical isomorphisms of algebra objects in $\CD^+_{\rmc}(\CM^{\Dol}(C))$ and $\CD^+_{\rmc}(\CM^{\Betti}(C))$:
\[
 \Phi_*\JH^{\dR}_*\BD\BoQ_{\FM^{\dR}(C)}^{\vir}\cong \JH^{\Betti}_*\BD\BoQ_{\FM^{\Betti}(C)}^{\vir}, \quad \Psi_*\JH^{\dR}_*\BD\BoQ_{\FM^{\dR}(C)}^{\vir}\cong \JH^{\Dol}_*\BD\BoQ_{\FM^{\Dol}(C)}^{\vir},
\]
canonical isomorphisms of algebra objects in $\Perv(\CM^{\Dol}(C))$ and $\Perv(\CM^{\Betti}(C))$:
\[
 \Phi_*\BPS_{\Alg}^{\dR}(C)\cong\BPS^{\Betti}(C), \quad \Psi_*\BPS_{\Alg}^{\dR}(C)\cong\BPS^{\Dol}(C),
\]
and canonical isomorphisms of Lie algebra objects in $\Perv(\CM^{\Dol}(C))$ and $\Perv(\CM^{\Betti}(C))$
\[
  \Phi_*\BPS_{\Lie}^{\dR}(C)\cong\BPS^{\Betti}_{\Lie}(C), \quad \Psi_*\BPS_{\Lie}^{\dR}(C)\cong\BPS^{\Dol}_{\Lie}(C).
\]
\end{theorem}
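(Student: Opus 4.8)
The plan is to treat the two comparisons separately, since they rest on different geometric inputs. The isomorphism $\Psi_*\SA^{\dR}(C)\cong\SA^{\Dol}(C)$ comes from the Hodge--Deligne interpolation and is essentially a corollary of Theorem~\ref{theorem:CoHAHodge}, whereas $\Phi_*\SA^{\dR}(C)\cong\SA^{\Betti}(C)$ requires a genuine compatibility of the CoHA product with the (derived) Riemann--Hilbert correspondence. In both cases the statements live only in $\CD^+_{\rmc}$ and not in $\MHM$, because $\Psi$ and $\Phi$ are merely (real-, resp.\ complex-)analytic homeomorphisms, so $\Psi_*$ and $\Phi_*$ preserve the underlying constructible and perverse structures but not the mixed Hodge module structure. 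Once the isomorphisms of the full relative CoHAs are established, the $\BPS_{\Alg}$ and $\BPS_{\Lie}$ statements follow formally: $\BPS^{\dR}_{\Alg}(C)=\CH^0(\SA^{\dR}(C))$ is the zeroth (perverse) cohomology, and $\Psi_*,\Phi_*$ are $t$-exact, being pushforwards along homeomorphisms, so they commute with $\CH^0$; moreover $\Psi_*,\Phi_*$ intertwine the monoidal products $\boxdot$ because $\Psi$ and $\Phi$ commute with the direct-sum maps on good moduli spaces, hence they preserve Lie algebra objects and the free/PBW descriptions of Theorem~\ref{theorem:CoHAderhamstructure}, carrying $\BPS_{\Lie}^{\dR}(C)$ to its Betti and Dolbeault counterparts.

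For the Dolbeault comparison I would argue as follows. By Theorem~\ref{theorem:CoHAHodge} the restrictions $\imath_{\CM^{\Dol}}^!\underline{\SA}^{\Hod}(C)\cong\underline{\SA}^{\Dol}(C)$ and $\imath_{\CM^{\dR}}^!\underline{\SA}^{\Hod}(C)\cong\underline{\SA}^{\dR}(C)$ are isomorphisms of algebra objects for the fibrewise products $\boxdot$. Passing to underlying constructible complexes, it then suffices to identify the fibre restrictions of $\SA^{\Hod}(C)$ over $\lambda=0$ and $\lambda=1$ through $\Psi$. For this I would use the preferred real-analytic trivialisation of $\CM^{\Hod}_r(C)\to\BoA^1$ (\S\ref{subsubsection:preferredtrivialization}), whose restriction to the fibres $\{0\}$ and $\{1\}$ is by construction the nonabelian Hodge homeomorphism $\Psi$. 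Using the $\BoC^*$-equivariance of the relative CoHA together with the local constancy of $(\pi_{\FM^{\Hod}})_*\BD\BoQ^{\vir}_{\FM^{\Hod}(C)}$ on $\BoA^1$ established above, the relative CoHA $\SA^{\Hod}(C)$ is pulled back from the central fibre along this trivialisation; and since $\boxdot_{\BoA^1}$ restricts to $\boxdot$ on each fibre, the resulting identification of the $\lambda=1$ and $\lambda=0$ fibres respects the products. Composing the three identifications yields the isomorphism of algebra objects $\Psi_*\SA^{\dR}(C)\cong\SA^{\Dol}(C)$ in $\CD^+_{\rmc}(\CM^{\Dol}(C))$.

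For the Betti comparison the input is the (derived) Riemann--Hilbert correspondence, which gives an equivalence of analytic dg-categories between flat connections and local systems on $C$ and hence an isomorphism of analytic stacks $\FM^{\dR,\an}(C)\cong\FM^{\Betti,\an}(C)$ lying over the homeomorphism $\Phi$ and commuting with the Jordan--H\"older maps. The structural facts I would use are that this equivalence is exact and $\BoC$-linear, so it induces compatible isomorphisms of the stacks of short exact sequences $\mathfrak{Exact}^{\dR,\an}\cong\mathfrak{Exact}^{\Betti,\an}$ commuting with the two forgetful maps to $\FM\times\FM$ and to $\FM$, and that it preserves the $\Ext$-complexes, hence the tangent--obstruction (quasi-smooth) structures and the virtual dualising complexes $\BD\BoQ^{\vir}$ (in particular the virtual Tate twists match). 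Granting this, the two CoHA products are built from the same correspondence diagram by the same recipe --- proper pushforward along $\mathfrak{Exact}\to\FM$ composed with virtual pullback along $\mathfrak{Exact}\to\FM\times\FM$ --- and so are intertwined by $\Phi_*$, yielding $\Phi_*\SA^{\dR}(C)\cong\SA^{\Betti}(C)$ as algebra objects.

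\textbf{The main obstacle} is this last compatibility. The relative CoHA product of \cite{davison2022BPS} is defined \emph{algebraically}, via three-term complexes of vector bundles and refined Euler classes (Assumptions~1--3 of \emph{loc.\ cit.}), whereas Riemann--Hilbert is only analytic and does not transport algebraic vector-bundle presentations. The resolution I envisage is to recognise the product as a virtual (Gysin) pullback along a quasi-smooth correspondence in the sense of \cite{khan2019virtual}, an operation on Borel--Moore homology that is intrinsic to the derived/analytic structure and its virtual class; one then checks that the derived Riemann--Hilbert equivalence matches these derived enhancements, equivalently the obstruction theories, which ultimately comes down to its compatibility with the $2$-Calabi--Yau structures and with $\Ext^{\leq 1}$ on both sides. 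This is precisely the point where the alternative derived-geometric construction of the product via \cite{porta2022two,khan2019virtual} is most convenient, since it is manifestly functorial for isomorphisms of the underlying analytic derived stacks.
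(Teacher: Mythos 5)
Your overall architecture matches the paper's: the Dolbeault comparison is extracted from the Hodge--Deligne family, the Betti comparison from the derived Riemann--Hilbert correspondence of Porta--Sala, and the $\BPS_{\Alg}$/$\BPS_{\Lie}$ statements are deduced formally from the full relative CoHA isomorphisms. There are two points of divergence worth flagging. On the Dolbeault side, the crux is not just that the underlying complex $\SA^{\Hod}$ trivialises over $\BoA^1$, but that the multiplication \emph{morphism} itself is constant in the family; the paper nails this with Lemma \ref{lemma:morphismonfiber}, which shows that $\Hom_{\CD^{\rmb}_{\rmc}(X)}(\SF\boxtimes\BoQ_Y,\SG\boxtimes\BoQ_Y)\to\Hom_{\CD^{\rmb}_{\rmc}(X_y)}(\SF,\SG)$ is an isomorphism for a trivial fibration over a contractible base, so that $m^{\Hod}$ is forced to be $m^{\Dol}\boxtimes\id$ once its restriction over $\lambda=0$ is known (Proposition \ref{proposition:trivializationCoHA}). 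You assert the conclusion but do not name this mechanism; "$\boxdot_{\BoA^1}$ restricts to $\boxdot$ on each fibre" only gives compatibility of the source and target of the product, not constancy of the map. On the Betti side you correctly identify the real obstacle (the product is built algebraically from $3$-term complexes, while Riemann--Hilbert is only analytic), but you resolve it differently: you propose to re-express the product as Khan's intrinsic virtual pullback for quasi-smooth correspondences, which is functorial under the derived analytic equivalence. The paper instead stays within the $3$-term-complex formalism: Proposition 7.21 of Porta--Sala identifies the analytified correspondence diagrams, hence the cotangent complexes $\BL_{\bm q^{\dR}}$ and $\BL_{\bm q^{\Betti}}$, hence $\Phi^*\CC^{\Betti}\cong\CC^{\dR}$ up to quasi-isomorphism, and the refined-Euler-class pullbacks then agree directly. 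Your route is cleaner in principle but imports an extra comparison -- that Khan's pullback agrees with the $3$-term-complex pullback actually used to define $\SA^{\dR}$ and $\SA^{\Betti}$ -- which the paper only says "appears" to hold and never proves; without that comparison your argument identifies a product that has not been shown to be \emph{the} product. The paper's route avoids this by transporting the presentation itself.
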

This theorem immediately translates to the absolute level, answering positively \cite[Conjecture 1.5]{sala2020comological}.
\begin{corollary}
\label{corollary:NAHTabsintro}
 We have canonical isomorphisms of algebras
\[
 \HO^*\!\!\SA^{\dR}(C)\cong  \HO^*\!\!\SA^{\Betti}(C)\cong \HO^*\!\!\SA^{\Dol}(C),
\]
\[
 \rmBPS_{\Alg}^{\dR}(C)\cong  \rmBPS_{\Alg}^{\Betti}(C)\cong \rmBPS_{\Alg}^{\Dol}(C),
\]
and a canonical isomorphism of Lie algebras
\[
 \rmBPS_{\Lie}^{\dR}(C)\cong  \rmBPS_{\Lie}^{\Betti}(C)\cong \rmBPS_{\Lie}^{\Dol}(C).
\]
\end{corollary}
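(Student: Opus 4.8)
The plan is to deduce every statement from the relative isomorphisms of Theorem~\ref{theorem:stackyNAHiso} by pushing forward to a point. Writing $a_\sharp\colon\CM^\sharp(C)\to\pt$ for the structure morphism, so that $\HO^*(-)=(a_\sharp)_*$ on $\CD^+_{\rmc}(\CM^\sharp(C))$, I would first recall that by the very construction of the absolute CoHA out of the relative one (Theorem~\ref{theorem:CoHAdeRhamintro} together with its Betti and Dolbeault analogues) the graded object $\HO^*\SA^\sharp(C)$, with its product, is the image of $\SA^\sharp(C)$ under the lax symmetric monoidal functor $(a_\sharp)_*$, which sends the relative product $\boxdot$ to the ordinary tensor product of the resulting bigraded mixed Hodge structures. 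In particular $(a_\sharp)_*$ carries algebra objects to algebras, Lie algebra objects to Lie algebras, and isomorphisms of such to isomorphisms of the corresponding absolute structures.

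Next I would exploit that $\Phi$ and $\Psi$ are homeomorphisms. From $a_{\Betti}\circ\Phi=a_{\dR}$ one obtains a canonical identification $(a_{\Betti})_*\circ\Phi_*\cong(a_{\dR})_*$, and likewise $(a_{\Dol})_*\circ\Psi_*\cong(a_{\dR})_*$. Applying $(a_{\Betti})_*$ to the relative algebra isomorphism $\Phi_*\SA^{\dR}(C)\cong\SA^{\Betti}(C)$ and feeding in this identification then yields a canonical isomorphism of algebras $\HO^*\SA^{\dR}(C)\cong\HO^*\SA^{\Betti}(C)$; the identical step with $\Psi$ produces $\HO^*\SA^{\dR}(C)\cong\HO^*\SA^{\Dol}(C)$, which together give the first chain. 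For the remaining two chains I would run the same argument with $\SA^\sharp(C)$ replaced by the relative BPS algebra $\BPS^\sharp_{\Alg}(C)$ and by the relative BPS Lie algebra $\BPS^\sharp_{\Lie}(C)$, objects of $\Perv(\CM^\sharp(C))$ whose derived global sections are by definition $\rmBPS^\sharp_{\Alg}(C)$ and $\rmBPS^\sharp_{\Lie}(C)$; here I invoke the algebra isomorphisms $\Phi_*\BPS^{\dR}_{\Alg}(C)\cong\BPS^{\Betti}(C)$ and $\Psi_*\BPS^{\dR}_{\Alg}(C)\cong\BPS^{\Dol}(C)$ and the Lie algebra isomorphisms $\Phi_*\BPS^{\dR}_{\Lie}(C)\cong\BPS^{\Betti}_{\Lie}(C)$, $\Psi_*\BPS^{\dR}_{\Lie}(C)\cong\BPS^{\Dol}_{\Lie}(C)$ of Theorem~\ref{theorem:stackyNAHiso}. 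Preservation of the Lie bracket under $(a_\sharp)_*$ is automatic from lax symmetric monoidality.

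The one point deserving scrutiny, and the only candidate for a genuine obstacle, is the interaction of $(a_\sharp)_*$ with the relative monoidal structures: I must check that the identification $(a_{\Betti})_*\circ\Phi_*\cong(a_{\dR})_*$ (and its $\Psi$ analogue) is one of monoidal functors, not merely of functors, so that it transports products to products and brackets to brackets. This reduces to the assertion that $\Phi$ and $\Psi$ intertwine the direct-sum maps $\CM^\sharp_r\times\CM^\sharp_{r'}\to\CM^\sharp_{r+r'}$ defining $\boxdot$. But that compatibility is exactly what already upgraded the set-theoretic nonabelian Hodge bijections to isomorphisms of relative algebra and Lie algebra objects in Theorem~\ref{theorem:stackyNAHiso}, so nothing new needs to be proven and the relative isomorphisms translate immediately to the claimed absolute ones.
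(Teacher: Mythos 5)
Your proposal is correct and matches the paper's own (implicit) argument: the paper simply states that Theorem \ref{theorem:stackyNAHiso} ``immediately translates to the absolute level'' by taking derived global sections, which is precisely your pushforward along $a_\sharp$ combined with the identifications $(a_{\Betti})_*\circ\Phi_*\cong(a_{\dR})_*$ and $(a_{\Dol})_*\circ\Psi_*\cong(a_{\dR})_*$ coming from $\Phi$, $\Psi$ being homeomorphisms over the point. Your closing observation that the monoidal compatibility is already packaged into the statement that the relative isomorphisms are isomorphisms of (Lie) algebra objects is exactly the right justification, so nothing is missing.
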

Note that both the BPS algebra and the BPS Lie algebra are free (Lie) algebras generated by the intersection cohomology of the moduli spaces (when $C$ has genus $\geq 2$) and so the second and third series of isomorphisms in Corollary \ref{corollary:NAHTabsintro} are easy to see. The first one requires the strong machineries provided by the derived Riemann--Hilbert correspondence and the Hodge--Deligne moduli space of $\bm{\lambda}$-connections, together with the convenient tool of relative perverse $t$-structures.

Corollary \ref{corollary:NAHTabsintro} therefore shows that the cohomological nonabelian Hodge isomorphisms can be lifted from the moduli spaces to the stacks, and the lifts are compatible with the cohomological Hall algebra structures. Moreover, we have nonabelian Hodge isomorphisms for the BPS cohomologies  (algebra and Lie algebra versions) of the respective moduli stacks.

\begin{remark}
 The isomorphism between the Betti and Dolbeault cohomological Hall algebra cannot be lifted in the category of mixed Hodge structures as the Betti CoHA is not pure while the Dolbeault CoHA is. This is the purpose of the $\mathrm{P}=\mathrm{W}$ conjecture to find which filtration on the Dolbeault side corresponds to the weight filtration on the Betti side.
\end{remark}

\subsubsection{Affinized BPS Lie algebras}
We can also compare the \emph{affinized} BPS Lie algebras for the Betti, Dolbeault and de Rham moduli spaces. The definition of this affinization relies on work in progress of Davison and Kinjo, in which a coproduct on the cohomological Hall algebras of $2$-Calabi--Yau categories $\CA$ is constructed. This coproduct is first defined on the CoHA of the $3$-Calabi--Yau completion of the category considered (vanishing cycle CoHA) and then, by dimensional reduction, a coproduct for the CoHA of $\CA$ is obtained. This coproduct
\[
 \Delta\colon\HO^*\SA_{\CA}\otimes\HO^*\SA_{\CA}\rightarrow\HO^*\SA_{\CA}
\]
is cocommutative and so the space of primitive elements
\[
 \hat{\Fn}^{+,\rmBPS}_{\CA}\coloneqq \{x\in\HO^*\SA_{\CA}\mid\Delta(x)=x\otimes 1+1\otimes x\}
\]
has an induced Lie algebra structure, i.e. is closed under the Lie bracket induced by the product on $\HO^*\SA_{\CA}$.

This sub-Lie algebra of the cohomological Hall algebra is called \emph{affinized BPS Lie algebra}. It is an object of deep interest that is understood fully in only one case (the $2$d BPS algebra associated with the Jordan quiver \cite{davison2022affine}, or equivalently, the critical cohomological Hall algebra of the tripled Jordan quiver with its canonical potential), and it shows a highly nontrivial behaviour, related to the $W$-algebra $W_{1+\infty}$. In this paper, rather than describing $\hat{\Fn}_{\CA}^{+,\rmBPS}$ when $\CA$ is the category of semistable Higgs sheaves, local systems or connections on a smooth projective curve, we compare these three Lie algebras and show that they are isomorphic (Corollary \ref{corollary:actiondetlbabsolute}). We let $\Fn_{\CA}^{+,\rmBPS}\coloneqq \HO^*(\BPS_{\CA,\Lie})$ be the (absolute) BPS Lie algebra for the category $\CA$ \cite{davison2023bps}. For the sake of the presentation, we assume that for the categories $\CA$ considered, the moduli stack of objects of the $3$-Calabi--Yau completion $\tilde{\CA}$ is a global critical locus and that a critical cohomological Hall algebra can be defined. We assume that there is a dimensional reduction isomorphism connecting the critical CoHA of $\tilde{\CA}$ and the $2$D CoHA of $\CA$ (i.e. we assume that the situation is as good as in the quiver case, preprojective algebra and quiver with potential, so that all the results of \cite{davison2020cohomological} and \cite{davison2020bps} can be adapted). Our results Theorem \ref{theorem:actiondetlinebundle} and Corollary \ref{corollary:actiondetlbabsolute} are formulated independently of this assumption.

By the support property for the BPS sheaf for $3$-Calabi--Yau completions (as in the quiver case, \cite[\S3.10]{davison2022affine}, since we assume our situations are as good as the quiver situation), the subspace
\[
 \Fn_{\CA}^{+,\rmBPS}\otimes\HO^*_{\BoC^*}\subset \HO^*\SA_{\CA}
\]
is contained in $\hat{\Fn}^{+,\rmBPS}_{\CA}$. By the PBW theorem for $2$-Calabi--Yau categories \cite[Corollary 1.8]{davison2023bps}, and the Milnor--Moore theorem, the graded dimensions of $\Fn_{\CA}^{+,\rmBPS}\otimes\HO^*_{\BoC^*}$ and $\hat{\Fn}^{+,\rmBPS}_{\CA}$ must coincide, leading to an identification of vector spaces
\[
 \hat{\Fn}^{+,\rmBPS}_{\CA}=\Fn_{\CA}^{+,\rmBPS}\otimes\HO^*_{\BoC^*}.
\]

Since the definition of the Davison--Kinjo coproduct does not appear yet in the literature, and the critical CoHA for the $3$-CY completions of the categories involved are not fully worked out yet, we formulate the following theorem independently of this coproduct and without considering any $3$-Calabi--Yau completions.

\begin{theorem}[=Theorem \ref{theorem:actionChernclassdet}]
\label{theorem:actiondetlinebundle}
 Let $C$ be a smooth projective curve. The action of the first Chern classes of the determinant line bundles on $\SA^{\dR}(C)$, $\SA^{\Dol}(C)$ and $\SA^{\Betti}(C)$ coincide with each other through the stacky nonabelian Hodge isomorphisms (Theorem \ref{theorem:stackyNAHiso}).
\end{theorem}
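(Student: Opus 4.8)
The plan is to verify the coincidence separately along the two legs of the nonabelian Hodge triangle realised in Theorem \ref{theorem:stackyNAHiso}: the Riemann--Hilbert leg comparing the $\dR$ and $\Betti$ sides, and the Hodge--Deligne leg comparing the $\dR$ and $\Dol$ sides. On each stack $\FM^{\sharp}(C)$ the determinant line bundle $\det^{\sharp}$ is the determinant of the derived pushforward along $C$ of (the two-term complex defined by) the universal family, and its first Chern class $c_1(\det^{\sharp})\in\HO^2(\FM^{\sharp}(C))$ acts on $\SA^{\sharp}(C)=\JH^{\sharp}_*\BD\BoQ^{\vir}_{\FM^{\sharp}(C)}$ by relative cup product. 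Since this action is induced by an honest degree-two cohomology class on the stack, it suffices in each case to match these classes under the geometric correspondence inducing the relevant algebra isomorphism of Theorem \ref{theorem:stackyNAHiso}.

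For the Riemann--Hilbert leg, the homeomorphism $\Phi$ lifts to an isomorphism of analytic stacks $\FM^{\dR}(C)^{\an}\cong\FM^{\Betti}(C)^{\an}$ intertwining the Jordan--H\"older maps, under which the universal connection corresponds to the universal local system. The derived Riemann--Hilbert correspondence then identifies the relative de Rham cohomology computing $\det^{\dR}$ with the relative sheaf cohomology computing $\det^{\Betti}$, so these line bundles match and $c_1(\det^{\dR})$ is carried to $c_1(\det^{\Betti})$. As the algebra isomorphism $\Phi_*\SA^{\dR}(C)\cong\SA^{\Betti}(C)$ of Theorem \ref{theorem:stackyNAHiso} is induced by this same identification of stacks (together with the matching of virtual structures used in the proof of Theorem \ref{theorem:reldRBetticoincide}), the two Chern-class actions are intertwined. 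This leg is essentially formal once the compatibility of the virtual and determinant structures under derived Riemann--Hilbert is recorded.

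For the Hodge--Deligne leg I would construct a determinant line bundle $\det^{\Hod}$ on $\FM^{\Hod}(C)$ from the derived pushforward along $C$ of the two-term complex defined by the universal $\bm{\lambda}$-connection. Via the two Cartesian squares, restriction to the fibres $\lambda=1$ and $\lambda=0$ turns this complex into the universal de Rham and Dolbeault (Higgs) complexes, so base change yields $\det^{\Hod}|_{\FM^{\dR}(C)}\cong\det^{\dR}$ and $\det^{\Hod}|_{\FM^{\Dol}(C)}\cong\det^{\Dol}$. Consequently $c_1(\det^{\Hod})$ acts on $\SA^{\Hod}(C)$ relatively over $\BoA^1$ in $(\CD^+(\MHM(\CM^{\Hod}(C))),\boxdot_{\BoA^1})$, and under the canonical restriction isomorphisms $\imath_{\CM^{\Dol}}^!\SA^{\Hod}(C)\cong\SA^{\Dol}(C)$ and $\imath_{\CM^{\dR}}^!\SA^{\Hod}(C)\cong\SA^{\dR}(C)$ of Theorem \ref{theorem:CoHAHodge} it specialises to the actions of $c_1(\det^{\Dol})$ and $c_1(\det^{\dR})$. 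It then remains to check that the homeomorphism $\Psi$, which identifies these two fibres through the trivialisation of the locally constant complex $(\pi_{\FM^{\Hod}})_*\BD\BoQ^{\vir}_{\FM^{\Hod}(C)}$ over $\BoA^1$, carries one specialised action onto the other.

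The main obstacle is precisely this last compatibility, because the identification of the $\lambda=0$ and $\lambda=1$ fibres --- built from the preferred real-analytic trivialisation of $\CM^{\Hod}_r(C)\to\BoA^1$ and the local constancy over $\BoA^1$ --- is not algebraic, so the two specialised actions cannot be matched by a cycle-class computation. Instead I would argue topologically that $c_1(\det^{\Hod})$ determines a class in the relative cohomology $\HO^2(\FM^{\Hod}(C)/\BoA^1)$ whose associated endomorphism of the locally constant relative complex commutes with the parallel-transport identifications of its fibres; concretely it suffices to verify that $c_1(\det^{\Hod})$ is invariant under the $\BoC^*$-scaling on $\BoA^1$, which holds because the universal $\bm{\lambda}$-connection and hence its determinant extend $\BoC^*$-equivariantly over the Hodge--Deligne stack. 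Granting this, the relative action over $\BoA^1$ furnishes a single operator whose specialisations at $\lambda=0$ and $\lambda=1$ are intertwined by $\Psi$; combined with the Riemann--Hilbert leg this proves the theorem, and the absolute statement of Corollary \ref{corollary:actiondetlbabsolute} follows by taking derived global sections.
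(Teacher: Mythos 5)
Your overall architecture (treat the Riemann--Hilbert leg and the Hodge--Deligne leg separately, and on the latter construct a determinant class on $\FM^{\Hod}(C)$ interpolating between the two sides) is the same as the paper's, but there are two genuine problems. First, you are working with a different line bundle: you take the determinant of the derived pushforward along $C$ of the universal family, whereas the paper's action (\S\ref{subsection:thedetlb}) is built from the determinant \emph{character} $\det\colon\GL_r\to\BoC^*$ applied to the presentation $\FM_r^{\sharp}=R_r^{\sharp}/\GL_r$, i.e.\ essentially the determinant of the fibre of the universal object at the framing point. These classes do not agree in $\HO^2$ of the stack (by Grothendieck--Riemann--Roch your class is a combination of pushed-forward tautological classes), so you are proving a statement about a different operator. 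With the paper's definition the Riemann--Hilbert leg is immediate: Simpson's framed Riemann--Hilbert correspondence is a $\GL_r$-equivariant homeomorphism $R_r^{\dR}\to R_r^{\Betti}$, so the associated Borel-construction maps, and hence the $\HO^*_{\BoC^*}$-actions, match with no input from determinants of cohomology.

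Second, and more seriously, the step you yourself identify as the main obstacle on the Hodge--Deligne leg is not closed by what you propose. Verifying that $c_1(\det^{\Hod})$ is $\BoC^*$-invariant only relates the specialised actions over the open orbit $\BoC^*\subset\BoA^1$ to one another; it does not transport the action from $\lambda=1$ across to the special fibre $\lambda=0$, which is exactly where the de Rham--Dolbeault comparison happens. The paper closes this gap with Lemma \ref{lemma:morphismonfiber}: since $\SA^{\Hod}\cong\SA^{\Dol}\boxtimes\BoQ_{\BoA^1}[2]$ under the preferred trivialisation (Proposition \ref{proposition:trivializationCoHA}) and $\BoA^1$ is contractible, restriction to \emph{any} fibre induces an isomorphism
\[
\Hom_{\CD^{\rmb}_{\rmc}(\CM^{\Hod})}\bigl(\SA^{\Hod}[-2],\SA^{\Hod}\bigr)\longrightarrow\Hom_{\CD^{\rmb}_{\rmc}(\CM^{\Hod}_{\lambda})}\bigl(\SA^{\Hod}_{\lambda}[-2],\SA^{\Hod}_{\lambda}\bigr),
\]
so the single operator $\cdot u\colon\SA^{\Hod}[-2]\to\SA^{\Hod}$ is of externally constant form and its restrictions at $\lambda=0$ and $\lambda=1$ are automatically intertwined by $\Psi$. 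You would need this (or an equivalent rigidity statement for morphisms of locally constant relative complexes over $\BoA^1$), not merely $\BoC^*$-equivariance, to complete your argument.
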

For $\sharp\in\{\Betti,\Dol,\dR\}$, we let $\Fn^{\sharp,\rmBPS,+}(C)\coloneqq\HO^*(\BPS_{\Lie}^{\sharp}(C))$ be the $\sharp$-BPS Lie algebra.
\begin{corollary}
\label{corollary:actiondetlbabsolute}
 The subspaces $\Fn^{\dR,\rmBPS,+}(C)\otimes\HO^*_{\BoC^*}\subset\HO^*\SA^{\dR}(C)$, $\Fn^{\Betti,\rmBPS,+}(C)\otimes\HO^*_{\BoC^*}\subset\HO^*\SA^{\Betti}(C)$ and $\Fn^{\Dol,\rmBPS,+}(C)\otimes\HO^*_{\BoC^*}\subset\HO^*\SA^{\Dol}(C)$ coincide with each other under the isomorphisms given by the stacky nonabelian Hodge correspondence.
\end{corollary}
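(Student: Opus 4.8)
\subsection*{Proof proposal}

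The plan is to deduce the statement directly from the two ingredients it cites, namely Theorem \ref{theorem:actiondetlinebundle} together with the absolute nonabelian Hodge isomorphisms already in hand. Passing Theorem \ref{theorem:stackyNAHiso} to derived global sections yields, as recorded in Corollary \ref{corollary:NAHTabsintro}, compatible algebra isomorphisms $\HO^*\SA^{\dR}(C)\cong\HO^*\SA^{\Betti}(C)\cong\HO^*\SA^{\Dol}(C)$ which moreover restrict to isomorphisms of the unaffinized BPS Lie algebras $\Fn^{\dR,\rmBPS,+}(C)\cong\Fn^{\Betti,\rmBPS,+}(C)\cong\Fn^{\Dol,\rmBPS,+}(C)$. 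So the only thing left is to upgrade this matching of the $\Fn^{\sharp,\rmBPS,+}(C)$ to a matching of the enlarged subspaces $\Fn^{\sharp,\rmBPS,+}(C)\otimes\HO^*_{\BoC^*}$.

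The key structural fact I would isolate is that, inside $\HO^*\SA^{\sharp}(C)$, the subspace $\Fn^{\sharp,\rmBPS,+}(C)\otimes\HO^*_{\BoC^*}$ is precisely the $\HO^*_{\BoC^*}$-submodule generated by $\Fn^{\sharp,\rmBPS,+}(C)=\Fn^{\sharp,\rmBPS,+}(C)\otimes 1$, where $\HO^*_{\BoC^*}=\BoQ[u]$ acts through $u\mapsto c_1(\det)$. Here $c_1(\det)$ is the first Chern class of the determinant line bundle, which on the rank-$r$ summand agrees up to the invertible scalar $r$ with the generator of $\HO^*_{\BoC^*}$ coming from the scalar automorphisms $\BoC^*\subset\operatorname{Aut}$ present on every object; this is exactly what lets the tensor factor $\HO^*_{\BoC^*}$ of the decomposition $\hat{\Fn}^{+,\rmBPS}_{\CA}=\Fn_{\CA}^{+,\rmBPS}\otimes\HO^*_{\BoC^*}$ be read off as the determinant Chern class action.

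Granting this, the argument is immediate: by Theorem \ref{theorem:actiondetlinebundle} the action of $c_1(\det)$ commutes with the nonabelian Hodge isomorphisms $\HO^*\SA^{\dR}(C)\cong\HO^*\SA^{\Betti}(C)\cong\HO^*\SA^{\Dol}(C)$, while by Corollary \ref{corollary:NAHTabsintro} these isomorphisms carry the $\Fn^{\sharp,\rmBPS,+}(C)$ to one another; hence they carry the $\BoQ[c_1(\det)]$-span $\Fn^{\sharp,\rmBPS,+}(C)\otimes\HO^*_{\BoC^*}$ of the former to one another as well. The induced bracket on each affinized BPS Lie algebra is the restriction of the commutator of the ambient CoHA product, which is itself intertwined by the isomorphisms, so the identification is one of Lie algebras and not merely of vector spaces. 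The single genuine point to pin down -- and the only (minor) obstacle here, since the substantive work lives in the cited Theorem \ref{theorem:actiondetlinebundle} -- is the structural fact, i.e. verifying that the $\HO^*_{\BoC^*}$ factor is generated by the determinant Chern class rather than by some other operator; this rests on the support property for the BPS sheaf and the explicit form of the PBW isomorphism of \cite{davison2023bps}, which hold uniformly across the three sides.
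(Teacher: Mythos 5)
Your proposal is correct and follows essentially the same route as the paper, whose entire proof is the one-line remark that the corollary ``follows by taking derived global sections'' of Theorem \ref{theorem:actionChernclassdet}; you have simply made explicit the two facts this relies on, namely that the subspace $\Fn^{\sharp,\rmBPS,+}(C)\otimes\HO^*_{\BoC^*}$ is the $c_1(\det)$-span of $\Fn^{\sharp,\rmBPS,+}(C)$ via the PBW isomorphism, and that both the span-generators and the operator $c_1(\det)$ are intertwined by the nonabelian Hodge isomorphisms. One small caveat: your closing claim that the identification is one of Lie algebras presupposes that these subspaces are closed under the CoHA commutator, which the paper deliberately leaves conditional on the Davison--Kinjo coproduct; the corollary itself only asserts that the subspaces coincide.
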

The existence of the coproduct would imply that the subspaces $\Fn^{\dR,\rmBPS,+}(C)\otimes\HO^*_{\BoC^*}\subset\HO^*\SA^{\dR}(C)$, $\Fn^{\Betti,\rmBPS,+}(C)\otimes\HO^*_{\BoC^*}\subset\HO^*\SA^{\Betti}(C)$ and $\Fn^{\Dol,\rmBPS,+}(C)\otimes\HO^*_{\BoC^*}\subset\HO^*\SA^{\Dol}(C)$ are closed under their respective Lie brackets coming from the ambient CoHA. Using the NAHT isomorphisms (Corollary \ref{corollary:NAHTabsintro}), these \emph{affinized} BPS Lie algebra would then be isomorphic. The full description of the affinized Lie algebras, or equivalently of the full CoHA, by generators and relations is not yet known but constitutes a challenging and crucial question in the theory of cohomological Hall algebras.

\subsubsection{Non-zero degree versions}
\label{subsubsection:parabolicversions}
We presented our results for degree zero Higgs bundles and local systems on the compact Riemann surface. In this section, we sketch how one can adapt the main results of this paper to Higgs bundles of non-zero degree or parabolic Higgs bundles and connections.

The strategies developped in this paper work in the parabolic case, but the constructions appear to be more subtle than in the compact case. For example, it is harder to construct the Hodge moduli space (see \cite{simpson2022twistor}).

It is also harder and requires extra care to construct moduli stack/spaces of connections on noncompact curves: this appears in \cite{nitsure1993moduli}, see also the literature on wild nonabelian Hodge theory (\cite{biquard1991stable} and the subsequent works).

Let $C$ be a smooth projective curve and $p\in C$ a fixed point. We let $\theta=d'/r'\in\BoQ$. We let $\zeta\coloneqq \exp\left(\frac{2\pi i}{r'}\right)$ be a primitive $r'$th root of unity and $\zeta_{\theta}\coloneqq \zeta^{d'}$. As in \cite{davison2022BPS}, we define $\BoC[\pi_1(C\setminus\{p\}),\lambda]\coloneqq\BoC\langle x_i^{\pm 1},y_i^{\pm 1}\colon 1\leq i\leq g\rangle/\langle\prod_{i=1}^gx_iy_ix_i^{-1}y_i^{-1}=\lambda\rangle$. We let $\JH^{\Betti}\colon\FM_{\theta}^{\Betti}\rightarrow\CM_{\theta}^{\Betti}$ be the stack of representations of the twisted fundamental group algebra $\BoC[\pi_1(C\setminus\{p\}),\zeta_{\theta}]$ and its good moduli space.

We let $\FM_{\theta}^{\dR}$ be the stack of connections on $C\setminus\{p\}$, with first order pole at $p$ and monodromy around $p$ given by multiplication by $\zeta_{\theta}$ \cite{nitsure1993moduli}. It has a good moduli space $\JH^{\dR}\colon\FM_{\theta}^{\dR}\rightarrow\CM_{\theta}^{\dR}$. The Riemann--Hilbert correspondence identifies the stacks and moduli spaces $\JH^{\sharp}\colon \FM_{\theta}^{\sharp}\rightarrow\CM_{\theta}^{\sharp}$ for $\sharp\in\{\Betti,\dR\}$. It may be constructed using the ring of logarithmic differential operators $\Lambda^{\dR,\log(p)}$ on the open curve $C\setminus\{p\}$ as defined in \cite[p.87]{simpson1994moduli}.

On the Dolbeault side, we consider the stack of slope $\theta$ semistable Higgs bundles on $C$, $\FM_{\theta}^{\Dol}$, together with its good moduli space $\JH^{\Dol}\colon \FM_{\theta}^{\Dol}\rightarrow\CM_{\theta}^{\Dol}$.

Last, we have the Hodge--Deligne moduli stack and space over $\BoA^1$, $\JH^{\Dol}\colon \FM_{\theta}^{\Hod}\rightarrow\CM_{\theta}^{\Hod}$ interpolating between the de Rham and Dolbeault moduli spaces/stacks \cite[\S2.4]{migliorini2017recent}. It can be constructed using the deformation to the associated graded of the ring of logarithmic differential operators $\Lambda^{\dR,\log(p)}$, as in \cite[p.86]{simpson1994moduli}. The strategy developed in this paper yields the analogue of Theorems \ref{theorem:CoHAdeRhamintro}, \ref{theorem:CoHAderhamstructure}, \ref{theorem:CoHAHodge}, \ref{theorem:CoHAHodgestructure}, \ref{theorem:stackyNAHiso}, \ref{theorem:actiondetlinebundle} and of their corollaries for a slope $\theta\in\BoQ$ possibly nonzero.

\subsubsection{$\chi$-independence phenomena}
We finish the outline of our main results by stating some $\chi$-independence results involving the cohomological Hall algebra structures. We state more conjectures in the last section of this paper \S\ref{section:chiindependenceph}. Let $\mu=d/r$ and $\mu'=r/d'$ where $r,d,r',d'\in\BoZ$ and $\gcd(r,d)=\gcd(r,d')=1$. Let $\zeta_r$ be a primitive $r$th root of unity. There is an automorphism of $\BoC$ sending $\zeta_r^d$ to $\zeta_r^{d'}$.

\begin{theorem}
The Galois conjugation induces an isomorphism $\gamma\colon\HO_*^{\rmBM}(\FM_{r,d}^{\Betti})\rightarrow\HO_*^{\rmBM}(\FM_{r,d'}^{\Betti})$ respecting the weight filtrations between the Borel--Moore homologies of different Betti stacks. If $\mu=r/d, \mu'=r/d'\in\BoQ$ with $\gcd(r,d)=\gcd(r,d')=1$. Then, we have an isomorphism of algebras induced by Galois conjugation
\[
 \HO^*(\SA_{\mu}^{\Betti})\cong\HO^*(\SA_{\mu'}^{\Betti}).
\]
\end{theorem}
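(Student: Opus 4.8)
The plan is to present both stacks as quotients of representation schemes defined over the cyclotomic ring $\BoZ[\zeta_r]$ and to transport everything by one field automorphism. A representation of the twisted group algebra of dimension $m$ satisfies $\prod_{i=1}^g[X_i,Y_i]=\zeta_r^d\cdot\Id$, whose determinant gives $\zeta_r^{dm}=1$, forcing $r\mid m$ (as $\gcd(r,d)=1$); so only dimensions $m=nr$ occur, and for each $n\geq1$ the slope-$\mu$ Betti stack in rank $nr$ is $[R_n/\GL_{nr}]$ with $R_n\subset\GL_{nr}^{2g}$ the scheme cut out by that single relation. Neither these schemes nor the correspondence diagram defining the CoHA --- the stacks of short exact sequences and of two-step filtrations, with the remember-the-subobject and forget-the-middle maps --- involves the scalar $\zeta_r^d$ except through this relation, so all are defined over $\BoZ[\zeta_r]$. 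As $\gcd(r,d)=\gcd(r,d')=1$ there is $k\in(\BoZ/r)^\times$ with $dk\equiv d'\pmod r$; the automorphism $\sigma\in\operatorname{Gal}(\BoQ(\zeta_r)/\BoQ)$ with $\sigma(\zeta_r)=\zeta_r^k$ then sends $\zeta_r^d$ to $\zeta_r^{d'}$. Applying $\sigma$ entrywise, $\rho\mapsto\rho^\sigma$, is a $\sigma$-semilinear algebraic isomorphism from the $\BoC$-points of $R_n$ to those of its counterpart at $d'$, equivariant for the $\BoQ$-group $\GL_{nr}$ and compatible with every map in the diagram: this Galois twist identifies the whole slope-$\mu$ datum with the slope-$\mu'$ one.

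The twist is not continuous for the analytic topology, so singular Borel--Moore homology cannot be transported directly; I pass instead through the $\ell$-adic realisation. For finite-type stacks over $\BoQ(\zeta_r)$, equivariant $\ell$-adic Borel--Moore homology is functorial for the Galois action, and $\sigma$ yields a canonical isomorphism $\HBM_{*,\et}(\FM_{nr,nd}^{\Betti})\cong\HBM_{*,\et}(\FM_{nr,nd'}^{\Betti})$ intertwining the two Galois actions through $\sigma$. Under the comparison with singular Borel--Moore homology the weight filtration on the Betti side coincides with the filtration by Frobenius weights (Deligne's theory of weights, in Saito's mixed-Hodge-module form on the good moduli spaces), so this isomorphism is automatically strict for the weight filtration; it does not respect the Hodge filtration, matching the non-analyticity of the twist, which is why the output lands in weight-filtered vector spaces rather than mixed Hodge structures. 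Taking $n=1$ gives the isomorphism $\gamma$ of the first assertion, and summing over $n$ gives its graded enhancement.

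For the algebra statement I invoke functoriality of the realisation for the product. Since the stack of short exact sequences and its two structure maps are defined over $\BoZ[\zeta_r]$, the virtual pullback built from $3$-term complexes as in \cite{davison2022BPS} and the proper pushforward defining the CoHA product both commute with the $\sigma$-action; hence $\gamma$ is an algebra isomorphism $\HO^*(\SA_\mu^{\Betti})\cong\HO^*(\SA_{\mu'}^{\Betti})$. A structural cross-check makes this transparent: the good moduli spaces $\CM_{nr,nd}^{\Betti}$ and $\CM_{nr,nd'}^{\Betti}$ are literally Galois conjugate, so $\gamma$ sends the relative BPS algebra to the relative BPS algebra and permutes its generating intersection complexes $\IC(\CM_{nr,nd}^{\Betti})$ to those at $d'$; by the freeness theorem these generators match in graded dimension and weight, reproving the algebra isomorphism without ever unwinding the full product.

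The hard part is the content of the second paragraph: upgrading the a priori $\BoQ_\ell$-linear comparison to a genuine $\BoQ$-linear isomorphism of rational Borel--Moore homologies, and checking strict weight-exactness in the stacky, non-proper, equivariant setting where naive purity fails. I expect to settle both by working throughout with mixed Hodge modules on the good moduli spaces: after the decomposition theorem the relevant complexes are direct sums of shifted intersection complexes of the moduli spaces, whose weights and Betti numbers are manifestly Galois stable, so the required rational form and weight-strictness descend from the freeness of the BPS algebra rather than from a delicate comparison on the stacks themselves.
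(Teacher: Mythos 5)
Your proposal follows essentially the same route as the paper: realise the twisted representation schemes and the whole convolution diagram over the cyclotomic field, transport everything by the Galois automorphism sending $\zeta_r^d$ to $\zeta_r^{d'}$, and pass through the $\ell$-adic realisation (comparing \'etale and analytic Borel--Moore homology and choosing an isomorphism $\overline{\BoQ}_{\ell}\cong\BoC$) to obtain the algebra isomorphism. The paper's own proof is terser --- it does not separately argue weight-strictness or worry about $\BoQ$-rationality, since it only asserts an isomorphism with $\BoC$-coefficients after that choice --- but the substance of your argument, including the compatibility of the $3$-term-complex product with conjugation, is the same.
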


\begin{corollary}
If $\mu=r/d, \mu'=r/d'\in\BoQ$ with $\gcd(r,d)=\gcd(r,d')=1$. Then, we have an isomorphism of algebras
\[
 \HO^*(\SA_{\mu}^{\Dol})\cong\HO^*(\SA_{\mu'}^{\Dol}).
\]
\end{corollary}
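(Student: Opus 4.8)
The plan is to deduce this statement formally, by transporting the Galois-conjugation isomorphism of the preceding theorem — which lives on the Betti side — across the nonabelian Hodge correspondence to the Dolbeault side. First I would invoke the nonabelian Hodge comparison of cohomological Hall algebras at each fixed slope separately. By the parabolic analogue of Corollary~\ref{corollary:NAHTabsintro} (established in \S\ref{subsubsection:parabolicversions}, where Theorem~\ref{theorem:stackyNAHiso} and its corollaries are extended to an arbitrary slope $\theta\in\BoQ$), the correspondence furnishes, for each of the two slopes, an isomorphism of algebras
\[
 \HO^*(\SA_{\mu}^{\Dol})\cong\HO^*(\SA_{\mu}^{\Betti}),\qquad \HO^*(\SA_{\mu'}^{\Dol})\cong\HO^*(\SA_{\mu'}^{\Betti}).
\]

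Next, the preceding theorem provides an isomorphism of algebras $\HO^*(\SA_{\mu}^{\Betti})\cong\HO^*(\SA_{\mu'}^{\Betti})$ induced by the Galois conjugation sending $\zeta_r^d$ to $\zeta_r^{d'}$, the hypothesis $\gcd(r,d)=\gcd(r,d')=1$ guaranteeing that both are primitive $r$th roots of unity so that such an automorphism of $\BoC$ exists. Composing the three isomorphisms,
\[
 \HO^*(\SA_{\mu}^{\Dol})\cong\HO^*(\SA_{\mu}^{\Betti})\cong\HO^*(\SA_{\mu'}^{\Betti})\cong\HO^*(\SA_{\mu'}^{\Dol}),
\]
yields the desired algebra isomorphism, every arrow being an isomorphism of algebras by construction.

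The only point requiring care — and it is not truly an obstacle, since the substantive work has already been done — is to confirm that the nonabelian Hodge comparison of cohomological Hall algebras is indeed available at the nonzero slopes $\mu$ and $\mu'$, rather than only at slope $0$. This is precisely the content of the parabolic extension of \S\ref{subsubsection:parabolicversions}: the Riemann--Hilbert correspondence and the Hodge--Deligne moduli space of $\bm{\lambda}$-connections, on which the slope-$0$ comparison rests, are replaced by their logarithmic counterparts (connections with a first-order pole at $p$ and prescribed residue, governed by the ring $\Lambda^{\dR,\log(p)}$), and the relative CoHA constructions carry over verbatim. Granting this, the corollary has no further content: it is a purely formal composition, transporting the genuinely nontrivial Galois symmetry of the Betti cohomological Hall algebra — whose construction, respecting both the algebra structure and the weight filtration, is the substance of the theorem above — onto the Dolbeault side, where no such symmetry is visible directly.
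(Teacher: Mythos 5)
Your proposal is correct and is exactly the route the paper intends (the corollary is left without an explicit proof, but the surrounding text — the remark that NAHT isomorphisms for CoHAs transfer $\chi$-independence between the Betti and Dolbeault sides, together with the slope-$\theta$ extension sketched in \S\ref{subsubsection:parabolicversions} — makes clear that the argument is precisely the composition you write down). You also correctly flag the only genuine point of care, namely that the nonabelian Hodge comparison of CoHAs must be available at the nonzero slopes $\mu,\mu'$, which is the content of the parabolic extension.
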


\subsection{More questions}
\subsubsection{Stacky nonabelian Hodge isomorphisms for higher dimensional varieties}
Our methods to prove nonabelian Hodge isomorphisms for stacks are very specific to smooth projective curves, which is the setting allowing us to take advantage of the cohomological Hall algebra structures. It is not known how to defined such structures for Higgs sheaves, local systems or connections on higher dimensional varieties. It is a nevertheless extremely motivating question, in view of the positive results obtained in the case of curves, to try to see to what extent one can compare the Borel--Moore homologies of the Betti, Dolbeault and de Rham stacks for higher dimensional algebraic varieties. At the level of moduli spaces, and in connection with the $\mathrm{P}=\mathrm{W}$ conjecture, this question was formulated in \cite{de2018perverse}. Such questions were also raised by Simpson \cite{simpson1994moduliII,simpson1996hodge}.

\subsubsection{Stacky nonabelian Hodge isomorphisms in cohomology}
The nonabelian Hodge isomorphisms for stacks we provide between the Betti, Dolbeault and de Rham moduli stacks concern the \emph{Borel--Moore homologies} of these stacks. This is justified by the fact that it is understood since at least the book \cite{chriss1997representation} that Borel--Moore homology plays in representation theory a more prominent role than cohomology. The stacks we consider in this paper are neither smooth nor proper, and so there is no comparison between Borel--Moore homology and cohomology. This seems therefore a legitimate question to try to compare the \emph{cohomologies} of these three stacks (for curves but also for higher dimensional smooth projective varieties).

\subsubsection{Categorification of stacky NAHT}
A reasonable categorification of the stacky NAHT should provide a way to compare some categories of coherent sheaves on the Dolbeault, Betti and de Rham stacks. This question has been studied by Porta--Sala in \cite{porta2022two}, building on \cite{porta2017derived} and the categorified Hall algebras defined there. They explain that one should consider the derived moduli stacks and the corresponding categories of perfect complexes, and how the derived Riemann--Hilbert correspondence induces an equivalence of categorified Hall algebras \cite[Theorem 7.19]{porta2022two}. The comparison between the categorified Dolbeault and de Rham Hall algebras involves the Hodge moduli space; the former is expected to be the associated graded of the latter with respect to a filtration arising from the stack of $\lambda$-connections \cite[Conjecture 6.7]{porta2022two}.

For cohomological Hall algebras as considered in the present paper, we are able to compare the objects without taking any associated graded. This provides strong support for \cite[Conjecture 6.7]{porta2022two} but also an improvement of this conjecture in the world of CoHAs.

\subsection{Cohomological Hall algebras in families}
One ingredient in this paper is formed by cohomological Hall algebras in families, which we combined with the powerful tool of relative perverse $t$-structures \cite{hansen2021relative}. The author together with collaborators are developping this formalism and use it to draw several other consequences, among which
\begin{enumerate}
 \item The positivity conjecture of Schiffmann concerning the positivity of Kac polynomials of smooth projective curves, counting absolutely indecomposable vector bundles \cite{schiffmann2016indecomposable}. This conjecture may be formulated as follows.
 \begin{conjecture}
  Let $(g,r,d)\in\BoN\times\BoZ_{\geq 1}\times\BoZ$ be a triple of a genus $g$, a rank $r$ and a degree $d$. Then there exists a representation of $\GSp(2g,\BoC)$ whose character is given by $A_{g,r,d}(z_1,\hdots,z_{2g})$.
 \end{conjecture}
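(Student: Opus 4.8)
The plan is to realise the (refined) Kac polynomial $A_{g,r,d}(z_1,\ldots,z_{2g})$ as the \emph{honest} graded character of a genuine $\GSp(2g,\BoC)$-representation carried by the BPS cohomology of the Dolbeault moduli stack. Once this is achieved, positivity is automatic: the multiplicities in the decomposition of $A_{g,r,d}$ into irreducible $\GSp(2g)$-characters are then dimensions of actual (not virtual) graded pieces of a cohomology group, hence non-negative integers. Thus the whole difficulty is concentrated in producing the group action, not in the positivity step itself.

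First I would identify $A_{g,r,d}$ with a cohomological invariant of the BPS sheaf. By the cohomological integrality theorem for $2$-Calabi--Yau categories together with the Mozgovoy--Schiffmann counting formula, the Poincaré series of the degree $(r,d)$ piece $\Fn^{\Dol,\rmBPS,+}(C)_{r,d}=\HO^*(\BPS^{\Dol}_{\Lie}(C))_{r,d}$ of the BPS Lie algebra, refined so as to record the splitting of the cohomology of the moduli space under the Künneth components of $\HO^*(C;\BoQ)$, recovers $A_{g,r,d}$. Here the $2g$ variables $z_1,\ldots,z_{2g}$ track the weights on $\HO^1(C;\BoQ)\cong\BoQ^{2g}$, while the symplectic similitude parameter is the Tate twist $\BoL$. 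The identification is cleanest in the coprime case $\gcd(r,d)=1$, where $\CM^{\Dol}_{r,d}(C)$ is smooth and $\BPS^{\Dol}_{\Lie}(C)_{r,d}=\IC(\CM^{\Dol}_{r,d}(C))$ has hypercohomology equal to the ordinary cohomology $\HO^*(\CM^{\Dol}_{r,d}(C))$; the $\chi$-independence results (the Galois-conjugation theorem and its Dolbeault corollary) then transport the statement to arbitrary $d$.

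Next I would construct the $\GSp(2g,\BoC)$-action using cohomological Hall algebras in families. Working over the moduli stack $\SM_g$ of smooth projective genus-$g$ curves, the relative Dolbeault BPS cohomology assembles into a variation of Hodge structure over $\SM_g$ whose monodromy factors through the symplectic action $\pi_1(\SM_g)\to\mathrm{Sp}(2g,\BoZ)$ on $\HO^1$ of the fibre curve. Since $\HO^*(\CM^{\Dol}_{r,d}(C))$ is built functorially from $\HO^*(C;\BoQ)$ by tautological classes and all CoHA operations are defined over $\Spec\BoQ$, the construction is a polynomial functor of the symplectic vector space $\HO^1(C)$; as $\mathrm{Sp}(2g,\BoZ)$ is Zariski-dense in $\mathrm{Sp}(2g,\BoC)$, the monodromy extends to an algebraic $\mathrm{Sp}(2g,\BoC)$-representation, and incorporating the cohomological weight grading (the similitude factor, i.e. $\BoL$) upgrades it to a $\GSp(2g,\BoC)$-representation. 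By construction its graded character is precisely the series of the previous step, namely $A_{g,r,d}(z_1,\ldots,z_{2g})$. The NAHT isomorphisms (Corollary \ref{corollary:NAHTabsintro}) allow one to import this structure freely between the Dolbeault, Betti and de Rham sides, choosing whichever is most convenient for purity.

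The hard part will be making precise the functoriality of the BPS/CoHA construction in $\HO^*(C)$ and the resulting symplectic equivariance: promoting the torus-weight bookkeeping of the first step to a genuine action of the \emph{full} group $\mathrm{Sp}(2g,\BoC)$, rather than merely its maximal torus. This is exactly where the families formalism is indispensable: one must show that the relative BPS sheaf over $\SM_g$ is a local system of $\GSp(2g,\BoC)$-representations in a manner compatible with the CoHA product, so that the freeness and PBW structure of the BPS Lie algebra (from Corollary \ref{corollary:NAHTabsintro} and \cite{davison2023bps}) are themselves $\GSp(2g,\BoC)$-equivariant. Once the action is genuinely realised on cohomology, positivity follows formally, since the character of an actual graded representation expands with non-negative multiplicities; but verifying the algebraicity and symplectic compatibility of the monodromy is the substantive obstacle.
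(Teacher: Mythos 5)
The paper does not actually prove this statement: it appears there as a conjecture (a strengthening of Schiffmann's positivity result), accompanied only by the one-line indication that the sought representation should be the relative BPS cohomology of the Dolbeault moduli space over the moduli stack $\SM_g$ of smooth genus-$g$ curves, with the supporting families formalism explicitly deferred to work in progress. Your proposal adopts exactly that suggestion, so there is no competing argument in the paper to compare against; the only question is whether your sketch closes the gap, and it does not.

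The decisive missing step is the one you yourself flag: producing an algebraic $\GSp(2g,\BoC)$-action rather than an action of a discrete group. The local system of relative BPS cohomology over $\SM_g$ gives, a priori, a representation of $\pi_1(\SM_g)$, i.e.\ of the mapping class group; you must first show that this factors through $\mathrm{Sp}(2g,\BoZ)$ and then that it is the restriction of an algebraic representation of $\mathrm{Sp}(2g,\BoC)$. Zariski density of $\mathrm{Sp}(2g,\BoZ)$ is only useful once algebraicity is known --- it guarantees uniqueness of the extension, not its existence. The assertion that ``the construction is a polynomial functor of $\HO^1(C)$'' is precisely what has to be established, and nothing in the CoHA/BPS formalism of this paper (which works over a fixed curve) supplies it. A second unproven identification is that the $2g$-variable refinement of the BPS Poincar\'e series equals $A_{g,r,d}(z_1,\dots,z_{2g})$: Schiffmann's theorem identifies the Kac polynomial with a point count whose variables are Frobenius eigenvalues on $\ell$-adic $\HO^1$ of a curve over a finite field, and matching these with the torus weights of a monodromy representation in the Betti/Hodge setting over $\SM_g$ requires a comparison argument you do not give. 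Finally, for $\gcd(r,d)\neq 1$ the BPS cohomology is not the intersection cohomology of the (singular) moduli space, and the $\chi$-independence transport you invoke to reduce to the coprime case is itself partly conjectural in this paper; so that reduction is not available as stated.
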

This is a strict strengthening of \cite[Corollary 1.5.]{schiffmann2016indecomposable}. The representation one has to consider is given by the relative BPS cohomology of the Dolbeault moduli space over the moduli stack of smooth genus $g$ curves.
 \item The $\lambda$-independence conjecture of the BPS sheaf for deformed preprojective algebras.
 \begin{conjecture}
  Let $Q=(Q_0,Q_1)$ be a quiver, $\lambda\in\BoC^{Q_0}$ and $\Pi_Q^{\lambda}$ the $\lambda$-deformed preprojective algebra. We let $\BPS_{\Pi_Q^{\lambda},\Lie}$ be the $BPS$ sheaf for $\Pi_Q^{\lambda}$ (defined as in \cite{davison2022BPS} or via deformed dimensional reduction \cite{davison2022deformed}). Then, for any $\lambda,\mu\in\BoC^{Q_0}$ and $\dd\in\BoN^{Q_0}$ such that $\lambda\cdot\dd=\mu\cdot\dd=0$, we have an isomorphism $\BPS_{\Pi_Q^{\lambda},\dd}\cong\BPS_{\Pi_Q^{\mu},\dd}$. In particular, for any $\lambda\in\BoC^{Q_0}$, and $\dd\in\BoN^{Q_0}$ such that $\lambda\cdot\dd=0$,
  \[
   \ch\HO^*\BPS_{\Pi_Q^{\lambda},\dd}=A_{Q,\dd}(t^{-2}).
  \]
  Moreover, the BPS cohomology gives a local system on the affine space $\{\lambda\in\BoC^{Q_0}\mid\lambda\cdot\dd=0\}$.
 \end{conjecture}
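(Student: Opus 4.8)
The plan is to reinterpret the conjectural $\lambda$-independence as the \emph{local constancy of a single relative BPS sheaf} over the affine base
\[
 B \coloneqq \{\lambda\in\BoC^{Q_0}\mid \lambda\cdot\dd=0\},
\]
in exact analogy with the way the Hodge--Deligne moduli stack interpolates between the Dolbeault and de Rham sides over $\BoA^1$ (Theorems \ref{theorem:CoHAHodge} and \ref{theorem:CoHAHodgestructure}). First I would assemble the universal deformed preprojective algebra into a family over $B$. Writing $\mu\colon \Rep(\bar Q,\dd)\to\bigoplus_{i\in Q_0}\gl_{\dd_i}$ for the moment map of the doubled quiver, the preimage $\mu^{-1}(\FZ_\dd)$ of the space $\FZ_\dd\cong B$ of admissible central elements fibers over $B$, and its stacky GIT quotient produces a good moduli space $\JH_\dd\colon\FM_\dd\to\CM_\dd$ together with a structure map $\pi\colon\CM_\dd\to B$. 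The condition $\lambda\cdot\dd=0$ is precisely the vanishing of $\tr\mu$ evaluated against the central element $(\lambda_i\Id_{\dd_i})_i$, i.e.\ exactly the condition that guarantees that every fiber $\pi^{-1}(\lambda)$ is nonempty and recovers the representation stack of $\Pi_Q^\lambda$.

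Following the strategy of \cite{davison2022BPS} adapted to the relative setting, I would equip $\JH_{\dd,*}\BD\underline{\BoQ}^{\vir}_{\FM_\dd}$ with a relative CoHA structure for the monoidal product $\boxdot_B$ attached to $\pi\colon\CM_\dd\to B$, and use the relative perverse $t$-structure of Hansen--Scholze \cite{hansen2021relative} to carve out a relative BPS sheaf $\BPS_{\dd/B}$. The essential compatibility to verify --- exactly as for $\imath_{\CM^{\Dol}}^!$ and $\imath_{\CM^{\dR}}^!$ in Theorem \ref{theorem:CoHAHodge} --- is that restriction along $\imath_\lambda\colon \pi^{-1}(\lambda)\hookrightarrow\CM_\dd$ recovers the fiberwise BPS sheaf, that is $\imath_\lambda^!\BPS_{\dd/B}\cong\BPS_{\Pi_Q^\lambda,\dd}$ up to a fixed shift and Tate twist. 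This is where the relative perverse $t$-structure does the real work, since by construction it commutes with restriction to fibers.

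It then remains to prove that $\BPS_{\dd/B}$ is locally constant over $B$; since $B$ is an affine space, hence connected and simply connected, this immediately yields $\BPS_{\Pi_Q^\lambda,\dd}\cong\BPS_{\Pi_Q^\mu,\dd}$ for all $\lambda,\mu\in B$, and the local system statement is a direct reformulation. Specialising to $\lambda=0$ and invoking the known identification of the BPS cohomology of the undeformed preprojective algebra with Kac polynomials then gives $\ch\HO^*\BPS_{\Pi_Q^\lambda,\dd}=A_{Q,\dd}(t^{-2})$. A first structural tool is the scaling action $\lambda\mapsto t\lambda$ ($t\in\BoC^*$), which via $y_a\mapsto t\,y_a$ on the dual arrows lifts to a $\BoC^*$-action on $\mu^{-1}(\FZ_\dd)$ identifying the fibers over $\lambda$ and $t\lambda$ while contracting $B$ to the origin; this already shows that $\BPS_{\dd/B}$ is constant along punctured lines through $0$.

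The hard part --- and the reason the statement remains a conjecture --- is upgrading this to genuine local constancy across the whole of $B$, and in particular across the origin, where the central fiber $\Pi_Q^0$ is strictly more singular than the generic deformed fibers. This is the precise analogue of the real-analytic triviality of $\CM^{\Hod}_r(C)\to\BoA^1$ that underlies the curve case, but here no external analytic correspondence supplies it. The route I would take is through deformed dimensional reduction \cite{davison2022deformed}: one realises $\BPS_{\Pi_Q^\lambda,\dd}$ as the dimensional reduction of the vanishing-cycle BPS sheaf of a potential $W_\lambda$ on the tripled quiver whose entire $\lambda$-dependence is the single linear term $-\sum_{i}\lambda_i\tr(\omega_i)$. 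The family $\{W_\lambda\}_{\lambda\in B}$ is thus a linearly varying family of potentials, and the condition $\lambda\cdot\dd=0$ should force the critical loci over $B$ to form an \emph{equisingular} family; local constancy of vanishing-cycle cohomology for such a family, combined with the $\BoC^*$-contraction to reach the origin, would close the argument. Making the equisingularity precise and controlling the behaviour of the relative perverse $t$-structure at the degenerate fiber $\lambda=0$ is the technical heart of the problem, and is exactly the point I expect to be the main obstacle.
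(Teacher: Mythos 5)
This statement is presented in the paper as a \emph{conjecture}, with no proof offered: the author lists it among consequences expected from work in progress on CoHAs in families, noting only that it generalises the use of deformed preprojective algebras in Crawley--Boevey--Van den Bergh's proof of Kac positivity in the indivisible case. So there is no proof in the paper to compare against, and your own write-up candidly concedes that the decisive step is missing. Your framework is the right one and faithfully mirrors the paper's Hodge--Deligne strategy: assemble the family over $B=\{\lambda\mid\lambda\cdot\dd=0\}$, build a relative CoHA for $\boxdot_B$, extract a relative BPS object via the $!$-relative perverse $t$-structure, use the $\BoC^*$-scaling $y_a\mapsto t y_a$ to get constancy along punctured lines through the origin, and then try to cross $\lambda=0$.

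The genuine gap is the crossing of the origin, and it is worse than your appeal to ``equisingularity'' suggests. In the curve case the whole argument leans on the fact that $\CM^{\Hod}_r\to\BoA^1$ is a \emph{topologically trivial} fibration (the preferred trivialization of \S\ref{subsubsection:preferredtrivialization}), which is what makes Lemma \ref{lemma:morphismonfiber}, Corollary \ref{corollary:simplesummands} and ultimately Proposition \ref{proposition:trivializationCoHA} work; that triviality is supplied externally by nonabelian Hodge theory. For deformed preprojective algebras the family $\CM_\dd\to B$ is \emph{not} topologically locally trivial: by Crawley--Boevey's classification, the set of subroots $\ee\leq\dd$ supporting simple $\Pi_Q^\lambda$-modules is $\{\ee\mid\lambda\cdot\ee=0,\ \ee\text{ a positive root}\}$, which jumps as $\lambda$ varies inside $B$, so the good moduli spaces over different points of $B$ are generally not homeomorphic and the critical loci of the family $W_\lambda$ are not equisingular. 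Consequently (i) the fiberwise identification $\imath_\lambda^!\BPS_{\dd/B}\cong\BPS_{\Pi_Q^\lambda,\dd}$ cannot be extracted from a trivialization as in the Hodge case and must be argued directly, and (ii) the conjectured isomorphism can only be an isomorphism after pushforward (to a point, or to a common base such as $\bigoplus_i\gl_{\dd_i}\cms\GL_\dd$), not of sheaves on a fixed space --- which is exactly why the statement is about $\ch\HO^*$ and about a local system on $B$ rather than about the BPS sheaves themselves. Your proposal correctly identifies where the difficulty sits but does not close it; as the paper itself treats this as open, no complete proof should be expected here.
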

 This is a generalisation of \cite{crawley2004absolutely}, where deformed preprojective algebras were used to prove Kac's positivity conjecture in the indivisible case.
 \item $\chi$-independence in families (for the BPS cohomology of the moduli space of Higgs sheaves or pure one-dimensional sheaves on K3 or Abelian surfaces). The $\chi$-independence for Higgs sheaves was proven in \cite{de2021cohomology,kinjo2021cohomological}. For sheaves on K3 or Abelian surfaces, no ``3d'' definition of the BPS Lie algebra sheaf is available yet but there is a purely 2d definition one can provide using \cite{davison2022BPS} and \cite{davison2023bps}. The $\chi$-independence for a given K3 of Abelian surface is not yet known, although expected. This would have to be considered first, before the version in families. The $\chi$-independence conjecture in families for Higgs bundles can be written as follows.
 \begin{conjecture}
  Let $f\colon\CC\rightarrow S$ be a family of smooth projective curves of genus $g$ over a smooth connected base $S$. We let $\FM_{r,d}^{\Dol}(\CC)$ be the moduli stack of semistable rank $r$ and degree $d$ Higgs bundles over $\CC$, $\CM_{r,d}^{\Dol}(\CC)$ the good moduli space and $B_S$ the Hitchin base. We have morphisms
  \[
   \FM_{r,d}^{\Dol}(\CC)\xrightarrow{\JH}\CM_{r,d}^{\Dol}(\CC)\xrightarrow{h}B_S\xrightarrow{\pr} S.
  \]
Then, there exists a BPS sheaf in families $\BPS_{r,d}(\CC)\in\MHM(\CM_{r,d}^{\Dol}(\CC))[\dim S]$ such that for any $s\in S$, if $\imath_{\CM,s}\colon\CM_{r,d}^{\Dol}(\CC_s)\rightarrow\CM_{r,d}^{\Dol}(\CC)$ is the inclusion of the fiber of $\pr\circ h$ over $s$, then $\imath_{\CM,s}^!\BPS_{r,d}^{\Dol}(\CC)\cong\BPS_{r,d}^{\Dol}(\CC_s)$. Moreover, the complex $h_*\BPS_{r,d}^{\Dol}(\CC)$ has locally constant cohomology sheaves and is independent of $d$.
 \end{conjecture}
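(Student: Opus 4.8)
The plan is to build the family BPS sheaf by relativising the CoHA/BPS machinery over the base $S$, and then to deduce both $d$-independence and local constancy from a relative cohomological integrality (support) theorem together with the single-curve results of \cite{de2021cohomology,kinjo2021cohomological}. First I would regard $\FM^{\Dol}_{r,d}(\CC)\to S$ as a relative $2$-Calabi--Yau moduli problem over $S$ and form the relative CoHA $\JH_*\BD\underline{\BoQ}^{\vir}_{\FM^{\Dol}_{r,d}(\CC)}$ as an algebra object for the relative monoidal structure over $S$, exactly as in Theorem~\ref{theorem:CoHAHodge} but with $\BoA^1$ replaced by $S$. Using the relative perverse $t$-structure of \cite{hansen2021relative} (relative to $\pr\circ h$), I would define $\BPS_{r,d}(\CC)\in\MHM(\CM^{\Dol}_{r,d}(\CC))[\dim S]$ as the relative $\CH^0$ of the relative BPS complex. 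The base-change identity $\imath_{\CM,s}^!\BPS_{r,d}(\CC)\cong\BPS_{r,d}(\CC_s)$ is then built into the formalism: the relative perverse truncation commutes with restriction to fibres (the defining compatibility of the relative $t$-structure of \emph{loc. cit.}), and the relative CoHA restricts to the fibrewise CoHA by proper base change for $\JH$.

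The $d$-independence is where I would invoke the structure of the Hitchin system. Let $B_S^{\circ}\subset B_S$ be the open locus of integral spectral curves; over it $h$ restricts to a relative (over $S$) fibration in compactified Jacobians, and $\BPS_{r,d}(\CC)$ restricts over $h^{-1}(B_S^\circ)$ to the relative intersection complex of the total space of this fibration. Since the compactified Jacobians of an integral curve in different degrees are non-canonically isomorphic, this restriction --- and hence $(h_*\BPS_{r,d}(\CC))|_{B_S^\circ}$ --- is independent of $d$. The key step is then a \emph{relative support theorem}: every simple constituent of $h_*\BPS_{r,d}(\CC)$ has full support in each fibre $B_{\CC_s}$, so that $h_*\BPS_{r,d}(\CC)$ is the relative-perverse intermediate extension (over $S$) of its restriction to $B_S^\circ$. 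Combining the two statements gives $d$-independence of $h_*\BPS_{r,d}(\CC)$.

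Local constancy along $S$ follows from the same support analysis. Since $\CC\to S$ is proper and smooth it is topologically locally trivial over $S$ by Ehresmann's theorem; this trivialises the relative spectral correspondence over $B_S^\circ$, so $(h_*\BPS_{r,d}(\CC))|_{B_S^\circ}$ is, fibrewise over $S$, the pushforward of the intersection complex of a smooth family of compactified Jacobians, and is therefore locally constant in the $S$-direction. By the relative support theorem the whole complex $h_*\BPS_{r,d}(\CC)$ is recovered from this restriction by relative intermediate extension, and intermediate extension commutes with the local topological trivialisation over $S$; hence the cohomology sheaves of $h_*\BPS_{r,d}(\CC)$ are locally constant over $S$. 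The resulting $\pi_1(S)$-monodromy is exactly what produces the $\GSp(2g,\BoC)$-action in the Schiffmann positivity statement when $S$ dominates the moduli of genus-$g$ curves.

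The main obstacle is the relative support theorem, i.e. controlling $\BPS_{r,d}(\CC)$ across the discriminant $B_S\setminus B_S^\circ$ uniformly in $s\in S$. For a single curve this is the substance of \cite{de2021cohomology,kinjo2021cohomological}; the family version requires a cohomological integrality theorem for the relative CoHA over $S$ whose BPS summand is controlled uniformly, together with the verification that the relative BPS sheaf is genuinely perverse relative to $S$ (concentrated in a single relative degree) rather than a nontrivial complex. A secondary, more foundational difficulty is to check that $\FM^{\Dol}_{r,d}(\CC)\to S$ satisfies the relative analogues of Assumptions~1--3 of \cite{davison2022BPS}, so that the relative CoHA, its relative PBW theorem, and the relative cohomological integrality statement are available in the first place.
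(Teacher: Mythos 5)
The statement you are attempting is stated in the paper as a \emph{conjecture}, not a theorem: the paper offers no proof of it, only the remark that the author and collaborators are developing a formalism of ``cohomological Hall algebras in families'' combined with relative perverse $t$-structures with this statement as one of its intended applications. So there is no proof in the paper to compare yours against, and your text should be judged as a proposed strategy rather than checked against an existing argument.

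As a strategy, what you write is consistent with the direction the paper signals (relativise the CoHA over $S$ exactly as the paper does over $\BoA^1$ for the Hodge moduli stack, define the family BPS sheaf as a relative perverse cohomology object, and deduce the fibrewise compatibility from the characterisation of the $!$-relative perverse $t$-structure). But it is not a proof, and you correctly identify the two places where it is genuinely incomplete. First, the relative support theorem across the discriminant of the Hitchin map, uniformly in $s\in S$, is precisely the hard content: even for a single curve this is the substance of the $\chi$-independence results of Kinjo--Koseki and de Cataldo--Maulik--Shen, and no family version exists. Without it, neither the $d$-independence nor the reduction of local constancy to the locus of integral spectral curves goes through. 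Second, the identification $\imath_{\CM,s}^!\BPS_{r,d}(\CC)\cong\BPS_{r,d}^{\Dol}(\CC_s)$ is \emph{not} automatic from the definition of the relative $t$-structure: the defining property of $\pSD{S}{!\leq 0}$ and $\pSD{S}{!\geq 0}$ constrains where an object lives, but the truncation and cohomology functors do not commute with $\imath_{\CM,s}^!$ in general. In the paper's $\BoA^1$-case this identification is obtained indirectly, via the explicit freeness description of the fibrewise BPS algebras and a cone-vanishing argument that leans on $\BoC^*$-equivariance and on the preferred (product-compatible) trivialisation of the Hodge moduli space over $\BoA^1$; over a general base $S$ no analogue of either ingredient is available, so this step is a genuine gap rather than ``built into the formalism''. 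A further unaddressed point: your appeal to ``intermediate extension commutes with the local topological trivialisation'' requires the trivialisation to respect the relevant stratification (as in Lemma 2.6(5) and Corollary 2.7 of the paper), and the discriminant stratification of the Hitchin base varies with $s$, so this compatibility must itself be established. In short, your proposal is a reasonable research program matching the paper's announced one, but the statement remains open and your sketch does not close it.
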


\end{enumerate}

\subsection*{Acknowledgements}
I would like to thank Philip Boalch, Ben Davison, Tasuki Kinjo and Marco Robalo for useful email exchanges and generous discussions. I am grateful to Francesco Sala for pointing out connections between \cite{porta2022two} and \cite{porta2022non} and various definitions of virtual pullbacks. I would like to thank the organisers of the workshop on ``Moduli spaces, virtual invariants and shifted symplectic structures'' in Seoul in June 2023 where I learned a lot, met experts in derived algebraic geometry and made progress on the writing of this paper.

\subsection{Conventions and notations}
\begin{enumerate}[$\bullet$]
 \item If $X$ is an algebraic variety with an action of an algebraic group $G$, we denote by $X/G$ the quotient stack. In the literature, the consacred notation is sometimes $[X/G]$ and we drop the brackets, to lighten the notation.
 \item (Complexes of) Mixed Hodge modules (MHM) are underlined $\underline{\CF}$; while (complexes of) constructible sheaves are not: $\CF$.
 \item Most of the time, we will forget to write the analytification functor $(-)^{\an}$ and denote by $X$ at the same time the complex algebraic variety and the complex analytic variety. The context should help to choose between the two when it matters.
 \item The $\BoC^*$-equivariant cohomology of the point is denoted by $\HO^*_{\BoC^*}$.
 \item To lighten the notation, we will often forget the underlying smooth projective curve in the notation: for example, $\FM_r^{\Dol}=\FM_r^{\Dol}(C)$.
 \item In this paper, the word \emph{algebra} is used to refer to associative algebras. Other types of algebras (e.g. Lie algebras) are called by their full name.
 \item If $X$ is a complex algebraic variety, we denote by $\IC(X)\in\Perv(X)$ the intersection cohomology complex on $X$. We denote by $\ICA(X)\coloneqq\HO^*(\IC(X))$ its derived global sections. This may vary from conventions used in the literature: if $\dim X=n$, $\ICA(X)$ is concentrated in degrees $[-n,n]$.
 \item If $X$ is even-dimensional, $\underline{\IC}(X)$ denotes the MHM upgrade of the intersection complex.
\end{enumerate}

\section{Perverse sheaves and mixed Hodge modules}

\subsection{Perverse sheaves}
Let $X$ be a complex algebraic variety. We let $\CD(X,\BoQ)$ be the category of complexes of sheaves of $\BoQ$-vector spaces on $X$ with quasi-isomorphisms inverted. This is the derived category of the category of sheaves of $\BoQ$-vector spaces $\Sh(X,\BoQ)$. A sheaf $\SF\in\Sh(X,\BoQ)$ is called \emph{constructible} if there exists an algebraic stratification $X=\bigsqcup_{i\in I}X_i$ by locally closed smooth subvarieties such that the restrictions $\SF_{X_i}$ are locally constant sheaves with finite dimensional fibers. Constructible sheaves form a full Abelian subcategory $\Sh_{\rmc}(X,\BoQ)\subset\Sh(X,\BoQ)$. A complex of sheaves $\SF\in\CD(X,\BoQ)$ is called constructible if its cohomology sheaves are constructible. We let $\CD_{\rmc}(X,\BoQ)$ be the triangulated category of constructible complexes. By \cite{beilinson2018faisceaux}, it has the \emph{perverse $t$-structure} $(\pDc{\leq 0}(X,\BoQ),\pDc{\geq 0}(X,\BoQ))$. Its heart is denoted $\Perv(X,\BoQ)\coloneqq\pDc{\leq 0}(X,\BoQ)\cap\pDc{\geq 0}(X,\BoQ)$ and its objects are called perverse sheaves. We refer to \cite{beilinson2018faisceaux} for more properties of this very well-behaved finite length Abelian category. Since the ring of coefficients will be $\BoQ$ throughout this paper, we sometimes drop it from the notation: $\CD_{\rmc}(X), \Perv(X)$. Moreover, we often consider the bounded or semi-bounded versions $\CD^{\rmb}_{\rmc}(X)$ and $\CD^+_{\rmc}(X)$.

\subsection{$\ell$-adic constructible sheaves and perverse sheaves}
\label{subsection:elladic}
In the last section of this paper, \S\ref{section:chiindependenceph}, for an algebraic variety $X$ defined over an algebraically closed field $k$, we will consider at some places $\ell$-adic constructible derived categories $\CD_{\rmc}(X,\overline{\BoQ}_{\ell})$, $\ell$-adic Borel--Moore homology, and $\ell$-adic intersection cohomology, where $\ell$ is a prime number invertible in $k$. For us, $k$ will be a field of characteristic $0$. We refer to \cite{beilinson2018faisceaux,bhatt2015proetale} for definitions and properties. We will also need the versions for Artin stacks over $k$. We refer to \cite{behrend1993lefschetz,behrend2003derived,laszlo2008six,laszlo2008sixb,laszlo2009perverse} for background and extension of the definitions to this context. Since all the stacks appearing in this paper have an explicit presentation as quotient stacks, the more elementary formalism of equivariant derived categories of \cite{bernstein2006equivariant} is sufficient for us. The general formalism is convenient to avoid referring to explicit presentations of quotient stacks.

\subsection{Relative perverse $t$-structures}
\label{subsection:relativeptstructure}
Let $f\colon X\rightarrow S$ be a morphism between complex algebraic varieties. Relative perverse $t$-structures were introduced in \cite{hansen2021relative} to interpolate the perverse $t$-structures on the constructible derived categories of the fibers of the morphism $f$ to a global $t$-structure on $X$, but earlier work around the Langlands program and the affine or Beilinson--Drinfeld Grassmannians hinted at the existence of such a $t$-structure, see \emph{loc. cit.}.  For us, relative perverse $t$-structures will prove to be the right tool to define and study the BPS algebras for the Hodge moduli space (\S\ref{section:CoHAHodge}). One of the main results of \cite{hansen2021relative} reads as follows.
\begin{theorem}[{\cite[Theorem~1.1 + Remark~1.3]{hansen2021relative}}]
\label{theorem:relativeperversetstructure}
 There exists a $t$-structure $(\pSD{S}{*\leq 0}(X),\pSD{S}{*\geq 0}(X))$ on $\CD_{\rmc}(X)$, called the \emph{$*$-relative perverse $t$-structure}, such that an object $\SF\in\CD_{\rmc}(X)$ is in $\pSD{S}{*\leq 0}(X)$ (resp. $\pSD{S}{*\geq 0}(X)$) if and only if for any geometric point $s\in S$, and $X_s$ the fiber of $f$ over $s$:
\[
 \begin{tikzcd}
	{X_s} & X \\
	{\{s\}} & S
	\arrow["{\imath_{X_s}}", from=1-1, to=1-2]
	\arrow["f", from=1-2, to=2-2]
	\arrow["{f_s}"', from=1-1, to=2-1]
	\arrow[from=2-1, to=2-2]
	\arrow["\lrcorner"{anchor=center, pos=0.125}, draw=none, from=1-1, to=2-2]
\end{tikzcd}
\]
one has $\imath_{X_s}^*\SF\in\CD^{\leq 0}_{\rmc}(X_s)$ (resp. $\imath_{X_s}^*\SF\in\CD^{\geq 0}_{\rmc}(X_s)$).
\end{theorem}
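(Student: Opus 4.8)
The plan is to verify directly that the two strictly full subcategories $\pSD{S}{*\leq 0}(X)$ and $\pSD{S}{*\geq 0}(X)$ cut out by the fiberwise conditions form a $t$-structure. Stability under the shift $[1]$ is immediate, since $\imath_{X_s}^*$ commutes with shifts and the perverse $t$-structure on each fiber $X_s$ is a $t$-structure. The two substantive points are the orthogonality $\Hom(\SF,\SG)=0$ for $\SF\in\pSD{S}{*\leq 0}(X)$ and $\SG\in\pSD{S}{*\geq 1}(X)$, and the existence of a truncation triangle. A structural warning governs the whole argument: both the support ($*\leq 0$) and the cosupport ($*\geq 0$) conditions are tested by the \emph{same} functor $\imath_{X_s}^*$, so the pair does not arise from a recollement, and the usual gluing of $t$-structures (which pairs $\imath^*$-support with $\imath^!$-cosupport) does not apply. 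This is the source of all the difficulty.

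For orthogonality I would use the adjunction identity
\[
 \RHom_X(\SF,\SG)=\mathrm{R}\Gamma\!\left(S,\; Rf_*\,R\intHom(\SF,\SG)\right),
\]
together with the left $t$-exactness of $\mathrm{R}\Gamma(S,-)$ for the standard $t$-structure, to reduce the vanishing of $\Hom(\SF,\SG)=\HO^0\RHom_X(\SF,\SG)$ to the assertion that $Rf_*\,R\intHom(\SF,\SG)$ lies in $\CD^{\geq 1}(S)$ for the standard $t$-structure. The latter is checked on geometric stalks: by Deligne's generic base change theorem one stratifies $S$ into locally closed pieces over which $\SF$ and $\SG$ become universally locally acyclic, so that the stalk at a geometric point $s$ is computed on the fiber $X_s$. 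Universal local acyclicity moreover identifies $\imath_{X_s}^!\SG$ with $\imath_{X_s}^*\SG$ up to a shift and a Tate twist, so that the fiberwise perverse orthogonality between $\imath_{X_s}^*\SF\in\pDc{\leq 0}(X_s)$ and $\imath_{X_s}^*\SG\in\pDc{\geq 1}(X_s)$ supplies the required degree bound. One then assembles the strata by Noetherian induction on $S$.

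The construction of truncations is the heart of the matter, and I would proceed by Noetherian induction on $S$. Using a Thom--Mather stratification of the map $f$, choose a dense open $U\subset S$ over which $f$ is a stratified topological fiber bundle and the cohomology sheaves of $\SF$ are locally constant along $U$ (that is, $\SF|_{X_U}$ is universally locally acyclic relative to $U$). Over $U$ the fiberwise perverse truncation $\tau^{\leq 0}$ varies locally constantly and therefore glues to a relative truncation of $\SF|_{X_U}$; crucially, universal local acyclicity is exactly what guarantees that the glued object restricts to the correct perverse truncation on \emph{every} fiber over $U$, including special ones, thereby bypassing the failure of base change. The relative $t$-structure over the closed complement $Z=S\setminus U$ is available by the inductive hypothesis (the base $Z$ being smaller), and one splices the two partial truncations using the octahedral axiom, checking fiber by fiber that the resulting triangle has its outer terms in $\pSD{S}{*\leq 0}(X)$ and $\pSD{S}{*\geq 1}(X)$. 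Functoriality and uniqueness of the truncation then follow formally from the orthogonality established above.

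The step I expect to be the main obstacle is precisely this globalization over $U$: one must show that truncating perversely in each fiber produces a single constructible complex on $X_U$ whose $*$-restriction to \emph{every} geometric fiber is again the perverse truncation. Because the $*\geq 0$ half of the condition is also tested by $\imath^*$, this cannot be arranged by a formal recollement and genuinely requires universal local acyclicity (equivalently, the local triviality furnished by Thom--Mather), which is what forces the preliminary stratification of the morphism and the attendant Noetherian induction.
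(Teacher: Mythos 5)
The paper offers no proof of this statement: it is quoted verbatim from Hansen--Scholze \cite{hansen2021relative}, whose argument is by descent --- they show that $S'\mapsto \CD_{\rmc}(X\times_S S')$ satisfies hyperdescent for the arc-topology on the base, reduce to $S$ the spectrum of an absolutely integrally closed valuation ring, and construct the $t$-structure there directly; no explicit description of the truncation functors over a general base comes out of this. Your proposal instead attempts a direct, stratification-theoretic construction by Noetherian induction on $S$. You correctly isolate the core difficulty (both halves are tested by $\imath_{X_s}^*$, so no recollement applies), and the analysis over the generic open stratum $U$ --- where local topological triviality/ULA makes the $*$- and $!$-tests agree up to shift, so the fiberwise truncations do glue --- is sound (it is essentially the content of Lemma \ref{lemma:propertyrelative} of this paper). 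But the two places where the real difficulty lives are exactly the places your outline waves at rather than resolves.

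Concretely, write $\jmath\colon X_U\to X$ and $\imath\colon X_Z\to X$. A $t$-structure is determined by its left half, and the left half of the desired $t$-structure visibly coincides with the left half of the recollement gluing of the relative perverse $t$-structures on $X_U/U$ and $X_Z/Z$ (the condition $\imath_{X_s}^*\SF\in\pDc{\leq 0}(X_s)$ for all $s$ splits into the conditions over $U$ and over $Z$). So any splicing you perform must reproduce that glued $t$-structure, and the entire content of the theorem becomes: if $\jmath^*\SG\in\pSD{U}{*\geq 1}(X_U)$ and $\imath^!\SG\in\pSD{Z}{*\geq 1}(X_Z)$, then also $\imath^*\SG\in\pSD{Z}{*\geq 1}(X_Z)$. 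Via the triangle $\imath^!\SG\to\imath^*\SG\to\imath^*R\jmath_*\jmath^*\SG$, this is a lower perversity bound for the nearby-cycles-type functor $\imath^*R\jmath_*$ applied to a (ULA) object on $X_U$ --- a genuine $t$-exactness statement, a relative of Gabber's theorem that $\psi[-1]$ is perverse $t$-exact, but here over a closed stratum of arbitrary codimension in the base. Nothing in ``splice with the octahedral axiom and check fiber by fiber'' supplies this; indeed there is not even a natural map $\imath_*A_Z\to\SF$ to splice with, since such maps factor through $\imath^!\SF$ and your truncation over $Z$ is pinned to $\imath^*\SF$. The same gap reappears in the orthogonality argument: the inductive step needs $\Hom(\imath^*\SF,\imath^!\SG)=0$, hence a fiberwise lower bound on $\imath^!\SG$ rather than on $\imath^*\SG$, which is again the uncontrolled nearby-cycles term (and, separately, stalks of $Rf_*\intHom(\SF,\SG)$ at points of $Z$ are not computed on the fibres there, so ``check on geometric stalks after generic base change'' does not close up either). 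Until the required $t$-exactness of $\imath^*R\jmath_*$ relative to the fiberwise perverse $t$-structures is established --- which is the actual hard content, and the reason Hansen--Scholze abandon induction over the base in favour of descent --- the proof is incomplete.
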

Although the authors of \cite{hansen2021relative} work with $\infty$-categories, $t$-structures are defined on the homotopy category, see \cite[Definition~1.2.1.4]{lurie2017higher}. By considering the (semi-)bounded constructible derived categories, we obtain $t$-structures on $\CD^{\rmb}_{\rmc}(X)$ and $\CD^+_{\rmc}(X)$. The truncation functors associated with this $t$-structure are denoted $\pStau{S}{*\leq i}$ and $\pStau{S}{*\geq i}$ for $i\in\BoZ$. The cohomology functors are $\pSH{S}{*i}$, $i\in\BoZ$. The heart of the $*$-relative perverse $t$-structure is denoted by $\Perv^*(X/S)$.

One can dualise Theorem \ref{theorem:relativeperversetstructure} using the Verdier duality functor and consider instead of $*$-pullbacks the $!$-pullbacks.
\begin{corollary}
\label{corollary:shriekreltstructure}
 There exists a $t$-structure $(\pSD{S}{!\leq 0}(X),\pSD{S}{!\geq 0}(X))$ on $\CD_{\rmc}(X)$ called the \emph{$!$-relative perverse $t$-structure} such that an object $\SF\in\CD_{\rmc}(X)$ is in $\pSD{S}{!\leq 0}(X)$ (resp. $\pSD{S}{!\geq 0}(X)$) if and only if for any geometric point $s\in S$, and $X_s$ the fiber of $f$ over $s$, with same Cartesian diagram as in Theorem \ref{theorem:relativeperversetstructure}, one has $\imath_{X_s}^!\SF\in\CD^{\leq 0}_{\rmc}(X_s)$ (resp. $\imath_{X_s}^!\SF\in\CD^{\geq 0}_{\rmc}(X_s)$).
\end{corollary}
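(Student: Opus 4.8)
The plan is to obtain the $!$-relative perverse $t$-structure from the $*$-relative one of Theorem~\ref{theorem:relativeperversetstructure} by transporting it through Verdier duality, exploiting that $\BD_X$ is a contravariant involutive equivalence of $\CD_{\rmc}(X)$ interchanging $*$- and $!$-pullbacks. Concretely, I would \emph{define}
\[
 \pSD{S}{!\leq 0}(X)\coloneqq\BD_X\big(\pSD{S}{*\geq 0}(X)\big),\qquad \pSD{S}{!\geq 0}(X)\coloneqq\BD_X\big(\pSD{S}{*\leq 0}(X)\big),
\]
the aisles being swapped because $\BD_X$ is contravariant. A purely formal check shows that an involutive anti-equivalence carries a $t$-structure to a $t$-structure with its two aisles exchanged: the inclusion $\pSD{S}{!\leq 0}(X)[1]\subset\pSD{S}{!\leq 0}(X)$ follows from $\BD_X[1]\cong[-1]\BD_X$, the orthogonality $\Hom(\pSD{S}{!\leq 0}(X),\pSD{S}{!\geq 1}(X))=0$ is the image under $\BD_X$ of the orthogonality for the $*$-structure, and the truncation triangles are obtained by applying $\BD_X$ to those of Theorem~\ref{theorem:relativeperversetstructure}.

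It then remains to identify this $t$-structure by the stated fiberwise $!$-criterion. Here I would use the two standard compatibilities of Verdier duality in the six-functor formalism: for the fiber inclusion $\imath_{X_s}$ one has $\imath_{X_s}^*\,\BD_X\cong\BD_{X_s}\,\imath_{X_s}^!$, and on each fiber the middle-perversity $t$-structure is self-dual, $\BD_{X_s}\big(\CD^{\leq 0}_{\rmc}(X_s)\big)=\CD^{\geq 0}_{\rmc}(X_s)$. Combining these, for $\SF\in\CD_{\rmc}(X)$ one obtains the chain of equivalences
\[
 \SF\in\pSD{S}{!\leq 0}(X)\iff \BD_X\SF\in\pSD{S}{*\geq 0}(X)\iff \forall s,\ \imath_{X_s}^*\BD_X\SF\in\CD^{\geq 0}_{\rmc}(X_s)\iff \forall s,\ \imath_{X_s}^!\SF\in\CD^{\leq 0}_{\rmc}(X_s),
\]
where the middle step is Theorem~\ref{theorem:relativeperversetstructure} applied to $\BD_X\SF$ and the last step uses the two compatibilities above; the dual computation gives the characterization of $\pSD{S}{!\geq 0}(X)$.

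The only point requiring genuine care — and the closest thing to an obstacle — is the interaction of Verdier duality with boundedness, since $\BD_X$ exchanges $\CD^+_{\rmc}(X)$ and $\CD^-_{\rmc}(X)$ rather than preserving either. On $\CD^{\rmb}_{\rmc}(X)$, where $\BD_X$ is an honest involutive anti-equivalence, the argument above is complete. For the semibounded category $\CD^+_{\rmc}(X)$ (which is what we use in \S\ref{section:CoHAHodge}) one can either define the $!$-relative $t$-structure as the $\BD_X$-image of the $*$-relative $t$-structure on $\CD^-_{\rmc}(X)$ furnished by Theorem~\ref{theorem:relativeperversetstructure}, or, more robustly, verify the $t$-structure axioms directly on $\CD_{\rmc}(X)$ for the two subcategories cut out by the fiberwise $!$-criterion, mirroring the proof of Theorem~\ref{theorem:relativeperversetstructure} with $\imath_{X_s}^*$ replaced by $\imath_{X_s}^!$ throughout; the inputs to that proof are purely the fiberwise criteria together with standard exactness and conservativity properties of the functors $\imath_{X_s}^!$, all of which dualize without change.
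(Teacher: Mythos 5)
Your proposal is correct and follows exactly the route the paper takes: the paper justifies this corollary with the single remark that one dualises Theorem \ref{theorem:relativeperversetstructure} via Verdier duality, which is precisely your construction via $\BD_X$, the exchange $\imath_{X_s}^*\BD_X\cong\BD_{X_s}\imath_{X_s}^!$, and the self-duality of the perverse $t$-structure on fibers. Your additional care about the (semi)bounded variants is a reasonable elaboration the paper leaves implicit.
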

We denote by $\pStau{S}{!\leq i}$, $\pStau{S}{!\geq i}$, $i\in\BoZ$ the truncation functors. The cohomology functors are $\pSH{S}{!i}$, $i\in\BoZ$. The heart of the $!$-relative perverse $t$-structure is denoted by $\Perv^!(X/S)$.

In this paper, we will use exclusively the $!$-relative perverse $t$-structure. Therefore, we may drop the symbol ``!'' from the notation and write $\pStau{S}{\leq i}$, $\pStau{S}{\geq i},\pSH{S}{i}$ and $\Perv(X/S)$.

An example of relative perverse sheaf playing an important role in this paper (for the Hodge--Deligne moduli space) is given by the following lemma.
\begin{lemma}
\label{lemma:relativeICps}
 If $f\colon X\rightarrow S$ is a morphism between complex algebraic varieties with $S$ smooth and $f$ topologically locally trivial, then $\IC(X)[\dim S]\in\Perv^!(X/S)$.
\end{lemma}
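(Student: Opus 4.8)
The plan is to verify membership in the heart of the $!$-relative perverse $t$-structure fibrewise, using the characterisation of Corollary \ref{corollary:shriekreltstructure}. Thus it suffices to show that for every point $s\in S$ with fibre inclusion $\imath_{X_s}\colon X_s\into X$, the complex $\imath_{X_s}^!(\IC(X)[\dim S])$ lies in $\Perv(X_s)$; I will in fact prove the stronger statement that it is canonically isomorphic to $\IC(X_s)$, which is perverse by definition. Since a $!$-restriction to a closed fibre is a local question in a neighbourhood of $X_s$, I am free to replace $S$ by a small (analytic) open $U\ni s$ over which $f$ is trivial.

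First I would fix such a trivialisation $\phi\colon f^{-1}(U)\isoto X_s\times U$, which by the hypothesis that $f$ is topologically locally trivial may be taken stratum-preserving; shrinking $U$, I may assume $U$ is smooth of pure dimension $d=\dim S$. The key input is then the product formula for intersection complexes together with the topological invariance of $\IC$: under $\phi$ one has $\IC(f^{-1}(U))\cong\IC(X_s)\boxtimes\IC(U)\cong\IC(X_s)\boxtimes\BoQ_U[d]$, the last identification using $\IC(U)=\BoQ_U[d]$ for $U$ smooth. This is exactly the content underlying \cite[Proposition 2.5.1]{de2018compactification} as invoked above for $\CM^{\Hod}_r(C)/\BoA^1$, and I expect \emph{this} step — transporting the sheaf-theoretic object $\IC$ across a merely topological trivialisation — to be the main obstacle, as it requires knowing that $\IC$ depends only on the stratified homeomorphism type.

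Granting the local product decomposition, the remainder is a short six-functor computation carried out on $X_s\times U$, where $\imath_{X_s}$ is identified via $\phi$ with the section $\iota=\id_{X_s}\times\imath_s$ over $s\in U$. Let $q\colon X_s\times U\to X_s$ be the projection, so that $q\circ\iota=\id_{X_s}$. Since $U$ is smooth of dimension $d$, the morphism $q$ is smooth of relative dimension $d$, whence $q^!\cong q^*[2d]$ (up to a Tate twist, irrelevant for perversity). Therefore
\[
\IC(X)[\dim S]\big|_{f^{-1}(U)}\cong\big(\IC(X_s)\boxtimes\BoQ_U[d]\big)[d]\cong q^*\IC(X_s)[2d]\cong q^!\IC(X_s).
\]
Applying $\iota^!$ and using functoriality of $!$-pullback gives $\iota^!q^!\IC(X_s)\cong(q\circ\iota)^!\IC(X_s)\cong\IC(X_s)$.

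Combining the two steps yields $\imath_{X_s}^!(\IC(X)[\dim S])\cong\IC(X_s)\in\Perv(X_s)$ for every $s\in S$, and hence $\IC(X)[\dim S]\in\Perv^!(X/S)$ by Corollary \ref{corollary:shriekreltstructure}. One should in principle check the condition over geometric points as in Theorem \ref{theorem:relativeperversetstructure}, but in the present complex-analytic setting this reduces to the classical points treated above.
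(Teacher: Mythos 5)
Your proof is correct and takes essentially the same route as the paper's: both reduce to the fibrewise criterion of Corollary \ref{corollary:shriekreltstructure}, pass to a topological trivialisation $f^{-1}(U)\cong X_s\times U$ (with the transport of $\IC$ across the homeomorphism justified by working with perverse sheaves on stratified topological spaces as in \cite{beilinson2018faisceaux}), invoke the product formula $\IC(X_s\times U)\cong\IC(X_s)\boxtimes\BoQ_U[\dim S]$, and finish with the same shift computation — your $q^!\cong q^*[2\dim S]$ together with $(q\circ\iota)^!=\id$ is exactly the paper's $\imath_s^!\BoQ_S\cong\BoQ_{\{s\}}[-2\dim S]$ in different notation.
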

\begin{proof}
 By shrinking $S$, we may assume that $f$ is topologically locally trivial and work on the topological space $f^{-1}(s)\times S=X_s\times S$ for $s\in S$ and perverse sheaves for stratified topological spaces as in \cite{beilinson2018faisceaux}. Then, $\IC(X_s\times U)\cong \IC(X_s)\boxtimes\BoQ_S[\dim S]$. If $\imath_s\colon \{s\}\rightarrow S$ is the inclusion of $s$, then $\imath_s^!\BoQ_S\cong \BoQ_{\{s\}}[-2\dim S]$. The lemma follows.
\end{proof}

The following lemma gives some compatibilities between the relative and absolute perverse $t$-structures for certain constructible complexes when the morphism at hand is a topologically locally trivial fibration.
\begin{lemma}
\label{lemma:propertyrelative}
 Let $f\colon X\rightarrow Y$ be a morphism of complex algebraic varieties, such that $Y$ is smooth and $f$ is a topologically trivial fibration. For $y\in Y$, we let $X_y=f^{-1}(y)$ be the fiber of $f$ over $y$ and $g\colon X\rightarrow Y\times X_y$ be a homeomorphism. We let $\jmath\colon U\rightarrow X$ be a locally closed subset such that $g\circ\jmath$ is identified with $(\id_Y\times\jmath_{U_y})$ through $g$, where $\jmath_{U_y}\colon U_y\rightarrow X_y$ is the open subset defined by the Cartesian diagram
 \[
  \begin{tikzcd}
	{U_y} & U \\
	y & Y
	\arrow["{f_{U}}", from=1-2, to=2-2]
	\arrow[from=1-1, to=2-1]
	\arrow[from=2-1, to=2-2]
	\arrow[from=1-1, to=1-2]
	\arrow["\lrcorner"{anchor=center, pos=0.125}, draw=none, from=1-1, to=2-2]
\end{tikzcd}
 \]
In this lemma, we use the $!$-relative perverse $t$-structure given by Corollary \ref{corollary:shriekreltstructure}.

We let $\flat\in\{!,*\}$. We have the following properties.
\begin{enumerate}
 \item Let $\SF\in\CD^{\rmb}_{\rmc}(U_y,\BoQ)$ be a constructible complex. then, $g^*(\id_Y\times\jmath_{U_y})_{\flat}(\BoQ_Y\boxtimes\SF)\cong g^*(\BoQ_Y\boxtimes(\jmath_{U_y})_{\flat}\SF)$,
 \item For any constructible complex $\SG\in\CD^{\rmb}_{\rmc}(X_y,\BoQ)$, $\pYtau{\leq j}(g^*(\BoQ_Y\boxtimes\SG))\cong g^*(\BoQ_Y\boxtimes(\ptau{\leq j-2\dim Y}\SG))$ and so, in particular, $\pYtau{\leq j}(g^*(\id_Y\times\jmath_{U_y})_{\flat}(\BoQ_Y\boxtimes\SF))\cong g^*(\BoQ_Y\boxtimes(\ptau{\leq j-2\dim Y}(\jmath_{U_y})_{\flat}\SF))$,
 \item For any constructible complex $\SG\in\CD^{\rmb}_{\rmc}(X_y,\BoQ)$, $\pYtau{\geq j}(g^*(\BoQ_Y\boxtimes\SG))\cong g^*(\BoQ_Y\boxtimes(\ptau{\geq j-2\dim Y}\SG))$ and so, in particular, $\pYtau{\geq j}(g^*(\id_Y\times\jmath_{U_y})_{\flat}(\BoQ_Y\boxtimes\SF))\cong g^*(\BoQ_Y\boxtimes(\ptau{\geq j-2\dim Y}(\jmath_{U_y})_{\flat}\SF))$,
 \item For any constructible complex $\SG\in\CD^{\rmb}_{\rmc}(X_y,\BoQ)$, $\pYH{j}(g^*(\BoQ_Y\boxtimes\SG))\cong g^*(\BoQ_Y[2\dim Y]\boxtimes(\pH{j-2\dim Y}(\SG))$ and so, in particular, $\pYH{j}(g^*(\id_Y\times\jmath_{U_y})_{\flat}(\BoQ_Y\boxtimes\SF))\cong g^*(\BoQ_Y[2\dim Y]\boxtimes(\pH{j-2\dim Y}(\jmath_{U_y})_{\flat}\SF))$.
 \item  If $\SF_y\in\Perv(U_y)$ is a perverse sheaf, then
 \[
  (\id_Y\times\jmath_{U_y})_{!*}(\BoQ_Y[\dim Y]\boxtimes\SF)\cong\BoQ_{Y}[\dim Y]\boxtimes((\jmath_{U_y})_{!*}\SF)
 \]
 is an isomorphism of perverse sheaves on $X$.
 \end{enumerate}
\end{lemma}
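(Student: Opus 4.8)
The plan is to transport everything through the trivialization $g$ and reduce to the product $Y\times X_y$ with its two projections $q\colon Y\times X_y\to Y$ and $p\colon Y\times X_y\to X_y$, and then to exploit the fiberwise description of the $!$-relative perverse $t$-structure from Corollary~\ref{corollary:shriekreltstructure}. Since $g$ is a homeomorphism trivializing $f$ (so $q\circ g=f$ and $g$ carries the fiber $X_{y'}$ homeomorphically onto $\{y'\}\times X_y$), the equivalence $g^*$ intertwines the $!$-relative perverse $t$-structure of $f$ with that of $q$; hence it suffices to prove the four displayed identities on $Y\times X_y$, the $g^*$'s being then formal bookkeeping. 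I would first record the two computational inputs: (i) $\BoQ_Y\boxtimes(-)=p^*(-)$, so that $\BoQ_Y\boxtimes(-)$ is the triangulated functor $p^*$, and $\BoQ_Y[\dim Y]\boxtimes(-)=p^*(-)[\dim Y]$ is perverse $t$-exact because $p$ is smooth of relative dimension $\dim Y$; and (ii) for $y'\in Y$, the Künneth formula for exceptional pullback gives $(\imath_{y'}\times\id)^!(\BoQ_Y\boxtimes\SG)\cong(\imath_{y'}^!\BoQ_Y)\boxtimes\SG\cong\SG[-2\dim Y]$, using $\imath_{y'}^!\BoQ_Y\cong\BoQ_{\{y'\}}[-2\dim Y]$ (smoothness of $Y$, exactly as in the proof of Lemma~\ref{lemma:relativeICps}). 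Part~(1) is then precisely the Künneth formula $(\id_Y\times\jmath_{U_y})_{\flat}(\BoQ_Y\boxtimes\SF)\cong\BoQ_Y\boxtimes(\jmath_{U_y})_{\flat}\SF$ for $\flat\in\{!,*\}$, valid because the first-factor map is the identity.

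For parts (2) and (3), the key observation is that (ii) converts membership in the relative classes into a shifted membership on the fiber: combining Corollary~\ref{corollary:shriekreltstructure} with (ii) gives $\BoQ_Y\boxtimes\SG\in\pSD{Y}{!\leq j}$ iff $\SG[-2\dim Y]\in\pDc{\leq j}(X_y)$ iff $\SG\in\pDc{\leq j-2\dim Y}(X_y)$, and dually $\BoQ_Y\boxtimes\SG\in\pSD{Y}{!\geq m}$ iff $\SG\in\pDc{\geq m-2\dim Y}(X_y)$. I would then apply $p^*$ to the absolute truncation triangle $\ptau{\leq j-2\dim Y}\SG\to\SG\to\ptau{\geq j-2\dim Y+1}\SG$; since $p^*$ is triangulated this stays distinguished, its left term lies in $\pSD{Y}{!\leq j}$ and its right term in $\pSD{Y}{!\geq j+1}$ by the previous sentence, and uniqueness of truncation triangles identifies it with the $!$-relative truncation triangle. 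This yields $\pYtau{\leq j}(\BoQ_Y\boxtimes\SG)\cong\BoQ_Y\boxtimes\ptau{\leq j-2\dim Y}\SG$ and the dual statement, and the ``in particular'' clauses follow by feeding $\SG=(\jmath_{U_y})_{\flat}\SF$ through part~(1).

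Part~(4) I would deduce formally from (2) and (3). Writing $\pYtau{\geq j}\pYtau{\leq j}(\BoQ_Y\boxtimes\SG)\cong\BoQ_Y\boxtimes\bigl(\ptau{\geq j-2\dim Y}\ptau{\leq j-2\dim Y}\SG\bigr)\cong\BoQ_Y\boxtimes\pH{j-2\dim Y}(\SG)[2\dim Y-j]$ and re-shifting this heart object by $[j]$ to extract $\pYH{j}$ produces the factor $\BoQ_Y[2\dim Y]$, matching the stated formula; equivalently, one checks directly that $\BoQ_Y[2\dim Y]\boxtimes P$ has $!$-fiber $P$ and hence lies in the relative heart for $P\in\Perv(X_y)$. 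For part~(5), I would combine part~(1) for $\flat=!$ and $\flat=*$ with the description of $\jmath_{U_y,!*}$ as the image of the canonical map $\pH{0}((\id_Y\times\jmath_{U_y})_!(\BoQ_Y[\dim Y]\boxtimes\SF))\to\pH{0}((\id_Y\times\jmath_{U_y})_*(\BoQ_Y[\dim Y]\boxtimes\SF))$. Since $\BoQ_Y[\dim Y]\boxtimes(-)$ is perverse $t$-exact by (i), it restricts to an exact functor of perverse hearts, so it commutes with $\pH{0}$ and with the formation of images; the image is therefore $\BoQ_Y[\dim Y]\boxtimes\jmath_{U_y,!*}\SF$, as claimed.

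The whole argument is ``soft'': the only genuine inputs are the Künneth formulas, the perverse $t$-exactness of $p^*[\dim Y]$, and the single computation $\imath_{y'}^!\BoQ_Y\cong\BoQ_{\{y'\}}[-2\dim Y]$. The main thing to be careful about — and essentially the only place an error could creep in — is the bookkeeping of the degree shift by $2\dim Y$ relating the relative and absolute perverse degrees, together with the verification that $g^*$ genuinely intertwines the two $!$-relative $t$-structures; this last point rests on $q\circ g=f$ and on $g$ carrying each fiber $X_{y'}$ homeomorphically onto $\{y'\}\times X_y$ compatibly with $!$-restriction, which is exactly what ``topologically trivial fibration over $Y$'' provides.
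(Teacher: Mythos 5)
Your proposal is correct and follows essentially the same route as the paper: Künneth for (1), the computation $\imath_{y'}^!\BoQ_Y\cong\BoQ_{\{y'\}}[-2\dim Y]$ combined with the fiberwise characterization of the $!$-relative $t$-structure and uniqueness of truncation triangles for (2)–(3), formal combination for (4), and the perverse $t$-exactness of the shifted external product on the trivialization for (5). You merely spell out (4) and (5) in more detail than the paper, which leaves them as immediate consequences; the degree bookkeeping ($[2\dim Y-j][j]=[2\dim Y]$) checks out.
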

\begin{proof}
 The point $(1)$ follows from the compatibility of pushforwards with products: if $a\times b\colon A\times B\rightarrow C\times D$ is a morphism between products of algebraic varieties, $(a\times b)_{\flat}\cong a_{\flat}\boxtimes b_{\flat}$ (see \cite[\S4.2.7 (a)]{beilinson2018faisceaux}). $(4)$ is a combination of $(2)$ and $(3)$. $(2)$ and $(3)$ are dual to each other and the second part of $(2)$ (resp. $(3)$) is obtained by applying the first part of $(2)$ (resp. $(3)$) to $\SG=(\jmath_{U_y})_{\flat}(\SF)$, using also $(1)$. We may therefore only prove the first part of $(2)$. We have a distinguished triangle \cite[Proposition 1.3.3]{beilinson2018faisceaux}
\[
 \ptau{\leq j-2\dim Y}\SG\rightarrow\SG\rightarrow\ptau{> j-2\dim Y}\SG\rightarrow.
\]
Since $\BoQ_{Y}\boxtimes -$ and $g^*$ are triangulated functors, we obtain a distinguished triangle
\[
 g^*(\BoQ_Y\boxtimes\ptau{\leq j-2\dim Y}\SG)\rightarrow g^*(\BoQ_Y\boxtimes\SG)\rightarrow g^*(\BoQ_Y\boxtimes\ptau{> j-2\dim Y}\SG)\rightarrow.
\]
For $y'\in Y$, we let $\imath_{X_{y'}}\colon X_{y'}\rightarrow X$ be the inclusion of the fiber of $f$ over $y'$. We have $\imath_{X_{y'}}^!g^*(\BoQ_Y\boxtimes\ptau{\leq j-2\dim Y}\SG)\cong (\ptau{\leq j-2\dim Y}\SG)[-2\dim Y]$ (resp. $\imath_{X_{y'}}^!g^*(\BoQ_Y\boxtimes\ptau{> j-2\dim Y}\SG)\cong (\ptau{> j-2\dim Y}\SG)[-2\dim Y]$) via the homeomorphism $X_y\cong X_{y'}$ induced by $g$ (and since $Y$ being smooth, we have $\imath_y^!\BoQ_Y\cong \BoQ_{\{y\}}[-2\dim Y]$). Consequently, $\imath_{X_{y'}}^!g^*(\BoQ_Y\boxtimes\ptau{\leq j-2\dim Y}\SG)$ (resp. $\imath_{X_{y'}}^!g^*(\BoQ_Y\boxtimes\ptau{> j-2\dim Y}\SG)$) is in perverse degrees $\leq j$ (resp. $>j$). By the characterization of the $!$-relative perverse $t$-structure (Corollary \ref{corollary:shriekreltstructure}), this implies that $g^*(\BoQ_Y\boxtimes\ptau{\leq j-2\dim Y}\SG)\in{\pSD{Y}{\leq j}(X)}$ (resp. $g^*(\BoQ_Y\boxtimes\ptau{> j-2\dim Y}\SG)\in {\pSD{Y}{>j}(X)}$. By unicity of distinguished triangles associated to truncation of objects up to isomorphism \cite[Proposition 1.3.3]{beilinson2018faisceaux}, we obtain isomorphisms
\[
 \pYtau{\leq j}g^*(\BoQ_Y\boxtimes\SG)\cong g^*(\BoQ_Y\boxtimes\ptau{\leq j-2\dim Y}\SG) \quad\text{ and }\quad \pYtau{> j}g^*(\BoQ_Y\boxtimes\SG)\cong g^*(\BoQ_Y\boxtimes\ptau{> j-2\dim Y}\SG).
\]
$(5)$ is rather straightforward when working on the trivialization $Y\times X_y$ of $X$ and with perverse sheaves on stratified spaces as in \cite[\S2.1]{beilinson2018faisceaux}.
\end{proof}

\begin{corollary}
\label{corollary:simplesummands}
 Let $f\colon X\rightarrow Y$ be a morphism between algebraic varieties such that $Y$ is smooth and $f$ is a topologically trivial fibration. We let $y\in Y$ and $g\colon X\rightarrow Y\times X_y$ be a homeomorphism. We assume given an algebraic stratification $X=\sqcup_{i\in I}X_i$ and that it induces via $g$ an algebraic stratification $\CS_y$: $X_{y}=\bigsqcup_{i\in I}X_{y,i}$. Then, for any constructible complexe $\SG\in\CD^{\rmb}_{\rmc}(X_y)$ constructible for the stratification $\CS_y$, the simple direct summands of $g^*(\BoQ_Y\boxtimes \SG)$ are of the form $g^*(\id_Y\times \jmath_{F_y})_{!*}(\BoQ_Y[\dim Y]\boxtimes \SG')$ for $\jmath_F\colon F\rightarrow X$ a locally closed immersion of a smooth variety $F$ such that $g\circ \jmath_F$ is identified with $\id_Y\times\jmath_{F_y}\colon Y\times F_y\rightarrow Y\times X_y$ (where $F_y\coloneqq F\cap f^{-1}(y)$) and $\SG'$ a simple local system on $F_y$.
\end{corollary}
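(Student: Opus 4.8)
The plan is to transport the question across the homeomorphism $g$ onto the genuine algebraic product $Y\times X_y$, where $\BoQ_Y\boxtimes\SG\cong\pr_{X_y}^*\SG$ is the pullback along the smooth projection $\pr_{X_y}\colon Y\times X_y\to X_y$, and then to invoke the Beilinson--Bernstein--Deligne classification of simple perverse sheaves together with Lemma \ref{lemma:propertyrelative}. Since $g$ is a homeomorphism, $g^*$ is an equivalence of triangulated categories which is $t$-exact for the perverse $t$-structure and preserves simple objects; hence it suffices to describe the simple perverse constituents of $\BoQ_Y\boxtimes\SG$ on $Y\times X_y$ and then apply $g^*$. (We may assume $Y$ connected, treating its components separately otherwise.)

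First I would record that $\pr_{X_y}$ is smooth of relative dimension $\dim Y$, so that $\pr_{X_y}^*[\dim Y]\cong\BoQ_Y[\dim Y]\boxtimes(-)$ is perverse $t$-exact $\Perv(X_y)\to\Perv(Y\times X_y)$. Choosing a Jordan--H\"older filtration of each absolute perverse cohomology sheaf $\pH{\bullet}\SG$ (which is perverse and constructible for $\CS_y$) and pulling it back, one obtains a filtration of $\BoQ_Y[\dim Y]\boxtimes\pH{\bullet}\SG$ whose subquotients are $\BoQ_Y[\dim Y]\boxtimes\IC(\overline{X_{y,i}},L)$. By Lemma \ref{lemma:propertyrelative}(5) applied with $U_y=X_{y,i}$, each such subquotient equals $(\id_Y\times\jmath_{X_{y,i}})_{!*}(\BoQ_Y[\dim Y]\boxtimes L)\cong\IC(\overline{Y\times X_{y,i}},\BoQ_Y\boxtimes L)$, which is simple since $\BoQ_Y\boxtimes L$ is an irreducible local system when $L$ is. Thus the simple constituents of $\BoQ_Y\boxtimes\SG$ are exactly the external products of $\BoQ_Y[\dim Y]$ with the simple constituents of the $\pH{\bullet}\SG$. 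The step requiring the most care here is the bookkeeping of shifts --- the implicit perverse shift on the local system $L$ (so that $(\id_Y\times\jmath_{X_{y,i}})_{!*}$ is applied to a perverse object) and the distinction between the absolute perverse $t$-structure used here and the relative one of Corollary \ref{corollary:shriekreltstructure}, the two differing by $[\dim Y]$ over the smooth base $Y$.

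By the BBD classification \cite{beilinson2018faisceaux} the simple constituents of the $\pH{\bullet}\SG$ are precisely the intersection complexes $\IC(\overline{X_{y,i}},L)$ as $X_{y,i}$ ranges over the (smooth) strata of $\CS_y$ and $L$ over the simple local systems on $X_{y,i}$; this uses only that the strata of an algebraic stratification are smooth. Finally I would transport back along $g$. By hypothesis the algebraic stratification of $X$ is carried onto $\CS_y$ by $g$, so for each index $i$ the locally closed stratum $F\subset X$ with $g(F)=Y\times X_{y,i}$ is smooth and satisfies $g\circ\jmath_F=\id_Y\times\jmath_{F_y}$ with $F_y=X_{y,i}$. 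Setting $\SG'=L$ and applying the equivalence $g^*$ to the simple constituents found above yields exactly the asserted form $g^*(\id_Y\times\jmath_{F_y})_{!*}(\BoQ_Y[\dim Y]\boxtimes\SG')$. (When $\SG$ is semisimple, as for the pushforwards produced by the decomposition theorem in the applications, these constituents are genuine direct summands.) The main obstacle is therefore not conceptual but lies in the careful tracking of perverse shifts and in the simplicity of the external product, both of which are supplied by the perverse $t$-exactness of smooth pullback and by Lemma \ref{lemma:propertyrelative}(5).
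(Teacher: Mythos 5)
Your proposal is correct and follows essentially the same route as the paper, whose proof is a one-line reduction to perverse sheaves on the stratified product $Y\times X_y$ (via \cite[\S2.1]{beilinson2018faisceaux}) combined with Lemma \ref{lemma:propertyrelative}(5), with $F$ being one of the strata $X_i$. You have merely spelled out the details the paper calls ``immediate'' (perverse $t$-exactness and simplicity-preservation of the smooth pullback $\BoQ_Y[\dim Y]\boxtimes(-)$, the BBD classification, and the shift/summand-versus-constituent caveats), all of which are accurate.
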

\begin{proof}
 This is immediate when working with perverse sheaves for stratified spaces as in \cite[\S2.1]{beilinson2018faisceaux} and using $(5)$ in Lemma \ref{lemma:propertyrelative}. The locally closed subset $F$ is one of the $X_i$'s.
\end{proof}

\begin{remark}
 To the knowledge of the author, the literature does not contain the notion of relative $t$-structure for the derived category of mixed Hodge modules on an algebraic variety $X$ with a morphism $f\colon X\rightarrow S$. It is highly possible that the sheaf of relative differential operators on $f$ would appear when attempting to construct such a $t$-structure. It would be desirable to obtain mixed Hodge modules enhancements of our constructions in a more straightforward way. 
\end{remark}

\subsubsection{A useful lemma}
We now formulate an easy but convenient lemma.

\begin{lemma}
\label{lemma:morphismonfiber}
 Let $f\colon X\rightarrow Y$ be a morphism between complex algebraic varieties. We assume that $f$ is a topologically trivial fibration and that $\RHom_{\CD^{\rmb}(Y)}(\BoQ_Y,\BoQ_Y)\cong\BoQ$ (for example, $Y$ is contractible). Then, for any $y\in Y$ and $\SF,\SG\in\CD^{\rmb}_{\rmc}(X_y,\BoQ)$, by topologically identifying $X\cong X_y\times Y$ and $f$ with the second projection, the restriction map (given by the $*$-restriction to $X_y$)
 \[
  \Hom_{\CD^{\rmb}_{\rmc}(X)}(\SF\boxtimes \BoQ_Y,\SG\boxtimes\BoQ_Y)\rightarrow \Hom_{\CD^{\rmb}_{\rmc}(X_y)}(\SF,\SG)
 \]
is an isomorphism. Similarly, the $!$-restriction to the fiber $X_y$ between the same $\Hom$ spaces is an isomorphism.

\end{lemma}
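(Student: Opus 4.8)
The plan is to reduce the statement to a Künneth computation of the Hom-complex, combined with the elementary fact that a split injection between finite-dimensional vector spaces of the same dimension is bijective. Throughout I use the identification $X\cong X_y\times Y$ with $f\cong\pr_Y$, under which the fiber inclusion $\imath_{X_y}\colon X_y\into X$ becomes $\id_{X_y}\times\imath_y$ for $\imath_y\colon\{y\}\into Y$.

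First I would introduce the external-product map
\[
 e\colon\Hom(\SF,\SG)\longto\Hom(\SF\boxtimes\BoQ_Y,\SG\boxtimes\BoQ_Y),\qquad \psi\mapsto\psi\boxtimes\id_{\BoQ_Y},
\]
and observe that the $*$-restriction map $r_*$ of the statement (which sends $\phi$ to $\imath_{X_y}^*\phi$, using $\imath_{X_y}^*(\SF\boxtimes\BoQ_Y)=\SF$ and similarly for $\SG$) is a one-sided inverse: since $\imath_{X_y}=\id_{X_y}\times\imath_y$ and $\imath_y^*\BoQ_Y=\BoQ_{\pt}$, one has $r_*(e(\psi))=\imath_{X_y}^*(\psi\boxtimes\id_{\BoQ_Y})=\psi\boxtimes\id_{\BoQ_{\pt}}=\psi$, so $r_*\circ e=\id$. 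Hence $e$ is a split monomorphism and $r_*$ a split epimorphism on $\HO^0$.

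Next I would compute the target Hom-complex by Künneth. Using the external-product formula for internal Hom, $\intHom(\SF\boxtimes\BoQ_Y,\SG\boxtimes\BoQ_Y)\cong\intHom(\SF,\SG)\boxtimes\BoQ_Y$, and then the Künneth isomorphism for derived global sections over the field $\BoQ$ (legitimate because all the cohomologies are finite-dimensional by constructibility), one obtains
\[
 \RHom_X(\SF\boxtimes\BoQ_Y,\SG\boxtimes\BoQ_Y)\cong\RHom_{X_y}(\SF,\SG)\otimes_{\BoQ}\RHom_Y(\BoQ_Y,\BoQ_Y)\cong\RHom_{X_y}(\SF,\SG),
\]
where the last step is exactly the hypothesis $\RHom_Y(\BoQ_Y,\BoQ_Y)\cong\BoQ$. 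In particular the two $\Hom$ spaces ($=\HO^0$ of these complexes) have equal finite dimension, so the split monomorphism $e$ is an isomorphism, and therefore so is its inverse $r_*$. For the $!$-version the same argument runs verbatim with $\imath_y^*\BoQ_Y=\BoQ_{\pt}$ replaced by $L\coloneqq\imath_y^!\BoQ_Y$: one has $\imath_{X_y}^!(\SF\boxtimes\BoQ_Y)\cong\SF\boxtimes L$, the relation $r_!\circ e=\id$ persists after the canonical identification $\Hom(\SF\boxtimes L,\SG\boxtimes L)\cong\Hom(\SF,\SG)$, and the Künneth dimension count is unchanged because $\RHom_{\pt}(L,L)\cong\BoQ$.

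The only substantial input is the Künneth computation of $\RHom_X(\SF\boxtimes\BoQ_Y,\SG\boxtimes\BoQ_Y)$; everything else is formal. The one point demanding care is the $!$-case, where the identification of the target with $\Hom(\SF,\SG)$ rests on $L=\imath_y^!\BoQ_Y$ being invertible, i.e.\ a shift of $\BoQ_{\pt}$. This is automatic whenever $Y$ is smooth — in particular in all applications of the lemma in this paper, where $Y=\BoA^1$ and $L=\BoQ_{\pt}[-2]$.
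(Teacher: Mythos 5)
Your proof is correct, and it rests on the same essential input as the paper's: the K\"unneth computation $\RHom_X(\SF\boxtimes\BoQ_Y,\SG\boxtimes\BoQ_Y)\cong\RHom_{X_y}(\SF,\SG)\otimes\RHom_Y(\BoQ_Y,\BoQ_Y)$, obtained from $\intHom(\SF\boxtimes\BoQ_Y,\SG\boxtimes\BoQ_Y)\cong\intHom(\SF,\SG)\boxtimes\intHom(\BoQ_Y,\BoQ_Y)$ and compatibility of $\boxtimes$ with pushforward. Where you diverge is in how you identify the restriction map with this abstract isomorphism: the paper simply asserts that under K\"unneth ``the map is given by the $*$-restriction,'' which requires tracing the identification through the adjunctions, whereas you sidestep that verification entirely by exhibiting the section $e(\psi)=\psi\boxtimes\id_{\BoQ_Y}$, checking $r_*\circ e=\id$ on the nose, and then using the K\"unneth dimension count (legitimate by finite-dimensionality of $\RHom$ between bounded constructible complexes) to upgrade the split injection to an isomorphism. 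This buys a cleaner argument at the cost of being slightly less canonical (it identifies the map only a posteriori). For the $!$-case the paper invokes Verdier duality on the $*$-case, while you argue directly through $L=\imath_y^!\BoQ_Y$; your explicit flag that the identification $\Hom(\SF\boxtimes L,\SG\boxtimes L)\cong\Hom(\SF,\SG)$ needs $L$ to be a shift of $\BoQ_{\pt}$ (true when $Y$ is smooth, as in every application with $Y=\BoA^1$) is a genuine point of care that the paper's one-line reduction glosses over, since the duality argument likewise implicitly uses that $\omega_Y$ is a shift of $\BoQ_Y$.
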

\begin{proof}
By the compatibility between the external tensor product and the external Hom functor \cite[\S4.2.7 (b)]{beilinson2018faisceaux}, we have
\[
 \intHom_{\CD^{\rmb}_{\rmc}(X)}(\SF\boxtimes\BoQ_Y,\SG\boxtimes\BoQ_Y)\cong \intHom_{\CD^{\rmb}_{\rmc}(X_y)}(\SF,\SG)\boxtimes\intHom_{\CD^{\rmb}_{\rmc}(Y)}(\BoQ_Y,\BoQ_Y).
\]
By compatibility of the external tensor product with pushforwards, we obtain
\[
  \RHom_{\CD^{\rmb}_{\rmc}(X)}(\SF\boxtimes\BoQ_Y,\SG\boxtimes\BoQ_Y)\cong \RHom_{\CD^{\rmb}_{\rmc}(X_y)}(\SF,\SG)\otimes\RHom_{\CD^{\rmb}_{\rmc}(Y)}(\BoQ_Y,\BoQ_Y).
\]
Under our assumptions, the right-hand-side reduces to $\RHom_{\CD^{\rmb}_{\rmc}(X_y)}(\SF,\SG)$ and the map is given by the $*$-restriction.

The case of the $!$-restriction follows by combining the $*$ case with the Verdier duality.
\end{proof}

\subsection{Mixed Hodge modules}
We work with mixed Hodge modules (MHM) over the spaces considered. The body of this paper only uses MHM but for later use, we give a short account on monodromic mixed Hodge modules in Appendix \ref{section:purityforMMHM} (for results regarding purity). We refer to \cite{davison2020cohomological} or \cite[\S2.2]{davison2021purity} for more details and background on (monodromic) mixed Hodge modules and their use in representation and Donaldson--Thomas theories.

\subsubsection{Mixed Hodge modules}
Let $X$ be a complex algebraic variety. Saito defined an Abelian category $\MHM(X)$, the category of mixed Hodge modules on $X$ (see \cite{saito1990mixed}, the references therein and the subsequent literature). This can be seen as an enhancement of the category of perverse sheaves $\Perv(X)$ in the sense that there is an exact functor $\rat_X\colon\MHM(X)\rightarrow\Perv(X)$. This functor can be derived to a functor $\CD^{\rmb}(\MHM(X))\rightarrow\CD^{\rmb}(\Perv(X))\simeq \CD^{\rmb}_{\rmc}(X)$ (the last equivalence being Beilinson's theorem, \cite{beilinson2006derived}). It intertwines the natural six functors on these categories: $f^*, f_*, f^!, f_!, \otimes$ and  $\intHom$.

\subsubsection{Tate twist}
We let $\BoL\coloneqq \HO^*_{\rmc}(\BoA^1,\BoQ)$ be the compactly supported cohomology of $\BoA^1$, with its natural mixed Hodge structure. This is a complex of mixed Hodge structures concentrated in cohomological degree $2$ and pure of weight $0$.

\subsubsection{Intersection complexes}
Let $X$ be a complex algebraic variety and $\jmath\colon X^{\sm}\rightarrow X$ its smooth locus. If $X$ is irreducible even dimensional, the constant mixed Hodge module $\underline{\BoQ}_{X^{\sm}}\otimes\BoL^{-\dim(X)/2}$ is pure of weight zero. It admits a unique extension $\underline{\IC}(X)\coloneqq\jmath_{!*}\underline{\BoQ}_{X^{\sm}}\otimes\BoL^{-\dim(X)/2}$ to a pure weight $0$ mixed Hodge module on $X$. It is the \emph{intersection complex mixed Hodge module} of $X$. We have $\rat_X(\underline{\IC}(X))\cong \IC(X)$.

\begin{lemma}
\label{lemma:relativeICmhm}
 Let $f\colon X\rightarrow S$ be a morphism between algebraic varieties such that $f$ is a topologically locally trivial  fibration, $\dim X-\dim S$ is even and the smooth locus $\jmath\colon X^{\sm}\rightarrow X$ of $X$ intersects any fiber of $f$ in a dense subset. Then, the pure weight zero complex of mixed Hodge modules $\underline{\IC}(X/S)\coloneqq\jmath_{!*}\underline{\BoQ}_{X^{\sm}}\otimes\BoL^{(\dim S-\dim X)/2-2\dim S}$ is such that for any $s\in S$, $\imath_{X_s}^!\underline{\IC}(X/S)\cong \underline{\IC}(X_s)$.
\end{lemma}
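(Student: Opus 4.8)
The plan is to deduce this mixed Hodge module statement from the constructible computation already available (Lemma \ref{lemma:relativeICps}), using two structural inputs about $\rat$ and about $\MHM$: that $\rat$ intertwines the six functors and is exact, faithful and exact for the respective perverse/MHM $t$-structures; and that every simple mixed Hodge module is pure. Concretely, I would first show that $\imath_{X_s}^!\underline{\IC}(X/S)$ is a single, \emph{simple} mixed Hodge module whose underlying perverse sheaf is $\IC(X_s)$, and only then fix the residual Tate twist by an elementary computation on the smooth locus of the fibre. The only genuinely geometric input is that $f$, although not smooth (not even flat) as a morphism, is smooth along a dense open subset of each fibre, which is what makes a Gysin argument available there.

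For the first step, since $\rat$ intertwines $\imath^!$ we have that $\rat(\imath_{X_s}^!\underline{\IC}(X/S))$ is, up to the cohomological shift coming from the Tate normalisation, the restriction $\imath_{X_s}^!\IC(X/S)$ with $\IC(X/S)=\IC(X)[\dim S]$. By Lemma \ref{lemma:relativeICps} this complex lies in $\Perv^!(X/S)$, so its $!$-restriction to the fibre is a single perverse sheaf; computing on the topological trivialisation $X\cong S\times X_s$ exactly as in the proof of Lemma \ref{lemma:relativeICps} (where $\IC(X)\cong\IC(X_s)\boxtimes\BoQ_S[\dim S]$ and $\imath_s^!\BoQ_S\cong\BoQ_{\{s\}}[-2\dim S]$) identifies it with the \emph{simple} perverse sheaf $\IC(X_s)$. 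Because $\rat$ is faithful, exact and exact for the two $t$-structures, $\imath_{X_s}^!\underline{\IC}(X/S)$ is then concentrated in a single cohomological degree with simple underlying perverse sheaf, hence is a simple object of $\MHM(X_s)$. By Saito's structure theory \cite{saito1990mixed} a simple mixed Hodge module is pure and is the intermediate extension of a simple polarisable variation of Hodge structure; since its underlying perverse sheaf is $\IC(X_s)$ with trivial generic local system, this forces $\imath_{X_s}^!\underline{\IC}(X/S)\cong\underline{\IC}(X_s)\otimes\BoL^{m}$ for some half-integer $m$.

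It then remains to check $m=0$, which I would do on the dense open $V\subset X_s$ over which both $X$ and $X_s$ are smooth; this $V$ exists because $X^{\sm}$ meets $X_s$ densely by hypothesis and $X_s^{\sm}$ is dense. Choosing an open $U\subset X^{\sm}$ with $U\cap X_s=V$, the inclusion $V\hookrightarrow U$ is a closed regular immersion of smooth varieties of codimension $\dim S$, while $\underline{\IC}(X/S)|_U=\underline{\BoQ}_U$ twisted by the normalising power of $\BoL$ from the definition. The purity (Gysin) isomorphism for regular immersions computes $(\imath_{X_s}^!\underline{\IC}(X/S))|_V$ as $\underline{\BoQ}_V$ with a definite Tate twist, and the normalisation in the definition of $\underline{\IC}(X/S)$ is precisely chosen so that this equals $\underline{\IC}(X_s)|_V=\underline{\BoQ}_V\otimes\BoL^{-\dim X_s/2}$; hence $m=0$ and the two simple Hodge modules, agreeing on the dense open $V$, coincide.

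The main obstacle is the first step: as $f$ is only real-analytically locally trivial, one cannot write $\underline{\IC}(X/S)$ as an external product of Hodge modules, and $\imath^!$ does not preserve purity in general, so a naive weight estimate only yields weights $\geq 0$. The device that resolves this is to obtain purity not from weights but from \emph{simplicity}: the constructible input guarantees a single simple perverse sheaf, and Saito's theory upgrades ``simple'' to ``pure'' for free, after which only the Gysin computation on $V$ is needed.
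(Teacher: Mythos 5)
Your proposal is correct and follows essentially the same route as the paper: the paper's proof likewise restricts to the dense open $X_s^{\sm}\cap X^{\sm}$, computes the $!$-restriction of $\underline{\IC}(X/S)$ there, and concludes from the identification of the underlying perverse sheaf via Lemma \ref{lemma:relativeICps} (your simplicity-implies-purity argument just spells out why those two inputs pin down the Hodge module). One caveat on your last step: if you actually run the Gysin computation with the exponent $(\dim S-\dim X)/2-2\dim S$ as printed in the statement you get a residual twist $\BoL^{-\dim S}$ rather than $m=0$; the exponent should be $(\dim S-\dim X)/2-\dim S$ (consistent with the formula $\BoL^{(1-g)r^2-2}$ used for $\underline{\IC}(\CM_r^{\Hod}(C)/\BoA^1)$ and with the paper's own claim that the generic $!$-restriction is $\underline{\BoQ}\otimes\BoL^{(\dim S-\dim X)/2}$), so the normalisation must be checked rather than asserted to be "precisely chosen".
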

\begin{proof}
 Let $s\in S$ be a geometric point. We let $X_s^{\sm,\sm}\coloneqq X_s^{\sm}\cap X^{\sm}$, where $X_s=f^{-1}(s)$. By assumption, $X_s^{\sm,\sm}$ is a dense open subset of $X_s$. If $\jmath_{X_s^{\sm,\sm}}\colon X_s^{\sm,\sm}\rightarrow X$ denotes the inclusion, we have $\jmath_{X_s^{\sm,\sm}}^!\underline{\IC}(X/S)\cong \underline{\BoQ}_{X_s^{\sm,\sm}}\otimes\BoL^{(\dim S-\dim X)/2}$. The lemma then follows from $\rat_X(\underline{\IC}(X/S))\cong \IC(X/S)$ by construction of $\underline{\IC}(X/S)$.
\end{proof}

We call $\underline{\IC}(X/S)$ the \emph{relative intersection complex} of $X/S$.

\subsubsection{Mixed Hodge modules on stacks}
We will only deal with mixed Hodge modules on global quotient stacks. Mixed Hodge modules on arbitrary Artin stacks are harder to work with -- one classically assume that there is an exhaustion by global quotient stacks (see \cite{davison2021purity} for example). On a global quotient stack, one can work with equivariant mixed Hodge modules and their derived category, as explained by Achar in \cite{achar2013equivariant}. Moreover, we will need to consider direct images of mixed Hodge modules for a morphism from a global quotient stack to an algebraic variety. For this, we refer to the detailed treatment in \cite[\S2.3]{davison2021purity}. One general caveat is that the six-functor formalism is subtle and not defined for general stacks.

\subsection{Monoidal categories}

\subsubsection{Monoidal structures: monoid over a point}
\label{subsubsection:monoidalstrucpoint}
Let $\CM$ be a monoid in the category of complex schemes. We assume that each connected component of $\CM$ is a finite type separated scheme. The monoid structure is given by the map $\oplus\colon\CM\times\CM\rightarrow\CM$ and the unit by $0_{\CM}\colon\pt\rightarrow\CM$ where $\pt=\Spec(\BoC)$. The formula
\[
 \SF\boxdot\SG\coloneqq \oplus_*(\SF\boxtimes\SG)
\]
defines monoidal structures on the categories $\CD^+_{\rmc}(\CM,\BoQ)$ and $\CD^+(\MHM(\CM))$, and assuming that $\oplus$ is a finite morphism, on $\Perv(\CM)$ and $\MHM(\CM)$. We refer to \cite[\S3.1]{davison2022BPS} for more details on this monoidal category.

\subsubsection{Monoidal structures: monoid over an algebraic variety}
\label{subsubsection:monoidalstroveralgvar}
Let $S$ be a finite type separated $\BoC$-scheme. Let $f\colon\CM\rightarrow S$ be a monoid in the category of $S$-schemes. We assume that $f$ is a finite type separated morphism. In this section, we explain how to endow the categories $\CD^+_{\rmc}(\CM,\BoQ)$, $\CD^+(\MHM(\CM))$ and $\Perv(\CM/S)$ (relative perverse sheaves, \S\ref{subsection:relativeptstructure}) with monoidal structures. We let $\oplus\colon \CM\times_S\CM\rightarrow\CM$ be the monoid structure on $\CM$ and $0_{\CM}\colon S\rightarrow \CM$ be the unit. It is a closed immersion. We let $\iota\colon\CM\times_S\CM\rightarrow\CM\times\CM$ be the closed immersion (by separatedness of $S$). For a closed point $s\in\CM$, we let $\imath_{\CM_s}\colon\CM_s\rightarrow\CM$ be the closed immersion of the fiber of $f$ over $s$. The base-change over $s$ of the monoid structure $\oplus$ gives a monoid $(\CM_s,\oplus)$ as in \S\ref{subsubsection:monoidalstrucpoint}. We let $\boxdot$ be the associated monoidal structure. For any $\SF, \SG\in\CD^+_{\rmc}(\CM,\BoQ)$ (resp. $\CD^+(\MHM(\CM))$), we let
\begin{equation}
\label{equation:monoidalproduct}
 \SF\boxdot_S\SG\coloneqq \oplus_*\iota^!(\SF\boxtimes\SG).
\end{equation}
We also let $\SF\boxtimes_S\SG\coloneqq \imath^!(\SF\boxtimes\SG)$ to shorten the notation.

\begin{proposition}
\label{proposition:compexternalS}
 For any closed point $s\in S$, and any $\SF,\SG\in\CD^+(\MHM(\CM))$ (resp. $\CD^+_{\rmc}(\CM)$), $\imath_{\CM_s}^!(\SF\boxdot_S\SG)\cong(\imath_{\CM_s}^!\SF)\boxdot(\imath_{\CM_s}^!\SG)$.
\end{proposition}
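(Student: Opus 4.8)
The plan is to deduce the identity from base change together with the Künneth compatibility of external products with the six operations, both of which are available in $\CD^+_{\rmc}$ and in $\CD^+(\MHM(-))$. First I would record the Cartesian square obtained by base change along the closed point $\imath_s\colon\{s\}\to S$. Since $\{s\}=\Spec(\BoC)$, the fibre over $s$ of the relative product $\CM\times_S\CM$ is the absolute product $\CM_s\times\CM_s$, giving a Cartesian diagram
\[
\begin{tikzcd}
{\CM_s\times\CM_s} \ar[r, "\tilde\imath"] \ar[d, "\oplus_s"'] & {\CM\times_S\CM} \ar[d, "\oplus"] \\
{\CM_s} \ar[r, "\imath_{\CM_s}"'] & {\CM}
\end{tikzcd}
\]
in which $\oplus_s$ is exactly the base-changed monoid structure on $\CM_s$ recorded in \S\ref{subsubsection:monoidalstroveralgvar}. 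Applying the base-change isomorphism $g^!f_*\cong f'_*g'^!$ (the Verdier dual of $g^*f_!\cong f'_!g'^*$, and hence always an isomorphism) to this square yields $\imath_{\CM_s}^!\oplus_*\cong(\oplus_s)_*\tilde\imath^!$, so that
\[
\imath_{\CM_s}^!(\SF\boxdot_S\SG)=\imath_{\CM_s}^!\oplus_*\iota^!(\SF\boxtimes\SG)\cong(\oplus_s)_*\tilde\imath^!\iota^!(\SF\boxtimes\SG).
\]

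The second step is to identify the composite $\iota\circ\tilde\imath\colon\CM_s\times\CM_s\to\CM\times\CM$. On points it sends $(m_1,m_2)$ to $(m_1,m_2)$, so it is the external product of inclusions $\imath_{\CM_s}\times\imath_{\CM_s}$; thus $\tilde\imath^!\iota^!=(\imath_{\CM_s}\times\imath_{\CM_s})^!$. The Künneth compatibility of $!$-pullback with $\boxtimes$ (the pullback analogue of the external-product identity of \cite[\S4.2.7]{beilinson2018faisceaux} already used in Lemma \ref{lemma:propertyrelative}) then gives $(\imath_{\CM_s}\times\imath_{\CM_s})^!(\SF\boxtimes\SG)\cong(\imath_{\CM_s}^!\SF)\boxtimes(\imath_{\CM_s}^!\SG)$. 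Feeding this back in, and recalling that over the point $s$ one has $\CM_s\times_{\{s\}}\CM_s=\CM_s\times\CM_s$ so that no residual $\iota^!$ survives and the absolute product of \S\ref{subsubsection:monoidalstrucpoint} is literally $\boxdot=(\oplus_s)_*(-\boxtimes-)$, I would obtain
\[
\imath_{\CM_s}^!(\SF\boxdot_S\SG)\cong(\oplus_s)_*\bigl((\imath_{\CM_s}^!\SF)\boxtimes(\imath_{\CM_s}^!\SG)\bigr)=(\imath_{\CM_s}^!\SF)\boxdot(\imath_{\CM_s}^!\SG),
\]
which is the asserted isomorphism.

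The genuinely geometric inputs — that the fibre of $\CM\times_S\CM\to S$ over $s$ is $\CM_s\times\CM_s$ and that $\iota\circ\tilde\imath=\imath_{\CM_s}\times\imath_{\CM_s}$ — are elementary, so the points requiring care are instead the validity of the base-change isomorphism $g^!f_*\cong f'_*g'^!$ and of the Künneth formula in the \emph{mixed Hodge module} setting, not merely for constructible complexes. For $\CD^+_{\rmc}$ both are standard; for $\CD^+(\MHM(-))$ one invokes Saito's six-operation formalism, under which the base change $\imath_{\CM_s}$ of a closed point and the separated finite-type morphism $\oplus$ possess all the functors in play and satisfy the same base-change and external-product identities (here the duality functor $\BD$ transports $g^*f_!\cong f'_!g'^*$ to the form used above). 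I therefore expect the main obstacle to be purely bookkeeping: verifying that the base-change transformation written down coincides canonically with the one implicit in the definition \eqref{equation:monoidalproduct} of $\boxdot_S$, so that the resulting isomorphism is natural in $\SF,\SG$ and, where needed later, compatible with the algebra-object structures.
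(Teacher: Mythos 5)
Your proposal is correct and follows essentially the same route as the paper: base change in the Cartesian square exhibiting $\CM_s\times\CM_s$ as the fibre of $\oplus\colon\CM\times_S\CM\to\CM$ over $\CM_s$, the identification $\iota\circ(\imath_{\CM_s}\times_S\imath_{\CM_s})=\imath_{\CM_s}\times\imath_{\CM_s}$, and the compatibility of $\boxtimes$ with exceptional pullbacks. The only difference is presentational — you spell out the $g^!f_*\cong f'_*g'^!$ base-change isomorphism and the collapse of $\boxdot_{\{s\}}$ to $\boxdot$ explicitly, which the paper leaves implicit.
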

\begin{proof}
This follows by base-change in the Cartesian square
\[
 \begin{tikzcd}
	{\CM_s\times\CM_s} & {\CM_s} \\
	{\CM\times_S\CM} & \CM
	\arrow["\oplus", from=2-1, to=2-2]
	\arrow["{\imath_{\CM_s}}", from=1-2, to=2-2]
	\arrow["{\imath_{\CM_s}\times_S\imath_{\CM_s}}"', from=1-1, to=2-1]
	\arrow["\oplus", from=1-1, to=1-2]
	\arrow["\lrcorner"{anchor=center, pos=0.125}, draw=none, from=1-1, to=2-2]
\end{tikzcd}
\]
using the fact that we have a commutative triangle
\[
 \begin{tikzcd}
	{\CM_s\times\CM_s} && {\CM\times_S\CM} \\
	& \CM\times\CM
	\arrow["{\imath_{\CM_s}\times\imath_{\CM_s}}"'{pos=0.5}, from=1-1, to=2-2]
	\arrow["{\imath_{\CM_s}\times_S\imath_{\CM_s}}", from=1-1, to=1-3]
	\arrow["\imath", from=1-3, to=2-2]
\end{tikzcd}
\]
and the compatibility of the external product $\boxtimes$ with exceptional pullbacks.
\end{proof}

\begin{proposition}
\label{proposition:monoidalstructures}
The formula \eqref{equation:monoidalproduct} defines monoidal structures on $\CD^+_{\rmc}(\CM,\BoQ)$, $\CD^+(\MHM(\CM))$. If $\oplus$ is finite on fibers (i.e. $\oplus\colon\CM_s\times\CM_s\rightarrow\CM_s$ is finite for any closed point $s\in S$), it induces a monoidal structure on $\Perv(\CM/S)$. The monoidal unit is $(0_{\CM})_*\BoQ[2\dim S]\in\Perv(\CM/S)$ and if the monoid $(\CM,\oplus)$ is commutative, the monoidal structures are symmetric.
\end{proposition}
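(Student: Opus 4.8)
The plan is to build every piece of coherence data for the (symmetric) monoidal structure directly out of the six-functor formalism, reducing each axiom to the corresponding property of the scheme-level monoid $(\CM,\oplus)$ over $S$. Throughout I would use two basic compatibilities, both valid verbatim in $\CD^+(\MHM(\CM))$ by Saito's theory and classically in $\CD^+_{\rmc}(\CM,\BoQ)$: the Künneth-type identity $(a\times b)_*(\SA\boxtimes\SB)\cong(a_*\SA)\boxtimes(b_*\SB)$ together with its $!$-pullback analogue from \cite[\S4.2.7]{beilinson2018faisceaux}, and base change for $!$-pullback along Cartesian squares.

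For associativity, the key step is to rewrite an iterated product as a single pushforward from the triple fibre product. Writing $\iota_3\colon\CM\times_S\CM\times_S\CM\to\CM\times\CM\times\CM$ for the inclusion and $m^{(12)},m^{(23)}\colon\CM\times_S\CM\times_S\CM\to\CM$ for the two maps $\oplus\circ(\oplus\times_S\id)$ and $\oplus\circ(\id\times_S\oplus)$, I would push $\oplus$ through $\boxtimes$ by Künneth and then base change to move $\iota^!$ past $(\oplus\times\id)_*$, obtaining
\[
 (\SF\boxdot_S\SG)\boxdot_S\SH\cong m^{(12)}_*\iota_3^!(\SF\boxtimes\SG\boxtimes\SH),\qquad \SF\boxdot_S(\SG\boxdot_S\SH)\cong m^{(23)}_*\iota_3^!(\SF\boxtimes\SG\boxtimes\SH).
\]
The intervening square (with corner $\CM\times_S\CM\times_S\CM$) is Cartesian precisely because $\oplus$ is a morphism of $S$-monoids, and associativity of $\oplus$ gives $m^{(12)}=m^{(23)}$, whence the associator. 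The pentagon axiom then follows formally, since every bracketing of a fourfold product collapses to the same pushforward along the unique iterated sum from the fourfold fibre product, matching the coherence already present for $\oplus$ and for $\boxtimes$.

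For the unit I would take $\mathbf{1}\coloneqq(0_{\CM})_*\BoQ[2\dim S]$ and compute $\mathbf{1}\boxdot_S\SF$ by the same mechanism: pushing $0_{\CM}$ through $\boxtimes$ and base-changing $\iota^!$ along the unit section yields $\oplus_*q_*j^!(\BoQ_S[2\dim S]\boxtimes\SF)$, where $q=0_{\CM}\times_S\id$ and $j=(f,\id)\colon\CM\to S\times\CM$ is the graph of the structure map $f$. The monoid unit axiom $\oplus\circ q=\id_{\CM}$ collapses the pushforwards, and since $S$ is smooth one has $\BoQ_S[2\dim S]\boxtimes\SF\cong\pr_{\CM}^!\SF$ up to a Tate twist, so $j^!\pr_{\CM}^!\SF\cong(\pr_{\CM}\circ j)^!\SF=\SF$. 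The shift $[2\dim S]$ is forced here to absorb the $[-2\dim S]$ coming from $\imath_s^!\BoQ_S$; the same computation on a fibre identifies $\imath_{\CM_s}^!\mathbf{1}$ with the skyscraper $\BoQ$ at the unit point of $\CM_s$, so $\mathbf{1}\in\Perv(\CM/S)$. The right unit and the triangle axiom are the mirror image.

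Finally, for the relative perverse statement I would argue fibrewise rather than globally. By Proposition \ref{proposition:compexternalS} we have $\imath_{\CM_s}^!(\SF\boxdot_S\SG)\cong(\imath_{\CM_s}^!\SF)\boxdot(\imath_{\CM_s}^!\SG)$; when $\SF,\SG\in\Perv(\CM/S)$ the two restrictions are perverse on $\CM_s$ by the characterisation in Corollary \ref{corollary:shriekreltstructure}, and when $\oplus$ is finite on fibres the fibrewise product $\boxdot$ preserves perversity as recorded in \S\ref{subsubsection:monoidalstrucpoint} (external product of perverse sheaves is perverse, and finite pushforward is perverse $t$-exact), so $\SF\boxdot_S\SG\in\Perv(\CM/S)$. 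Symmetry when $(\CM,\oplus)$ is commutative comes from the symmetry of $\boxtimes$ composed with the swap $\sigma$ and the identity $\oplus\circ\sigma=\oplus$, with the hexagon inherited from that of $\boxtimes$. I expect the only genuinely delicate bookkeeping to be the tracking of Tate twists and shifts in the unit and associativity isomorphisms in the MHM setting, and the verification that the abstract coherence diagrams reduce to the already-coherent data for $\oplus$ and $\boxtimes$; the existence of the isomorphisms themselves is formal once the base-change rewriting is in place.
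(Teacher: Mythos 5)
Your proposal is correct and follows essentially the same route as the paper's proof: associativity via rewriting both bracketings as a single pushforward from the triple fibre product using the Künneth compatibility and the Cartesian square built from the fact that $\oplus$ is an $S$-morphism, the unit via base change along the unit section together with the smooth projection identity $\pr_2^!\SF\cong\BoQ_S[2\dim S]\boxtimes\SF$, and the preservation of $\Perv(\CM/S)$ via the fibrewise compatibility of Proposition \ref{proposition:compexternalS}. The only difference is that you spell out the coherence axioms (pentagon, triangle, hexagon) which the paper leaves implicit.
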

\begin{proof}
 We first need to prove the associativity of $\boxdot_S$. Since $\oplus$ is associative (being a monoid structure on $\CM$), we just need to prove that for any $\SF,\SG,\SH\in\CD^+_{\rmc}(\CM,\BoQ)$ (resp. $\CD^+(\MHM(\CM))$), $(\SF\boxtimes_S\SG)\boxtimes_S\SH\cong\SF\boxtimes_S(\SG\boxtimes_S\SH)$. This follows from the commutativity of the diagram
 \[
  \begin{tikzcd}
	{X\times_SX\times_SX} & {(X\times_SX)\times X} \\
	{X\times (X\times_SX)} & {X\times X\times X}
	\arrow["\id\times\imath"', from=2-1, to=2-2]
	\arrow["\imath\times\id", from=1-2, to=2-2]
	\arrow["{\imath''}"', from=1-1, to=2-1]
	\arrow["{\imath'}", from=1-1, to=1-2]
\end{tikzcd}
 \]
where $\imath'\colon X\times_SX\times_SX\rightarrow (X\times_SX)\times X$ is obtained via the associativity $X\times_SX\times_SX\cong (X\times_SX)\times_SX$ and the maps $\id\colon X\times_SX\rightarrow X\times_S X$ and $\id\colon X\rightarrow X$, and similarly for the map $\imath''$. If $(\CM,\oplus)$ is commutative, this monoidal structure is easily seen to be symmetric.

To show this induces a monoidal structure on $\Perv(\CM/S)$ when $\oplus$ is finite on fibers, we need to prove that the operation $\boxdot_S$ preserves relative perverse sheaves on $\CM$. This is a consequence of Proposition \ref{proposition:compexternalS}. Indeed, if $\SF, \SG\in\Perv(\CM/S)$ are relative perverse sheaves, then $\imath^!_{\CM_s}(\SF\boxdot_S\SG)\cong (\imath_{\CM_s}^!\SF)\boxdot(\imath_{\CM_s}^!\SG)=\oplus_* (\imath_{\CM_s}^!\SF)\boxtimes(\imath_{\CM_s}^!\SG)$ where the first isomorphism is Proposition~\ref{proposition:compexternalS}. By assumption, $\imath_{\CM_s}^!\SF$ and $\imath_{\CM_s}^!\SG$ are perverse sheaves on $\CM_s$. The external tensor product $\boxtimes$ and the pushforward $\oplus_*$ for the finite morphism $\oplus $ are perverse $t$-exact and so $\oplus_* (\imath_{\CM_s}^!\SF)\boxtimes(\imath_{\CM_s}^!\SG)$ is perverse. This proves that $\SF\boxdot_S\SG$ is indeed relative perverse following the definition of the $!$-relative perverse $t$-structure in Corollary~\ref{corollary:shriekreltstructure}.

Since $0_{\CM}$ is a closed immersion and $S$ is smooth, $(0_{\CM})_*\BoQ_S[2\dim S]\in \Perv(\CM/S)$ (Corollary \ref{corollary:shriekreltstructure}). The unitality comes from the fact that in the diagram
\begin{equation}
\label{equation:monoidalunit}
 \begin{tikzcd}
	{S\times_S\CM\cong\CM} & S\times\CM \\
	{\CM\times_S\CM} & \CM\times\CM \\
	\CM
	\arrow["\oplus"', from=2-1, to=3-1]
	\arrow["{0_{\CM}\times_S\id}"', from=1-1, to=2-1]
	\arrow["{0_{\CM}\times\id}", from=1-2, to=2-2]
	\arrow["\imath"', from=2-1, to=2-2]
	\arrow["{\imath_0=f\times\id}", from=1-1, to=1-2]
	\arrow["\lrcorner"{anchor=center, pos=0.125}, draw=none, from=1-1, to=2-2]
	\arrow["\id"', bend right=80, from=1-1, to=3-1]
\end{tikzcd},
\end{equation}
we have $\oplus\circ (0_{\CM}\times\id)=\id_{\CM}$ and so
\[
 ((0_{\CM})_*\BoQ_S[2\dim  S])\boxdot_S\SF\cong\oplus_*\imath^!(0_{\CM}\times\id)_*(\BoQ_S[2\dim S]\boxtimes\SF)\cong \oplus_*(0_{\CM}\times_S\id)_*\imath_0^!(\BoQ_S[2\dim S]\boxtimes\SF)\cong \SF
\]
where the first isomorphism comes from the compatibility of the external tensor product with pushforwards \cite[\S4.2.7 (a)]{beilinson2018faisceaux}, the second isomorphism is the base-change map in the diagram \eqref{equation:monoidalunit}, and the third isomorphism comes from $\oplus\circ (0_{\CM}\times_S\CM)=\id$ together with $\imath_0^!(\BoQ_S[2\dim S])\boxtimes\SF\cong \imath_0^!\pr_2^!\SF\cong\SF$. We used the smooth projection $\pr_2\colon \CM\times S\rightarrow\CM$, which gives $\pr_2^!\SF\cong (\BoQ_S[2\dim S])\boxtimes\SF$.
\end{proof}

\subsubsection{Monoidal structures: for an algebraic variety}
Let $S$ be an algebraic variety. It can be seen as a monoid over itself, where $f=\id_S\colon S\rightarrow S$ and the monoid structure $\oplus\colon S\times_SS\rightarrow S$ is also given by the identity morphism. Then, the monoidal structures on $\CD^+(\MHM(S))$ and $\CD^+_{\rmc}(S)$ obtained from \S\ref{subsubsection:monoidalstroveralgvar} are nothing else than the $!$-derived tensor product $\stackrel{!}{\otimes}$:
\[
 \SF\boxdot_S\SG=\imath^!(\SG\boxtimes\SG)
\]
where $\imath\colon S\rightarrow S\times S$ is the diagonal.

\section{Higgs sheaves and local systems}
In this section, we concentrate ourselves on curves, although the definitions make sense or can be adapted for any algebraic variety. We recall the main features of the categories of local systems on a closed Riemann surface and (semistable) Higgs bundles on a smooth projective curve. These are the properties that allowed the author together with Davison and Schlegel Mejia to carry a thorough study of the Borel--Moore homologies of the associated stacks, culminating in a stacky nonabelian Hodge isomorphism between the Borel--Moore homologies of the Betti and Dolbeault stacks \cite[Theorem 1.7]{davison2022BPS}. In this paper, we consider the untwisted versions (i.e. no punctures on the Riemann surface and Higgs bundles of degree $0$). We refer to \cite{davison2022BPS} for more details or for the twisted cases. Our methods adapt the the twisted situations and nonzero degree Higgs bundles, see \S\ref{subsubsection:parabolicversions}, although the constructions of the moduli spaces and stacks bear some subtleties but were considered in \cite{nitsure1993moduli}.

\subsection{Local systems}
\subsubsection{Categories of local systems}
\label{subsubsection:categorieslocsys}
Let $\Sigma_g$ be a closed Riemann surface of genus $g\geq 1$ and $p\in \Sigma_g$ a fixed point. We let
\[
 \pi_1(\Sigma_g,p)\cong \left\langle a_i,b_i\colon 1\leq i\leq g\mid \prod_{i=1}^ga_ib_ia_i^{-1}b_i^{-1}=1\right\rangle
\]
be its fundamental group based at $p$.

The fundamental group algebra, $\BoC[\pi_1(\Sigma_g,p)]$ is a $2$-Calabi--Yau algebra (\cite[Corollary 6.2.4]{davison2012superpotential} and also \cite[Corollary 7.3]{davison2022BPS}). In \cite{davison2022BPS}, the case of \emph{twisted} fundamental group algebras is dealt with using multiplicative preprojective algebras and \cite{kaplan2023multiplicative}.

\subsubsection{Moduli stacks of local systems}
Connected components $\FM_{g,r}^{\Betti}$ ($r\in\BoN$) of the moduli stack of local systems can be described as global quotient stacks. Namely, the moduli stack of rank $r$ local systems on $\Sigma_g$ is
\[
 \mathfrak{M}^{\Betti}_{g,r}\coloneqq X_{g,r}^{\Betti}/\GL_r
\]
where $X_{g,r}^{\Betti}=\{(M_i,N_i)_{1\leq i\leq g}\in\GL_{r}^{2g}\mid \prod_{i=1}^gM_iN_iM_i^{-1}N_i^{-1}=1\}$ is an affine algebraic variety and $\GL_r$ acts on it by simultaneous conjugation. We let $\FM_g^{\Betti}\coloneqq\bigsqcup_{r\in\BoN}\FM_{g,r}^{\Betti}$, where is is implicitly understood that $\FM_{g,0}^{\Betti}=\pt$.

The good moduli space of $\FM_g^{\Betti}$ is denoted by $\CM_g^{\Betti}$. It can be decomposed as $\CM_g^{\Betti}=\bigsqcup_{r\in\BoN}\CM_{g,r}^{\Betti}$ where $\CM_{g,r}^{\Betti}\coloneqq X_{g,r}^{\Betti}\cms\GL_r$ is the affine GIT quotient. We let $\JH^{\Betti}\colon \FM_g^{\Betti}\rightarrow\CM_g^{\Betti}$ be the Jordan--H\"older map, which at the level of $\BoC$-points sends a representation of $\pi_1(\Sigma_g,p)$ to its associated graded with respect to some Jordan--H\"older filtration.

\subsubsection{The RHom complex}

The RHom complex on $\FM_g^{\Betti}\times\FM_g^{\Betti}$ admits a resolution by a $3$-term complex of vector bundles. Its description is recalled in \cite[\S7.2.3]{davison2022BPS}, to which we refer for more details. It is obtained from the standard resolution of $A\coloneqq \BoC[\pi_1(\Sigma_g,p)]$ by projective $A$-bimodules
\[
 0\rightarrow K\rightarrow A\otimes A\otimes A\rightarrow A\otimes A\rightarrow A\rightarrow 0.
\]
It is directly given as a $3$-term complex of vector bundles on $\FM_g^{\Betti}\times\FM_g^{\Betti}$ (\cite[\S7.2.3]{davison2022BPS}, formulated in the closely related context of multiplicative preprojective algebras). We denote a shift of this complex by $\CC^{\Betti}\coloneqq \RHom[1]$.

By now classical facts concerning stacks of short exact sequences, their derived enhancements and cotangent complexes, the classical truncation $t_0(\Tot(\CC^{\Betti}))$ is isomorphic to $\mathfrak{Exact}_A\coloneqq\mathfrak{Exact}^{\Betti}$, the stack of short exact sequences of representations of $\pi_1(\Sigma_g,p)$, see for example \cite[Proposition 3.6, Corollary 3.7]{porta2022two}.

\subsubsection{The Euler form}
\label{subsubsection:theeulerformfdga}
The Euler form of the category of local systems is given by $\chi(M,N)=2(1-g)\rank(M)\rank(N)$ where $M,N$ are two local systems on $C$. Indeed $\chi(M,N)$ is given by the cohomology of $C$ with coefficients in the local system $M^{\vee}\otimes N$, $\chi(\HO^*(C,M^{\vee}\otimes N))=\rank(M)\rank(N)\chi(\HO^*_{\sing}(C))$ and $\chi_{\sing}(C)=2(1-g)$. This Euler form may as well be computed using the realisation of fundamental group algebras as localised multiplicative preprojective algebras, as in \cite[\S 7.2.4]{davison2022BPS}.

\subsubsection{The direct sum map}
\label{subsubsection:directsumBetti}
The direct sum of representations of $\pi_1(\Sigma_g)$ induces a morphism of stacks $\oplus\colon\FM_g^{\Betti}\times\FM_g^{\Betti}\rightarrow\FM_g^{\Betti}$. By functoriality, it induces a map at the level of the good moduli spaces $\oplus\colon\CM_g^{\Betti}\times\CM_g^{\Betti}\rightarrow\CM_g^{\Betti}$. It is a classical fact that this map is finite (see \cite[Proposition 6.15]{davison2022BPS}, but also \cite[Lemma 2.1]{meinhardt2019donaldson} for the general strategy).

\subsection{Higgs sheaves}

\subsubsection{Categories of Higgs sheaves}
Let $C$ be a smooth projective curve over $\BoC$. We denote by $K_C=\Omega^1_C$ its canonical line bundle/sheaf of algebraic one-forms. A Higgs sheaf on $C$ is a pair $(\CF,\theta)$ of a coherent sheaf $\CF$ on $C$ and a Higgs field $\theta\colon \CF\rightarrow \CF\otimes_{\CO_C} \Omega^1_C$ (an $\CO_C$-linear map). A Higgs sheaf is said \emph{semistable} if for any $0\neq\SG\subsetneq\SF$ such that $\theta(\SG)\subset\SG\otimes \Omega^1_C$, the inequality of slopes $\mu(\SG)\coloneqq \frac{\deg\SG}{\rank\SG}\leq \frac{\deg\SF}{\rank\SG}=\mu(\SF)$ is satisfied. If the inequality is always strict, the Higgs sheaf is said \emph{stable}. The category of semistable Higgs bundles of some fixed slope $\mu\in\BoQ$ is Abelian and of finite length.

\subsubsection{Moduli stacks of Higgs sheaves}
The moduli stack of (semistable) Higgs sheaves on a smooth projective curve can be constructed in various ways, using versions of Quot schemes. There are (at least) three equivalent but distinct ways to proceed.
\begin{enumerate}
 \item Use the Beauville--Narasimhan--Ramanan (BNR) correspondence and identify Higgs sheaves on $C$ with compactly supported coherent sheaves on $\Tan^*C$. Then, using a compactification $\widetilde{\Tan^*C}$ of $\Tan^*C$, use usual Quot schemes for $\widetilde{\Tan^*C}$. This is the approach recalled in \cite{davison2022BPS}, following the spectral correspondence of \cite{narasimhan1989spectral}. This approach is also explained in \cite[pp.18-19, The second construction]{simpson1994moduliII}.

 \item Use the Quot scheme description of open substacks of the stack of coherent sheaves on the smooth projective curve. Then, describe open substacks of the stack of Higgs sheaves using Hamiltonian reduction. This is the approach pursued in \cite{sala2020comological} to define the cohomological Hall algebra of Higgs sheaves on the curve. See also \cite{pouchin2013higgs} for earlier fruitful use of this description (related to a representation-theoretic study of the global nilpotent cone for \emph{weighted projective lines}).

 \item Use the approach explained in \cite{simpson1994moduli}: A Higgs sheaf is the same as an $\CO_C$-linear map $\mathcal{T}_C\otimes \CF\rightarrow\CF$ where $\mathcal{T}_C$ is the tangent line bundle to $C$. Therefore, a Higgs sheaf is a $\Sym(\CT_C)$-module, where $\Sym(\CT_C)$ is an $\CO_C$-algebra. This approach is the best suited for the analogy with the study of the Hodge moduli space. Indeed, this description allows a more straightforward comparison with the construction of moduli spaces for flat vector bundles and $\lambda$-connections.
\end{enumerate}
These constructions provide us with the Dolbeault stack $\FM^{\Dol}(C)$, parametrising slope $0$ semistable Higgs sheaves on $C$ and its good moduli space, $\CM^{\Dol}(C)$, whose $\BoC$-points are in bijection with the set of isomorphism classes of polystable Higgs sheaves of slope $0$. They are related by the Jordan--H\"older map $\JH^{\Dol}\colon \FM^{\Dol}(C)\rightarrow\CM^{\Dol}(C)$. At the level of $\BoC$-points, it sends a semistable Higgs sheaf to the polystable Higgs sheaf obtained by taking the associated graded with respect to a filtration with stable subquotients.

\subsubsection{The RHom complex}
\label{subsubsection:RHomHiggs}
A global presentation of the RHom complex over $\FM^{\Dol}(C)\times\FM^{\Dol}(C)$ is explained in \cite[Proposition 6.9]{davison2022BPS}. It is a classical construction that provides us with a $3$-term complex of vector bundles
\[
 \CC^{\Dol}=(V^{-1}\rightarrow V^0\rightarrow V^1)
\]
on $\FM^{\Dol}(C)\times\FM^{\Dol}(C)$ such that $t_0(\Tot(\CC^{\Dol}))\simeq \mathfrak{Exact}^{\Dol}(C)$ is canonically equivalent to the stack of short exact sequences of semistable Higgs bundles of slope $0$ on $C$. This presentation was crucial in \cite{davison2022BPS} in order to provide a construction of the relative CoHA product at the level of mixed Hodge modules for the stack of semistable Higgs bundles.

\subsubsection{The direct sum map}
The direct sum of semistable Higgs bundles of slope $0$ induces a morphism between stacks $\oplus\colon\FM^{\Dol}(C)\times\FM^{\Dol}(C)\rightarrow\FM^{\Dol}(C)$. By universality of the good moduli space, we obtain a map $\oplus\colon\CM^{\Dol}(C)\times\CM^{\Dol}(C)\rightarrow\CM^{\Dol}(C)$ at the level of the moduli space of polystable Higgs bundles of slope $0$. As proven in \cite[Proposition 6.14]{davison2022BPS}, this map is finite. 

\section{Connections}
\label{section:connections}
In this section, we develop the formalism necessary to define the CoHA product for the stack of connections on a smooth projective curve and to carry out the study of the BPS algebra as in \cite{davison2022BPS}.

For more details on the objects and constructions, we refer to \cite{hotta2007d,simpson1994moduli,pantev2019moduli}.

As in the introduction, we recall that an alternative approach to defining the CoHA for connections using Khan's formalism (in particular, the pullback in Borel--Moore homology for quasi-smooth morphisms of derived stacks, \cite{khan2019virtual}) together with \cite{porta2022two}.

\subsection{Categories of connections}
\subsubsection{Differential operators}
A vector bundle with connection is a pair $(\CF,\nabla)$ of a vector bundle $\CF$ on $C$ together with a map $\nabla\colon\CF\rightarrow\CF\otimes \Omega^1_C$ satisfying the Leibniz rule: $\nabla(f\cdot e)=e\otimes _{\CO_C}\mathrm{d}f+f\nabla(e)$ for $e,f$ local sections of $\CF, \CO_C$ respectively \cite[Definition 1.2.2]{hotta2007d}.

A vector bundle with connection can be seen as a module over the sheaf of algebras $\Lambda^{\dR}$ over $C$ that is coherent over $\CO_C$, the structure sheaf of $C$, where $\Lambda^{\dR}\coloneqq \CD_C$ is the sheaf of differential operators on $C$. We owe the notation $\Lambda^{\dR}$ to \cite[pp. 85-86]{simpson1994moduli}.

The sheaf of differential operators $\Lambda^{\dR}$ on $C$ is an increasing union of $\CO_C$-coherent submodules: $\Lambda_{\leq k}^{\dR}=\bigcup_{k\geq 0}\Lambda_{\leq k}^{\dR}$ where $\Lambda_{\leq k}^{\dR}$ is the $\CO_C$-coherent sheaf on $C$ of differential operators of order $\leq k$.

We denote by $\CD_{\qcoh}(\Lambda^{\dR})$ the dg-category of complexes of left $\Lambda^{\dR}$-modules localised at quasi-isomorphisms (i.e. morphisms between complexes which induce isomorphisms in cohomology) \cite[\S1.5]{hotta2007d}.

\subsubsection{The de Rham algebra}
\label{subsubsection:deRhamalgebra}
We recall an other viewpoint on connections over the curve $C$, without attempting to be exhaustive or complete. This viewpoint is sketched only for the sake of the derived enhancement of the moduli stack of connections \S\ref{subsubsection:conneections:thederivedmodstack} and can be safely ignored in the first lecture.

Recall that $\Omega^1_C$ denotes the sheaf of algebraic $1$-forms over the smooth projective curve $C$. The de Rham algebra of $C$ is defined as
\[
 \DR_C\coloneqq \Sym_{\CO_C}(\Omega^1_C[1]).
\]
This is a sheaf of graded mixed commutative dg-algebras over $C$ \cite[\S2.2]{pantev2019moduli}, \cite[\S1.2]{pantev2013shifted}. The term ``mixed'' refers to the fact that $\DR_C$ has the additional differential given by the de Rham differential.
\subsubsection{Graded mixed dg-modules over the de Rham algebra}
To construct a derived moduli stack of connections, Pantev and To\"en realise flat connections as \emph{graded mixed dg-modules} over the de Rham algebra \S\ref{subsubsection:deRhamalgebra}. To save space (as they will only play the role of an \emph{ambient} nicely behaved dg-category), we do not recall the precise definition of these objects here. There is a category $\DR_C-\dg^{\gr}_{\epsilon}$ of graded mixed $\DR_C-\dg$-modules (with inverted quasi-isomorphisms), \cite[\S2.2]{pantev2019moduli}. Moreover, there is a fully faithful functor $\DR_C\colon \CD_{\qcoh}(\Lambda^{\dR})\rightarrow\DR_C-\dg^{\gr}_{\epsilon}$ \cite[Proposition 2.5]{pantev2019moduli}. Its essential image is described as the full subcategory of free graded modules over $\DR_C$.

\begin{remark}
 The discussion in \cite[\S5]{pantev2019moduli}, see also \cite[Theorem A]{pantev2019moduli} should give a relative Calabi--Yau structure on the restriction functor from $\DR_X-\dg^{\gr}_{\epsilon}$ to category of objects on the boundary,
 and therefore a $2$-Calabi--Yau structure on the category $\DR_C-\dg^{\gr}_{\epsilon}$ when $X=C$ is a smooth projective curve. This should be related to the geometric Calabi--Yau completions which are being developed by Kinjo--Safronov.
\end{remark}

\subsection{The moduli stack of connections and the good moduli space}
\subsubsection{Classical construction}
\label{subsubsection:connectionsclassical}
The moduli stacks and moduli spaces of connections are well-studied objects in algebraic geometry. They are constructed for example in \cite[Theorem 3.8]{simpson1994moduli}. See also \cite[Theorem 4.10]{simpson1994moduli}. The construction provides for each rank $r\in\BoN$ a \emph{representation space} $R^{\dR}_r(C)$ parametrising pairs $(\CF,\beta)$ where $\CF$ is a $\Lambda^{\dR}$-modules (semistability is automatic, \cite[Theorem 6.13]{simpson1994moduliII}) and $\beta$ is the framing, i.e. $\beta\colon \CF_p\cong\BoC^{r}$ is an isomorphism for a fixed point $p\in C$. The general linear group $\GL_r$ acts on $R_r^{\dR}(C)$ by changing the framing. The stacky quotient $\FM_r^{\dR}(C)\coloneqq R_r^{\dR}(C)/\GL_r$ is the moduli stack of rank $r$ connections on $C$. Moreover, the $\GL_r$-action on $R_r^{\dR}(C)$ admits a linearisation $\mathscr{L}^{\dR}$ where $\mathscr{L}^{\dR}$ is very ample and the GIT quotient with respect to this linearisation is exactly the de Rham moduli space, parametrising semisimple connections on $C$. Note that the existence of this linearisation implies that the quotient stack $\FM^{\dR}_r(C)$ have the resolution property \cite[Theorem~1.1, (2)]{totaro2004resolution}. Indeed, the linearization $\mathscr{L}^{\dR}$ gives an embedding in an affine chart of a projective variety. The resolution property is something that is desirable when defining the associated CoHA \cite[Assumption 2]{davison2022BPS}. We let $\FM^{\dR}(C)\coloneqq \bigsqcup_{r\geq 0}\FM^{\dR}_r(C)$ and $\CM^{\dR}(C)\coloneqq\bigsqcup_{r\geq 0}\CM_r^{\dR}(C)$. We have the good moduli space map $\JH^{\dR}\colon \FM^{\dR}(C)\rightarrow\CM^{\dR}(C)$.

\subsubsection{The derived moduli stack}
\label{subsubsection:conneections:thederivedmodstack}
There is a way to upgrade the moduli stack of connections to a derived stack. We give some information on this stack, without attempting to be exhaustive. We let $\bm{\FM}^{\dR}(C)\coloneqq\Perf^{\nabla}(C)$ be the derived stack of graded mixed dg-modules over $\DR_C$ which are perfect of degree $0$ \cite[page 26]{pantev2019moduli}. An other description uses \emph{mapping stacks}. Namely, we have $\Perf^{\nabla}(C)=\Map_{\dSt}(C_{\dR},\Perf)$ where
\begin{enumerate}
 \item $C_{\dR}$ is the de Rham functor associated to $C$:
 \[
 \begin{matrix}
  C_{\dR}(A)=C(A_{\cl}^{\red})
 \end{matrix}
 \]
for $A$ a nonpositively graded commutative differential graded algebra, where $A_{\cl}=\HO^0(A)$ is the classical truncation and the superscript ``$\red$'' indicates the reduced quotient (quotient by the nilradical). We denote by $C(-)$ the functor of points of the curve $C$. See for example \cite[\S4.1]{naef2023torsion}.
 \item $\Perf$ is the derived stack of perfect complexes.
\end{enumerate}
See also \cite[p.32]{simpson1996hodge}.

We let $\BT$ be the tangent complex of $\Perf^{\nabla}(C)$. For a perfect graded mixed dg-module over $\DR_C$ of degree $0$, corresponding to the vector bundle with connection $(E,\nabla)$, the restriction of $\BT$ to the point $(E,\nabla)$ is given by
\[
 \BT_{E}=\BH_{\rmDR}(C,E\otimes E^{\vee}[1]),
\]
the de Rham cohomology with values in the perfect object $E\otimes E^{\vee}[1]$, see \cite[Proposition~4.8]{pantev2019moduli}. This de Rham cohomology can be computed as follows. The connection
\[
 \nabla\colon E\rightarrow E\otimes\Omega^1_C
\]
can be dualised in a connection
\[
 \nabla^{\vee}\colon E^{\vee}\rightarrow E^{\vee}\otimes\Omega^1_C
\]
on the dual complex $E^{\vee}$. The associated de Rham complex is
\[
 E\otimes E^{\vee}\xrightarrow{\nabla+\nabla^{\vee}} E\otimes E^{\vee}\otimes\Omega^1_C.
\]
The tangent complex $\BT_E$ is the hypercohomology of this complex, which is just the $E\otimes E^{\vee}$-twisted de Rham complex.

We let $\bm{\FM}^{\dR}(C)\subset\Perf^{\nabla}(C)$ be the open substack of perfect graded mixed dg-modules concentrated in degree $0$. This is a derived enhancement of the Artin stack $\FM^{\dR}(C)$ constructed in \S\ref{subsubsection:connectionsclassical}: $t_0(\bm{\FM}^{\dR}(C))\simeq\FM^{\dR}(C)$, where $t_0$ denotes the classical truncation.

\subsubsection{The stack of short exact sequences and the cotangent complex}
We let $\bm{\mathfrak{Exact}}^{\dR}(C)$ be the derived moduli stack of short exact sequences of connections. It may be constructed as in \cite{sala2020comological}. It admits a map
\[
 \bm{\mathfrak{Exact}}^{\dR}(C)\xrightarrow{\bm{q}^{\dR}}\bm{\FM}^{\dR}(C)\times\bm{\FM}^{\dR}(C),
\]
which on $\BoC$ points sends a short exact sequence to its extremal terms.

There is a natural map
\[
 \FM^{\dR}(C)\times\FM^{\dR}(C)\xrightarrow{i_{\oplus}}\mathfrak{Exact}^{\dR}(C)
\]
sending a pair of connections to the trivial extension. If $(E,\nabla_E)$ and $(F,\nabla_F)$ are connections on $C$, the tangent complex of $\bm{q}^{\dR}$ at the trivial extension (direct sum) of $E$ and $F$ is computed by
\begin{equation}
\label{equation:tangentextensions}
 \BH_{\rmDR}(C,E\otimes F^{\vee}[1]).
\end{equation}
The computation is analogous to that of the tangent complex (\S\ref{subsubsection:conneections:thederivedmodstack}).

\subsection{The Euler form}
\label{subsection:Eulerformconnections}
The Euler form for the category of connections on the smooth projective curve $C$ is given by $\chi((\CF,\nabla),(\CG,\nabla))=2(1-g)\rank\CF\rank\CG$. It is therefore locally constant on the product $\FM^{\dR}(C)\times\FM^{\dR}(C)$. A straightforward way to see this is through the Riemann--Hilbert correspondence which gives a equivalences of categories between the dg-category containing some simple connections and the dg-category containing the corresponding representations of the fundamental group of the curve (as a consequence of the fact that the Riemann--Hilbert correspondence lifts to the dg-categories). Then, we use the computation of the Euler form for the fundamental group algebra \S\ref{subsubsection:theeulerformfdga}.

A more intrinsic way to compute the Euler form (i.e. without appealing to the Riemann--Hilbert correspondence) follows from \cite{simpson1992higgs}. Indeed, the Euler form is given by $\chi_{\dR}(\CF^{\vee}\otimes\CG)=\rank(\CF)\rank(\CG)\chi_{\dR}(C)$ (the Euler characteristic of the hypercohomology \eqref{equation:tangentextensions}) and by the comparison between the de Rham cohomology and singular cohomology of the curve $C$, $\chi_{\dR}(C)=2(1-g)$.

\subsection{Resolution of the RHom complex over the classical moduli stack}
\label{subsection:resolutionRHomdR}
We explain how to resolve globally the cotangent complex (or equivalently, the tangent complex) by a $3$-term complex of vector bundles over $\FM^{\dR}(C)\times\FM^{\dR}(C)$. Such resolutions are used to define the CoHA product at the level of mixed Hodge modules \cite{davison2022BPS}.

\begin{proposition}
\label{proposition:resolutionRHomdR}
 Let $r,s\in \BoN$. Then, there exists a $3$-term complex of vector bundles over $\FM_r^{\dR}(C)\times\FM_{s}^{\dR}(C)$, concentrated in degrees $[-1,1]$, and quasi-isomorphic to $\RHom[1]$.
\end{proposition}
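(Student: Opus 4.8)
The plan is to adapt the global presentations constructed for Higgs sheaves and local systems in \cite{davison2022BPS} to connections, exploiting their description as $\CO_C$-coherent left $\Lambda^{\dR}$-modules. The key local input is the \emph{Spencer (de Rham) resolution}. Since $\gr\Lambda^{\dR}\cong\Sym_{\CO_C}(\CT_C)$ and $C$ is a curve (so $\wedge^{\geq 2}\CT_C=0$), any connection $E$ admits a two-term resolution by locally free (``induced'') left $\Lambda^{\dR}$-modules
\[
0\longrightarrow \Lambda^{\dR}\otimes_{\CO_C}\CT_C\otimes_{\CO_C}E\longrightarrow \Lambda^{\dR}\otimes_{\CO_C}E\longrightarrow E\longrightarrow 0.
\]
Applying $\Hom_{\Lambda^{\dR}}(-,F)$ together with the adjunction $\Hom_{\Lambda^{\dR}}(\Lambda^{\dR}\otimes_{\CO_C}V,F)\cong\Hom_{\CO_C}(V,F)$ identifies $\RHom_{\Lambda^{\dR}}(E,F)$ with the hypercohomology over $C$ of the relative de Rham complex
\[
\intHom_{\CO_C}(E,F)\xrightarrow{\ \nabla+\nabla^{\vee}\ }\intHom_{\CO_C}(E,F)\otimes_{\CO_C}\Omega^1_C,
\]
placed in degrees $[0,1]$. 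This recovers the tangent complex $\BH_{\rmDR}(C,E\otimes F^{\vee}[1])$ of \eqref{equation:tangentextensions}, and since $C$ has cohomological dimension one, $\RHom$ is concentrated in degrees $[0,2]$, consistently with the Euler form computed in \S\ref{subsection:Eulerformconnections}.

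Second, I would globalise this over $S\coloneqq\FM_r^{\dR}(C)\times\FM_s^{\dR}(C)$. Writing $\bm{E},\bm{F}$ for the two universal families (pulled back from the factors) on $C\times S$ and $\pi\colon C\times S\to S$ for the projection, the computation above becomes a two-term complex $\CL^{\bullet}=[\CL^0\to\CL^1]$ of vector bundles on $C\times S$, with $\CL^0=\intHom(\bm{E},\bm{F})$ and $\CL^1=\CL^0\otimes p_C^*\Omega^1_C$, satisfying $\RHom\cong\BoR\pi_*\CL^{\bullet}$. Because $\pi$ is a relative smooth projective curve, $\BoR\pi_*$ has cohomological amplitude $[0,1]$; using the resolution property of $\FM^{\dR}_r(C)$ (which holds by the natural linearisation of \S\ref{subsubsection:connectionsclassical}) together with cohomology-and-base-change over the relative curve, one represents $\BoR\pi_*\CL^0$ and $\BoR\pi_*\CL^1$ by two-term complexes of vector bundles $[A^0\to A^1]$ and $[B^0\to B^1]$ on $S$. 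Forming the total complex of the resulting double complex yields
\[
\big[\,A^0\longrightarrow A^1\oplus B^0\longrightarrow B^1\,\big],
\]
a complex of vector bundles in degrees $[0,2]$ quasi-isomorphic to $\RHom$. Shifting by $[1]$ produces the desired $3$-term complex of vector bundles concentrated in degrees $[-1,1]$ and quasi-isomorphic to $\RHom[1]$.

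The main obstacle lies in the second step: one must produce the two-term vector-bundle presentations of $\BoR\pi_*\CL^i$ \emph{over the stack} (not merely as abstract perfect complexes), functorially enough that they assemble into honest vector bundles with vector-bundle differentials. This is precisely the role of the resolution property, and it parallels the verification carried out for Higgs sheaves in \cite[Proposition 6.9]{davison2022BPS}: concretely, one may work $\GL_r\times\GL_s$-equivariantly on $R_r^{\dR}(C)\times R_s^{\dR}(C)$ and descend, or apply the standard presentation of relative cohomology after twisting $\CL^i$ by a sufficiently positive multiple of a relatively ample line bundle on $C$ so that $R^1\pi_*$ vanishes and $\pi_*$ is locally free. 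One then checks, as in \emph{loc. cit.}, that the outcome is canonically independent of these choices up to quasi-isomorphism and compatible with the identification of $t_0(\Tot(\cdot))$ with $\mathfrak{Exact}^{\dR}(C)$, so that it can serve as input to the CoHA construction of Theorem \ref{theorem:CoHAdeRhamintro}.
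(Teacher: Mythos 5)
Your proposal is correct, but it takes a genuinely different route from the paper's proof. The paper never invokes the Spencer resolution or relative de Rham cohomology: it resolves the universal connection $\CF$ directly as a $\Lambda^{\dR}$-module, choosing $N_1\ll 0$ and an epimorphism $\Lambda^{\dR}\otimes_{\CO_C}\CO(N_1)^{P(N_1)}\rightarrow\CF$, invoking local Noetherianity of $\Lambda^{\dR}$ and Simpson's boundedness to cover the kernel uniformly by $\Lambda^{\dR}\otimes_{\CO_C}\CO(N_2)^{Q(N_2)}$ with $N_2\ll N_1$, and taking $K'$ to be the resulting second syzygy; applying $\Hom_{\Lambda^{\dR}}(-,\CG)$ to this locally projective resolution produces the three-term complex in one stroke, with the constancy of $\dim_{\BoC}\Hom_{\Lambda^{\dR}}(K',\CG)$ (hence the vector-bundle property of the last term) extracted from the constancy of the Euler form. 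Your route --- canonical Spencer resolution, then pushforward of the relative de Rham complex of $\intHom(\bm{E},\bm{F})$ along $\pi\colon C\times S\rightarrow S$ using Serre-type twists --- buys canonicity: no finite-generation or Euler-characteristic argument is needed, and all three terms are manifestly vector bundles. Its cost is exactly the chain-level assembly problem you flag, which is genuine but surmountable and deserves to be made explicit: with $\CV=\intHom(\bm{E},\bm{F})$, $\CW=\CV\otimes p_C^*\Omega^1_C$ and $D=\{p\}\times S$ for a point $p\in C$, the relative connection raises pole order along $D$ by exactly one, so $\nabla$ maps $\CV(mD)$ into $\CW((m+1)D)$ and preserves the subsheaves $\CV$ and $\CW$; it therefore induces an honest chain map from $[\pi_*\CV(mD)\rightarrow\pi_*(\CV(mD)|_{mD})]$ to $[\pi_*\CW((m+1)D)\rightarrow\pi_*(\CW((m+1)D)|_{(m+1)D})]$, and this map is $\CO_S$-linear because $d_{C/S}\pi^*f=0$ for any function $f$ on $S$. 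With that point spelled out, your totalization argument is complete; note that both proofs ultimately rest on the same two external inputs, namely Simpson's boundedness (for uniformity of choices over the stack, or equivariantly over the representation spaces) and sufficiently large twists (for vanishing of $R^1\pi_*$ and constancy of dimensions).
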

\begin{proof}
We give a sketch of the argument, as it is very similar to the case of sheaves on surfaces, \cite[\S6.1.3]{davison2022BPS} or \cite[\S4.3]{kapranov2019cohomological}.

Let $(\CF,\nabla)$ be a rank $r$ vector bundle with flat connection, which we see as an $\CO_C$-coherent $\Lambda^{\dR}=\CD_C$-module. We let $N_1\ll0$ be such that we have an epimorphism
\[
  \CO(N_1)\otimes_{\BoC}\Hom_{\CO_C}(\CO_{C}(N_1),\CF)\cong\CO(N_1)^{P(N_1)}\xrightarrow{\tilde{f}}\CF,
\]
where $P(N_1)\coloneqq \dim_{\BoC}\Hom_{\CO_C}(\CO_C(N_1),\CF)$. It induces an epimorphism $\Lambda^{\dR}\otimes_{\CO_C}\CO(N_1)^{P(N_1)}\xrightarrow{f}\CF$ using the $\Lambda^{\dR}$-structure of $\CF$. The locally projective $\Lambda^{\dR}$-module $\Lambda^{\dR}\otimes_{\CO_C}\CO(N_1)^{P(N_1)}$ is filtered by $\CO_C$-coherent submodules $\Lambda^{\dR}_{\leq k}\otimes_{\CO_C}\CO(N_1)^{P(N_1)}$. By local Noetherianity of $\Lambda^{\dR}$ \cite[Theorem 2.6.11 and Proof]{hotta2007d}, there is a $k\in\BoN$ such that $K_{\leq k}\coloneqq\ker(f_{\Lambda^{\dR}_{\leq k}\otimes_{\CO_C}\CO(N_1)^{P(N_1)}})$ generates $\ker(f)$ as a $\Lambda^{\dR}$-module, where $f_{\Lambda^{\dR}_{\leq k}\otimes_{\CO_C}\CO(N_1)^{P(N_1)}}$ denotes the restriction of $f$. By boundedness of the family of vector bundles underlying a flat rank $r$ connection \cite[Proposition 3.5]{simpson1994moduli}, we can choose $k$ uniform over the stack of rank $r$ connections. The coherent sheaf $K_{\leq k}$ has constant rank and degree when $(\CF,\nabla)$ varies over rank $r$ connection and we let $N_2\ll N_1$ such that we have an epimorphism
\[
 \CO(N_2)\otimes_{\BoC}\Hom_{\CO_C}(\CO(N_2),K_{\leq k})\cong\CO(N_2)^{Q(N_2)}\xrightarrow{\tilde{g}} K_{\leq k},
\]
where $Q(N_2)=\dim_{\BoC}(\CO(N_2),K_{\leq k})$.

This map induces an epimorphism, again using the $\Lambda^{\dR}$-structure on $\ker(f)$,
\[
 \Lambda^{\dR}\otimes_{\CO_C}\CO(N_2)^{Q(N_2)}\xrightarrow{g}\ker(f).
\]
Letting $K'=\ker(g)$, we have an exact sequence
\[
 0\rightarrow K'\rightarrow \Lambda^{\dR}\otimes\CO(N_2)^{Q(N_2)}\rightarrow\Lambda^{\dR}\otimes\CO(N_1)^{P(N_1)}\rightarrow\CF\rightarrow 0
\]
of $\Lambda^{\dR}$-modules giving a locally projective resolution of $\CF$.
By applying the functor $\Hom_{\Lambda^{\dR}}(-,\CG)$, we obtain the complex
\[
 \Hom_{\Lambda^{\dR}}(\Lambda^{\dR}\otimes\CO(N_1)^{P(N_1)},\CG)\rightarrow\Hom_{\Lambda^{\dR}}(\Lambda^{\dR}\otimes\CO(N_2)^{Q(N_2)},\CG)\rightarrow\Hom_{\Lambda^{\dR}}(K',\CG)
\]
which is quasi-isomorphic to the complex $\RHom(\CF,\CG)$. Moreover, we can rewrite this complex as
\[
 \Hom_{\CO_C}(\CO(N_1)^{P(N_1)},\CG)\rightarrow\Hom_{\CO_C}(\CO(N_2)^{Q(N_2)},\CG)\rightarrow\Hom_{\Lambda^{\dR}}(K',\CG)
\]
using that $\Hom_{\Lambda^{\dR}}(\Lambda^{\dR}\otimes\CO(N_1)^{P(N_1)},\CG)\cong \Hom_{\CO_C}(\CO(N_1)^{P(N_1)},\CG)$ (and similarly for $\CO(N_2)$). This construction is uniform over $(\CF,\CG)\in\CM^{\dR}_r\times\CM^{\dR}_s$ for $r,s\in\BoN$ and therefore provides us with a $3$-term complex over $\CM^{\dR}_r\times\CM^{\dR}_s$ quasi-isomorphic to the $\RHom$ complex. Since we can choose $N_1, N_2$ arbitrarily negative, for any connected component $\CM_r^{\dR}\times\CM_s^{\dR}$ of $\CM^{\dR}\times\CM^{\dR}$, the RHom complex is represented by a $3$-term complex of vector bundles. Indeed, for $N_1,N_2\ll0$, $\dim\Hom_{\CO_C}(\CO(N_1)^{P(N_1)},\CG)$ and $\dim\Hom_{\CO_C}(\CO(N_1)^{P(N_1)},\CG)$ do not depend on $\CG$ in a fixed component of $\FM^{\dR}$ and for $(\CF,\CG)\in\FM_r^{\dR}\times\FM_s^{\dR}$, the Euler characteristic of the complex is equal to the Euler form $\chi(\CF,\CG)=2(1-g)rs$, so $\dim_{\BoC} \Hom_{\Lambda^{\dR}}(K',\CG)$ does not depend on $\CF,\CG$ of respective ranks $r$ and $s$.
\end{proof}
We see that the discussion is very similar to that for sheaves on surfaces or Higgs sheaves \cite[\S6.1.3]{davison2022BPS}. The discussion of \cite[\S6.1.4]{davison2022BPS} extends to this situation and gives the necessary properties for the associativity of the CoHA product \cite[Assumption 3]{davison2022BPS}.

\subsection{Sigma-collections}
\label{subsection:Sigmacollections}
A $\Sigma$-object in a $\BoC$-linear dg-category $\SC$ is an object $\CF$ such that
\[
 \dim_{\BoC}\HO^i(\Hom^{\bullet}(\CF,\CF))=
 \left\{
 \begin{aligned}
  1\quad&\text{ if $i=0,2$}\\
  2g\quad& \text{ if $i=1$}\\
  0\quad&\text{ otherwise}
 \end{aligned}
 \right.
\]
for some $g\geq 0$.

A Sigma-collection is a finite collection of pairwise orthogonal $\Sigma$-objects: $(\CF_1,\hdots,\CF_t)$ with $\Hom_{\SC}^0(\CF_i,\CF_j)=0$ for $i\neq j$.

\begin{lemma}
\label{lemma:sigmacollection_connections}
 A collection of non-isomorphic irreducible connections on a smooth projective curve is a $\Sigma$-collection.
\end{lemma}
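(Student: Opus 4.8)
The plan is to compute the graded self-Hom space of an irreducible connection as de Rham hypercohomology, read off the three required dimensions, and deduce orthogonality from Schur's lemma. Write $(E,\nabla_E)$ and $(F,\nabla_F)$ for two connections on $C$, regarded as $\CO_C$-coherent $\Lambda^{\dR}$-modules. As recalled in \S\ref{subsubsection:conneections:thederivedmodstack}, the complex $\Hom^\bullet(E,F)$ computing morphisms in $\SC$ is quasi-isomorphic to the de Rham hypercohomology $\BH^\bullet_{\rmDR}\bigl(C,\intHom(E,F)\bigr)$, where $\intHom(E,F)=E^\vee\otimes F$ carries the induced flat connection. Since $C$ is a curve, the associated de Rham complex $\bigl[\,\intHom(E,F)\xrightarrow{\nabla}\intHom(E,F)\otimes\Omega^1_C\,\bigr]$ sits in degrees $0$ and $1$, so its hypercohomology is supported in degrees $\{0,1,2\}$; equivalently, by Proposition~\ref{proposition:resolutionRHomdR} the object $\RHom(E,F)$ is represented by a complex in degrees $[0,2]$. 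Either way, $\HO^i\bigl(\Hom^\bullet(E,F)\bigr)=\Ext^i(E,F)=0$ for $i\notin\{0,1,2\}$.

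Now take $E=F$ irreducible. In degree $0$, $\HO^0\bigl(\Hom^\bullet(E,E)\bigr)=\Hom_{\SC}(E,E)=\BoC$, since irreducible connections are the simple objects of a finite-length $\BoC$-linear abelian category and their endomorphism algebra is a finite division algebra over the algebraically closed field $\BoC$, hence $\BoC$ (Schur's lemma). In degree $2$, Poincaré--Verdier (de Rham) duality on the smooth proper curve $C$ gives a perfect pairing $\BH^2_{\rmDR}(C,\intHom(E,E))\cong\BH^0_{\rmDR}\bigl(C,\intHom(E,E)^\vee\bigr)^\vee$; as $\intHom(E,E)=E^\vee\otimes E$ is self-dual, this identifies $\HO^2$ with $\Hom_{\SC}(E,E)^\vee=\BoC$. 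Finally, the dimension of $\HO^1$ is forced by the Euler form of \S\ref{subsection:Eulerformconnections}, namely $\chi(E,E)=2(1-g)(\rank E)^2$:
\[
\dim_{\BoC}\HO^1\bigl(\Hom^\bullet(E,E)\bigr)=\dim\HO^0+\dim\HO^2-\chi(E,E)=2\bigl(1+(g-1)(\rank E)^2\bigr).
\]
Being a dimension it is $\geq 0$, and the right-hand side is manifestly even, so it equals $2g'$ with $g'\geq 0$ (note $g'$ need not be the genus of $C$). This exhibits $E$ as a $\Sigma$-object.

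For orthogonality, let $E\not\cong F$ be non-isomorphic irreducible connections. A nonzero morphism $E\to F$ in the abelian category of connections would have image a nonzero subobject of $F$, hence all of $F$ by simplicity, and kernel a proper subobject of $E$, hence $0$ by simplicity, so it would be an isomorphism; thus $\Hom_{\SC}(E,F)=\HO^0\bigl(\Hom^\bullet(E,F)\bigr)=0$, which is exactly the orthogonality condition. Combined with the previous paragraph, this shows that any finite family of pairwise non-isomorphic irreducible connections is a $\Sigma$-collection. The main obstacle is the degree-$2$ computation, i.e.\ making the duality $\HO^2\cong(\HO^0)^\vee$ precise: this is where properness of $C$ genuinely enters, through Poincaré duality for the de Rham cohomology of the flat bundle $\intHom(E,E)$ (equivalently, through the $2$-Calabi--Yau structure on $\SC$). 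One may sidestep it by invoking the Riemann--Hilbert correspondence exactly as in \S\ref{subsection:Eulerformconnections}: it identifies an irreducible connection with an irreducible representation $V$ of $\pi_1(\Sigma_g)$ and $\Ext^\bullet_{\SC}(E,E)$ with the group cohomology $\HO^\bullet(\pi_1(\Sigma_g),\End V)$, after which the three dimensions follow from Schur's lemma, Poincaré duality for the closed oriented surface $\Sigma_g$, and $\chi(\Sigma_g)=2-2g$, thereby reducing everything to the fundamental group algebra computation of \S\ref{subsubsection:theeulerformfdga}.
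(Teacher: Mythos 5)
Your proof is correct, but it takes a more hands-on route than the paper. The paper's entire proof is a one-line reduction: apply the Riemann--Hilbert correspondence and quote the already-established fact that pairwise non-isomorphic irreducible local systems form a $\Sigma$-collection (via the $2$-Calabi--Yau property of $\BoC[\pi_1(\Sigma_g,p)]$ from \S\ref{subsubsection:categorieslocsys}) --- essentially the last sentence of your writeup, promoted to the whole argument. Your main argument instead computes everything directly on the de Rham side: $\Ext^0=\BoC$ by Schur's lemma, concentration in degrees $[0,2]$ from the two-term de Rham complex on a curve, $\Ext^2\cong(\Ext^0)^\vee$ by Poincar\'e duality for the flat bundle $\End(E)$, and $\dim\Ext^1$ forced to be the even nonnegative number $2\bigl(1+(g-1)(\rank E)^2\bigr)$ by the Euler form of \S\ref{subsection:Eulerformconnections}; orthogonality is again Schur. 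What your approach buys is self-containedness --- it never leaves the algebraic/de Rham category and makes explicit exactly where properness of $C$ enters (the degree-$2$ duality), which is the one step the paper's reduction hides inside the $2$CY property of the fundamental group algebra. What the paper's approach buys is brevity and the avoidance of having to justify Poincar\'e duality for flat bundles (equivalently a $2$CY structure on the de Rham category) independently; note, though, that the Euler-form computation in \S\ref{subsection:Eulerformconnections} that you rely on for $\dim\Ext^1$ is itself justified in the paper via Riemann--Hilbert, so your "direct" route is only fully independent of RH if you also take the intrinsic computation of $\chi_{\dR}$ there. Both arguments are sound.
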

\begin{proof}
 This follows directly from the Riemann--Hilbert correspondence and the fact that the analogous statement is satisfied for collections of irreducible local systems on a smooth projective curve (by the 2CY property \S\ref{subsubsection:categorieslocsys}).
\end{proof}

\subsection{The direct sum map}
\begin{proposition}
 The direct sum map $\oplus\colon\CM^{\dR}(C)\times\CM^{\dR}(C)\rightarrow\CM^{\dR}(C)$ is finite.
\end{proposition}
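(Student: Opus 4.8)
The plan is to deduce finiteness from the Riemann--Hilbert correspondence, transferring it from the Betti side where it is already established (\cite[Proposition 6.15]{davison2022BPS}). Since $\oplus$ respects the rank grading, sending $\CM_r^{\dR}(C)\times\CM_s^{\dR}(C)$ into $\CM_{r+s}^{\dR}(C)$, it suffices to treat each such component separately; these are separated finite-type $\BoC$-schemes and $\oplus$ is a morphism between them, so by Zariski's main theorem (a proper quasi-finite morphism of finite type is finite) it is enough to show that $\oplus$ is proper and quasi-finite. Note that one cannot simply appeal to affineness of the target: already in rank one $\CM_1^{\dR}(C)$ fibres over the Jacobian and is not affine, in contrast to the Betti moduli space.

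The Riemann--Hilbert correspondence is an equivalence of categories preserving direct sums and semisimple objects, so the homeomorphism $\Phi\colon\CM^{\dR}(C)\to\CM^{\Betti}(C)$ intertwines the two direct sum maps: the square
\[
 \begin{tikzcd}
	{\CM_r^{\dR}(C)\times\CM_s^{\dR}(C)} & {\CM_{r+s}^{\dR}(C)} \\
	{\CM_r^{\Betti}(C)\times\CM_s^{\Betti}(C)} & {\CM_{r+s}^{\Betti}(C)}
	\arrow["\oplus", from=1-1, to=1-2]
	\arrow["{\Phi\times\Phi}"', from=1-1, to=2-1]
	\arrow["\Phi", from=1-2, to=2-2]
	\arrow["\oplus"', from=2-1, to=2-2]
\end{tikzcd}
\]
commutes, with vertical maps homeomorphisms of analytifications. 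Because the bottom map is finite, its analytification is a topologically proper map with finite fibres; as the square identifies the de Rham direct sum map with the Betti one up to homeomorphism, the analytification of $\oplus$ on the de Rham side is likewise topologically proper with finite fibres. In particular $\oplus$ has finite fibres and, being of finite type, is quasi-finite.

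It then remains to upgrade the topological properness of the analytification to algebraic properness of $\oplus$, after which proper plus quasi-finite gives finiteness. For this I would invoke the standard comparison that a separated finite-type morphism of complex algebraic varieties is proper if and only if the associated map of analytifications is proper as a map of topological spaces (preimages of compact sets are compact). This re-algebraicization is the crux of the argument and the delicate point requiring care: $\Phi$ is only a complex-analytic isomorphism, not an algebraic one, so the reasoning must be routed entirely through the homeomorphism-invariant topological notion of properness and only afterwards transported back to the algebraic category. Everything else -- the rank grading, the finiteness of fibres, and the implication proper $+$ quasi-finite $\Rightarrow$ finite -- is routine. An alternative route, independent of Riemann--Hilbert and parallel to the Dolbeault case \cite[Proposition 6.14]{davison2022BPS}, would be to apply the general good moduli space strategy of \cite[Lemma 2.1]{meinhardt2019donaldson} directly; the argument above is shorter given that the Betti statement is already in hand.
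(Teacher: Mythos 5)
Your proof is correct and follows essentially the same route as the paper: quasi-finiteness from the finiteness of fibres, and properness transferred from the Betti side through the Riemann--Hilbert homeomorphism together with the standard comparison between algebraic properness and topological properness of the analytification. The paper obtains quasi-finiteness directly from the description of closed points as semisimple connections rather than via the homeomorphism, but this is an inessential variation, and your version spells out the GAGA-type re-algebraicization step that the paper leaves implicit.
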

\begin{proof}
We rely on the classical nonabelian Hodge isomorphism (more precisely, the Riemann--Hilbert correspondence), which gives a diagram where the vertical arrows are analytic isomorphisms
 \[
  \begin{tikzcd}
  	& {(\CM^{\dR}(C))^{\an}\times(\CM^{\dR}(C))^{\an}} & {(\CM^{\dR}(C))^{\an}} \\
	{} & {(\CM_g^{\Betti})^{\an}\times(\CM_g^{\Betti})^{\an}} & {(\CM_g^{\Betti})^{\an}}
	\arrow["\oplus", from=2-2, to=2-3]
	\arrow["\Phi", from=1-3, to=2-3]
	\arrow["\Phi\times\Phi"', from=1-2, to=2-2]
	\arrow["\oplus", from=1-2, to=1-3]
\end{tikzcd}.
 \]
 The morphism $\oplus\colon\CM^{\Betti}\times\CM^{\Betti}\rightarrow\CM^{\Betti}$ is finite (\S\ref{subsubsection:directsumBetti}), and therefore, $\oplus^{\an}\colon(\CM^{\dR}(C))^{\an}\times(\CM^{\dR}(C))^{\an}\rightarrow(\CM^{\dR}(C))^{\an}$ is finite. Consequently, so is $\oplus\colon\CM^{\dR}\times\CM^{\dR}\rightarrow\CM^{\dR}$. 
\end{proof}

\section{Cohomological Hall algebras for Betti, de Rham and Dolbeault moduli stacks}
In this section, we first recall the main result of \cite{davison2022BPS} concerning the BPS algebras and the cohomological Hall algebras of the Betti and de Rham moduli stacks (Theorem \ref{theorem:dhsBettiDolbeault}), and we carry out the analogous study for the de Rham moduli stack (Theorem \ref{theorem:CoHAdeRham}).

\subsection{The determinant line bundle on a quotient stack}
\label{subsection:thedetlb}
Let $\FX=X/G$ be a quotient stack where $G$ is a linear algebraic group and $X$ a $G$-variety. For any character $\chi\colon G\rightarrow\BoC^*$, we obtain a morphism
\[
 \tilde{\chi}\colon\FX\rightarrow\FX\times\rmB\BoC^*
\]
which is the identity on the first component and the morphism induced by $X\rightarrow \pt$, and $\chi$ on the second component. We have $\tilde{\chi}^*(\BD\BoQ_{\FX}^{\vir}\boxtimes\BoQ_{\rmB\BoC^*})\cong \BD\BoQ_{\FX}^{\vir}$ (where in this abstract context, $\vir$ is just a certain Tate twist). By adjunction and pushing forward to the good moduli space $f\colon \FX\rightarrow\CX$ (which we assume exists), we obtain the action
\[
 f_*\BD\BoQ_{\FX}^{\vir}\otimes \HO^*_{\BoC^*}\rightarrow f_*\BD\BoQ_{\FX}^{\vir}.
\]
In our cases of interest, we have $G=\GL_r$ for some $r\in\BoN$ and the choosen character is the determinant $\det\colon \GL_r\rightarrow\BoC^*$.

\subsection{CoHA and BPS algebra}
\label{subsection:CoHAandBPSalgebra}
In \cite{davison2020bps}, \cite{davison2021purity} and \cite{davison2022BPS}, it was explained how to fruitfully exploit the good moduli space morphism $\JH\colon\FM_{\CA}\rightarrow\CM_{\CA}$ of the stack of object in a $2$-Calabi--Yau category $\CA$ to study the CoHA of this category. For the sake of efficiency, we only recall the definitions for the Dolbeault and Betti stacks here (the case of general $2$-Calabi--Yau categories is treated in great detail in \cite{davison2022BPS}). We let $\sharp\in\{\Betti,\Dol\}$. The complexes of mixed Hodge modules $\underline{\SA}^{\sharp}\coloneqq\JH_*^{\sharp}\BD\underline{\BoQ}_{\FM^{\sharp}}^{\vir}\in\CD^+(\MHM(\CM^{\sharp}))$ admit a (relative) CoHA product \cite[Theorem 1.1]{davison2022BPS}. Here and in general, the superscript $\vir$ denotes a Tate twist related to the virtual dimension of the stack. More precisely, for $r\in\BoN$, $\underline{\BoQ}_{\FM^{\sharp}_r}^{\vir}=\underline{\BoQ}_{\FM^{\sharp}_r}\otimes\BoL^{(1-g)r^2}$. The complex of mixed Hodge modules $\underline{\SA}^{\sharp}$ is concentrated in nonnegative cohomological degrees \cite[Lemma 10.1]{davison2022BPS}, so that $\underline{\BPS}_{\Alg}^{\sharp}\coloneqq\CH^{0}(\underline{\SA}^{\sharp})\in\MHM(\CM^{\sharp})$ inherits a multiplication. It is called the \emph{relative BPS algebra}. By taking derived global sections, we obtain genuine algebras $\HO^*(\underline{\SA}^{\sharp})$ and $\rmBPS_{\Alg}^{\sharp}$, the latter being a subalgebra of the former. Moreover, the determinant line bundle on $\FM^{\sharp}$ (\S\ref{subsection:thedetlb}) gives an action of $\HO^*_{\BoC^*}$ on $\underline{\SA}^{\sharp}$. As it is now clear from \cite{hennecart2022geometric, davison2022BPS, davison2023bps}, it is crucial to twist the multiplication naturally defined on $\underline{\SA}^{\sharp}$ by a bilinear form $\Psi$. We recall for the reader's convenience that in the framework of quantum groups of Kac--Moody type, this twist exchanges the enveloping algebra and the specialisation at $q=-1$ of the quantum group \cite{hennecart2022geometric}. In our context, when one wants to describe the BPS algebra as the enveloping algebra of some Lie algebra, this is absolutely crucial (the specialisation at $-1$ of the quantum group is not naturally in general the enveloping algebra of any Lie algebra). We briefly recall how this twist works (in our very specific situations).

We choose a bilinar form
\[
 \Psi\colon\BoZ\times\BoZ\rightarrow\BoZ
\]
such that
\[
 \Psi(r,s)+\Psi(s,r)\equiv \chi(r,s)=2(1-g)rs\pmod{2}
\]
where $\chi$ is the Euler form of the category of local systems on the curve $C$ of genus $g$, or of semistable Higgs bundles of slope $0$. We may therefore very well take $\Psi=0$ here (i.e. forget about the twists); a more natural choice would be $\Psi(r,s)=(1-g)rs$ (the Euler form of the category of semistable slope zero vector bundles on $C$). If
\[
 m_{r,s}\colon\underline{\SA}^{\sharp}_r\boxdot\underline{\SA}^{\sharp}_s\rightarrow\SA_{r+s}^{\sharp}
\]
is the $(r,s)$ component of the multiplication of $\underline{\SA}^{\sharp}$, then the $\Psi$-twisted multiplication is given by $m^{\Psi}_{r,s}\coloneqq (-1)^{\Psi(r,s)}m_{r,s}$.

In this paper, a bilinear form $\Psi$ as above is fixed once and for all and although it does not appear in the notation, we consider the $\Psi$-twisted multiplication on $\underline{\SA}^{\sharp}$ (this is justified as $\Psi=0$ is a legitimate choice here, and the reader may wish to make this choice).

\subsection{Local systems and semistable Higgs bundles}
The (relative) cohomological algebras for the stacks of (possibly) twisted local system and Higgs bundles over a smooth projective curve were defined and studied in depth in \cite{davison2022BPS}. One of the motivations of \cite{davison2022BPS} was to obtain a nonabelian Hodge isomorphism between the Borel--Moore homologies of the Betti and Dolbeault moduli spaces, motivated by the $\mathrm{P}=\mathrm{W}$ conjecture and its variants for intersection cohomology of the moduli spaces and Borel--Moore homology for the stacks.

We recall here our main results.

\begin{theorem}[{\cite[Theorem 1.4+Theorem 1.7]{davison2022BPS}}]
\label{theorem:dhsBettiDolbeault}
Let $C$ be a curve of genus $\geq 2$.
\begin{enumerate}
 \item For $\sharp\in\{\Dol,\Betti\}$, we have an idenfication of the relative BPS algebra $\underline{\BPS}_{\Alg}^{\sharp}\cong\Free_{\boxdot-\Alg}\left(\bigoplus_{r\geq 1}\underline{\IC}(\CM_r^{\sharp})\right)$; a relative PBW isomorphism $\Sym_{\boxdot}\left(\underline{\BPS}_{\Lie}^{\sharp}\otimes\HO^*_{\BoC^*}\right)\cong \underline{\SA}^{\sharp}$ where $\underline{\BPS}_{\Lie}^{\sharp}\coloneqq\Free_{\boxdot-\Lie}\left(\bigoplus_{r\geq 1}\underline{\IC}(\CM_{r}^{\sharp})\right)$; and absolute versions of these two statements $\rmBPS_{\Alg}^{\sharp}\cong\Free_{\Alg}\left(\bigoplus_{r\geq 1}\ICA(\CM_r^{\sharp})\right)$ and  $\HO^*\SA^{\sharp}\cong\Sym\left(\rmBPS_{\Lie}^{\sharp}\otimes\HO^*_{\BoC^*}\right)$ where $\rmBPS_{\Lie}^{\sharp}\coloneqq \HO^*(\BPS_{\Lie}^{\sharp})$.
 \item Let $\Xi\colon \CM^{\Betti}\rightarrow\CM^{\Dol}$ be the homeomorphism given by nonabelian Hodge theory. We have isomorphisms of Lie algebra objects $\Xi_*\BPS_{\Lie}^{\Betti}\cong\BPS_{\Lie}^{\Dol}$, of constructible complexes $\Xi_*\SA^{\Betti}\cong\SA^{\Dol}$ and therefore isomorphisms of the BPS cohomologies and Borel--Moore homologies
 \[
  \rmBPS_{\Lie}^{\Betti}\cong \rmBPS_{\Lie}^{\Dol}, \quad \HO^{\rmBM}_*(\FM^{\Betti})\cong\HO^{\rmBM}_*(\FM^{\Dol}).
 \]
\end{enumerate}
\end{theorem}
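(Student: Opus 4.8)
The plan is to deduce both parts from the general theory of cohomological Hall algebras attached to $2$-Calabi--Yau categories, applied to the two categories at hand: finite-dimensional representations of $\pi_1(\Sigma_g)$ on the Betti side and semistable slope-$0$ Higgs sheaves on the Dolbeault side. For part~(1) I would first record the relative CoHA product on $\underline{\SA}^\sharp=\JH_*^\sharp\BD\underline{\BoQ}_{\FM^\sharp}^{\vir}$, built from the correspondence $\FM^\sharp\times\FM^\sharp\leftarrow\mathfrak{Exact}^\sharp\to\FM^\sharp$ whose left leg is presented as the total space of the $3$-term complex of vector bundles resolving $\RHom[1]$ (\S\ref{subsubsection:RHomHiggs} and its Betti analogue); the product is then a virtual pullback followed by a proper pushforward. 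Checking Assumptions~1--3 of \cite{davison2022BPS}---properness of the Quot-type maps, the resolution property of $\FM^\sharp$, and compatibility of global presentations---lifts this construction to the level of mixed Hodge modules.

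The heart of part~(1) is the relative cohomological integrality (PBW) isomorphism $\Sym_\boxdot\!\left(\underline{\BPS}_{\Lie}^\sharp\otimes\HO^*_{\BoC^*}\right)\cong\underline{\SA}^\sharp$. I would establish this by reduction to an \'etale-local model: near a polystable point $x=\bigoplus_i S_i^{\oplus n_i}$ with the $S_i$ simple, the $2$CY property forces $\FM^\sharp$ to be \'etale-locally the stack of representations of the preprojective algebra of the Ext-quiver of $x$ at dimension vector $(n_i)$, so that cohomological integrality for preprojective algebras \cite{davison2020bps} can be glued over $\CM^\sharp$ using the support and purity results of \cite{davison2021purity,davison2023bps}. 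To identify the intrinsic BPS Lie algebra sheaf with the \emph{free} Lie algebra $\Free_{\boxdot-\Lie}\!\left(\bigoplus_{r\geq 1}\underline{\IC}(\CM_r^\sharp)\right)$, I would invoke that for $g\geq 2$ the Euler form $\chi(r,s)=2(1-g)rs$ is totally negative (\S\ref{subsubsection:theeulerformfdga}): each simple $S$ of rank $r$ has $\dim\Ext^1(S,S)=2+2(g-1)r^2\geq 4$ self-extensions, placing the local Ext-quivers in the totally negative regime in which the BPS Lie algebra is free on the intersection complexes $\underline{\IC}(\CM_r^\sharp)$, with no relations among the brackets. A Poincar\'e-series comparison against the free (Lie) algebra, combined with the PBW isomorphism, then upgrades the tautological surjection to an isomorphism, yielding both the freeness of $\underline{\BPS}_{\Alg}^\sharp$ and the PBW statement; derived global sections give the absolute versions.

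For part~(2), the input is that nonabelian Hodge theory furnishes a homeomorphism $\Xi\colon\CM^\Betti\to\CM^\Dol$ that intertwines the direct-sum (monoid) maps $\oplus$, since semisimplification commutes with $\oplus$ and NAHT preserves simple objects and their multiplicities. Working now at the level of constructible (not Hodge) sheaves: because the intersection complex $\IC(\CM_r^\sharp)$ is a topological invariant (Goresky--MacPherson), the homeomorphism induces $\Xi_*\IC(\CM_r^\Betti)\cong\IC(\CM_r^\Dol)$, and because $\Xi$ intertwines $\oplus$ it also intertwines the monoidal products $\boxdot$. Since the bracket on $\BPS_{\Lie}^\sharp$ is the free one on these generators (part~(1)), it transports across $\Xi$, giving the isomorphism of Lie algebra objects $\Xi_*\BPS_{\Lie}^\Betti\cong\BPS_{\Lie}^\Dol$; feeding this through the PBW isomorphism transports the whole complex, giving $\Xi_*\SA^\Betti\cong\SA^\Dol$ as constructible complexes, and hypercohomology yields $\rmBPS_{\Lie}^\Betti\cong\rmBPS_{\Lie}^\Dol$ and $\HO^{\rmBM}_*(\FM^\Betti)\cong\HO^{\rmBM}_*(\FM^\Dol)$.

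The main obstacle is the relative cohomological integrality isomorphism at the level of mixed Hodge modules: the \'etale-local quiver description must respect the good moduli space map and the Hodge structures at once, which is exactly where the support and purity inputs of \cite{davison2021purity,davison2023bps} carry the weight, while the totally negative genus-$\geq 2$ estimate is what trivialises the relation structure. In part~(2) the delicate point is that $\Xi$ is only a homeomorphism and not algebraic, so the CoHA product cannot be transported directly along an algebraic correspondence; it is precisely the freeness from part~(1) that circumvents this, letting one transport the structure through its generators and the topologically-defined intersection complexes rather than through the non-algebraic map itself.
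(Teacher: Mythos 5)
Your proposal is correct and follows essentially the same route as the cited source [DHS, Theorems 1.4 and 1.7], which this paper invokes without reproving: the CoHA is built from the $3$-term presentation of $\RHom[1]$, cohomological integrality and freeness are obtained by étale-locally comparing with preprojective algebras of Ext-quivers (the neighbourhood theorem of \cite{davison2021purity}) in the totally negative regime forced by $g\geq 2$, and part (2) is transported along $\Xi$ through the topologically invariant generators $\IC(\CM_r^{\sharp})$ rather than through the non-algebraic map itself. You are also right to record $\Xi_*\SA^{\Betti}\cong\SA^{\Dol}$ only as constructible complexes, since the cited result does not compare the CoHA products — that comparison is precisely what the present paper adds via the de Rham and Hodge--Deligne intermediaries.
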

The case of curves of genus $\leq 1$ was considered first in \cite{davison2023nonabelian}. Note that neither \cite{davison2023nonabelian} nor \cite{davison2022BPS} take the CoHA structures into account when comparing $\HO^{\rmBM}_*(\FM^{\Betti})$ and $\HO^{\rmBM}_*(\FM^{\Dol})$.

\subsection{Connections}
\label{subsection:connections}
In this section, we explain how to obtain the cohomological Hall algebra structure for the stack of connections over a smooth projective curve following the formalism developped in \cite{davison2022BPS}.

From our verification of the \cite[Assumptions 1-6]{davison2022BPS} in \S\ref{section:connections} regarding the stack of connections on a smooth projective curve, one can apply \cite[Theorem 1.4+Theorem 1.5]{davison2022BPS} to the category of connections. We then obtain the following (compare Theorem \ref{theorem:CoHAdeRham} with Theorem \ref{theorem:dhsBettiDolbeault} concerning Betti and Dolbeault stacks).

\begin{theorem}
\label{theorem:CoHAdeRham}
 The complex of mixed Hodge modules $\underline{\SA}^{\dR}\coloneqq \JH_*^{\dR}\BD\underline{\BoQ}_{\FM^{\dR}}^{\vir}\in\CD^+(\MHM(\CM^{\dR}))$ is pure and admits a relative CoHA product. This makes this complex an associative unital algebra object in $(\CD^+(\MHM(\CM^{\dR})),\boxdot)$. This complex is concentrated in nonnegative cohomological degrees and the degree $0$ cohomology mixed Hodge module $\underline{\BPS}_{\Alg}^{\dR}\coloneqq\CH^{0}\underline{\SA}^{\dR}$ has an induced algebra structure, making it an algebra object in the tensor category $(\MHM(\CM^{\dR}),\boxdot)$ (obtained as in \S\ref{subsubsection:monoidalstrucpoint} for the monoid $(\CM^{\dR},\oplus)$).
\end{theorem}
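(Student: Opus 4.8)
The plan is to obtain the theorem as an instance of the general machinery for cohomological Hall algebras of $2$-Calabi--Yau categories developed in \cite{davison2022BPS}, in exactly the way that framework is applied there to the Betti and Dolbeault stacks (Theorem \ref{theorem:dhsBettiDolbeault}). Under that formalism, once a good moduli space morphism $\JH\colon\FM\to\CM$ of the stack of objects of a $2$-Calabi--Yau category is shown to satisfy \cite[Assumptions 1--6]{davison2022BPS}, the cited \cite[Theorem 1.4 and Theorem 1.5]{davison2022BPS} produce a relative CoHA product on $\JH_*\BD\underline{\BoQ}^{\vir}_{\FM}$, its purity, and the description of its low-degree part. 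The whole proof therefore reduces to checking those assumptions for $\JH^{\dR}\colon\FM^{\dR}(C)\to\CM^{\dR}(C)$, a verification assembled from the results of \S\ref{section:connections}.

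First I would collect the geometric inputs. The category of connections on $C$ is $2$-Calabi--Yau, either through the (derived) Riemann--Hilbert correspondence with the $2$-Calabi--Yau category of local systems (\S\ref{subsubsection:categorieslocsys}) or through the Calabi--Yau structure on the ambient dg-category of graded mixed $\DR_C$-modules anticipated in \S\ref{subsubsection:deRhamalgebra}. Its Euler form is locally constant, $\chi((\CF,\nabla),(\CG,\nabla))=2(1-g)\rank\CF\rank\CG$ (\S\ref{subsection:Eulerformconnections}), so a twisting form $\Psi$ as in \S\ref{subsection:CoHAandBPSalgebra} can be fixed (indeed $\Psi=0$ is admissible). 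The resolution property of $\FM^{\dR}(C)$ follows from the $\GL_r$-linearisation on $R^{\dR}_r(C)$ (\S\ref{subsubsection:connectionsclassical}), supplying part of \cite[Assumption 2]{davison2022BPS}. The $\RHom$ complex over $\FM^{\dR}(C)\times\FM^{\dR}(C)$ is globally presented by a $3$-term complex of vector bundles in degrees $[-1,1]$ whose classical truncation is $\mathfrak{Exact}^{\dR}(C)$ (Proposition \ref{proposition:resolutionRHomdR}), giving the remaining content of \cite[Assumption 2]{davison2022BPS}; the argument extending \cite[\S6.1.4]{davison2022BPS} then yields the compatibility of global presentations with the stack of $2$-step filtrations that is \cite[Assumption 3]{davison2022BPS}, which is what makes the product associative at the level of mixed Hodge modules. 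Properness of the morphism from $\mathfrak{Exact}^{\dR}(C)$ to the stack of objects remembering the middle term (the connection analogue of properness of Quot schemes) gives \cite[Assumption 1]{davison2022BPS}, and the finiteness of the direct sum map $\oplus\colon\CM^{\dR}(C)\times\CM^{\dR}(C)\to\CM^{\dR}(C)$ (established in \S\ref{section:connections}) makes $\boxdot$ monoidal on $\MHM(\CM^{\dR})$, so that a degree-zero cohomology sheaf can inherit a product.

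With these inputs in hand, \cite[Theorem 1.4 and Theorem 1.5]{davison2022BPS} apply verbatim and give the relative CoHA product making $\underline{\SA}^{\dR}$ an associative unital algebra in $(\CD^+(\MHM(\CM^{\dR})),\boxdot)$. The $\Sigma$-collection local model (\S\ref{subsection:Sigmacollections}) identifies the \'etale-local structure of $\CM^{\dR}(C)$ with that of a preprojective-type representation space; the cohomological integrality theorem for such spaces underlies the structural results and yields the purity of $\underline{\SA}^{\dR}$ (the constituent $\underline{\IC}$ sheaves being pure) together with, via the analogue of \cite[Lemma 10.1]{davison2022BPS}, the concentration of $\underline{\SA}^{\dR}$ in nonnegative cohomological degrees. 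This concentration lets $\underline{\BPS}^{\dR}_{\Alg}\coloneqq\CH^0\underline{\SA}^{\dR}$ inherit an associative multiplication from the product, making it an algebra object in $(\MHM(\CM^{\dR}),\boxdot)$ as in \S\ref{subsubsection:monoidalstrucpoint}.

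The hard part will be the verification of the global presentation and its compatibility with filtrations (\cite[Assumptions 2--3]{davison2022BPS}) in the $\Lambda^{\dR}$-module setting rather than for coherent sheaves on a surface: one must construct the $3$-term resolution of Proposition \ref{proposition:resolutionRHomdR} uniformly over each connected component of $\FM^{\dR}(C)$, which relies on the local Noetherianity of the sheaf of differential operators $\Lambda^{\dR}$ and on the boundedness of the family of vector bundles underlying rank $r$ flat connections (\cite[Proposition 3.5]{simpson1994moduli}) to choose the order of filtration and the twists uniformly. A secondary subtlety is purity: it holds here, as on the Dolbeault side, because the de Rham moduli space and its $\Sigma$-collection local models carry pure Hodge structures on their intersection cohomology --- precisely the feature that will later obstruct lifting the comparison with the Betti CoHA to the category of mixed Hodge structures.
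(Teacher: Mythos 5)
Your proposal follows essentially the same route as the paper: reduce to the verification of the Assumptions of \cite{davison2022BPS} carried out in \S\ref{section:connections} (the $3$-term presentation of Proposition \ref{proposition:resolutionRHomdR}, properness, resolution property, finiteness of $\oplus$), build the product from the virtual pullback and proper pushforward, and obtain nonnegativity and purity from the $\Sigma$-collection local model and the comparison with preprojective algebras via the neighbourhood theorem of \cite{davison2021purity}, after which the algebra structure on $\CH^0$ is formal. This matches the paper's proof in both structure and the key inputs, so no further comment is needed.
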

We adopt (implicitly) the same bilinear form $\Psi$ as in \S\ref{subsection:CoHAandBPSalgebra} to twist the product on $\underline{\SA}^{\dR}$. Again, $\Psi=0$ is a possible choice.
\begin{proof}
 The category of connections on a smooth projective curve satisfies the Assumptions 1--3 of \cite{davison2022BPS}, as explained in \S\ref{section:connections}. We write the following diagram for reference:
 \[
  \begin{tikzcd}
	{\FM^{\dR}\times\FM^{\dR}} & {\mathfrak{Exact}^{\dR}} & {\FM^{\dR}} \\
	{\CM^{\dR}\times\CM^{\dR}} && {\CM^{\dR}}
	\arrow["{q^{\dR}}"', from=1-2, to=1-1]
	\arrow["{p^{\dR}}", from=1-2, to=1-3]
	\arrow["{\JH^{\dR}\times\JH^{\dR}}"', from=1-1, to=2-1]
	\arrow["{\JH^{\dR}}", from=1-3, to=2-3]
	\arrow["\oplus"', from=2-1, to=2-3]
\end{tikzcd}
 \]
As in \cite{davison2022BPS}, one can construct a virtual pullback map
 \begin{equation}
 \label{equation:virpbdR}
  \BD\underline{\BoQ}_{\FM^{\dR}\times\FM^{\dR}}\rightarrow q^{\dR}_*\BD\underline{\BoQ}_{\mathfrak{Exact}}\otimes \BoL^{\vrank(\CC^{\dR})}
 \end{equation}
and a pushforward map
\begin{equation}
\label{equation:pfdR}
 p^{\dR}_*\BD\underline{\BoQ}_{\mathfrak{Exact}^{\dR}}\rightarrow\BD\underline{\BoQ}_{\FM^{\dR}}.
\end{equation}
By applying $\oplus_*(\JH^{\dR}\times\JH^{\dR})_*$ to \eqref{equation:virpbdR}, $\JH^{\dR}_*$ to \eqref{equation:pfdR} and composing both together, with a Tate twist, they combine together to give the CoHA product
\[
 m\colon \underline{\SA}^{\dR}\boxdot\underline{\SA}^{\dR}\rightarrow\underline{\SA}^{\dR}.
\]
The fact that $\underline{\SA}^{\dR}$ is concentrated in nonnegative cohomological degrees is a consequence of \S\ref{subsection:Sigmacollections} (more precisely, Lemma~\ref{lemma:sigmacollection_connections}) and the fact that $\JH^{\dR}$ is a good moduli space (so Assumptions 4 and 6 of \cite{davison2022BPS}) are satisfied). Indeed, we can use the neighbourhood theorem \cite[Theorem 5.11]{davison2021purity} and conclude (as in \cite[Lemma~10.1]{davison2022BPS}), using by the fact that the analogous statement for preprojective algebras of quivers is known to be true (see \cite[Theorem 6.1]{davison2021purity} for the proof). The fact that $\underline{\BPS}_{\Alg}^{\dR}$ has an induced algebra structure now follows formally, as in \cite{davison2022BPS}, or \cite[\S7.6.3]{davison2022BPS}, or \cite{davison2020bps} in the case of preprojective algebras. We only use that $\oplus$ is a finite morphism (and so $\oplus_*$ preserves the categories of mixed Hodge modules) and that $\underline{\SA}^{\dR}$ is concentrated in nonnegative cohomological degrees. The comparison with preprojective algebras also gives the purity of the complex of mixed Hodge modules $\underline{\SA}^{\dR}$, as in \cite[Theorem~6.1]{davison2021purity}.
\end{proof}

\begin{theorem}
\label{theorem:freenessPBWconnections}
Let $C$ be a curve of genus $g\geq 2$.
 \begin{enumerate}
  \item The relative BPS algebra is isomorphic to the free algebra generated by the intersection complexes of the good moduli space:
  \[
   \underline{\BPS}^{\dR}_{\Alg}\cong\Free_{\boxdot-\Alg}\left(\bigoplus_{r\geq 1}\underline{\IC}(\CM_r^{\dR})\right)
  \]
  \item We have a relative PBW isomorphism:
  \[
   \Sym_{\boxdot}\left(\underline{\BPS}_{\Lie}^{\dR}\otimes\HO^*_{\BoC^*}\right)\rightarrow\underline{\SA}^{\dR}
  \]
  where the relative BPS Lie algebra is defined as $\underline{\BPS}_{\Lie}^{\dR}\coloneqq\Free_{\boxdot-\Lie}\left(\bigoplus_{r\geq 1}\underline{\IC}(\CM_r^{\dR})\right)$.
\end{enumerate}
\end{theorem}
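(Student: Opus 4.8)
The plan is to observe that Theorem \ref{theorem:freenessPBWconnections} is the exact de Rham analogue of Theorem \ref{theorem:dhsBettiDolbeault}(1) for the Betti and Dolbeault stacks, and that its proof in \cite{davison2022BPS} is formal once the relevant geometric input is available. Having verified the Assumptions 1--6 of \cite{davison2022BPS} for the category of connections in \S\ref{section:connections}, and having established in Theorem \ref{theorem:CoHAdeRham} that $\underline{\SA}^{\dR}$ is a pure complex of mixed Hodge modules, concentrated in nonnegative degrees, carrying a relative CoHA product, we are precisely in the situation where the structural results of \emph{loc. cit.} apply verbatim. I would therefore run the same argument, indicating only the two places where the de Rham geometry enters.

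The heart of the argument is the local structure theory. First I would apply the neighbourhood theorem \cite[Theorem 5.11]{davison2021purity} around each polystable connection $x=\bigoplus_i S_i^{\oplus m_i}$: \'etale-locally on $\CM^{\dR}$, the good moduli space morphism $\JH^{\dR}$ is modelled on the one for the representation stack of the preprojective algebra of the $\Ext$-quiver $Q$ of the simple summands $S_i$. Here the de Rham specifics enter through the Sigma-collection lemma of \S\ref{subsection:Sigmacollections} together with the Euler form computation of \S\ref{subsection:Eulerformconnections}: since any collection of pairwise non-isomorphic irreducible connections is a $\Sigma$-collection, each vertex of $Q$ carries exactly $g$ loops (that is, $2g$ arrows after doubling), so for $g\geq 2$ the local model lies in the regime in which the BPS Lie algebra is freely generated. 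The freeness of the relative BPS algebra on the intersection complexes, and the relative PBW isomorphism, are then known for such preprojective algebras by \cite{davison2020bps}. These local identifications glue to the asserted global statements because $\underline{\SA}^{\dR}$ is pure (Theorem \ref{theorem:CoHAdeRham}): purity forces the simple summands of $\underline{\SA}^{\dR}$ to be shifts of intersection complexes, and the support/cohomological integrality argument pins down the BPS sheaf in rank $r$ as $\underline{\IC}(\CM_r^{\dR})$ rather than any smaller stratum closure.

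For part (2), the relative PBW isomorphism is the cohomological integrality statement for the $2$-Calabi--Yau category of connections: the tensor factor $\otimes\,\HO^*_{\BoC^*}$ records the action of the first Chern class of the determinant line bundle (\S\ref{subsection:thedetlb}), and $\Sym_{\boxdot}$ of the free BPS Lie algebra $\underline{\BPS}_{\Lie}^{\dR}$ recovers the full relative CoHA $\underline{\SA}^{\dR}$. The $\Psi$-twist fixed in \S\ref{subsection:CoHAandBPSalgebra} is what makes $\underline{\BPS}_{\Alg}^{\dR}$ the enveloping algebra $\Env{\underline{\BPS}_{\Lie}^{\dR}}$ rather than the $q=-1$ specialisation of a quantum group, so that ``free associative algebra on the $\underline{\IC}(\CM_r^{\dR})$'' in part (1) is compatible with ``$\Sym_{\boxdot}$ of the free Lie algebra'' in part (2).

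I expect the only genuine obstacle to be the compatibility of the local-to-global gluing with the relative monoidal product $\boxdot$ and with the associativity of the CoHA product. This is exactly what Assumption 3 of \cite{davison2022BPS} (compatibility of global presentations with the stack of $2$-step filtrations, verified through the $3$-term resolution of Proposition \ref{proposition:resolutionRHomdR} in \S\ref{subsection:resolutionRHomdR}) is designed to handle; once it is in place, no new difficulty arises beyond those already treated for the Betti and Dolbeault stacks in \emph{loc. cit.}
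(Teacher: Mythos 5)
Your proposal is correct and follows essentially the same route as the paper: the paper's proof simply notes that the category of connections satisfies Assumptions 1--6 of \cite{davison2022BPS} (verified in \S\ref{section:connections}) and is totally negative for $g\geq 2$ by the Euler form computation of \S\ref{subsection:Eulerformconnections}, then invokes \cite[Theorems 1.4 and 1.5]{davison2022BPS} directly. Your additional unpacking of the local structure theory (neighbourhood theorem, $\Sigma$-collections, $\operatorname{Ext}$-quiver local models) is an accurate account of what lies inside those cited theorems, with your ``$g$ loops at each vertex'' observation playing the role of the paper's total negativity hypothesis, but the paper treats all of this as a black box.
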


\begin{proof}
 The category of connections on a smooth projective curve of genus $\geq 2$ is an Abelian category satisfying the Assumptions 1-6 of \cite{davison2022BPS}. Moreover, it is totally negative in the sense of \cite[\S7]{davison2022BPS} by \S\ref{subsection:Eulerformconnections}. We may therefore apply \cite[Theorem 1.4]{davison2022BPS} to get $(1)$ and \cite[Theorem 1.5]{davison2022BPS} to get $(2)$.
\end{proof}

By taking derived global sections of the isomorphisms in Theorem \ref{theorem:freenessPBWconnections}, we obtain the following.
\begin{corollary}
\label{corollary:absolutedR}
\begin{enumerate}
\item We have the absolute version of the description of the BPS algebra:
  \[
   \rmBPS_{\Alg}^{\dR}\cong\Free_{\Alg}\left(\bigoplus_{r\geq 1}\ICA(\CM_r^{\dR})\right).
  \]
  \item We have the absolute PBW isomorphism
  \[
   \HO^*\SA^{\dR}\cong\Sym\left(\rmBPS_{\Lie}^{\dR}\otimes\HO^*_{\BoC^*}\right).
  \]
 \end{enumerate}
\end{corollary}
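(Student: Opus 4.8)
The plan is to deduce both absolute statements from the relative isomorphisms of Theorem \ref{theorem:freenessPBWconnections} by applying the derived global sections functor $\HO^*(\CM^{\dR},-)$, using that this functor is symmetric monoidal from the convolution product $\boxdot$ to the ordinary tensor product $\otimes$. The first thing I would record is precisely this monoidality. Writing $\pi_{\CM}\colon\CM^{\dR}\to\pt$ for the structure map, so that $\pi_{\CM}\circ\oplus=\pi_{\CM\times\CM}$, one has for any $\SF,\SG\in\CD^+(\MHM(\CM^{\dR}))$
\[
 \HO^*(\CM^{\dR},\SF\boxdot\SG)=(\pi_{\CM})_*\oplus_*(\SF\boxtimes\SG)=(\pi_{\CM\times\CM})_*(\SF\boxtimes\SG)\cong\HO^*(\CM^{\dR},\SF)\otimes\HO^*(\CM^{\dR},\SG),
\]
the middle equality being functoriality of pushforward and the last the Künneth formula (\cite[\S4.2.7]{beilinson2018faisceaux}, exact over the field $\BoQ$). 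Finiteness of $\oplus$ (\S\ref{section:connections}) ensures $\oplus_*$ preserves mixed Hodge modules, as already used to define the relative product. This isomorphism is natural and respects the symmetry constraint, so $\HO^*(\CM^{\dR},-)$ is symmetric monoidal; it is moreover exact and commutes with the rank-graded (hence locally finite) direct sums occurring in the free and symmetric algebra constructions. By construction (\S\ref{subsection:CoHAandBPSalgebra}) the transported product is exactly the absolute CoHA product on $\HO^*\SA^{\dR}$.

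Granting monoidality, both points follow formally. For $(1)$, since a symmetric monoidal functor commuting with direct sums sends $\Free_{\boxdot-\Alg}(V)=\bigoplus_{n}V^{\boxdot n}$ to $\Free_{\Alg}(\HO^*(\CM^{\dR},V))=\bigoplus_{n}\HO^*(\CM^{\dR},V)^{\otimes n}$, applying $\HO^*(\CM^{\dR},-)$ to the relative algebra isomorphism gives
\[
 \rmBPS_{\Alg}^{\dR}=\HO^*(\CM^{\dR},\underline{\BPS}^{\dR}_{\Alg})\cong\Free_{\Alg}\Big(\bigoplus_{r\geq 1}\HO^*(\CM^{\dR},\underline{\IC}(\CM_r^{\dR}))\Big)=\Free_{\Alg}\Big(\bigoplus_{r\geq 1}\ICA(\CM_r^{\dR})\Big),
\]
the last step being $\HO^*(\CM^{\dR},\underline{\IC}(\CM_r^{\dR}))=\ICA(\CM_r^{\dR})$ by definition of $\ICA$. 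For $(2)$, the same functor carries $S_n$-coinvariants to $S_n$-coinvariants (exact over $\BoQ$) and hence intertwines $\Sym_{\boxdot}$ with $\Sym$; applying it to the relative PBW isomorphism $\Sym_{\boxdot}\big(\underline{\BPS}_{\Lie}^{\dR}\otimes\HO^*_{\BoC^*}\big)\isoto\underline{\SA}^{\dR}$, and using $\HO^*(\CM^{\dR},\underline{\BPS}_{\Lie}^{\dR}\otimes\HO^*_{\BoC^*})=\rmBPS_{\Lie}^{\dR}\otimes\HO^*_{\BoC^*}$, yields $\HO^*\SA^{\dR}\cong\Sym\big(\rmBPS_{\Lie}^{\dR}\otimes\HO^*_{\BoC^*}\big)$. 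The Lie bracket on $\rmBPS_{\Lie}^{\dR}=\HO^*(\BPS_{\Lie}^{\dR})$ is the one induced by the monoidal bracket, preserved because the functor is monoidal, so this is an isomorphism of (Lie) algebra objects.

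The argument is essentially formal once monoidality is established, so the only point genuinely requiring care — and the step I would treat as the main (mild) obstacle — is checking that $\HO^*(\CM^{\dR},-)$ respects all the decorations uniformly: that the Künneth isomorphism is compatible with the sign twist $\Psi$ and with the virtual Tate twists entering the relative product, and that it interchanges the free, symmetric and Lie functors at the graded level. All of this reduces to exactness of Künneth over $\BoQ$ together with exactness of the $S_n$-averaging projectors, and is entirely parallel to the Betti and Dolbeault cases of Theorem \ref{theorem:dhsBettiDolbeault}; I would therefore carry it out by appealing to that parallel rather than by redoing the bookkeeping.
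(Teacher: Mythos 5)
Your proposal is correct and follows exactly the route the paper takes: the paper's entire justification is the sentence preceding the corollary, ``By taking derived global sections of the isomorphisms in Theorem \ref{theorem:freenessPBWconnections}, we obtain the following.'' You have simply made explicit the standard facts the paper leaves implicit — that $(\pi_{\CM})_*\oplus_*=(\pi_{\CM\times\CM})_*$ together with K\"unneth makes $\HO^*(\CM^{\dR},-)$ symmetric monoidal from $\boxdot$ to $\otimes$, and that exactness over $\BoQ$ lets it commute with $\Free$ and $\Sym$ — which is a faithful expansion of the intended argument.
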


\subsection{Genus one}
For completeness, we briefly explain the situation in genus one. Let $C$ be a smooth projective curve of genus one. We let
\[
\begin{matrix}
 \Delta_r&\colon&\CM_{1}^{\dR}&\rightarrow&\CM_r^{\dR}\\
         &      &x&\mapsto&x^{\oplus r}
\end{matrix}
\]
be the small diagonal inside the de Rham moduli space.

\begin{theorem}
 \label{theorem:genusone}
 Let $C$ be a smooth projective curve of genus one. Then, we have the following.
 \begin{enumerate}
  \item We have an isomorphism of algebras
  \[
   \underline{\BPS}_{\Alg}^{\dR}=\Sym_{\boxdot}\left(\bigoplus_{r\geq 1}(\Delta_r)_*\underline{\IC}(\CM_1^{\dR})\right).
  \]
  \item We let $\underline{\BPS}_{\Lie}^{\dR}\coloneqq\bigoplus_{r\geq 1}(\Delta_r)_*\underline{\IC}(\CM_1^{\dR})$. It is a Lie algebra object in $\MHM(\CM^{\dR})$ for the trivial (i.e. vanishing) Lie bracket. We have a PBW isomorphism
  \[
   \underline{\SA}^{\dR}=\Sym_{\boxdot}\left(\underline{\BPS}_{\Lie}^{\dR}\otimes\HO^*_{\BoC^*}\right).
  \]
 \end{enumerate}
\end{theorem}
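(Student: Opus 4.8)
The plan is to run the argument in parallel with the proof of Theorem~\ref{theorem:freenessPBWconnections}, replacing the structural input for totally negative $2$-Calabi--Yau categories by its totally isotropic counterpart. The decisive feature, recorded in \S\ref{subsection:Eulerformconnections}, is that for $g=1$ the Euler form $\chi((\CF,\nabla),(\CG,\nabla))=2(1-g)\rank\CF\,\rank\CG$ vanishes identically, so the category of connections on $C$ is \emph{totally isotropic}. First I would pin down the geometry of the good moduli spaces. By the Riemann--Hilbert correspondence, simple connections on $C$ correspond to irreducible representations of $\pi_1(\Sigma_1)\cong\BZ^2$, and since $\BZ^2$ is abelian these are all one-dimensional. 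Hence every simple connection has rank one, every semisimple connection is a direct sum of rank one connections, and the finite direct sum map induces isomorphisms $\CM_r^{\dR}\cong\Sym^r(\CM_1^{\dR})$ of varieties, under which $\Delta_r$ becomes the small diagonal $p\mapsto r\cdot p$.

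Next I would identify the local model of $\FM^{\dR}(C)$ around a polystable object $\bigoplus_i x_i^{\oplus m_i}$, with the $x_i$ pairwise non-isomorphic rank one connections, using the neighbourhood theorem \cite[Theorem 5.11]{davison2021purity} exactly as in the proof of Theorem~\ref{theorem:CoHAdeRham}. Total isotropy forces $\dim\Ext^1(x_i,x_j)=-\chi(x_i,x_j)=0$ for $i\neq j$ (as $\Hom$ and, by $2$-Calabi--Yau duality, $\Ext^2$ vanish between non-isomorphic simples), while $\dim\Ext^1(x_i,x_i)=2g=2$. Thus the Ext-quiver is a disjoint union of Jordan quivers (a single loop at each vertex), and the local model is a product of copies of the preprojective algebra $\Pi=\BoC[x,y]$ of the Jordan quiver. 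For this model -- the isotropic imaginary root case -- the relative CoHA is supercommutative and its BPS structure is known \cite{davison2020bps}: the BPS sheaf in charge $n$ is $(\Delta_n)_*\IC(\BoA^2)$, supported on the small diagonal of $\Sym^n(\BoA^2)$, and the associated BPS Lie algebra is abelian.

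The third step is to globalise. Since the Ext-quiver attached to every rank one simple is the same Jordan quiver, the local computation is constant along $\CM_1^{\dR}$; gluing then identifies the degree-$r$ BPS sheaf of the stack of connections with the pure weight zero mixed Hodge module $(\Delta_r)_*\underline{\IC}(\CM_1^{\dR})$ and propagates the local supercommutativity to a global one. This yields the abelian relative BPS Lie algebra $\underline{\BPS}_{\Lie}^{\dR}=\bigoplus_{r\geq 1}(\Delta_r)_*\underline{\IC}(\CM_1^{\dR})$ of part~(2), while the PBW isomorphism
\[
 \underline{\SA}^{\dR}\cong\Sym_{\boxdot}\left(\underline{\BPS}_{\Lie}^{\dR}\otimes\HO^*_{\BoC^*}\right)
\]
is the totally isotropic specialisation of \cite[Theorem 1.5]{davison2022BPS} (equivalently of the $2$-Calabi--Yau PBW theorem \cite[Corollary 1.8]{davison2023bps}). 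Taking $\CH^0$ and using that the enveloping algebra of an abelian Lie algebra is its symmetric algebra gives $\underline{\BPS}_{\Alg}^{\dR}=\Sym_{\boxdot}\left(\bigoplus_{r\geq 1}(\Delta_r)_*\underline{\IC}(\CM_1^{\dR})\right)$ of part~(1).

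I expect the main obstacle to lie in the globalisation: verifying, at the level of mixed Hodge modules over all of $\CM_r^{\dR}$ rather than \'etale-locally, both the identification of the degree-$r$ BPS sheaf with $(\Delta_r)_*\underline{\IC}(\CM_1^{\dR})$ and the vanishing of the induced bracket between BPS states of different charges (both of which are supported on small diagonals). Here the purity of $\underline{\SA}^{\dR}$ (Theorem~\ref{theorem:CoHAdeRham}) and the support property for BPS sheaves do the work that freeness of the bracket did in the totally negative case. As an independent check, one may transport the genus one Betti computation of \cite{davison2023nonabelian} through the stacky nonabelian Hodge isomorphism of Theorem~\ref{theorem:stackyNAHiso}.
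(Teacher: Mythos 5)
Your proposal is correct and follows essentially the same route as the paper: the proof there simply observes that the genus one category of connections is totally isotropic (\S\ref{subsection:Eulerformconnections}) and invokes the structural results of \cite{davison2023bps} for totally isotropic $2$-Calabi--Yau categories, noting the alternative of adapting \cite[\S\S4.4,5.4]{davison2023nonabelian} — which is exactly the cross-check you mention via the stacky NAHT isomorphism. Your write-up merely unpacks the local Jordan-quiver model underlying that citation, so there is no substantive difference in approach.
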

\begin{proof}
 The category of connections on a genus one curve is totally isotropic (i.e. its Euler form vanishes), \S\ref{subsection:Eulerformconnections}. Therefore, the result follows from \cite[\S6.4.2]{davison2023bps}. It is also possible to adapt the proof of \cite[\S\S4.4,5.4]{davison2023nonabelian} to the case of connections.
\end{proof}

\section{Cohomological Hall algebra for the Hodge moduli stack}
\label{section:CoHAHodge}
\subsection{The Hodge moduli stack}
\label{subsection:Hodgemodulistack}
The categories of $\lambda$-connections for $\lambda\in\BoC$ were imagined by Deligne to interpolate between flat connections and Higgs bundles. The  moduli space of $\bm{\lambda}$-connections gives a way to understand the homeomorphism between the de Rham and the Dolbeault moduli spaces. In this section, we recall its principal features, following \cite{simpson1996hodge}. We only work on smooth projective curves, which had the advantage of simplifying slightly the definitions since the integrability conditions are automatic.

\subsubsection{$\lambda$-connections}
Let $C$ be a smooth projective curve. For $\lambda\in\BoC$, a $\lambda$-connection on $C$ is a pair $(\CF,\nabla)$ of a vector bundle $\CF$ together with a morphim of sheaves
\[
 \nabla\colon \CF\rightarrow \CF\otimes_{\CO_C} \Omega^1_C
\]
satisfying the $\lambda$-twisted Leibniz rule $\nabla(fe)=f\nabla(e)+\lambda e\otimes df$ for any (local) section $e$ of $\CF$ and any local regular function $f$ on $C$. For $\lambda=0$, we obtain Higgs bundles and for $\lambda=1$, usual connections.

Let $p\in C$ be a fixed closed point. A framed $\lambda$-connection of rank $r$ is a triple $(\CF,\nabla,\beta)$ where $\CF$ is a vector bundle of rank $r$, $(\CF,\nabla)$ is a $\lambda$-connection and $\beta\colon \CF_p\rightarrow \BoC^r$ is an isomorphism. We adopt the following convenient terminology: a $\bm{\lambda}$-connection (with bold symbol) is a $\lambda$-connection for some $\lambda\in\BoC$. By \cite[Proposition 4.1]{simpson1996hodge}, which relies on and follows from the formalism developped in \cite{simpson1994moduli,simpson1994moduliII}, there exists a scheme $R^{\Hod}_r(C,p)\rightarrow\BoA^1$ parametrising framed $\bm{\lambda}$-connections and the morphism to $\BoA^1$ remembers the parameter $\lambda$ of the $\bm{\lambda}$-connection. This scheme is acted on by $\GL_r$ by change of framing, and the geometric invariant theory quotient $\CM^{\Hod}_r(C,p)\rightarrow\BoA^1$ is a universal categorical quotient, so that the fiber over $0$ coincides with the Dolbeault moduli space $\CM_r^{\Dol}(C)$ and the fiber over $1$ coincides with the de Rham moduli space $\CM^{\dR}_r(C)$. There is a natural $\BoC^*$-action on $R^{\Hod}_r(C,p)$, given by $t\cdot(E,\nabla,\beta)=(E,t\nabla,\beta)$. It commutes with the $\GL_r$-action. Therefore, it induces a $\BoC^*$-action of $\CM^{\dR}(C)$ covering the natural $\BoC^*$-action on $\BoA^1$.

The stack-theoretic quotient $\FM^{\Hod}_r(C)\coloneqq R^{\Hod}_r(C,p)/\GL_r\rightarrow \BoA^1$ is the stack of $\bm{\lambda}$-connections. It is an Artin stack over $\BoA^1$ sending a $\BoA^1$-scheme $\lambda_S\colon S\rightarrow\BoA^1$ to the groupoid of pairs $(\CF,\nabla)$ where
\begin{enumerate}
 \item $\CF$ is a vector bundle on $C_S=C\times S$,
 \item $\nabla\colon \CF\rightarrow\CF\otimes_{\CO_{C_S}}\Omega^1_{C_S/S}$ is a $\lambda_S$-connection.
\end{enumerate}

We have a commutative diagram
\[
 \begin{tikzcd}
	{\FM_r^{\Hod}} & {} & {\CM_r^{\Hod}} \\
	& {\BoA^1}
	\arrow["{\JH_r^{\Hod}}", from=1-1, to=1-3]
 	\arrow["{\pi_{\FM^{\Hod}_r}}"', from=1-1, to=2-2]
	\arrow["{\pi_{\CM^{\Hod}_r}}", from=1-3, to=2-2]
\end{tikzcd}
\]
and $\JH^{\Hod}_r$ is a good moduli space for $\FM^{\Hod}_r(C)$.

For an arbitrary $\lambda\in\BoC$, the fiber over $\lambda$ of this diagram gives the map
\[
 \JH^{\Hod}_{r,\lambda}\colon\FM^{\Hod}_{r,\lambda}\rightarrow\CM^{\Hod}_{r,\lambda}
\]
from the stack of $\lambda$-connections to the the good moduli space.

Using the $\BoC^*$-action, for any $\lambda\neq\mu\in\BoC\setminus\{0\}$, we have a commutative diagram where the horizontal maps are isomorphisms
\[
 \begin{tikzcd}
	{\FM^{\Hod}_{\lambda}} & {\FM^{\Hod}_{\mu}} \\
	{\CM^{\Hod}_{\lambda}} & {\CM^{\Hod}_{\mu}}
	\arrow["{\JH^{\Hod}_{\lambda}}"', from=1-1, to=2-1]
	\arrow["\cong", from=1-1, to=1-2]
	\arrow["\cong", from=2-1, to=2-2]
	\arrow["{\JH^{\Hod}_{\mu}}", from=1-2, to=2-2]
\end{tikzcd}.
\]

\subsubsection{The sheaf of differential operators}
\label{subsubsection:sheafdiffoperators}
We recall the \emph{deformation to the associated graded} of \cite[p. 86]{simpson1994moduli} used to construct the moduli stack and moduli space of 
$\bm{\lambda}$-connections.

We let $\lambda$ be a coordinate on $\BoA^1$, and we define $\Lambda^{\Hod}$ be the sheaf of $\CO_{C\times\BoA^1}$-algebras over $C\times\BoA^1$ defined as the subsheaf of $\pr_1^*\Lambda^{\dR}$ generated by section of the form $\sum\lambda^is_i$ where $s_i$ are sections of $\Lambda^{\dR}$.

This sheaf of algebra interpolates between $\Lambda^{\dR}$ and $\Lambda^{\Dol}$. Namely, for any $t\neq 0$, $\Lambda^{\Hod}_{C\times\{t\}}\cong\Lambda^{\dR}$ while for $t=0$, $\Lambda^{\Hod}_{C\times\{0\}}\cong \Lambda^{\Dol}$.

The natural filtration of $\Lambda^{\dR}$ by the order of differential operators induces a filtration of $\Lambda^{\Hod}=\bigcup_{l\geq 0}\Lambda^{\Hod}_{\leq l}$ by coherent $\CO_{C\times\BoA^1}$-submodules. The associated graded of $\Lambda^{\Hod}$ with respect to this filtration is isomorphic to $(\pi_{\Tan^*C})_*\CO_{\Tan^*C}\boxtimes \CO_{\BoA^1}$ where $\pi_{\Tan^*C}\colon\Tan^*C\rightarrow C$ is the cotangent projection.

\subsubsection{Preferred trivialization}
\label{subsubsection:preferredtrivialization}
There is a canonical way to trivialize the Hodge moduli space $\CM_{r}^{\Hod}\rightarrow\BoA^1$ in the category of topological spaces over $\BoA^1$. It is explained in \cite[pp. 20-21]{simpson1996hodge} and is called the trivialization \emph{via preferred sections}. It provides us with homeomorphisms $\CM^{\Hod}_r\cong\CM_r^{\dR}\times\BoA^1$ over $\BoA^1$ for any $r\in\BoN$. We refer to \emph{loc.cit.} for more details. The advantage is that it gives compatible trivializations for all $r\in \BoN$, in the sense that, by their canonicity, they are compatible with the direct sum: for any $r,s\in\BoN$, the diagram in the category of topological spaces
\[
 \begin{tikzcd}
	{\CM_r^{\Hod}\times_{\BoA^1}\CM_s^{\Hod}} & {\CM_{r+s}^{\Hod}} \\
	{\CM_r^{\dR}\times\CM_s^{\dR}\times\BoA^1} & {\CM_{r+s}^{\dR}\times\BoA^1}
	\arrow["\oplus", from=1-1, to=1-2]
	\arrow[from=1-2, to=2-2]
	\arrow[from=1-1, to=2-1]
	\arrow["{\oplus\times\id_{\BoA^1}}"', from=2-1, to=2-2]
\end{tikzcd}
\]
commutes, where vertical arrows are given by this trivialization. When considering trivializations of the Hodge moduli space, we will always refer to this preferred trivialization, as the compatibility with direct sum is important for CoHAs considerations.

\subsubsection{Stack of extensions and RHom complex}
In this section, we explain how to obtain the RHom complex over $\FM^{\Hod}\times\FM^{\Hod}$. This complex interpolates between the RHom complexes for the de Rham and the Dolbeault moduli stacks (\S \ref{subsection:resolutionRHomdR} and \S \ref{subsubsection:RHomHiggs} respectively). We give a global presentation as a $3$-term complex of vector bundles, a crucial property for the construction of the CoHA product in \S\ref{subsection:CoHAstructure} following the formalism of \cite{davison2022BPS}. The following lemma is the version over the Hodge moduli stack of Proposition \ref{proposition:resolutionRHomdR}.

\begin{lemma}
\label{lemma:3termHodge}
 The RHom complex over $\FM^{\Hod}\times\FM^{\Hod}$ admits a global resolution by a $3$-term complex of vector bundles. We let $\CC^{\Hod}\coloneqq\RHom[1]$ be (the shift of) such a resolution. Moreover, the complexes $(\imath_{\FM^{\Dol}}\times\imath_{\FM^{\Dol}})^*\CC^{\Hod}$ and $\CC^{\Dol}$ on the one hand and $(\imath_{\FM^{\dR}}\times\imath_{\FM^{\dR}})^*\CC^{\Hod}$ and $\CC^{\dR}$ on the other hand, are quasi-isomorphic.
\end{lemma}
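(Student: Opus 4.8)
The plan is to transpose the construction of Proposition \ref{proposition:resolutionRHomdR} essentially verbatim, replacing the sheaf $\Lambda^{\dR}$ on $C$ by the sheaf of algebras $\Lambda^{\Hod}$ on $C\times\BoA^1$ introduced in \S\ref{subsubsection:sheafdiffoperators}, and carrying out everything relatively over $\BoA^1$. A $\bm{\lambda}$-connection is precisely a $\Lambda^{\Hod}$-module which is $\CO_C$-coherent (in the relative sense over $\BoA^1$), so for a pair $(\CF,\CG)$ of $\bm{\lambda}$-connections of ranks $r$ and $s$ the RHom complex is $\RHom_{\Lambda^{\Hod}}(\CF,\CG)$, and the goal is to present its shift by a three-term complex of vector bundles over $\FM_r^{\Hod}\times\FM_s^{\Hod}$.

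First I would produce, uniformly in the family, a length-two locally projective resolution
\[
 0\rightarrow K'\rightarrow \Lambda^{\Hod}\otimes\CO(N_2)^{Q(N_2)}\rightarrow\Lambda^{\Hod}\otimes\CO(N_1)^{P(N_1)}\rightarrow\CF\rightarrow 0
\]
of $\CF$ as a $\Lambda^{\Hod}$-module, exactly as in the proof of Proposition \ref{proposition:resolutionRHomdR}. Two inputs make this uniform over $\FM^{\Hod}$: the local Noetherianity of $\Lambda^{\Hod}$, which holds as for $\Lambda^{\dR}$ (cf.\ \cite[Theorem 2.6.11]{hotta2007d}) and lets us fix a single order $k$ so that the order-$\leq k$ part of the kernel generates it; and the uniform boundedness of the vector bundles underlying a rank $r$ $\bm{\lambda}$-connection over all of $\BoA^1$, which is built into Simpson's construction of the Hodge moduli space \cite[Proposition 4.1]{simpson1996hodge} (extending \cite[Proposition 3.5]{simpson1994moduli}). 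Applying $\Hom_{\Lambda^{\Hod}}(-,\CG)$ and using the adjunction $\Hom_{\Lambda^{\Hod}}(\Lambda^{\Hod}\otimes_{\CO}V,\CG)\cong\Hom_{\CO}(V,\CG)$ then yields a three-term complex computing $\RHom_{\Lambda^{\Hod}}(\CF,\CG)$. For $N_1,N_2\ll 0$ the outer ranks $P(N_1),Q(N_2)$ are constant, and the point guaranteeing that the third term is again a vector bundle is that the Euler characteristic of the complex equals the Euler form $2(1-g)rs$ by the same Riemann--Roch computation as in \S\ref{subsection:Eulerformconnections} (the relevant $\bm{\lambda}$-de Rham complex $\CF^{\vee}\otimes\CG\rightarrow\CF^{\vee}\otimes\CG\otimes\Omega^1_C$ has Euler characteristic independent of $\lambda$); hence $\dim_{\BoC}\Hom_{\Lambda^{\Hod}}(K',\CG)$ is constant over $\FM_r^{\Hod}\times\FM_s^{\Hod}$. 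This produces $\CC^{\Hod}=\RHom[1]$ as a three-term complex of vector bundles.

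For the compatibility with the fibers I would use that $\Lambda^{\Hod}$ restricts to $\Lambda^{\Dol}$ over $\lambda=0$ and to $\Lambda^{\dR}$ over $\lambda=1$ (\S\ref{subsubsection:sheafdiffoperators}). Since the ranks of all three bundles are $\lambda$-independent, the pullbacks $(\imath_{\FM^{\Dol}}\times\imath_{\FM^{\Dol}})^*\CC^{\Hod}$ and $(\imath_{\FM^{\dR}}\times\imath_{\FM^{\dR}})^*\CC^{\Hod}$ are again three-term complexes of vector bundles of the expected ranks. The content is then a base-change statement: restricting the resolution above to a fiber $\{\lambda\}$ again yields a locally projective resolution of $\CF$ as a $\Lambda^{\Dol}$- (resp.\ $\Lambda^{\dR}$-)module, so the restricted complex computes $\RHom$ in the fiber category, i.e.\ is quasi-isomorphic to the Dolbeault (resp.\ de Rham) $\RHom[1]$. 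Since $\CC^{\Dol}$ (\S\ref{subsubsection:RHomHiggs}) and $\CC^{\dR}$ (Proposition \ref{proposition:resolutionRHomdR}) represent these same complexes, the asserted quasi-isomorphisms follow.

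The main obstacle I anticipate is precisely this base-change step: one must check that forming the resolution, and hence the RHom complex, commutes with restriction to a fiber of $\BoA^1$. Concretely this amounts to the vanishing of the relevant $\operatorname{Tor}$'s, equivalently to the flatness over $\BoA^1$ of the syzygy kernel $K'$ and of the intermediate bundles. The constancy of the Euler characteristic across $\BoA^1$ (so that ranks do not jump) together with the uniform boundedness is exactly what should be leveraged to deduce this flatness and local freeness of $K'$, as in the analogous discussion for sheaves on surfaces \cite[\S6.1.3, \S6.1.4]{davison2022BPS}. Verifying this carefully is where the genuine work lies; the remainder is a direct transcription of Proposition \ref{proposition:resolutionRHomdR} with $\Lambda^{\dR}$ replaced by $\Lambda^{\Hod}$.
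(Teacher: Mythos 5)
Your proposal follows essentially the same route as the paper: the paper's proof is, in its own words, ``literally the same proof'' as Proposition \ref{proposition:resolutionRHomdR} carried out over $\BoA^1$ with $\Lambda^{\Hod}$ in place of $\Lambda^{\dR}$, using Noetherianity of $\Lambda^{\Hod}$ and boundedness of the family to build a uniform two-step locally projective resolution, then applying $(\pr_2)_*\intHom_{\Lambda^{\Hod}}(-,\CG)$ and checking fiberwise compatibility. The base-change step you single out as the main obstacle is in fact dispatched in the paper with the phrase ``by construction,'' so your extra attention to flatness of the syzygy kernel over $\BoA^1$ is, if anything, more careful than the published argument.
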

\begin{proof}
We proceed as in \S\ref{subsection:resolutionRHomdR}, with litterally the same proof (except that we work over $\BoA^1$). We let $(\CF,\nabla)$ be a vector bundle with $\bm{\lambda}$-connection over $C$, i.e. a vector bundle on $C\times\BoA^1$ with a $\Lambda^{\Hod}$-module structure (\S\ref{subsubsection:sheafdiffoperators}). We denote by $\pr_i$, $i=1,2$ the projections from $C\times\BoA^1$ on the first and second factors. If $\CO_C(N)$ is a line bundle on $C$, we denote by $\CO_{C\times\BoA^1}(N)$ the pullback $\pr_1^*\CO_C(N)$. We let $N_1\ll0$, so that $(\pr_2)_*\intHom_{\CO_{C\times\BoA^1}}(\CO_{C\times\BoA^1}(N_1),\CF)$ is a (necessarily trivial by contractibility of $\BoA^1$) vector bundle on $\BoA^1$ of rank $P(N_1)$ and we have an epimorphism
\[
 \CO_{C\times\BoA^1}(N_1)\otimes_{\CO_{C\times\BoA^1}}\intHom_{\CO_{C\times\BoA^1}}(\CO_{C\times\BoA^1}(N_1),\CF)\cong\CO_{C\times\BoA^1}(N_1)^{P(N_1)}\xrightarrow{\tilde{f}}\CF.
\]
Thanks to the $\Lambda^{\Hod}$-structure on $\CF$, we obtain an epimorphism
\[
 \Lambda^{\Hod}\otimes_{\CO_{C\times\BoA^1}}\CO_{C\times\BoA^1}(N_1)^{P(N_1)}\xrightarrow{f}\CF.
\]
We let $\CK_1=\ker(f)$. Since $\Lambda^{\Hod}$ is Noetherian, there exists $l\in\BoN$ such that $\ker(f)$ is generated as a $\Lambda^{\Hod}$-Hodge module by $\CK_{1,\leq l}=\ker(f_{| \Lambda^{\Hod}_{\leq l}\otimes_{\CO_C}\CO_{C\times\BoA^1}(N_1)^{P(N_1)}})$. We let $N_2\ll0$ be such that we have an epimorphism $\CO_{C\times\BoA^1}(N_2)\otimes_{\CO_{\BoA^1}}(\pr_2)_*\intHom(\CO_{C\times\BoA^1}(N_2),\CK_{1,\leq l})\cong\CO_{C\times\BoA^1}(N_2)^{Q(N_2)}\xrightarrow{\tilde{g}}\CK_{1,\leq l}$. With the $\Lambda^{\Hod}$-structure on $\CK_{1}$, we obtain an epimorphism
\[
 \Lambda^{\Hod}\otimes_{\CO_{C\times\BoA^1}}\CO_{C\times\BoA^1}(N_2)^{Q(N_2)}\xrightarrow{g}\CK_1.
\]
We obtain an exact sequence
\[
 0\rightarrow\CK\rightarrow\Lambda^{\Hod}\otimes_{\CO_{C\times\BoA^1}}\CO_{C\times\BoA^1}(N_2)^{Q(N_2)}\rightarrow\Lambda^{\Hod}\otimes_{\CO_{C\times\BoA^1}}\CO_{C\times\BoA^1}(N_1)^{P(N_1)}\rightarrow\CF\rightarrow0
\]
defining at the same time $\CK=\ker(g)$. Applying the left-exact functor $(\pr_2)_*\intHom_{\Lambda^{\Hod}}(-,\CG)$, we obtain the sequence of coherent sheaves on $\BoA^1$
\[
 (\pr_2)_*\intHom_{\Lambda^{\Hod}}(\CF,\CG)\rightarrow(\pr_2)_*\intHom_{\Lambda^{\Hod}}(\Lambda^{\Hod}\otimes_{\CO_{C\times\BoA^1}}\CO(N_1)^{P(N_1)},\CG)\rightarrow(\pr_2)_*\intHom_{\Lambda^{\Hod}}(\Lambda^{\Hod}\otimes_{\CO_{C\times\BoA^1}}\CO(N_2)^{P(N_2)},\CG).
\]
By construction, the fiber over $\lambda\in\BoA^1$ of this sequence is quasi-isomorphic to $\RHom_{\Lambda^{\Hod}_{\lambda}}(\CF_{\lambda},\CG_{\lambda})$ where $\CF_{\lambda}$ (resp. $\CG_{\lambda}$) is the restriction to $\FM_{\lambda}^{\Hod}$ of $\CF$ (resp. $\CG$). For fixed $r,s\in\BoN$, one can perform this construction uniformly over $(\CF,\CG)\in\FM_r^{\Hod}\times\FM_s^{\Hod}$ (by boundedness of semistable $\Lambda^{\Hod}$-modules of fixed rank), producing a $3$-term complex of vector bundles over $\FM_r^{\Hod}\times\FM_s^{\Hod}$.
\end{proof}

Given a derived stack $\boldsymbol{\FM}$, we denote by $t_0(\boldsymbol{\FM})$ the classical truncation of $\boldsymbol{\FM}$. Moreover, the Total space of a complex of vector bundles $\CC$ concentrated in degrees $[-1,+\infty)$ on an Artin stack is a derived stack, which we denote by $\Tot(\CC)$ (see \cite[Definition~3.4]{porta2022two}).
\begin{proposition}
 We have $t_0(\Tot(\CC^{\Hod}))\simeq\mathfrak{Exact}^{\Hod}$, the stack of short exact sequence of $\Lambda^{\Hod}$-modules.
\end{proposition}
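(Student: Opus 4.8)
The plan is to run, over the base $\BoA^1$, the same identification that gives $t_0(\Tot(\CC^{\dR}))\simeq\mathfrak{Exact}^{\dR}$ and $t_0(\Tot(\CC^{\Dol}))\simeq\mathfrak{Exact}^{\Dol}$, now for $\Lambda^{\Hod}$-modules. The conceptual input is that the derived stack $\bm{\mathfrak{Exact}}^{\Hod}$ of short exact sequences of $\Lambda^{\Hod}$-modules, together with the projection $\bm{q}^{\Hod}\colon\bm{\mathfrak{Exact}}^{\Hod}\to\bm{\FM}^{\Hod}\times\bm{\FM}^{\Hod}$ recording the sub- and quotient-objects, is the derived total space of the shifted $\RHom$-complex. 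Indeed, by the relative deformation theory of extensions (as in \cite[Proposition 3.6, Corollary 3.7]{porta2022two}), the cotangent complex of $\bm{q}^{\Hod}$ is dual to $\RHom[1]$: at a split extension of $(\CF,\CG)$ the relative tangent complex is $\RHom_{\Lambda^{\Hod}}(\CF,\CG)[1]$, exactly as in the fibrewise computation \eqref{equation:tangentextensions}. Since Lemma \ref{lemma:3termHodge} presents $\RHom[1]$ by the genuine $3$-term complex of vector bundles $\CC^{\Hod}$, I would deduce an equivalence of derived stacks $\bm{\mathfrak{Exact}}^{\Hod}\simeq\Tot(\CC^{\Hod})$ over $\bm{\FM}^{\Hod}\times\bm{\FM}^{\Hod}$.

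Next I would pass to classical truncations. On the left, $t_0(\bm{\mathfrak{Exact}}^{\Hod})=\mathfrak{Exact}^{\Hod}$ is the classical stack of short exact sequences of $\Lambda^{\Hod}$-modules by construction. On the right, writing $\CC^{\Hod}=(V^{-1}\xrightarrow{a}V^0\xrightarrow{b}V^1)$, the classical total space of this $3$-term complex in the sense of \cite[Definition 4.5]{davison2022BPS} is the quotient stack $[\ker(b)/V^{-1}]$ for the additive $V^{-1}$-action through $a$. Matching the two sides amounts to exhibiting the universal extension: unwinding the resolution $\Lambda^{\Hod}\otimes\CO(N_2)^{Q}\to\Lambda^{\Hod}\otimes\CO(N_1)^{P}\to\CF\to 0$ built in Lemma \ref{lemma:3termHodge}, a point $\phi\in\ker(b)$ over $(\CF,\CG)$ is precisely the pushout datum presenting an extension of $\CF$ with kernel $\CG$, while the $V^{-1}$-action records the indeterminacy $\Hom_{\CO}(\CO(N_1)^{P},\CG)$ in this presentation. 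On cohomology this recovers $H^{-1}(\CC^{\Hod})=\Hom_{\Lambda^{\Hod}}(\CF,\CG)$ as the automorphisms of a fixed extension and $H^{0}(\CC^{\Hod})=\Ext^1_{\Lambda^{\Hod}}(\CF,\CG)$ as its set of isomorphism classes, so the resulting morphism $t_0(\Tot(\CC^{\Hod}))\to\mathfrak{Exact}^{\Hod}$ is an isomorphism of stacks.

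The hard part will be organising everything \emph{relatively over} $\BoA^1$: the universal extension must be a flat family of honest $\Lambda^{\Hod}$-modules over $C\times\BoA^1$, not merely a fibrewise collection of $\lambda$-connections. This is forced by the fact that the whole resolution in Lemma \ref{lemma:3termHodge} is built from the single sheaf of algebras $\Lambda^{\Hod}$ over $C\times\BoA^1$ and is uniform in $(\CF,\CG)$ by boundedness of semistable $\Lambda^{\Hod}$-modules of fixed rank (together with the contractibility of $\BoA^1$, which trivialises the direct-image bundles $V^{-1},V^0,V^1$). I expect this relative bookkeeping to be the only genuine obstacle. As a consistency check, restricting along the Cartesian squares to $\lambda=0$ and $\lambda=1$ should recover the already-established identifications $t_0(\Tot(\CC^{\Dol}))\simeq\mathfrak{Exact}^{\Dol}$ and $t_0(\Tot(\CC^{\dR}))\simeq\mathfrak{Exact}^{\dR}$, via the quasi-isomorphisms $(\imath_{\FM^{\Dol}}\times\imath_{\FM^{\Dol}})^*\CC^{\Hod}\simeq\CC^{\Dol}$ and $(\imath_{\FM^{\dR}}\times\imath_{\FM^{\dR}})^*\CC^{\Hod}\simeq\CC^{\dR}$ of Lemma \ref{lemma:3termHodge} and the compatibility of the total-space construction with this base change.
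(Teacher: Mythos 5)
Your proposal is correct and follows essentially the same route as the paper, whose own proof is just a one-line appeal to the "considerations regarding stacks of short exact sequences and cotangent bundles" in the reference for Higgs sheaves; your argument spells out exactly those considerations (derived stack of extensions, its relative cotangent complex identified with the dual of $\RHom[1]$ via the cited results of Porta--Sala, classical truncation of the total space of the $3$-term presentation, and the relative bookkeeping over $\BoA^1$). The identification of $H^{-1}(\CC^{\Hod})$ with $\Hom_{\Lambda^{\Hod}}(\CF,\CG)$ and $H^{0}(\CC^{\Hod})$ with $\Ext^1_{\Lambda^{\Hod}}(\CF,\CG)$, and the compatibility with the restrictions to $\lambda=0,1$, are exactly the checks one needs.
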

\begin{proof}
 This can be proven using the same considerations as in \cite[Proposition~3.6]{porta2022two} regarding stacks of short exact sequences and cotangent bundles.
\end{proof}

\subsection{Cohomological Hall algebra structure}
\label{subsection:CoHAstructure}
In this section, we construct the cohomological Hall algebra product on $(\JH^{\Hod})_*\BD\underline{\BoQ}_{\FM^{\Hod}}^{\vir}$, where $\underline{\BoQ}_{\FM^{\Hod}_r}^{\vir}\coloneqq\underline{\BoQ}_{\FM^{\Hod}_r}\otimes\BoL^{(1-g)r^2}$. Consider the diagram
\[
 \begin{tikzcd}
	{\FM^{\Hod}_r\times_{\BoA^1}\FM^{\Hod}_s} & {\mathfrak{Exact}^{\Hod}_{r,s}} & {\FM^{\Hod}_{r+s}} \\
	{\CM^{\Hod}_r\times_{\BoA^1}\CM^{\Hod}_s} && {\CM_{r+s}^{\Hod}}
	\arrow["q^{\Hod}"',from=1-2, to=1-1]
	\arrow["p^{\Hod}",from=1-2, to=1-3]
	\arrow["\oplus"', from=2-1, to=2-3]
	\arrow["{\JH^{\Hod}_r\times_{\BoA^1}\JH^{\Hod}_s}"', from=1-1, to=2-1]
	\arrow["{\JH^{\Hod}_{r+s}}", from=1-3, to=2-3]
\end{tikzcd}
\]
The virtual pullback for the three-term complex of vector bundles $\CC^{\Hod}$ (Lemma \ref{lemma:3termHodge}) defined as in \cite[\S4.4.3]{davison2022BPS} gives a morphism
\[
 v_{r,s}\colon \BD\underline{\BoQ}_{\FM_r^{\Hod}\times_{\BoA^1}\FM_s^{\Hod}}\rightarrow q^{\Hod}_*\BD\underline{\BoQ}_{\mathfrak{Exact}^{\Hod}_{r,s}}\otimes\BoL^{\vrank(\CC^{\Hod})}.
\]
The pushforward for the proper map $p^{\Hod}$ gives a morphism
\[
 w_{r,s}\colon p^{\Hod}_*\BD\underline{\BoQ}_{\mathfrak{Exact}^{\Hod}_{r,s}}\rightarrow\BD\underline{\BoQ}_{\FM^{\Hod}_{r+s}}.
\]
The degree $(r,s)$ component of the cohomological Hall algebra product is given by the Tate twisted composition
\[
 m_{r,s}\coloneqq ((\JH_{r+s}^{\Hod})_*w_{r,s}\circ\oplus_*(\JH_r^{\Hod}\times_{\BoA^1}(\JH_s^{\Hod}))_*v_{r,s})\otimes\BoL^{2(1-g)(r^2+s^2)}.
\]

\begin{theorem}
\label{theorem:CoHAHodgeDeligne}
The complex of mixed Hodge modules $\underline{\SA}^{\Hod}=(\JH^{\Hod})_*\BD\underline{\BoQ}_{\FM^{\Hod}}^{\vir}$ has an algebra structure $m$ in $(\CD^+(\MHM(\CM^{\Hod})),\boxdot_{\BoA^1})$, whose $(r,s)$-graded component is given by $m_{r,s}$. For $\lambda=0$ (resp. $\lambda=1$), $(\imath_{\CM_{\lambda}^{\Hod}})^!\underline{\SA}^{\Hod}$ is isomorphic to the cohomological Hall algebra of the Dolbeault moduli stack $\underline{\SA}^{\Dol}$ recalled in \S\ref{subsection:CoHAandBPSalgebra} (resp. de Rham moduli stack $\underline{\SA}^{\dR}$, constructed in \S\ref{subsection:connections}).
\end{theorem}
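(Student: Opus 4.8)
The plan is to prove the statement in two stages: first that the degree $(r,s)$ maps $m_{r,s}$ defined just above assemble into an associative, unital algebra structure on $\underline{\SA}^{\Hod}$ in the relative monoidal category $(\CD^+(\MHM(\CM^{\Hod})),\boxdot_{\BoA^1})$ of \S\ref{subsubsection:monoidalstroveralgvar}, and second that $!$-restriction to the fibers over $\lambda=0$ and $\lambda=1$ recovers $\underline{\SA}^{\Dol}$ and $\underline{\SA}^{\dR}$ as algebra objects.

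For the first stage I would run the construction and associativity proof of \cite[\S4]{davison2022BPS} essentially verbatim, systematically replacing every absolute product by the fiber product over $\BoA^1$ and the absolute monoidal structure $\boxdot$ by the relative structure $\boxdot_{\BoA^1}$. All the geometric inputs this machinery requires are available over $\BoA^1$: the map $p^{\Hod}$ is proper (properness of the relevant Quot schemes of $\Lambda^{\Hod}$-modules), the complex $\CC^{\Hod}$ is a $3$-term complex of vector bundles resolving $\RHom[1]$ with $t_0(\Tot(\CC^{\Hod}))\simeq\mathfrak{Exact}^{\Hod}$ (Lemma \ref{lemma:3termHodge} and the preceding proposition), the stacks $\FM^{\Hod}_r$ have the resolution property, and the global presentation of $q^{\Hod}$ is compatible with the stack of $2$-step filtrations. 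Since $\oplus$ is finite on fibers, Proposition \ref{proposition:monoidalstructures} guarantees that $\boxdot_{\BoA^1}$ is a genuine monoidal structure and that $\underline{\SA}^{\Hod}$ is an algebra object in it. The only step demanding real care here is associativity: the diagram chase with refined Euler classes from \emph{loc. cit.} must be carried out relatively, so the global presentation compatibility used there has to be verified for the fiber products over $\BoA^1$ rather than the absolute ones.

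For the second stage, fix $\lambda\in\{0,1\}$ with corresponding label $\sharp\in\{\Dol,\dR\}$ and use the Cartesian square relating $\FM^{\sharp}\hookrightarrow\FM^{\Hod}$ and $\CM^{\sharp}\hookrightarrow\CM^{\Hod}$ from the introduction. I would first identify the underlying object. Base change for the good moduli space morphism along the closed immersion $\imath_{\CM_\lambda^{\Hod}}$ gives
\[
 \imath_{\CM_\lambda^{\Hod}}^!(\JH^{\Hod})_*\BD\underline{\BoQ}_{\FM^{\Hod}}^{\vir}\cong(\JH^{\sharp})_*\,\imath_{\FM_\lambda^{\Hod}}^!\BD\underline{\BoQ}_{\FM^{\Hod}}^{\vir}.
\]
Combining the identity $\imath^!\BD\cong\BD\imath^*$ with $\imath_{\FM_\lambda^{\Hod}}^*\underline{\BoQ}_{\FM^{\Hod}}\cong\underline{\BoQ}_{\FM^{\sharp}}$ and the fact that the virtual Tate twist $\BoL^{(1-g)r^2}$ is defined by the same exponent on both stacks, one obtains $\imath_{\FM_\lambda^{\Hod}}^!\BD\underline{\BoQ}_{\FM^{\Hod}}^{\vir}\cong\BD\underline{\BoQ}_{\FM^{\sharp}}^{\vir}$, and hence $\imath_{\CM_\lambda^{\Hod}}^!\underline{\SA}^{\Hod}\cong\underline{\SA}^{\sharp}$ as objects of $\CD^+(\MHM(\CM^{\sharp}))$.

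To upgrade this to an isomorphism of algebra objects, I would invoke Proposition \ref{proposition:compexternalS}, which makes $\imath_{\CM_\lambda^{\Hod}}^!$ monoidal from $(\CD^+(\MHM(\CM^{\Hod})),\boxdot_{\BoA^1})$ to $(\CD^+(\MHM(\CM^{\sharp})),\boxdot)$, and then check that $\imath_{\CM_\lambda^{\Hod}}^!$ sends each $m_{r,s}$ to $m^{\sharp}_{r,s}$. As $m_{r,s}$ is a composite of $\oplus_*$, the good moduli space pushforwards, the proper pushforward $w_{r,s}$, and the virtual pullback $v_{r,s}$, this reduces to checking the compatibility of each factor with $!$-restriction to the fiber: proper base change controls $w_{r,s}$ (since $p^{\Hod}$ is proper) and $\oplus_*$ (since $\oplus$ is finite), good moduli space base change controls the $\JH$ pushforwards, and Lemma \ref{lemma:3termHodge}, which identifies $(\imath_{\FM^{\sharp}}\times\imath_{\FM^{\sharp}})^*\CC^{\Hod}$ with $\CC^{\sharp}$, controls $v_{r,s}$. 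I expect the main obstacle to be precisely this last point: following \cite{davison2022BPS}, $v_{r,s}$ is built from refined Euler classes of $\CC^{\Hod}$ in a purely classical way, and one must verify that this refined-Euler-class construction commutes with $!$-restriction along the regular immersion $\{\lambda\}\hookrightarrow\BoA^1$. With the restriction of the complex itself governed by Lemma \ref{lemma:3termHodge}, the heart of the matter is the functoriality of refined Gysin maps under regular restriction, which together with the (established but nontrivial) good moduli space base change for $(\JH^{\Hod})_*$ yields the asserted algebra isomorphisms.
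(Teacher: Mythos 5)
Your proposal is correct and follows essentially the same route as the paper: the algebra structure is exactly the construction of $v_{r,s}$, $w_{r,s}$ and $m_{r,s}$ carried out relatively over $\BoA^1$, and the identification of the fibers rests on the fact that the convolution diagram $\FM^{\Hod}\times_{\BoA^1}\FM^{\Hod}\leftarrow\mathfrak{Exact}^{\Hod}\rightarrow\FM^{\Hod}$ restricts over $0$ and $1$ to the Dolbeault and de Rham diagrams together with the restriction statement for $\CC^{\Hod}$ in Lemma \ref{lemma:3termHodge}. Your write-up is in fact considerably more explicit than the paper's two-sentence proof about the base-change steps and the compatibility of the refined-Euler-class virtual pullback with $!$-restriction to the fiber, which the paper subsumes under ``by construction''.
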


\begin{proof}
 The algebra structure is constructed in the discussion preceding the theorem. We use the fact that $\BD\BoQ_{\FM_r^{\Hod}\times_{\BoA^1}\FM_s^{\Hod}}\cong \BD\BoQ_{\FM_r^{\Hod}}\boxtimes_{\BoA^1}\BD\BoQ_{\FM_s^{\Hod}}$. The restriction of $\underline{\SA}^{\Hod}$ to the fiber over $0,1\in\BoA^1$ coincide with the Dolbeault or de Rham cohomological Hall algebras, as the diagram
 \[
\FM^{\Hod}\times_{\BoA^1}\FM^{\Hod}\leftarrow\mathfrak{Exact}^{\Hod}\rightarrow\FM^{\Hod}
 \]
 restricts over $0,1$ to the corresponding (Dolbeault or de Rham) diagram and the restriction of the complex $\CC^{\Hod}$ is the complex $\CC^{\Dol}$ or $\CC^{\dR}$ by construction (Lemma \ref{lemma:3termHodge}).
\end{proof}

\subsection{Structure of the CoHA of the Hodge moduli stack}
\label{subsection:CoHAHodgemodulistack}
\begin{lemma}
\label{lemma:HodgeCoHAnonnegative}
 The complex of constructible sheaves $\SA^{\Hod}\in\CD_{\rmc}^+(\CM^{\Hod},\BoQ)$ is concentrated in relative cohomological degrees $\geq 0$.
\end{lemma}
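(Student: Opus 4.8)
The plan is to verify the claim fibrewise over $\BoA^1$ by means of the $!$-relative perverse $t$-structure. By Corollary~\ref{corollary:shriekreltstructure} applied to the structural morphism $\pi_{\CM^{\Hod}}\colon\CM^{\Hod}\rightarrow\BoA^1$, proving $\SA^{\Hod}\in\pSD{\BoA^1}{\geq 0}(\CM^{\Hod})$ is equivalent to showing that for every geometric point $\lambda\in\BoA^1$ the $!$-restriction $\imath_{\CM_\lambda^{\Hod}}^!\SA^{\Hod}$ lies in $\pDc{\geq 0}(\CM_\lambda^{\Hod})$. So I would first reduce the entire question to controlling the $!$-restrictions of $\SA^{\Hod}$ to the fibres of $\pi_{\CM^{\Hod}}$.

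Next I would dispose of the locus $\lambda\neq 0$ in one stroke. The $\BoC^*$-action $t\cdot(E,\nabla,\beta)=(E,t\nabla,\beta)$ restricts, over $\BG_m=\BoA^1\setminus\{0\}$, to an \emph{algebraic} isomorphism $\FM^{\Hod}|_{\BG_m}\cong\FM^{\dR}\times\BG_m$ (send a $\lambda$-connection to the pair formed by the connection $\lambda^{-1}\nabla$ and the scalar $\lambda$), compatible with $\JH^{\Hod}$ and with the good moduli space $\CM^{\Hod}|_{\BG_m}\cong\CM^{\dR}\times\BG_m$. Since $\BD\underline{\BoQ}^{\vir}$ and $\JH^{\Hod}$ are transported along this isomorphism, the restriction of $\SA^{\Hod}$ over $\BG_m$ is identified, up to a Tate twist, with the external product $\SA^{\dR}\boxtimes\BD\BoQ_{\BG_m}$. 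As $\BG_m$ is smooth of dimension one, for any geometric point $\lambda$ of $\BG_m$ (closed \emph{or} generic) one has $\imath_\lambda^!\BD\BoQ_{\BG_m}\cong\BoQ$, so that $\imath_{\CM_\lambda^{\Hod}}^!\SA^{\Hod}\cong\SA^{\dR}$; and this last complex lies in $\pDc{\geq 0}$ by Theorem~\ref{theorem:CoHAdeRham}. For the remaining fibre $\lambda=0$ I would invoke Theorem~\ref{theorem:CoHAHodgeDeligne}, which gives $\imath_{\CM_0^{\Hod}}^!\SA^{\Hod}\cong\SA^{\Dol}$, a complex concentrated in nonnegative (perverse) cohomological degrees by \cite[Lemma~10.1]{davison2022BPS}, recalled in \S\ref{subsection:CoHAandBPSalgebra}. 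Combining the two cases, $\imath_{\CM_\lambda^{\Hod}}^!\SA^{\Hod}\in\pDc{\geq 0}(\CM_\lambda^{\Hod})$ for every geometric $\lambda\in\BoA^1$, and Corollary~\ref{corollary:shriekreltstructure} then yields the statement.

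The delicate points are precisely the two fibre identifications. For $\lambda\neq 0$, the usual worry of having to treat the (geometric) generic point of $\BoA^1$ separately evaporates, because the $\BG_m$-trivialization above is algebraic and global over $\BG_m$ and therefore computes every $!$-fibre uniformly, rather than orbit by orbit. For $\lambda=0$ the real content sits in the base-change isomorphism $\imath_{\CM_0^{\Hod}}^!\JH^{\Hod}_*\cong\JH^{\Hod}_{0,*}\,\imath_{\FM_0^{\Hod}}^!$, which is not formal since $\JH^{\Hod}$ is merely a good moduli space morphism and not proper; here I would rely on the fact that this identification has already been carried out in the proof of Theorem~\ref{theorem:CoHAHodgeDeligne}, through the compatibility of the defining diagram and of the three-term complex $\CC^{\Hod}$ with the Dolbeault data (Lemma~\ref{lemma:3termHodge}), so that no new base-change argument is needed beyond citing that theorem. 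The main obstacle, then, is conceptual book-keeping rather than a genuinely new estimate: one must keep track of the shift conventions relating the absolute perverse $t$-structure on the fibres to the relative one over $\BoA^1$, and confirm that the $\geq 0$ bound is invariant under the $!$-restriction $\imath_\lambda^!\BD\BoQ_{\BG_m}\cong\BoQ$ used above.
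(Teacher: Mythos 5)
Your proposal is correct and follows essentially the same route as the paper: reduce to a fibrewise check via the $!$-relative perverse $t$-structure, identify the fibre over $\lambda=0$ with $\SA^{\Dol}$ (nonnegative by \cite[Lemma 10.1]{davison2022BPS}) and the fibres over $\lambda\neq 0$ with $\SA^{\dR}$ via the $\BoC^*$-action (nonnegative by Theorem~\ref{theorem:CoHAdeRham}). The only cosmetic difference is that the paper obtains the fibre identifications by a direct base-change $\imath_{\CM_{\lambda}^{\Hod}}^!\,(\JH^{\Hod})_*\cong(\JH^{\Hod}_{\lambda})_*\,\imath^!$ in the Cartesian diagram rather than by citing Theorem~\ref{theorem:CoHAHodgeDeligne} or a global trivialization over $\mathbf{G}_{\mathrm{m}}$.
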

\begin{proof}
 This follows from the definition of the relative perverse $t$-structure on $\CD^+_{\rmc}(\CM^{\Hod},\BoQ)$. and the fact that for any $\lambda\in\BoC$, by base-change in the diagram
 \[
  \begin{tikzcd}
  {\FM^{\Hod}_{\lambda}} & {\FM^{\Hod}} \\
	{\CM^{\Hod}_{\lambda}} & {\CM^{\Hod}} \\
	{\{\lambda\}} & {\BoA^1}
	\arrow["\imath_{\CM_{\lambda}^{\Hod}}",from=2-1, to=2-2]
	\arrow["\JH_{\lambda}^{\Hod}"',from=1-1, to=2-1]
	\arrow[from=1-1, to=1-2]
	\arrow["\JH^{\Hod}",from=1-2, to=2-2]
	\arrow["\lrcorner"{anchor=center, pos=0.125}, draw=none, from=1-1, to=2-2]
	\arrow[from=2-1,to=3-1]
	\arrow[from=3-1,to=3-2]
	\arrow[from=2-2,to=3-2]
	\arrow["\lrcorner"{anchor=center, pos=0.125}, draw=none, from=2-1, to=3-2]
\end{tikzcd},
 \]
 we get $\imath_{\CM_{\lambda}^{\Hod}}^!(\JH^{\Hod})_*\BD\BoQ_{\FM^{\Hod}}^{\vir}\cong (\JH_{\lambda}^{\Hod})_*\BD\BoQ_{\FM_{\lambda}^{\Hod}}^{\vir}$. If $\lambda=0$, we use that $\imath_{\CM_{\lambda}^{\Hod}}^!(\JH^{\Hod})_*\BD\BoQ_{\FM^{\Hod}}^{\vir}\cong\SA^{\Dol}$ is concentrated is nonnegative perverse degrees, by \cite[Lemma 10.1]{davison2022BPS}. If $\lambda\in \BoC\setminus\{0\}$, we use the $\BoC^*$-equivariance of $\SA^{\Hod}$ which gives $\imath_{\CM_{\lambda}^{\Hod}}^!(\JH^{\Hod})_*\BD\BoQ_{\FM^{\Hod}}^{\vir}\cong \SA^{\dR}$ which, by Theorem \ref{theorem:CoHAdeRham}, is concentrated in nonnegative perverse degrees.

\end{proof}

\begin{corollary}
  The CoHA structure on $\SA^{\Hod}$ induces an algebra structure on the zeroeth relative perverse cohomology $\BPS^{\Hod}\coloneqq\pAH{\BoA^1}{0}(\SA^{\Hod})$.
\end{corollary}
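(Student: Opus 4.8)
The plan is to descend the relative CoHA product $m$ of Theorem~\ref{theorem:CoHAHodgeDeligne} along the canonical truncation map onto the zeroth relative perverse cohomology, exactly as the relative BPS algebra is produced in the Betti, Dolbeault and de Rham cases. By Lemma~\ref{lemma:HodgeCoHAnonnegative} the complex $\SA^{\Hod}\in\CD^+_{\rmc}(\CM^{\Hod},\BoQ)$ lies in $\pSD{\BoA^1}{\geq 0}(\CM^{\Hod})$, so that $\BPS^{\Hod}=\pAH{\BoA^1}{0}(\SA^{\Hod})=\pStau{\BoA^1}{\leq 0}\SA^{\Hod}$ is a relative perverse sheaf and the truncation triangle supplies a canonical morphism $c\colon\BPS^{\Hod}\to\SA^{\Hod}$. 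The goal is to construct $m_{\BPS}\colon\BPS^{\Hod}\boxdot_{\BoA^1}\BPS^{\Hod}\to\BPS^{\Hod}$ in $\Perv(\CM^{\Hod}/\BoA^1)$ making $c$ an algebra morphism, and then to transport the associativity and unit axioms of $m$ across $c$.

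First I would record that $\boxdot_{\BoA^1}$ is exact for the relative perverse $t$-structure, so that $\BPS^{\Hod}\boxdot_{\BoA^1}\BPS^{\Hod}\in\Perv(\CM^{\Hod}/\BoA^1)$; this is precisely the content of Proposition~\ref{proposition:monoidalstructures}, whose hypothesis that $\oplus$ be finite on fibres holds here because each fibre of $\CM^{\Hod}\to\BoA^1$ is either $\CM^{\Dol}(C)$ (over $0$) or isomorphic, via the $\BoC^*$-action, to $\CM^{\dR}(C)$ (over $\BoA^1\setminus\{0\}$), and the direct sum map is finite on both. The mechanism behind this exactness is Proposition~\ref{proposition:compexternalS}: the identity $\imath_{\CM_\lambda^{\Hod}}^!(\SF\boxdot_{\BoA^1}\SG)\cong(\imath_{\CM_\lambda^{\Hod}}^!\SF)\boxdot(\imath_{\CM_\lambda^{\Hod}}^!\SG)$ reduces the relative statement to the fibrewise exactness of the absolute product $\boxdot$, which in turn follows from the $t$-exactness of the external product and of pushforward along the finite map $\oplus$.

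With this in hand, I would form the composite
\[
\phi\colon\BPS^{\Hod}\boxdot_{\BoA^1}\BPS^{\Hod}\xrightarrow{\,c\,\boxdot_{\BoA^1}\,c\,}\SA^{\Hod}\boxdot_{\BoA^1}\SA^{\Hod}\xrightarrow{\,m\,}\SA^{\Hod}.
\]
Its source lies in the heart, hence in $\pSD{\BoA^1}{\leq 0}(\CM^{\Hod})$, and since $c$ is the counit of the right adjoint $\pStau{\BoA^1}{\leq 0}$ to the inclusion $\pSD{\BoA^1}{\leq 0}(\CM^{\Hod})\hookrightarrow\CD^+_{\rmc}(\CM^{\Hod})$, post-composition with $c$ induces a bijection
\[
\Hom(A,\BPS^{\Hod})\isoto\Hom(A,\SA^{\Hod})\qquad\text{for every }A\in\pSD{\BoA^1}{\leq 0}(\CM^{\Hod}).
\]
Applying this to $A=\BPS^{\Hod}\boxdot_{\BoA^1}\BPS^{\Hod}$ factors $\phi$ uniquely as $c\circ m_{\BPS}$, defining the product $m_{\BPS}$. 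The same bijection, applied now to the triple product $\BPS^{\Hod}\boxdot_{\BoA^1}\BPS^{\Hod}\boxdot_{\BoA^1}\BPS^{\Hod}$ (again in the heart, hence admissible), shows that the two associativity composites become equal after post-composition with $c$, and are therefore equal; associativity of $m$ is thereby inherited. Unitality is identical: the monoidal unit $(0_{\CM^{\Hod}})_*\BoQ[2]$ of $\Perv(\CM^{\Hod}/\BoA^1)$ lies in $\pSD{\BoA^1}{\leq 0}(\CM^{\Hod})$, so the unit map of $\SA^{\Hod}$ factors through $\BPS^{\Hod}$.

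I would expect the only genuinely non-formal point to be the relative exactness of $\boxdot_{\BoA^1}$, i.e. the passage through Proposition~\ref{proposition:compexternalS} reducing the claim to the fibres; once $\BPS^{\Hod}\boxdot_{\BoA^1}\BPS^{\Hod}$ is known to be relative perverse, everything else is the standard adjunction argument that extracts a BPS algebra from a CoHA concentrated in nonnegative relative perverse degrees.
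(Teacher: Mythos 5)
Your proposal is correct and follows essentially the same route as the paper: nonnegativity of the relative perverse degrees (Lemma \ref{lemma:HodgeCoHAnonnegative}), preservation of $\Perv(\CM^{\Hod}/\BoA^1)$ under $\boxdot_{\BoA^1}$ via finiteness of $\oplus$ on fibres, and the formal factorisation of $m\circ(c\boxdot_{\BoA^1}c)$ through the truncation. The paper phrases the factorisation as the vanishing of the composite with $\SA^{\Hod}\rightarrow\pStau{\BoA^1}{\geq 1}\SA^{\Hod}$ rather than via the adjunction bijection, but these are the same argument; your explicit verification of associativity and unitality is a harmless addition that the paper leaves implicit.
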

\begin{proof}
 The fact that the algebra structure $m$ on $\SA^{\Hod}$ induces an algebra structure on $\BPS^{\Hod}_{\Alg}$ is a purely formal consequence of the properties of $t$-structures combined with Lemma \ref{lemma:HodgeCoHAnonnegative}. Namely, by adjunction, we have a morphism
 \begin{equation}
  \label{equation:adjunctionmapCoHAHodge}
  \pAH{\BoA^1}{0}(\SA^{\Hod})=\pStau{\BoA^1}{\leq 0}\SA^{\Hod}\rightarrow\SA^{\Hod}
 \end{equation}
and so a morphism
\begin{equation}
\label{equation:HodgeCoHApart}
  \pAH{\BoA^1}{0}(\SA^{\Hod})\boxdot_{\BoA^1} \pAH{\BoA^1}{0}(\SA^{\Hod})\rightarrow \SA^{\Hod}\boxdot_{\BoA^1}\SA^{\Hod}.
\end{equation}
By composing with the multiplication $m$ (Theorem \ref{theorem:CoHAHodgeDeligne}), we obtain a map
\begin{equation}
\label{equation:constructionmultBPS}
 \BPS_{\Alg}^{\Hod}\boxdot_{\BoA^1}\BPS_{\Alg}^{\Hod}\rightarrow\SA^{\Hod}.
\end{equation}
Since $\BPS_{\Alg}^{\Hod}\boxdot_{\BoA^1}\BPS_{\Alg}^{\Hod}\in\Perv(\CM^{\Hod}/\BoA^1)$ ($\oplus$ being finite on fibers, Proposition \ref{proposition:monoidalstructures}), the composition of this map with the adjunction map $\SA^{\Hod}\rightarrow\pStau{\BoA^1}{\geq 1}\SA^{\Hod}$ vanishes and so, the map \eqref{equation:constructionmultBPS} factors through \eqref{equation:adjunctionmapCoHAHodge}. This gives the multiplication $\BPS_{\Alg}^{\Hod}\boxdot_{\BoA^1}\BPS_{\Alg}^{\Hod}\rightarrow\BPS_{\Alg}^{\Hod}$.
\end{proof}

\begin{lemma}
\label{lemma:degreegeqminus2Hodge}
 The complex of mixed Hodge modules $\underline{\SA}^{\Hod}\in\CD^+(\MHM(\CM^{\Hod}))$ is concentrated in cohomological degrees $\geq -1$.
\end{lemma}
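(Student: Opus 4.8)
The plan is to deduce everything from the constructible statement already established in Lemma \ref{lemma:HodgeCoHAnonnegative}. Since the realisation functor $\rat\colon\CD^+(\MHM(\CM^{\Hod}))\to\CD^+_{\rmc}(\CM^{\Hod})$ is $t$-exact for the natural $t$-structures and faithful on the hearts, the cohomological amplitude of $\underline{\SA}^{\Hod}$ agrees with the (absolute) perverse amplitude of $\SA^{\Hod}=\rat(\underline{\SA}^{\Hod})$. Hence it suffices to prove that $\SA^{\Hod}\in\pDc{\geq -1}(\CM^{\Hod})$. By Lemma \ref{lemma:HodgeCoHAnonnegative} we already know that $\SA^{\Hod}\in{\pSD{\BoA^1}{\geq 0}(\CM^{\Hod})}$, so the lemma reduces to a comparison between the $!$-relative perverse $t$-structure over $\BoA^1$ and the absolute one.

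I would isolate this as the general claim: for $f\colon X\to S$ with $S$ smooth of pure dimension $d$, one has ${\pSD{S}{\geq 0}(X)}\subseteq\pDc{\geq -d}(X)$; applying it with $S=\BoA^1$, so $d=1$, yields exactly $\SA^{\Hod}\in\pDc{\geq -1}(\CM^{\Hod})$. To prove the claim I would test the cosupport condition stratum by stratum. Choose an algebraic stratification $X=\bigsqcup_\alpha X_\alpha$ for which a given $\SF\in{\pSD{S}{\geq 0}(X)}$ is constructible and each $X_\alpha$ maps submersively onto a stratum $S_{\beta(\alpha)}\subseteq S$ (possible by generic smoothness and Noetherian induction), and write $a_\alpha=\dim X_\alpha$ and $b_\alpha=\dim S_{\beta(\alpha)}\leq d$. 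Membership in $\pDc{\geq -d}(X)$ is then equivalent to $\CH^k(\imath_{X_\alpha}^!\SF)=0$ for $k<-a_\alpha-d$ on each stratum.

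The input from ${\pSD{S}{\geq 0}}$ is that $\imath_{X_s}^!\SF\in\pDc{\geq 0}(X_s)$ for every geometric point $s\in S$. Fixing $s\in S_{\beta(\alpha)}$, the intersection $X_{\alpha,s}=X_\alpha\cap X_s$ is a stratum of the fibre $X_s$ of dimension $a_\alpha-b_\alpha$, so this gives $\CH^k(\imath_{X_{\alpha,s}}^!\imath_{X_s}^!\SF)=0$ for $k<-(a_\alpha-b_\alpha)$. Factoring the inclusion of $X_{\alpha,s}$ into $X$ through $X_\alpha$ and using that $X_{\alpha,s}$ is the fibre of the submersion $X_\alpha\to S_{\beta(\alpha)}$ over the smooth $b_\alpha$-dimensional base, the purity isomorphism $\imath_{X_{\alpha,s},X_\alpha}^!\cong\imath_{X_{\alpha,s},X_\alpha}^*[-2b_\alpha](-b_\alpha)$ applied to $\imath_{X_\alpha}^!\SF$ (which has locally constant cohomology along $X_\alpha$ by constructibility) turns this into $\CH^i(\imath_{X_\alpha}^!\SF)=0$ for $i<-a_\alpha-b_\alpha$. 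Since $b_\alpha\leq d$ we have $-a_\alpha-b_\alpha\geq -a_\alpha-d$, so this vanishing in particular holds for all $i<-a_\alpha-d$, which is exactly the required cosupport bound.

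The main obstacle is precisely this comparison: keeping the normalisations of the relative and absolute perverse $t$-structures and of the fibrewise purity shift consistent. The slack of one unit over the fibrewise estimate $\geq 0$ is accounted for exactly by $\dim\BoA^1=1$. The comparison is essentially formal and is likely already available in \cite{hansen2021relative}; if one prefers to avoid the general statement, the topological triviality of $\CM^{\Hod}_r\to\BoA^1$ via preferred sections (\S\ref{subsubsection:preferredtrivialization}), combined with the fibrewise identifications $\imath_{\CM^{\Hod}_\lambda}^!\SA^{\Hod}\cong\SA^{\Dol}$ or $\SA^{\dR}$ (both in perverse degrees $\geq 0$), can be fed into Lemma \ref{lemma:propertyrelative} to reach the same bound locally on $\CM^{\Hod}$.
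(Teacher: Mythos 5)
Your proof is correct, but it takes a genuinely different route from the paper's. The paper also begins by reducing to the constructible statement via $\rat$, but then argues geometrically: by $\BoC^*$-equivariance the restriction of $\SA^{\Hod}$ to $\pi_{\CM^{\Hod}}^{-1}(\BoC^*)$ is identified with $\SA^{\dR}\boxtimes\BoQ_{\BoC^*}[2]$, which lies in absolute perverse degrees $\geq -1$; hence any nonzero $\pH{i}(\SA^{\Hod})$ with $i<-1$ would be supported on the closed fibre $\CM^{\Dol}$, and its $!$-restriction would force $\SA^{\Dol}=\imath_{\CM^{\Dol}}^!\SA^{\Hod}$ to have nonzero perverse cohomology in degree $i<0$, contradicting the known nonnegativity. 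You instead isolate the purely formal content: the inclusion $\pSD{S}{!\geq 0}(X)\subseteq\pDc{\geq -\dim S}(X)$ for $S$ smooth, applied to the fibrewise bound already recorded in Lemma \ref{lemma:HodgeCoHAnonnegative}. Your stratum-by-stratum verification is correct --- the bookkeeping $-(a_\alpha-b_\alpha)-2b_\alpha=-a_\alpha-b_\alpha\geq -a_\alpha-d$ checks out --- modulo the standard fact that one may choose a Whitney stratification adapted to $\SF$ whose strata submerse onto strata of $S$, so that the induced stratifications of the fibres are adapted to the complexes $\imath_{X_s}^!\SF$. Your route buys uniformity (all fibres are treated identically, no special role for $\lambda=0$) and reusability, being essentially the Verdier dual of a comparison in \cite{hansen2021relative}; the paper's argument is more ad hoc but avoids stratification theory.

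One caveat on your closing remark: the shortcut via Lemma \ref{lemma:propertyrelative} presupposes that $\SA^{\Hod}$ has the form $g^*(\BoQ_{\BoA^1}\boxtimes\SG)$ for the preferred trivialization over all of $\BoA^1$. That external-product decomposition is only established in Proposition \ref{proposition:trivializationCoHA}, whose proof depends on the present lemma through Lemma \ref{lemma:ICsubobject} and Theorem \ref{theorem:relativeCoHA}, so this alternative would be circular; over $\BoC^*$ the decomposition does hold by equivariance, and exploiting it together with a support argument at $\lambda=0$ is exactly what the paper does. This does not affect your main argument, which stands on its own.
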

\begin{proof}
 Clearly, it suffices to prove that the complex of constructible sheaves $\SA^{\Hod}=\rat(\underline{\SA}^{\Hod})$ is concentrated in perverse cohomological degrees $\geq -1$. Let $r\geq 1$. Since $\SA^{\Hod}_r$ is $\BoC^*$-equivariant on $\BoA^1$, the restriction $\SA^{\Hod}_{\BoA^1\setminus\{0\}}$ is isomorphic to $\SA^{\dR}\boxtimes \BoQ_{\BoC^*}[2]$ under a trivialization $\CM^{\Hod}_{\BoC^*}\cong \CM^{\dR}\times\BoC^*$ (for example, the prefered trivialization \S\ref{subsubsection:preferredtrivialization}). Therefore, if the smallest integer $i\in\BoZ$ such that $\pH{i}(\SA^{\Hod})\neq 0$ is $<-1$, then $\pH{i}(\SA^{\Hod})$ is a perverse sheaf supported on $\CM^{\Dol}=\pi^{-1}_{\CM^{\Hod}}(0)$. Then, we would get that $\SA^{\Dol}=\imath_{\CM_{0}^{\Hod}}^!\SA^{\Hod}$ has summands in negative perverse degrees (actually, in perverse degrees $<-1$) which is a contradiction. This proves the lemma.
\end{proof}

\begin{lemma}
\label{lemma:weightsHodge}
 The complex of mixed Hodge modules $\underline{\SA}^{\Hod}$ has weights $\geq 0$.
\end{lemma}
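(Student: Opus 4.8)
The plan is to prove that $\underline{\SA}^{\Hod}$ has weights $\geq 0$ by reducing to the two already-understood fibres via a semicontinuity/restriction argument, exactly paralleling the degree estimate just established in Lemma \ref{lemma:degreegeqminus2Hodge}. First I would recall that $\underline{\SA}^{\Hod}=(\JH^{\Hod})_*\BD\underline{\BoQ}_{\FM^{\Hod}}^{\vir}$, and that $\BD\underline{\BoQ}_{\FM^{\Hod}}^{\vir}$ is (up to the virtual Tate twist) a pure complex of weight $0$ on the stack. Since the good moduli space morphism $\JH^{\Hod}$ is proper on the relevant strata (more precisely, the pushforward in question factors through proper maps of the stack of exact sequences as in the construction of the CoHA product), and since $\JH^{\dR}$ yields a pure $\underline{\SA}^{\dR}$ by Theorem \ref{theorem:CoHAdeRham}, the natural expectation is that weight positivity can be checked fibrewise over $\BoA^1$.

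The key steps, in order, are as follows. First, observe that it suffices to bound the weights of the restrictions $\imath_{\CM_{\lambda}^{\Hod}}^!\underline{\SA}^{\Hod}$ for all $\lambda\in\BoA^1$: a complex of mixed Hodge modules on $\CM^{\Hod}$ has weights $\geq 0$ if and only if its $!$-restrictions to the fibres of $\pi_{\CM^{\Hod}}$ do, since the inclusion $\imath_\lambda\colon\{\lambda\}\hookrightarrow\BoA^1$ is a closed immersion into a smooth curve and $\imath_{\CM_\lambda^{\Hod}}^!$ differs from a shifted $*$-restriction only by a Tate twist which shifts weights by a fixed nonnegative amount. Second, by the base-change computation reproduced in the proof of Lemma \ref{lemma:HodgeCoHAnonnegative}, for $\lambda=0$ one has $\imath_{\CM_0^{\Hod}}^!\underline{\SA}^{\Hod}\cong\underline{\SA}^{\Dol}$, which is pure (hence of weights $\geq 0$), and for $\lambda\neq 0$ the $\BoC^*$-equivariance together with the preferred trivialization (\S\ref{subsubsection:preferredtrivialization}) gives $\imath_{\CM_\lambda^{\Hod}}^!\underline{\SA}^{\Hod}\cong\underline{\SA}^{\dR}$, which is pure by Theorem \ref{theorem:CoHAdeRham}. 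Third, I would conclude that all $!$-fibre restrictions have weights $\geq 0$, and hence so does $\underline{\SA}^{\Hod}$ itself.

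The main subtlety—and the step I expect to be the genuine obstacle—is the passage from fibrewise weight bounds back to a global weight bound on $\CM^{\Hod}$. Unlike the relative perverse $t$-structure, which is \emph{defined} by a fibrewise condition (Corollary \ref{corollary:shriekreltstructure}), the weight filtration of mixed Hodge modules is not a priori detected on fibres in such a clean way; the Remark following Lemma \ref{lemma:propertyrelative} explicitly notes the absence of a relative MHM formalism. To handle this I would invoke the stability of weights under the six operations for mixed Hodge modules: since $\imath_{\CM_\lambda^{\Hod}}^!$ preserves the property of having weights $\geq 0$ (the $!$-pullback does not lower weights), and since weight positivity of an object can be tested after applying $\imath_{\CM_\lambda^{\Hod}}^!$ for $\lambda$ ranging over all points of $\BoA^1$ together with the generic point—using that $\pi_{\CM^{\Hod}}$ is topologically trivial so no weight can be "hidden" away from the closed fibres—one obtains the bound. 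Concretely, I would argue that the weight filtration subquotients of $\underline{\SA}^{\Hod}$ are themselves complexes of mixed Hodge modules whose fibre restrictions compute the weight subquotients of $\underline{\SA}^{\Dol}$ and $\underline{\SA}^{\dR}$; any subquotient of negative weight would restrict to a nonzero negative-weight subquotient on some fibre, contradicting the purity established for both special fibres. Care is needed because the virtual Tate twists $\BoL^{(1-g)r^2}$ differ between the $\Hod$, $\dR$ and $\Dol$ normalizations only by the factor accounting for $\dim\BoA^1$ recorded in the definition of $\underline{\IC}(\CM_r^{\Hod}/\BoA^1)$, and I would track these twists carefully to ensure the bound is exactly $\geq 0$ rather than off by a shift.
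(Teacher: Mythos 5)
Your overall strategy is the same as the paper's: reduce the weight bound to the $!$-restrictions $\imath_{\CM_\lambda^{\Hod}}^!\underline{\SA}^{\Hod}$ over all $\lambda\in\BoA^1$, identify these with $\underline{\SA}^{\Dol}$ (for $\lambda=0$) and, via $\BoC^*$-equivariance, with $\underline{\SA}^{\dR}$ (for $\lambda\neq 0$), and invoke the purity of those two complexes. You are also right that the only real content is the passage from fibrewise weight bounds to a global one. The paper settles exactly that point by the \emph{punctual weights} formalism of Appendix \ref{section:purityforMMHM}: Lemma \ref{lemma:coincidenceweightsforMHM} together with Proposition \ref{proposition:puritycomplexcohomology} (i.e.\ Corollary \ref{corollary:coincidenceweights}) shows that a complex of mixed Hodge modules has weights $\geq w$ if and only if its $!$-restrictions to all closed points do, and since every point of $\CM^{\Hod}$ lies in some $\CM_\lambda^{\Hod}$ and $!$-pullback preserves lower weight bounds (Proposition \ref{proposition:stabilitypropertiesweights}), the fibrewise statement suffices. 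You do not cite this, and the substitute you offer is where your argument breaks.

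Concretely, your claim that ``the weight filtration subquotients of $\underline{\SA}^{\Hod}$ \ldots restrict on fibres to the weight subquotients of $\underline{\SA}^{\Dol}$ and $\underline{\SA}^{\dR}$'' is not true: $\imath^!$ does not commute with the weight filtration, and the $!$-restriction of a pure object of weight $w'<0$ at an arbitrary point is only constrained to have weights $\geq w'$ --- it can be zero, or nonzero with weights $\geq 0$ --- so ``a negative-weight subquotient would restrict to a nonzero negative-weight subquotient on some fibre'' does not follow as stated. The correct version (this is the proof of Lemma \ref{lemma:coincidenceweightsforMHM}) restricts to a point of a dense open subset of the support of the lowest weight graded piece $M_{w'}$, chosen so that the restriction is a shifted admissible VHS and hence still pure of weight $w'$; that genericity step is the missing ingredient. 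A smaller inaccuracy: $\imath_{\CM_\lambda^{\Hod}}^!$ is not in general a shift and Tate twist of $\imath_{\CM_\lambda^{\Hod}}^*$ (for a closed immersion these are related by a triangle, not an isomorphism), though this remark is not load-bearing. If you replace your heuristic with an appeal to Corollary \ref{corollary:coincidenceweights}, your proof coincides with the paper's.
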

\begin{proof}
 By Lemma \ref{lemma:coincidenceweightsforMHM} of Appendix \ref{section:purityforMMHM}, it suffices to prove that for any $\lambda\in\BoA^1$, $\imath_{\CM_{\lambda}}^!\underline{\SA}^{\Hod}$ has weights $\geq 0$. If $\lambda=0$, $\imath_{\CM_{\lambda}}^!\underline{\SA}^{\Hod}\cong\underline{\SA}^{\Dol}$ while if $\lambda\neq 0$, $\imath_{\CM_{\lambda}}^!\underline{\SA}^{\Hod}\cong\underline{\SA}^{\dR}$ and the claim follows from \cite[Theorem C]{davison2021purity} and Theorem \ref{theorem:CoHAdeRham} respectively.
\end{proof}

\begin{lemma}
\label{lemma:ICsubobject}
Let $C$ be a smooth projective curve of genus $g\geq 2$. For any $r\geq 1$, we have a canonical morphism
\[
\underline{\IC}(\CM_r^{\Hod}/\BoA^1)\rightarrow\underline{\SA}^{\Hod}  
\]
which for $0=\lambda\in\BoA^1$ induces the canonical morphism
\[
 \underline{\IC}(\CM_{r}^{\Dol})\rightarrow\underline{\SA}^{\Dol}
\]
and for $\lambda=1$ the canonical morphism
\[
 \underline{\IC}(\CM_r^{\dR})\rightarrow\underline{\SA}^{\dR}
\]
(defined as in \cite[\S10.2]{davison2022BPS}).

If $C$ is of genus $1$, we have this morphism but only for $r=1$.
\end{lemma}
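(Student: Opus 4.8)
The plan is to construct the morphism by first producing it over the locus of fibrewise-\emph{stable} $\bm{\lambda}$-connections, where $\JH^{\Hod}_r$ is a gerbe and the map is transparent, then extending it across the semistable boundary, and finally certifying that it is a morphism of mixed Hodge modules. This mirrors the absolute construction of \cite[\S10.2]{davison2022BPS}, where $\underline{\IC}(\CM_r^{\sharp})\to\underline{\SA}^{\sharp}$ arises as the inclusion of the lowest $\HO^*_{\BoC^*}$-summand over the stable locus, extended into the BPS sheaf and composed with the truncation adjunction into $\underline{\SA}^{\sharp}$. Concretely, let $\jmath^{\reg}\colon\CM_r^{\Hod,\reg}\into\CM_r^{\Hod}$ be the open subscheme whose fibre over each $\lambda\in\BoA^1$ is the stable locus of $\CM_{r,\lambda}^{\Hod}$. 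For $g\geq 2$ this open is nonempty and dense in every fibre, since stable $\bm{\lambda}$-connections of each rank exist and sweep out a dense open; for $g=1$ the simple objects all have rank one, so $\CM_r^{\Hod,\reg}=\emptyset$ unless $r=1$, which is exactly the origin of the genus-one restriction. Over $\CM_r^{\Hod,\reg}$ the morphism $\JH_r^{\Hod}$ is a $\rmB\BoC^*$-gerbe relative to $\BoA^1$, giving a canonical decomposition
\[
 (\jmath^{\reg})^*\underline{\SA}_r^{\Hod}\cong(\jmath^{\reg})^*\underline{\IC}(\CM_r^{\Hod}/\BoA^1)\otimes\HO^*_{\BoC^*},
\]
and the inclusion of the degree-zero summand $\HO^0_{\BoC^*}=\BoQ$ provides a canonical morphism of mixed Hodge modules $u\colon(\jmath^{\reg})^*\underline{\IC}(\CM_r^{\Hod}/\BoA^1)\to(\jmath^{\reg})^*\underline{\SA}_r^{\Hod}$ over the stable locus.

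To extend $u$ I would work first with underlying constructible complexes. By Lemma \ref{lemma:HodgeCoHAnonnegative}, $\SA_r^{\Hod}$ lies in relative perverse degrees $\geq 0$; write $\BPS_r^{\Hod}\coloneqq\pAH{\BoA^1}{0}\SA_r^{\Hod}$ for its zeroth relative perverse cohomology, a relative perverse sheaf carrying the adjunction $\BPS_r^{\Hod}\to\SA_r^{\Hod}$. Since $\IC(\CM_r^{\Hod}/\BoA^1)\in\Perv(\CM_r^{\Hod}/\BoA^1)$ (Lemma \ref{lemma:relativeICps}), any morphism from it to $\SA_r^{\Hod}$ factors through $\BPS_r^{\Hod}$, so it suffices to produce $\IC(\CM_r^{\Hod}/\BoA^1)\to\BPS_r^{\Hod}$. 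Through the preferred trivialization (\S\ref{subsubsection:preferredtrivialization}) and Corollary \ref{corollary:simplesummands}, $\IC(\CM_r^{\Hod}/\BoA^1)$ is the intermediate extension of $(\jmath^{\reg})^*\IC(\CM_r^{\Hod}/\BoA^1)$ from the fibrewise-dense open $\CM_r^{\Hod,\reg}$, on which $\BPS_r^{\Hod}$ restricts to the very same object. Extending $u$ then amounts to exhibiting $\IC(\CM_r^{\Hod}/\BoA^1)$ as a direct summand of $\BPS_r^{\Hod}$, for which I would establish that $\BPS_r^{\Hod}$ is pure of weight $0$: its weights are $\geq 0$ by Lemma \ref{lemma:weightsHodge}, the matching upper bound is checked fibrewise (where $\imath_{\CM_0^{\Hod}}^!$ and $\imath_{\CM_1^{\Hod}}^!$ return the pure BPS sheaves of $\underline{\SA}^{\Dol}$ and $\underline{\SA}^{\dR}$), and semisimplicity of the simple constituents is read off from the trivialized picture via Corollary \ref{corollary:simplesummands}.

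Granting this, the summand inclusion is automatically a morphism of \emph{pure weight-zero} Hodge modules, so it upgrades $u$ directly to $\underline{\phi}_r\colon\underline{\IC}(\CM_r^{\Hod}/\BoA^1)\to\BPS_r^{\Hod}$ in $\MHM$, and post-composing with the (MHM) adjunction $\BPS_r^{\Hod}\to\underline{\SA}^{\Hod}$ yields the desired morphism. To identify its restrictions, apply $\imath_{\CM_\lambda^{\Hod}}^!$: by Lemma \ref{lemma:relativeICmhm} the source becomes $\underline{\IC}(\CM_r^{\Dol})$ for $\lambda=0$ and $\underline{\IC}(\CM_r^{\dR})$ for $\lambda=1$, while by Theorem \ref{theorem:CoHAHodgeDeligne} the target becomes $\underline{\SA}^{\Dol}$, resp. $\underline{\SA}^{\dR}$. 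Since $\imath_{\CM_\lambda^{\Hod}}^!$ commutes with restriction to the stable locus (the stable locus of the fibre is the fibre of the stable locus) and with intermediate extension from a fibrewise-dense open (Lemma \ref{lemma:propertyrelative}(5)), and the relative gerbe restricts to the corresponding absolute gerbe, the restriction of $\underline{\phi}_r$ coincides with the canonical map of \cite[\S10.2]{davison2022BPS}. The genus-one statement follows because the whole construction is only available when $\CM_r^{\Hod,\reg}\neq\emptyset$, i.e.\ for $r=1$.

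The main obstacle is the mixed Hodge module upgrade encapsulated in the summand claim. As flagged in the Remark following Corollary \ref{corollary:shriekreltstructure}, there is no relative perverse $t$-structure on $\CD^+(\MHM)$, so the extension across the boundary cannot be performed internally to Hodge modules; one must run the extension on constructible complexes and separately certify the purity and semisimplicity of $\BPS_r^{\Hod}$ so that the summand inclusion is Hodge-theoretic. Making this certification precise, rather than the transparent but Hodge-inert step over the stable locus, is the delicate point, and I would expect the trivialization together with the fibrewise purity inputs of Lemma \ref{lemma:weightsHodge} and Corollary \ref{corollary:simplesummands} to be exactly what carries it through.
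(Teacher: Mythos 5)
Your construction over the open locus of simple (stable) $\bm{\lambda}$-connections, via the $\mathbf{G}_{\mathrm{m}}$-gerbe splitting and the inclusion of the $\HO^0_{\BoC^*}$-summand, is exactly the paper's starting point, as is the genus-one caveat and the identification of the fibrewise restrictions. The gap is in your extension step. You propose to extend across the semistable boundary by working with underlying constructible complexes, factoring through $\BPS_r^{\Hod}\coloneqq\pAH{\BoA^1}{0}\SA_r^{\Hod}$, and then ``certifying that $\BPS_r^{\Hod}$ is pure of weight $0$'' so that the summand inclusion upgrades to mixed Hodge modules. But $\BPS_r^{\Hod}$ is produced by the relative perverse truncation, which (as you yourself note, and as the paper's remark after Corollary \ref{corollary:shriekreltstructure} stresses) does not exist on $\CD^+(\MHM)$; the object whose purity you want to certify therefore carries no mixed Hodge structure at all, and ``pure of weight $0$'' is not a well-posed assertion about it. The only way in this paper to endow $\BPS^{\Hod}$ with a Hodge structure is via Theorem \ref{theorem:relativBPSalg} and the definition of $\underline{\BPS}^{\Hod}_{\Alg}$ as a free algebra on the $\underline{\IC}(\CM_r^{\Hod}/\BoA^1)$ --- but the proof of that theorem uses precisely the morphism of Lemma \ref{lemma:ICsubobject}, so your certification would be circular. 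Invoking Lemma \ref{lemma:weightsHodge} for the lower weight bound does not help, since that lemma bounds the weights of the complex $\underline{\SA}^{\Hod}$, not of a relative perverse subquotient that has no Hodge-theoretic existence.

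The paper's proof shows how to avoid the relative perverse $t$-structure altogether, and this is the idea missing from your argument: use the \emph{ordinary} cohomological truncation in $\CD^+(\MHM(\CM^{\Hod}))$. By Lemma \ref{lemma:degreegeqminus2Hodge} the complex $\underline{\SA}^{\Hod}$ is concentrated in degrees $\geq -1$, so the gerbe splitting over $\CM^{\Hod,s}$ gives a map of genuine mixed Hodge modules $\underline{\BoQ}_{\CM^{\Hod,s}}\otimes\BoL^{-\frac{\dim\CM^{\Hod}+1}{2}}\rightarrow\CH^{-1}(\underline{\SA}^{\Hod}_{\CM^{\Hod,s}})[1]$. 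Since the source is pure of weight zero and the target has weights $\geq 0$ by Lemma \ref{lemma:weightsHodge}, the map factors through $W_0\bigl(\CH^{-1}(\underline{\SA}^{\Hod})[1]\bigr)$, which \emph{is} a pure, hence semisimple, shifted mixed Hodge module; semisimplicity then gives the unique extension from the dense open $\CM^{\Hod,s}$ to $\underline{\IC}(\CM^{\Hod}/\BoA^1)\rightarrow W_0(\CH^{-1}(\underline{\SA}^{\Hod})[1])\rightarrow\underline{\SA}^{\Hod}$. In short: the extension and the Hodge upgrade are done in one stroke inside $\MHM$, using the degree bound $\geq -1$ and the weight bound $\geq 0$ on $\underline{\SA}^{\Hod}$ itself, rather than deferred to an ill-defined purity statement about a relative perverse cohomology object. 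Your fibrewise identification of the restrictions at $\lambda=0,1$ is fine and matches the paper once the morphism is constructed this way.
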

\begin{proof}
The morphism $\JH^{\Hod}\colon\FM^{\Hod}\rightarrow\CM^{\Hod}$ is a $\mathbf{G}_{\mathrm{m}}$-gerbe over the open locus $\CM^{\Hod,s}$ of $\CM^{\Hod}$ parametrising simple $\bm{\lambda}$-connections. Moreover, by definition, $\underline{\BoQ}_{\FM^{\Hod}_r}^{\vir}=\underline{\BoQ}_{\FM^{\Hod}_r}\otimes\BoL^{(1-g)r^2}$. Therefore, we have
\[
 (\JH^{\Hod}_*\BD\underline{\BoQ}_{\FM^{\Hod}}^{\vir})_{\CM^{\Hod,s}}\cong \underline{\BoQ}_{\CM^{\Hod,s}}\otimes\BoL^{-\frac{\dim\CM^{\Hod}+1}{2}}\otimes\HO^*_{\BoC^*}.
\]
Note that for $r\in\BoN$, $\dim\CM_r^{\Hod}+1=2(g-1)r^2+4$ is even. Then we have a canonical map
\[
 \underline{\BoQ}_{\CM^{\Hod,s}}\otimes\BoL^{-\frac{\dim\CM^{\Hod}+1}{2}}\rightarrow\underline{\SA}^{\Hod}_{\CM^{\Hod,s}}.
\]
Since by Lemma \ref{lemma:degreegeqminus2Hodge}, $\underline{\SA}^{\Hod}_{\CM^{\Hod,s}}$ is in cohomological degrees $\geq -1$, we obtain a map between mixed Hodge modules
\[
 \underline{\BoQ}_{\CM^{\Hod,s}}\otimes\BoL^{-\frac{\dim\CM^{\Hod}+1}{2}}\rightarrow\CH^{-1}(\underline{\SA}^{\Hod}_{\CM^{\Hod,s}})[1].
\]
Since by Lemma \ref{lemma:weightsHodge}, $\CH^{-1}(\underline{\SA}^{\Hod}_{\CM^{\Hod,s}})[1]$ has nonnegative weights, and $\underline{\BoQ}_{\CM^{\Hod,s}}\otimes\BoL^{-\frac{\dim\CM^{\Hod}+1}{2}}$ is pure of weight zero, this map factors through the inclusion
\[
 W_0(\CH^{-1}(\underline{\SA}^{\Hod}_{\CM^{\Hod,s}})[1])\rightarrow \CH^{-1}(\underline{\SA}^{\Hod}_{\CM^{\Hod,s}})[1]
\]
of the weight zero part to obtain the map
\begin{equation}
\label{equation:extensionmap}
 \underline{\BoQ}_{\CM^{\Hod,s}}\otimes\BoL^{-\frac{\dim\CM^{\Hod}+1}{2}}\rightarrow W_0(\CH^{-2}(\underline{\SA}^{\Hod}_{\CM^{\Hod,s}})[2]).
\end{equation}
Since $W_0(\CH^{-1}(\underline{\SA}^{\Hod})[1])$ is semisimple (as a pure shifted mixed Hodge module), we can uniquely extend the map \eqref{equation:extensionmap} obtained this way to
\[
 \underline{\IC}(\CM^{\Hod}/\BoA^1)\rightarrow W_0(\CH^{-1}(\underline{\SA}^{\Hod})[1]),
\]
which by post-composing with $W_0(\CH^{-1}(\underline{\SA}^{\Hod}_{\CM^{\Hod}})[1])\rightarrow \CH^{-1}(\underline{\SA}^{\Hod}_{\CM^{\Hod}})[1]\rightarrow \underline{\SA}^{\Hod}$ gives the desired map.

The restrictions of this map to the de Rham or Dolbeault stacks are the analogous morphisms from the intersection cohomology of the moduli space to the relative CoHA as they are obtained in the same way \cite[\S10.2]{davison2022BPS}.
\end{proof}
Recall that $\Delta_r\colon\CM_1^{\Hod}\rightarrow\CM_r^{\Hod}$, $x\mapsto x^{\oplus r}$ is the small diagonal of the Hodge moduli space.
\begin{theorem}
\label{theorem:relativBPSalg}
\begin{enumerate}
 \item 
 If $C$ is a smooth projective curve of genus $1$, then the natural map
 \[
  \Sym_{\boxdot_{\BoA^1}}\left(\bigoplus_{r\geq 1}(\Delta_r)_*\IC(\CM_1^{\Hod}/\BoA^1)\right)\rightarrow \BPS_{\Alg}^{\Hod}
 \]
is an isomorphism of algebra objects in $\Perv(\CM^{\Hod}/\BoA^1)$.
\item 
If $C$ is a smooth projective curve of genus $\geq 2$, the natural map
\[
 \Free_{\boxdot_{\BoA^1}-\Alg}\left(\bigoplus_{r\geq 1}\IC(\CM_r^{\Hod}/\BoA^1)\right)\rightarrow\BPS_{\Alg}^{\Hod}
\]
is an isomorphism of algebra objects in $\Perv(\CM^{\Hod}/\BoA^1)$.
\end{enumerate}
\end{theorem}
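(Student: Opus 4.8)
The plan is to build the displayed morphism from the universal property of the free (respectively symmetric) algebra and then to verify that it is an isomorphism by restricting to the fibres of $\pi_{\CM^{\Hod}}\colon\CM^{\Hod}\to\BoA^1$, where the statement is reduced to the already established de Rham and Dolbeault descriptions. First I would produce the generating maps: by Lemma \ref{lemma:ICsubobject} there are canonical morphisms $\underline{\IC}(\CM_r^{\Hod}/\BoA^1)\to\underline{\SA}^{\Hod}$ (for all $r\geq 1$ when $g\geq 2$, and for $r=1$ when $g=1$). Since $\IC(\CM_r^{\Hod}/\BoA^1)$ lies in the heart $\Perv(\CM^{\Hod}/\BoA^1)$ by Lemma \ref{lemma:relativeICps} while $\SA^{\Hod}$ is concentrated in relative perverse degrees $\geq 0$ by Lemma \ref{lemma:HodgeCoHAnonnegative}, the adjunction $\Hom(P,\SA^{\Hod})\cong\Hom(P,\pAH{\BoA^1}{0}\SA^{\Hod})$ for $P$ in the heart shows that each generating map factors canonically through $\BPS_{\Alg}^{\Hod}=\pAH{\BoA^1}{0}(\SA^{\Hod})$. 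Feeding these into the universal property of $\Free_{\boxdot_{\BoA^1}-\Alg}$ (for $g\geq 2$), respectively using the commutative algebra structure and the diagonals $\Delta_r$ (for $g=1$), yields the natural algebra morphism of the theorem inside the monoidal abelian category $(\Perv(\CM^{\Hod}/\BoA^1),\boxdot_{\BoA^1})$ of Proposition \ref{proposition:monoidalstructures}.

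Next I would check the isomorphism fibrewise. Fix $\lambda\in\BoA^1$ and apply $\imath_{\CM_\lambda^{\Hod}}^!$; write $\sharp=\Dol$ for $\lambda=0$ and $\sharp=\dR$ for $\lambda\neq 0$, using the $\BoC^*$-equivariance that identifies every nonzero fibre with the de Rham one. By Corollary \ref{corollary:shriekreltstructure} the functor $\imath_{\CM_\lambda^{\Hod}}^!$ is $t$-exact from the relative perverse $t$-structure to the absolute one, hence commutes with $\pAH{\BoA^1}{0}$, so that $\imath_{\CM_\lambda^{\Hod}}^!\BPS_{\Alg}^{\Hod}\cong\pH{0}(\imath_{\CM_\lambda^{\Hod}}^!\SA^{\Hod})\cong\pH{0}(\SA^{\sharp})=\BPS_{\Alg}^{\sharp}$, the middle identification being Theorem \ref{theorem:CoHAHodgeDeligne}. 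On generators, $\imath_{\CM_\lambda^{\Hod}}^!\IC(\CM_r^{\Hod}/\BoA^1)\cong\IC(\CM_{r,\lambda}^{\Hod})$ as recorded after Theorem \ref{theorem:CoHAHodgestructure} (in genus one one also uses proper base change for the closed immersion $\Delta_r$). Finally, by Proposition \ref{proposition:compexternalS} the restriction intertwines $\boxdot_{\BoA^1}$ with $\boxdot$, and since every rank-graded component of the free and symmetric algebras is a finite sum of iterated monoidal products, $\imath_{\CM_\lambda^{\Hod}}^!$ commutes with $\Free_{\boxdot_{\BoA^1}-\Alg}$ and with $\Sym_{\boxdot_{\BoA^1}}$.

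These identifications are compatible with the maps because Lemma \ref{lemma:ICsubobject} guarantees that the generating morphisms restrict to the canonical generating morphisms on each side. Therefore $\imath_{\CM_\lambda^{\Hod}}^!$ of the natural map is precisely the natural map appearing in Theorem \ref{theorem:freenessPBWconnections} (de Rham, $g\geq 2$), Theorem \ref{theorem:dhsBettiDolbeault} (Dolbeault, $g\geq 2$), or the genus one statements Theorem \ref{theorem:genusone} and its Dolbeault analogue, each of which is an isomorphism. A morphism in $\Perv(\CM^{\Hod}/\BoA^1)$ whose $!$-restriction to every fibre over $\BoA^1$ is an isomorphism is itself an isomorphism, since the family $\{\imath_{\CM_\lambda^{\Hod}}^!\}_{\lambda\in\BoA^1}$ is conservative: the fibres cover $\CM^{\Hod}$, so a nonzero constructible complex has nonvanishing $*$-restriction, hence by Verdier duality nonvanishing $!$-restriction, to some fibre. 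Thus the cone of the natural map vanishes and the map is an isomorphism, proving both (1) and (2).

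The main obstacle I expect is the bookkeeping in the second step: justifying rigorously that $\imath_{\CM_\lambda^{\Hod}}^!$ commutes simultaneously with the passage to the zeroth relative perverse cohomology, with the relative monoidal product $\boxdot_{\BoA^1}$, and with the free and symmetric algebra functors. The first commutation rests on the $t$-exactness in Corollary \ref{corollary:shriekreltstructure}, the second on Proposition \ref{proposition:compexternalS}, and the third on the finiteness of each rank-graded piece together with $\BoQ$-coefficients; the $\BoC^*$-equivariance identifying the nonzero fibres with the de Rham fibre is what lets a single de Rham computation cover all $\lambda\neq 0$ at once.
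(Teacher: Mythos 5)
Your proposal is correct and follows essentially the same route as the paper's proof: construct the algebra morphism from the canonical maps of Lemma \ref{lemma:ICsubobject} (which factor through $\BPS_{\Alg}^{\Hod}$ since the relative IC sheaves lie in the heart) and then check it is an isomorphism by applying $\imath_{\CM_\lambda^{\Hod}}^!$, invoking the Dolbeault and de Rham results on the fibres together with $\BoC^*$-equivariance. The only difference is presentational: the paper phrases the final step as the vanishing of the cone of $\Xi_{\BPS}$, while you spell out the $t$-exactness, compatibility with $\boxdot_{\BoA^1}$, and conservativity of the fibrewise $!$-restrictions that the paper leaves implicit.
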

\begin{proof}
We prove $(2)$. The proof of $(1)$ is similar, relying on \cite[Proposition 5.11]{davison2023nonabelian} for the zero fiber (Dolbeault moduli space) and on Theorem \ref{theorem:genusone} for all other fibers (corresponding to the de Rham stack, using $\BoC^*$-equivariance).

Since $\IC(\CM_r^{\Hod}/\BoA^1)\in\Perv(\CM_r^{\Hod}/\BoA^1)$, the natural morphism $\IC(\CM_r^{\Hod}/\BoA^1)\rightarrow\SA_r^{\Hod}$ provided by Lemma \ref{lemma:ICsubobject} factors through the natural map $\BPS_{\Alg}^{\Hod}\rightarrow\SA^{\Hod}$. Therefore, one can construct a morphism of algebra objects
\[
 \Xi_{\BPS}\colon\Free_{\boxdot_{\BoA^1}}\left(\bigoplus_{r\geq 1}\IC(\CM_r^{\Hod}/\BoA^1)\right)\rightarrow\BPS_{\Alg}^{\Hod}.
\]
The goal is to prove that it is an isomorphism. We let $\CK$ be the cone of $\Xi_{\BPS}$. Then, for $\lambda=0$ (resp. $1$), $\imath_{\CM_{\lambda}^{\Hod}}^!\CK$ is the cone of the morphism $\Free_{\boxdot_{\BoA^1}}\left(\bigoplus_{r\geq 1}\IC(\CM_r^{\Dol})\right)\rightarrow\BPS_{\Alg}^{\Dol}$ (resp. $\Free_{\boxdot_{\BoA^1}}\left(\bigoplus_{r\geq 1}\IC(\CM_r^{\dR})\right)\rightarrow\BPS_{\Alg}^{\dR}$) and so vanishes by \cite[Theorem 1.4]{davison2022BPS} (resp. Theorem \ref{theorem:freenessPBWconnections}). By $\BoC^*$-equivariance, $\imath_{\CM_{\lambda}}^!\CK=0$ for any $\lambda\in\BoA^1$. Therefore, $\CK=0$ and $\Xi_{\BPS}$ is an isomorphism.
\end{proof}

\begin{corollary}
 For any smooth projective curve, $\BPS_{\Alg}^{\Hod}\in\Perv(\CM^{\Hod})[1]$.
\end{corollary}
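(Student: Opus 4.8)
The plan is to read off the absolute perverse amplitude of $\BPS_{\Alg}^{\Hod}$ from its description as a $\boxdot_{\BoA^1}$-algebra generated by the relative intersection complexes (Theorem~\ref{theorem:relativBPSalg}), by computing the amplitude of iterated monoidal products over the preferred trivialization. Since $\BPS_{\Alg}^{\Hod}$ is a constructible object and the absolute perverse $t$-structure is a topological invariant, I would work throughout over the homeomorphism $\CM_r^{\Hod}\cong\CM_r^{\dR}\times\BoA^1$ of \S\ref{subsubsection:preferredtrivialization}, which has the crucial feature of being compatible with the direct sum maps. First I would record the amplitude of the generators: because $\IC(\CM_r^{\Hod}/\BoA^1)=\IC(\CM_r^{\Hod})[1]$ with $\IC(\CM_r^{\Hod})$ perverse, each generator lies in $\Perv(\CM_r^{\Hod})[1]$, and under the trivialization it acquires the product shape $\IC(\CM_r^{\dR})\boxtimes\BoQ_{\BoA^1}[2]$, i.e.\ $\CP\boxtimes\BoQ_{\BoA^1}[2]$ with $\CP$ an absolute perverse sheaf.

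The technical heart of the argument is to show that the class of objects $\CP\boxtimes\BoQ_{\BoA^1}[2]$, with $\CP$ perverse, is closed under the relative product $\boxdot_{\BoA^1}=\oplus_*\iota^!(-\boxtimes-)$. Over the trivialization the closed immersion $\iota$ becomes $\id\times\Delta_{\BoA^1}$ for the diagonal $\Delta_{\BoA^1}\colon\BoA^1\to\BoA^1\times\BoA^1$, while $\oplus$ becomes $\oplus^{\dR}\times\id_{\BoA^1}$. Using the purity isomorphism $\Delta_{\BoA^1}^!\BoQ_{\BoA^1\times\BoA^1}\cong\BoQ_{\BoA^1}[-2](-1)$ for the codimension-one diagonal of the smooth curve $\BoA^1$, I would compute
\[
 (\CP_1\boxtimes\BoQ_{\BoA^1}[2])\boxdot_{\BoA^1}(\CP_2\boxtimes\BoQ_{\BoA^1}[2])\cong\bigl(\oplus^{\dR}_*(\CP_1\boxtimes\CP_2)\bigr)\boxtimes\BoQ_{\BoA^1}[2]
\]
up to an irrelevant Tate twist. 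Since $\oplus^{\dR}$ is finite (so $\oplus^{\dR}_*$ is perverse $t$-exact) and the external product of perverse sheaves is perverse, the right-hand side is again of the required form, and in particular lies in $\Perv(\CM^{\Hod})[1]$.

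To finish, I would invoke Theorem~\ref{theorem:relativBPSalg}: on each connected component $\BPS_{\Alg}^{\Hod}$ is a finite direct sum of iterated $\boxdot_{\BoA^1}$-products of the generators (and of their direct summands, in the $\Sym$ description valid in genus one). By the previous step each such product is of the form $\CQ\boxtimes\BoQ_{\BoA^1}[2]$ with $\CQ$ perverse, hence lies in $\Perv(\CM^{\Hod})[1]$; as this condition is preserved by finite direct sums and by passage to direct summands, the claim would follow. I expect the only delicate point to be the codimension-one shift produced by $\iota^!$: it is exactly the purity computation $\Delta_{\BoA^1}^!\BoQ\cong\BoQ[-2](-1)$ that converts relative perverse degree $0$ into absolute perverse degree $-1=-\dim\BoA^1$, and it is this shift, together with the finiteness of $\oplus$, that makes the class $\CP\boxtimes\BoQ_{\BoA^1}[2]$ stable under $\boxdot_{\BoA^1}$. (The skyscraper $\BoQ_{\{0\}}$ on $\BoA^1$ shows that topological triviality of $\CM^{\Hod}\to\BoA^1$ alone is not enough to pass from the relative to the absolute heart, so it is essential that the structural description of Theorem~\ref{theorem:relativBPSalg} forces the generators, and hence all their products, to be genuinely constant along the $\BoA^1$-direction.)
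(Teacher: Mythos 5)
Your proposal is correct and follows the same route as the paper, whose entire proof is the one-liner ``This is immediate with Theorem \ref{theorem:relativBPSalg}.'' What you supply --- that the generators $\IC(\CM_r^{\Hod}/\BoA^1)=\IC(\CM_r^{\Hod})[1]$ have the form $\CP\boxtimes\BoQ_{\BoA^1}[2]$ under the preferred trivialization, and that this class is stable under $\boxdot_{\BoA^1}$ via $\Delta_{\BoA^1}^!\BoQ\cong\BoQ[-2](-1)$ and the finiteness of $\oplus$ --- is exactly the verification the paper leaves implicit, carried out correctly.
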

\begin{proof}
 This is immediate with Theorem \ref{theorem:relativBPSalg}.
\end{proof}

Recall the complex of mixed Hodge modules $\underline{\IC}(\CM_r^{\Hod}/\BoA^1)$ defined in Lemma \ref{lemma:relativeICmhm}.
By Theorem \ref{theorem:relativBPSalg}, it makes sense to define the relative BPS Lie algebra for the Hodge moduli stack as
\[
 \underline{\BPS}^{\Hod}_{\Lie}\coloneqq \Free_{\boxdot_{\BoA^1}-\Lie}\left(\bigoplus_{r\geq 1}\underline{\IC}(\CM_r^{\Hod}/\BoA^1)\right).
\]
We also let
\[
 \underline{\BPS}_{\Alg}^{\Hod}\coloneqq \Free_{\boxdot_{\BoA^1}-\Alg}\left(\bigoplus_{r\geq 1}\underline{\IC}(\CM_r^{\Hod}/\BoA^1)\right).
\]

The following is then straightforward.
\begin{lemma}[PBW isomorphism]
 We have an isomorphism of mixed Hodge modules in $\MHM(\CM^{\Hod})[1]$:
 \[
  \Sym_{\boxdot_{\BoA^1}}\left(\underline{\BPS}_{\Lie}^{\Hod}\right)\cong\underline{\BPS}_{\Alg}^{\Hod}.
 \]
\qed
\end{lemma}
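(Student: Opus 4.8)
The plan is to recognise this statement as a purely formal instance of the Poincaré--Birkhoff--Witt theorem in the symmetric monoidal category $(\MHM(\CM^{\Hod})[1],\boxdot_{\BoA^1})$, requiring no further input from the geometry. Write $V\coloneqq\bigoplus_{r\geq 1}\underline{\IC}(\CM_r^{\Hod}/\BoA^1)$ for the common generating object, so that by definition $\underline{\BPS}_{\Lie}^{\Hod}=\Free_{\boxdot_{\BoA^1}-\Lie}(V)$ and $\underline{\BPS}_{\Alg}^{\Hod}=\Free_{\boxdot_{\BoA^1}-\Alg}(V)$ are the free Lie and free associative algebra objects on the \emph{same} generator $V$. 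The ambient category is $\BoQ$-linear and abelian, hence idempotent-complete, and the product $\boxdot_{\BoA^1}$ is symmetric because the monoid $(\CM^{\Hod},\oplus)$ is commutative (Proposition \ref{proposition:monoidalstructures}); the sign conventions arising from the shift $[1]$ into cohomological degree $-1$ are precisely those set up for Lie algebras in such categories in \cite[\S3.3]{davison2023bps}.

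First I would record the standard categorical identity $\Free_{\boxdot_{\BoA^1}-\Alg}(V)\cong U\bigl(\Free_{\boxdot_{\BoA^1}-\Lie}(V)\bigr)$, where $U$ denotes the universal enveloping algebra functor. This is formal: the forgetful functor from associative-algebra objects to objects factors through the forgetful functor from associative-algebra objects to Lie-algebra objects (via the commutator bracket), and passing to left adjoints turns this factorisation into $\Free_{\boxdot_{\BoA^1}-\Alg}\cong U\circ\Free_{\boxdot_{\BoA^1}-\Lie}$. Consequently $\underline{\BPS}_{\Alg}^{\Hod}\cong U(\underline{\BPS}_{\Lie}^{\Hod})$.

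Next I would invoke the PBW theorem in this monoidal setting: for any Lie-algebra object $\mathfrak{g}$, the symmetrisation map $\Sym_{\boxdot_{\BoA^1}}(\mathfrak{g})\rightarrow U(\mathfrak{g})$ is an isomorphism. Applying this to $\mathfrak{g}=\underline{\BPS}_{\Lie}^{\Hod}$ and combining with the previous step yields the desired isomorphism $\Sym_{\boxdot_{\BoA^1}}(\underline{\BPS}_{\Lie}^{\Hod})\cong\underline{\BPS}_{\Alg}^{\Hod}$ in $\MHM(\CM^{\Hod})[1]$.

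The only point requiring care --- and hence the expected main obstacle --- is the validity of the monoidal PBW theorem itself in $(\MHM(\CM^{\Hod})[1],\boxdot_{\BoA^1})$: one must check that the symmetric powers $\Sym^n=(-)^{\otimes n}_{S_n}$ exist as direct summands cut out by the symmetriser idempotent $\tfrac{1}{n!}\sum_{\sigma\in S_n}\sigma$ (which needs $\BoQ$-linearity and idempotent-completeness, both available here), and that the symmetry isomorphism, including the Koszul signs from the shift, satisfies the coherence axioms so that the usual filtered-by-length argument applies verbatim. All of this is exactly the formalism developed in \cite[\S3.3]{davison2023bps}, so in practice the proof reduces to citing that the symmetrisation map is an isomorphism and specialising $\mathfrak{g}$ to the free Lie algebra $\underline{\BPS}_{\Lie}^{\Hod}$. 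In particular, no analysis of the fibers over $\BoA^1$ nor of the relative perverse $t$-structure is needed, since both sides are by construction free on $V$.
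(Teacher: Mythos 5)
Your proposal is correct and is essentially the argument the paper intends: the paper offers no written proof (the lemma is flagged as "straightforward" and closed with \qed), precisely because both $\underline{\BPS}_{\Lie}^{\Hod}$ and $\underline{\BPS}_{\Alg}^{\Hod}$ are by definition free on the same generator $\bigoplus_{r\geq 1}\underline{\IC}(\CM_r^{\Hod}/\BoA^1)$, so the identity $\Free_{\boxdot_{\BoA^1}-\Alg}(V)\cong U(\Free_{\boxdot_{\BoA^1}-\Lie}(V))\cong\Sym_{\boxdot_{\BoA^1}}(\Free_{\boxdot_{\BoA^1}-\Lie}(V))$ is the formal PBW statement in the $\BoQ$-linear symmetric monoidal abelian category $(\Perv(\CM^{\Hod}/\BoA^1),\boxdot_{\BoA^1})$, exactly as you argue. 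Your one caveat (Koszul signs from the shift) is harmless here, since the monoidal structure lives on the relative perverse heart itself rather than on a genuinely shifted category, so no extra signs arise.
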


\begin{theorem}
\label{theorem:relativeCoHA}
 Let $C$ be a curve of genus $g\geq 0$. Then, the natural map
\[
 \Xi^{\Hod}\colon\Sym_{\boxdot_{\BoA^1}}(\underline{\BPS}_{\Lie}^{\Hod}\otimes\HO^*_{\BoC^*})\rightarrow\underline{\SA}^{\Hod}
\]
is a quasi-isomorphism of complexes of mixed Hodge modules in $\CD^+(\MHM(\CM^{\Hod}))$.
\end{theorem}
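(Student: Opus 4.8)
The plan is to prove that $\Xi^{\Hod}$ is a quasi-isomorphism by a cone-vanishing argument carried out fiberwise over $\BoA^1$, after passing from mixed Hodge modules to constructible sheaves, where the fiber-restriction tools are available. First I would recall how $\Xi^{\Hod}$ is built, following the absolute constructions of \cite{davison2022BPS}: Lemma \ref{lemma:ICsubobject} furnishes canonical morphisms $\underline{\IC}(\CM_r^{\Hod}/\BoA^1)\to\underline{\SA}^{\Hod}$; since $\underline{\SA}^{\Hod}$ is an algebra object (Theorem \ref{theorem:CoHAHodgeDeligne}) it carries a commutator Lie bracket, so the universal property of $\Free_{\boxdot_{\BoA^1}-\Lie}$ extends the generators to a Lie morphism $\underline{\BPS}_{\Lie}^{\Hod}\to\underline{\SA}^{\Hod}$; composing with the determinant $\HO^*_{\BoC^*}$-action of \S\ref{subsection:thedetlb} and invoking the universal property of $\Sym_{\boxdot_{\BoA^1}}$ produces $\Xi^{\Hod}$.

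Next I would reduce to constructible sheaves. The realization functor $\rat$ is $t$-exact and faithful on hearts, hence conservative, so it suffices to show that $\rat(\Xi^{\Hod})$ is a quasi-isomorphism in $\CD^+_{\rmc}(\CM^{\Hod})$; equivalently, that its cone $\CK\coloneqq\cone(\rat(\Xi^{\Hod}))$ is zero. Because the fibers of $\pi_{\CM^{\Hod}}$ cover $\CM^{\Hod}$ and the functors $\imath_{\CM_{\lambda}^{\Hod}}^{!}$, $\lambda\in\BoA^1$, jointly detect all $!$-stalks of a constructible complex, I would have $\CK=0$ as soon as $\imath_{\CM_{\lambda}^{\Hod}}^{!}\CK=0$ for every $\lambda$.

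The core step is then to identify $\imath_{\CM_{\lambda}^{\Hod}}^{!}\Xi^{\Hod}$ with the fiber PBW map. By Proposition \ref{proposition:compexternalS} the functor $\imath_{\CM_{\lambda}^{\Hod}}^{!}$ is symmetric monoidal from $(\boxdot_{\BoA^1})$ to $(\boxdot)$; being $\BoQ$-linear, exact, and monoidal it commutes with $\Free_{\boxdot_{\BoA^1}-\Lie}$, with $\Sym_{\boxdot_{\BoA^1}}$, and with $-\otimes\HO^*_{\BoC^*}$, since these are assembled from the monoidal product, finite direct sums, and (in characteristic zero) Lie and symmetrizer idempotents. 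Together with $\imath_{\CM_{\lambda}^{\Hod}}^{!}\underline{\IC}(\CM_r^{\Hod}/\BoA^1)\cong\underline{\IC}(\CM_{r,\lambda}^{\Hod})$ (Lemma \ref{lemma:relativeICmhm}), the compatibility of the generator maps under restriction (Lemma \ref{lemma:ICsubobject}), and $\imath_{\CM_{\lambda}^{\Hod}}^{!}\underline{\SA}^{\Hod}\cong\underline{\SA}^{\sharp}$ (Theorem \ref{theorem:CoHAHodgeDeligne}), this exhibits $\imath_{\CM_{\lambda}^{\Hod}}^{!}\Xi^{\Hod}$ as the Dolbeault PBW map when $\lambda=0$ and the de Rham PBW map when $\lambda=1$. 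These are isomorphisms by Theorem \ref{theorem:dhsBettiDolbeault} and Corollary \ref{corollary:absolutedR} for $g\geq2$, and by Theorem \ref{theorem:genusone} together with the genus-one Dolbeault PBW (\cite[Proposition 5.11]{davison2023nonabelian}) for $g=1$, the degenerate case $g=0$ being handled in the same manner. Finally, the $\BoC^*$-equivariance over the scaling action on $\BoA^1$ identifies the fiber over any $\lambda\neq0$ with that over $1$, so $\imath_{\CM_{\lambda}^{\Hod}}^{!}\CK=0$ for all $\lambda$, whence $\CK=0$ and $\Xi^{\Hod}$ is a quasi-isomorphism.

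I expect the main obstacle to lie in this core step: upgrading the pointwise monoidality of Proposition \ref{proposition:compexternalS} into a genuine commutation of $\imath_{\CM_{\lambda}^{\Hod}}^{!}$ with the free-Lie and symmetric-algebra functors, and checking that the determinant $\HO^*_{\BoC^*}$-action on $\underline{\SA}^{\Hod}$ restricts to the corresponding actions on the Dolbeault and de Rham fibers, so that the restricted morphism is literally the fiber PBW map and not merely an abstract isomorphism.
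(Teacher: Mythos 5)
Your proposal is correct and follows essentially the same route as the paper: form the cone of $\Xi^{\Hod}$, identify its $!$-restrictions to the fibers over $0$ and $1$ with the cones of the Dolbeault and de Rham PBW isomorphisms (which vanish by \cite[Theorem 1.5]{davison2022BPS} and Theorem \ref{theorem:freenessPBWconnections}), and conclude by $\BoC^*$-equivariance that all fiber restrictions, hence the cone itself, vanish. The only cosmetic difference is your intermediate passage to constructible sheaves via conservativity of $\rat$, which the paper bypasses by working with $\imath^!$ directly on complexes of mixed Hodge modules.
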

\begin{proof}
 We let $\underline{\CG}\coloneqq \cone(\Xi^{\Hod})$. Then, $\imath_{\CM^{\Dol}}^!\underline{\CG}$ is the cone of the PBW isomorphism for the Dolbeault stack \cite[Theorem 1.5]{davison2022BPS} and so vanishes. Similarly, $\imath_{\CM^{\dR}}^!\underline{\CG}$ is the cone of the PBW isomorphism for the de Rham stack (Theorem \ref{theorem:freenessPBWconnections}), and so vanishes. By $\BoC^*$-equivariance, $\imath_{\CM_{\lambda}^{\Hod}}^!\underline{\CG}\cong 0$ for any $\lambda\in\BoA^1$ and so $\underline{\CG}\cong 0$. Therefore, $\Xi^{\Hod}$ is an isomorphism.
\end{proof}

As a corollary, we can reinforce Lemma \ref{lemma:weightsHodge} (which was crucial in proving Theorem \ref{theorem:relativeCoHA}).

\begin{corollary}
 The complexe of mixed Hodge modules $\underline{\SA}^{\Hod}\in\CD^+(\MHM(\CM^{\Hod}))$ underlying the Hodge CoHA is pure.
\end{corollary}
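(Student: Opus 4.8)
The plan is to upgrade the lower weight bound already obtained in Lemma~\ref{lemma:weightsHodge} to full purity by feeding in the relative PBW isomorphism of Theorem~\ref{theorem:relativeCoHA}. Since that lemma shows $\underline{\SA}^{\Hod}$ has weights $\geq 0$, it suffices to prove the complementary bound that $\underline{\SA}^{\Hod}$ has weights $\leq 0$. By Theorem~\ref{theorem:relativeCoHA} I may replace $\underline{\SA}^{\Hod}$ by $\Sym_{\boxdot_{\BoA^1}}(\underline{\BPS}_{\Lie}^{\Hod}\otimes\HO^*_{\BoC^*})$, so it is enough to bound the weights of each graded summand of this relative symmetric algebra.

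First I would record that the building blocks are pure of weight zero: the relative intersection complexes $\underline{\IC}(\CM_r^{\Hod}/\BoA^1)$ are pure of weight zero by Lemma~\ref{lemma:relativeICmhm}, and $\HO^*_{\BoC^*}$ is pure of weight zero in the normalisation of the paper (each of its Tate summands being so). The relative BPS Lie algebra $\underline{\BPS}_{\Lie}^{\Hod}=\Free_{\boxdot_{\BoA^1}-\Lie}(\bigoplus_{r\geq 1}\underline{\IC}(\CM_r^{\Hod}/\BoA^1))$ is a direct summand of the relative tensor algebra, and likewise each relative symmetric power is a direct summand of an iterated $\boxdot_{\BoA^1}$-product of the generators and of copies of $\HO^*_{\BoC^*}$. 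Because pure Hodge modules of a fixed weight form a semisimple category stable under direct summands, the whole statement reduces to the single assertion that the relative monoidal product $\boxdot_{\BoA^1}$ carries pure weight-zero objects to pure weight-zero objects.

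The hard part will be exactly this assertion, because $\boxdot_{\BoA^1}=\oplus_*\iota^!(-\boxtimes-)$ involves the exceptional pullback $\iota^!$ along the codimension-one regular immersion $\iota\colon\CM^{\Hod}\times_{\BoA^1}\CM^{\Hod}\hookrightarrow\CM^{\Hod}\times\CM^{\Hod}$, and $\iota^!$ only guarantees a lower bound on weights a priori, whereas I need an upper bound as well. The point I would exploit is that $\iota$ is the base change of the smooth diagonal $\BoA^1\hookrightarrow\BoA^1\times\BoA^1$, cut out by the single function $\lambda_1-\lambda_2$, and that the external product of two relative intersection complexes is topologically locally constant in the two $\BoA^1$-directions (the relative intersection complexes being topologically trivial over $\BoA^1$, \S\ref{subsubsection:preferredtrivialization}). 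This should make $\iota$ non-characteristic for such external products, so that $\iota^!$ agrees with $\iota^*$ up to a shift and a Tate twist. Since $\SF\boxtimes\SG$ is pure of weight zero, its $*$-pullback has weights $\leq 0$ while its $!$-pullback has weights $\geq 0$; the identification of the two forces $\iota^!(\SF\boxtimes\SG)$ to be pure of weight zero. As $\oplus$ is finite on fibres (Proposition~\ref{proposition:monoidalstructures}), $\oplus_*$ is weight-exact and preserves purity, whence $\SF\boxdot_{\BoA^1}\SG$ is pure of weight zero.

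Combining these steps, every graded summand of $\Sym_{\boxdot_{\BoA^1}}(\underline{\BPS}_{\Lie}^{\Hod}\otimes\HO^*_{\BoC^*})$ is pure of weight zero, hence so is $\underline{\SA}^{\Hod}$; together with the degree bound of Lemma~\ref{lemma:degreegeqminus2Hodge} this exhibits $\underline{\SA}^{\Hod}$ as a direct sum of shifts of pure Hodge modules of matching weight. The step I expect to require the most care is the non-characteristic identification of $\iota^!$ with a shift and Tate twist of $\iota^*$, since $\CM^{\Hod}$ is singular and the trivialisation over $\BoA^1$ is only real-analytic; to make it precise I would argue stratum by stratum using the compatibility of $\boxdot_{\BoA^1}$ with fibre restrictions (Proposition~\ref{proposition:compexternalS}), on each fibre of which the product becomes the absolute product $\boxdot$ of the pure intersection complexes of $\CM_r^{\Dol}$ (resp. $\CM_r^{\dR}$), which is pure by the decomposition theorem and the genuine finiteness of $\oplus$.
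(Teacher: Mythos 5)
Your reduction is the same as the paper's: invoke the PBW isomorphism of Theorem \ref{theorem:relativeCoHA}, use that $\oplus$ is finite on fibres so that $\oplus_*$ preserves purity, and thereby reduce everything to the purity of the $\boxtimes_{\BoA^1}$-products of the generators $\underline{\IC}(\CM_{r}^{\Hod}/\BoA^1)$. Where you diverge is in how that last purity is established. The paper simply identifies $(\boxtimes_{\BoA^1})_{j=1}^t\underline{\IC}(\CM_{r_j}^{\Hod}/\BoA^1)$ with the relative intersection complex $\underline{\IC}\bigl((\prod_{\BoA^1})_{j=1}^t\CM_{r_j}^{\Hod}/\BoA^1\bigr)$, which is pure of weight zero by construction (it is a $\jmath_{!*}$-extension of a pure object, cf.\ Lemma \ref{lemma:relativeICmhm}), and never mentions any comparison of $\iota^!$ with $\iota^*$. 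Your route instead squeezes purity out of the two weight inequalities by arguing that $\iota$ is non-characteristic for $\SF\boxtimes\SG$, so that $\iota^!\cong\iota^*$ up to shift and Tate twist. The squeeze itself is sound (and the weight bookkeeping works since $\BoL$ is weight zero as a complex of MHM), and your argument is in some sense more general, since it applies to any complex that is locally constant along the $\BoA^1$-directions and not only to intersection complexes. The cost is exactly the point you flag: the non-characteristic identification is a priori only a statement about the underlying constructible complexes, since the trivialisation over $\BoA^1$ is merely a homeomorphism, and to use it for weights you must either produce the comparison morphism $\iota^*(\SF\boxtimes\SG)\to\iota^!(\SF\boxtimes\SG)\otimes\BoL^{-1}$ at the level of mixed Hodge modules (e.g.\ via the Gysin triangle for the divisor $\lambda_1=\lambda_2$ and then check it is an isomorphism on underlying sheaves), or fall back, as you propose, on the fibrewise restrictions together with the punctual characterisation of weights (Corollary \ref{corollary:coincidenceweights}); for the latter note that the lower bound comes from $\imath^!$-restriction to fibres and the upper bound from $\imath^*$-restriction, so you need both fibre restrictions to be pure, which again uses the topological triviality. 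Both completions work, but the paper's identification with a relative IC is the shorter path and sidesteps the MHM-level comparison entirely.
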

\begin{proof}
 The direct sum map $\oplus\colon\CM^{\Hod}\times_{\BoA^1}\CM^{\Hod}\rightarrow\CM^{\Hod}$ is finite. Therefore, $\Sym_{\boxdot_{\BoA^1}}(\underline{\BPS}_{\Lie}^{\Hod}\otimes\HO^*_{\BoC^*})$ is pure. Indeed, it suffices that for $r_1,\hdots,r_t\in\BoN$, the external product over $\BoA^1$
 \[
  (\boxtimes_{\BoA^1})_{j=1}^t\underline{\IC}(\CM_{r_j}^{\Hod}/\BoA^1)
 \]
is pure. But this external tensor product is $\underline{\IC}((\prod_{\BoA^1})_{j=1}^t\CM^{\Hod}_{r_j}/\BoA^1)$ and is therefore pure. By Theorem \ref{theorem:relativeCoHA}, $\underline{\SA}^{\Hod}$ is pure.
\end{proof}

\begin{proposition}
\label{proposition:trivializationCoHA}
 Via the trivialization $\CM^{\Hod}\cong \CM^{\Dol}\times\BoA^1$ given by nonabelian Hodge theory, we have
 \[
  \SA^{\Hod}\cong\SA^{\Dol}\boxtimes \BoQ_{\BoA^1}[2],
 \]
 where the product over $\BoA^1$ for $\SA^{\Dol}\boxtimes \BoQ_{\BoA^1}[2]$ is given by
 \[
  m^{\Dol}\boxtimes\id_{\BoQ_{\BoA^1}[2]}\colon (\SA^{\Dol}\boxtimes \BoQ_{\BoA^1}[2])\boxtimes_{\BoA^1}(\SA^{\Dol}\boxtimes \BoQ_{\BoA^1}[2])\cong \SA^{\Dol}\boxtimes\SA^{\Dol}\boxtimes\BoQ_{\BoA^1}[2]\rightarrow \SA^{\Dol}\boxtimes\BoQ_{\BoA^1}[2].
 \]
\end{proposition}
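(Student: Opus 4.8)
The plan is to transport both the generators and the monoidal structure through the preferred trivialization of \S\ref{subsubsection:preferredtrivialization}, and then to pin the product down fibrewise. Write $g\colon\CM^{\Hod}\xrightarrow{\sim}\CM^{\Dol}\times\BoA^1$ for this trivialization, a homeomorphism over $\BoA^1$ restricting to the identity on the fibre $\CM^{\Dol}$ over $0$. The first step is to check that the exact functor
\[
F\coloneqq g^*\bigl((-)\boxtimes\BoQ_{\BoA^1}[2]\bigr)\colon\bigl(\CD^+_{\rmc}(\CM^{\Dol}),\boxdot\bigr)\longrightarrow\bigl(\CD^+_{\rmc}(\CM^{\Hod}),\boxdot_{\BoA^1}\bigr)
\]
is monoidal. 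For multiplicativity I would use that, by \S\ref{subsubsection:preferredtrivialization}, the direct sum $\oplus\colon\CM^{\Hod}\times_{\BoA^1}\CM^{\Hod}\to\CM^{\Hod}$ corresponds under $g$ to $\oplus\times\id_{\BoA^1}$, combine this with the definition $\boxdot_{\BoA^1}=\oplus_*\iota^!(-\boxtimes-)$ from \eqref{equation:monoidalproduct}, the compatibility of $\iota^!$ and of pushforward with external products, and the elementary identity $\delta^!\bigl(\BoQ_{\BoA^1}[2]\boxtimes\BoQ_{\BoA^1}[2]\bigr)\cong\BoQ_{\BoA^1}[2]$ for the diagonal $\delta\colon\BoA^1\hookrightarrow\BoA^1\times\BoA^1$. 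This cancellation is exactly what leaves a single factor $\BoQ_{\BoA^1}[2]$ and reproduces the isomorphism displayed in the statement; note that the Tate twist produced by $\delta^!$ is invisible over $\BoQ$-coefficients, which is precisely why the assertion lives at the level of constructible complexes. Unitality holds because $g$ is compatible with the zero object, so $F$ sends the monoidal unit $(0_{\CM^{\Dol}})_*\BoQ$ to $(0_{\CM^{\Hod}})_*\BoQ[2]$.

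With $F$ monoidal, I would produce the underlying isomorphism by feeding the relative PBW descriptions into $F$. Since $g$ is a homeomorphism and intersection complexes are topological invariants, and since $\IC(\CM_r^{\Hod}/\BoA^1)=\IC(\CM_r^{\Hod})[1]$ while $\IC(\CM_r^{\Hod})\cong g^*\bigl(\IC(\CM_r^{\Dol})\boxtimes\BoQ_{\BoA^1}[1]\bigr)$ by the Künneth formula for $\IC$ and $\IC(\BoA^1)=\BoQ_{\BoA^1}[1]$, one gets $\IC(\CM_r^{\Hod}/\BoA^1)\cong F\bigl(\IC(\CM_r^{\Dol})\bigr)$; thus $F$ carries the Dolbeault generators to the Hodge generators. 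Being exact and monoidal and preserving direct sums, $F$ intertwines $\Free_{\boxdot-\Lie}$, $\Sym_{\boxdot}$ and $(-)\otimes\HO^*_{\BoC^*}$ with their relative counterparts over $\BoA^1$. Applying $F$ to the relative Dolbeault PBW isomorphism of Theorem \ref{theorem:dhsBettiDolbeault} (and its genus $\leq 1$ analogue) and comparing with the relative Hodge PBW isomorphism $\Xi^{\Hod}$ of Theorem \ref{theorem:relativeCoHA} then yields an isomorphism of constructible complexes $\Theta\colon\SA^{\Hod}\xrightarrow{\sim}F(\SA^{\Dol})=g^*\bigl(\SA^{\Dol}\boxtimes\BoQ_{\BoA^1}[2]\bigr)$. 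Applying $\imath_{\CM^{\Dol}}^!$ to the whole construction and using the monoidality of $\imath_{\CM^{\Dol}}^!$ (Proposition \ref{proposition:compexternalS}), I would check that both PBW isomorphisms restrict over $0$ to the Dolbeault one, so that $\Theta$ may be normalised with $\imath_{\CM^{\Dol}}^!\Theta=\id_{\SA^{\Dol}}$.

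It then remains to upgrade $\Theta$ to an isomorphism of algebra objects, i.e.\ to identify the product $m^{\Hod}$ transported through $\Theta$ with $m^{\Dol}\boxtimes\id$. Using the monoidality of $F$ from the first step, both are morphisms with source $\bigl(\SA^{\Dol}\boxdot\SA^{\Dol}\bigr)\boxtimes\BoQ_{\BoA^1}[2]$ and target $\SA^{\Dol}\boxtimes\BoQ_{\BoA^1}[2]$ in $\CD^+_{\rmc}(\CM^{\Dol}\times\BoA^1)$. Because $\BoA^1$ is contractible, Lemma \ref{lemma:morphismonfiber} guarantees that such a morphism is determined by its $!$-restriction to the fibre over $0$. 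By Theorem \ref{theorem:CoHAHodgeDeligne} together with Proposition \ref{proposition:compexternalS} (identifying $\imath_{\CM^{\Dol}}^!$ of the source with $\SA^{\Dol}\boxdot\SA^{\Dol}$) and the normalisation $\imath_{\CM^{\Dol}}^!\Theta=\id$, the restriction of the transported $m^{\Hod}$ is the Dolbeault product $m^{\Dol}$, whereas the restriction of $m^{\Dol}\boxtimes\id$ is tautologically $m^{\Dol}$. Hence the two products coincide and $\Theta$ is an isomorphism of algebra objects, as claimed.

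The main obstacle is conceptual rather than computational: the CoHA product $m^{\Hod}$ is built at the level of the stacks $\FM^{\Hod}$ and $\mathfrak{Exact}^{\Hod}$, which the preferred trivialization does not (and need not) trivialize --- only the good moduli spaces carry the homeomorphism $g$. The device circumventing this is Lemma \ref{lemma:morphismonfiber}, which reduces the multiplicativity of $\Theta$ over the contractible base $\BoA^1$ to the single fibre over $0$, where Theorem \ref{theorem:CoHAHodgeDeligne} already identifies the product with $m^{\Dol}$. A secondary point demanding care is the shift and Tate-twist bookkeeping in the first step, where the $[2]$-shifts of the two factors $\BoQ_{\BoA^1}$ must collapse to a single one under $\delta^!$, and where the twist is only harmless because we work with $\BoQ$-coefficients rather than with mixed Hodge modules.
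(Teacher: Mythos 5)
Your proposal is correct and follows essentially the same route as the paper: the paper's (one-line) proof is precisely an application of Lemma \ref{lemma:morphismonfiber} to $\SF=\SA^{\Dol}\boxdot\SA^{\Dol}$, $\SG=\SA^{\Dol}$ and the multiplication $m^{\Hod}$, reducing the comparison of products to the fibre over $0$ where Theorem \ref{theorem:CoHAHodgeDeligne} identifies it with $m^{\Dol}$. Your additional steps (monoidality of the transport functor, construction of the underlying isomorphism $\Theta$ from the two PBW theorems, and the normalisation $\imath_{\CM^{\Dol}}^!\Theta=\id$) are exactly the details the paper leaves implicit, and they are carried out correctly.
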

\begin{proof}
 This follows from Lemma \ref{lemma:morphismonfiber} applied to $\SF=\SA^{\Dol}\boxdot\SA^{\Dol}$ and $\SG=\SA^{\Dol}$, and to the multiplication for the Hodge stack $m\in\Hom_{\CD_{\rmc}^+(\CM^{\Hod})}(\SA^{\Hod}\boxdot_{\BoA^1}\SA^{\Hod},\SA^{\Hod})$.
\end{proof}

\begin{corollary}
\label{corollary:reldRdolcoincide}
 The relative cohomological Hall algebras $\SA^{\Dol}$ and $\SA^{\dR}$ coincide via the homeomorphism $\CM^{\Dol}\cong\CM^{\dR}$ given by nonabelian Hodge theory.
\end{corollary}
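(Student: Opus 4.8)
The plan is to deduce this statement directly from the two structural facts we have already assembled for the Hodge CoHA: the isomorphism of algebra objects over $\BoA^1$ provided by Proposition \ref{proposition:trivializationCoHA}, and the identification of the two distinguished fibers of $\underline{\SA}^{\Hod}$ with $\underline{\SA}^{\Dol}$ and $\underline{\SA}^{\dR}$ furnished by Theorem \ref{theorem:CoHAHodgeDeligne}. The conceptual point is that $\SA^{\Dol}$ and $\SA^{\dR}$ are the fibers over $0$ and $1$ of a single family of algebra objects over $\BoA^1$ which, after the nonabelian Hodge trivialization, becomes constant along $\BoA^1$; restricting this constancy over the two points then yields the coincidence of the two CoHAs.

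First I would invoke Proposition \ref{proposition:trivializationCoHA}, which under the nonabelian Hodge trivialization $\CM^{\Hod} \cong \CM^{\Dol} \times \BoA^1$ gives an isomorphism of algebra objects $\SA^{\Hod} \cong \SA^{\Dol} \boxtimes \BoQ_{\BoA^1}[2]$ in $(\CD^+_{\rmc}(\CM^{\Hod}), \boxdot_{\BoA^1})$, with product $m^{\Dol} \boxtimes \id$ on the right-hand side. By construction of this trivialization, the comparison homeomorphism $\CM^{\dR} \cong \CM^{\Dol}$ of nonabelian Hodge theory is exactly the restriction of the trivialization to the fiber over $1 \in \BoA^1$, while the fiber over $0$ is $\CM^{\Dol}$ with the identity identification.

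Next I would apply the $!$-restriction functor $\imath_{\CM^{\Hod}_1}^!$ over the point $1 \in \BoA^1$. By Proposition \ref{proposition:compexternalS} this functor carries the relative product $\boxdot_{\BoA^1}$ to the absolute product $\boxdot$ on the fiber, so it is monoidal and sends the displayed algebra isomorphism to an algebra isomorphism between the two restrictions. On the left, Theorem \ref{theorem:CoHAHodgeDeligne} identifies $\imath_{\CM^{\Hod}_1}^! \SA^{\Hod}$ with $\SA^{\dR}$ as an algebra object (recall $\CM^{\Hod}_1 = \CM^{\dR}$). On the right, using the trivialization $\CM^{\Hod}_1 \cong \CM^{\Dol} \times \{1\}$ together with the compatibility of the external product with $!$-pullbacks, one computes $\imath_{\CM^{\Hod}_1}^!(\SA^{\Dol} \boxtimes \BoQ_{\BoA^1}[2]) \cong \SA^{\Dol} \boxtimes \imath_1^!(\BoQ_{\BoA^1}[2]) \cong \SA^{\Dol}$, where smoothness of $\BoA^1$ in dimension one gives $\imath_1^! \BoQ_{\BoA^1} \cong \BoQ[-2]$ and hence $\imath_1^!(\BoQ_{\BoA^1}[2]) \cong \BoQ$; the induced product on this restriction is $m^{\Dol}$. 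Combining the two computations gives the sought isomorphism of algebra objects $\SA^{\dR} \cong \SA^{\Dol}$ transported along the nonabelian Hodge homeomorphism.

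The only genuinely nonformal ingredient is the compatibility of the fiber restriction with the CoHA products, i.e.\ the monoidality of $\imath_{\CM^{\Hod}_1}^!$ as a functor from $(\CD^+_{\rmc}(\CM^{\Hod}), \boxdot_{\BoA^1})$ to $(\CD^+_{\rmc}(\CM^{\dR}), \boxdot)$; this is precisely Proposition \ref{proposition:compexternalS}, so once it is in hand the argument reduces to bookkeeping of which trivialization and which fiber we restrict over. I expect the only remaining care to be in confirming that the fiberwise homeomorphism over $\lambda = 1$ coincides with the classical nonabelian Hodge homeomorphism $\Psi$, which is immediate from the definition of the trivialization entering Proposition \ref{proposition:trivializationCoHA}.
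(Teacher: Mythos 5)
Your proposal is correct and follows the same route as the paper: the paper's proof of this corollary is literally ``immediate from Proposition \ref{proposition:trivializationCoHA}'', and what you have written is the careful unpacking of that immediacy (restriction over $1\in\BoA^1$ via $\imath^!$, monoidality of the fiber restriction as in Proposition \ref{proposition:compexternalS}, identification of the fibers via Theorem \ref{theorem:CoHAHodgeDeligne}, and the computation $\imath_1^!\BoQ_{\BoA^1}[2]\cong\BoQ$). Nothing further is needed.
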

\begin{proof}
 This is immediate from Proposition \ref{proposition:trivializationCoHA}.
\end{proof}

\begin{corollary}
\label{corollary:abdDoldRcoincide}
 The cohomological Hall algebras $\HO^*\SA^{\Dol}$ and $\HO^*\SA^{\dR}$ are isomorphic.
\end{corollary}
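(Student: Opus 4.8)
The plan is to deduce this absolute statement from the relative one (Corollary \ref{corollary:reldRdolcoincide}) by pushing forward to a point. Recall that, by construction (Theorem \ref{theorem:CoHAdeRham} and \S\ref{subsection:CoHAandBPSalgebra}), the absolute CoHAs $\HO^*\SA^{\Dol}$ and $\HO^*\SA^{\dR}$ are the derived global sections $a^{\Dol}_*\SA^{\Dol}$ and $a^{\dR}_*\SA^{\dR}$, where $a^{\sharp}\colon\CM^{\sharp}\to\pt$ denotes the structure morphism, and that their associative products are \emph{defined} to be the ones induced from the relative CoHA products through this pushforward. First I would record that derived global sections along $a^{\sharp}$ assemble into a (symmetric) monoidal functor from $(\CD^+_{\rmc}(\CM^{\sharp}),\boxdot)$ to complexes of $\BoQ$-vector spaces: for $\SF,\SG\in\CD^+_{\rmc}(\CM^{\sharp})$ one has
\[
 a^{\sharp}_*(\SF\boxdot\SG)=a^{\sharp}_*\oplus_*(\SF\boxtimes\SG)\cong a^{\sharp}_*\SF\otimes a^{\sharp}_*\SG,
\]
where the first equality is the definition of $\boxdot$ (\S\ref{subsubsection:monoidalstrucpoint}) and the isomorphism is the K\"unneth formula together with the fact that $a^{\sharp}\circ\oplus$ is the structure morphism of $\CM^{\sharp}\times\CM^{\sharp}$. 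This identification is compatible with associativity and units, so $a^{\sharp}_*$ carries algebra objects to associative algebras, and applied to $\SA^{\sharp}$ it recovers precisely the absolute CoHA of Theorem \ref{theorem:CoHAdeRhamintro}.

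Next I would use that $\Psi\colon\CM^{\dR}\to\CM^{\Dol}$ is a homeomorphism, so that tautologically $a^{\Dol}\circ\Psi=a^{\dR}$ and hence $a^{\dR}_*\cong a^{\Dol}_*\circ\Psi_*$. Applying $a^{\Dol}_*$ to the isomorphism of algebra objects $\Psi_*\SA^{\dR}\cong\SA^{\Dol}$ supplied by Corollary \ref{corollary:reldRdolcoincide} then yields the chain
\[
 \HO^*\SA^{\dR}=a^{\dR}_*\SA^{\dR}\cong a^{\Dol}_*\Psi_*\SA^{\dR}\cong a^{\Dol}_*\SA^{\Dol}=\HO^*\SA^{\Dol}.
\]
Since $a^{\Dol}_*$ is monoidal by the K\"unneth isomorphism above and $\Psi_*$ is monoidal for the relative products (Corollary \ref{corollary:reldRdolcoincide} being an isomorphism of algebra objects), every arrow in this chain respects multiplication, so the composite is an isomorphism of associative algebras.

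In fact the argument is entirely formal once Corollary \ref{corollary:reldRdolcoincide} is available; the only point needing verification is that derived global sections genuinely transport the relative CoHA product to the absolute one and do so compatibly with $\Psi_*$. This is exactly the lax-monoidality of $a^{\sharp}_*$ encoded in the K\"unneth isomorphism, and since the paper defines the absolute product as this pushforward of the relative product, I do not expect a genuine obstacle here. The substantive content---comparing the two relative CoHAs across the non-continuous nonabelian Hodge correspondence---has already been achieved through the Hodge--Deligne interpolation and the trivialization of Proposition \ref{proposition:trivializationCoHA}, so the remaining work is only to make the monoidality of $a^{\sharp}_*$ precise.
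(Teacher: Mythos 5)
Your argument is correct and follows essentially the same route as the paper: the paper's proof is a one-line reference to Proposition \ref{proposition:trivializationCoHA} (via the relative statement of Corollary \ref{corollary:reldRdolcoincide}), and you simply make explicit the formal step of applying derived global sections, which is monoidal by K\"unneth and intertwined with $\Psi_*$ since $\Psi$ is a homeomorphism. No gap; you have just written out the details the paper leaves implicit.
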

\begin{proof}
 This follows from Proposition \ref{proposition:trivializationCoHA}.
\end{proof}

\begin{proposition}
\label{proposition:Hodgelocconstant}
 The constructible sheaf $(\pi_{\CM}^{\Hod})_*\SA^{\Hod}$ has locally constant and therefore constant cohomology sheaves.
\end{proposition}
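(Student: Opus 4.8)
The plan is to read off the conclusion from the product decomposition of $\SA^{\Hod}$ already established in Proposition \ref{proposition:trivializationCoHA}. Fix the homeomorphism $\tau\colon\CM^{\Hod}\cong\CM^{\Dol}\times\BoA^1$ over $\BoA^1$ provided by nonabelian Hodge theory (the preferred trivialization of \S\ref{subsubsection:preferredtrivialization}), under which $\pi_{\CM}^{\Hod}$ becomes the second projection $\pr_2\colon\CM^{\Dol}\times\BoA^1\to\BoA^1$, and $\pr_1\colon\CM^{\Dol}\times\BoA^1\to\CM^{\Dol}$ the first. By Proposition \ref{proposition:trivializationCoHA} we have, under $\tau$,
\[
 \SA^{\Hod}\cong\SA^{\Dol}\boxtimes\BoQ_{\BoA^1}[2]=\pr_1^*\SA^{\Dol}\otimes\pr_2^*\BoQ_{\BoA^1}[2],
\]
so that the entire computation reduces to pushing a genuine external product forward along $\pr_2$.

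I would then compute $(\pr_2)_*\bigl(\pr_1^*\SA^{\Dol}\otimes\pr_2^*\BoQ_{\BoA^1}[2]\bigr)$. By the projection formula the pulled-back factor $\pr_2^*\BoQ_{\BoA^1}[2]$ comes out, leaving $(\pr_2)_*\pr_1^*\SA^{\Dol}$ tensored with $\BoQ_{\BoA^1}[2]$. Applying smooth base change to the Cartesian square formed by $\pr_1,\pr_2$ and the structure morphisms $a\colon\CM^{\Dol}\to\pt$, $b\colon\BoA^1\to\pt$ (with $b$ smooth), or equivalently the compatibility of the external tensor product with pushforward \cite[\S4.2.7 (a)]{beilinson2018faisceaux}, identifies $(\pr_2)_*\pr_1^*\SA^{\Dol}\cong b^*a_*\SA^{\Dol}$, the constant complex on $\BoA^1$ with stalk $\HO^*\SA^{\Dol}$. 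Hence
\[
 (\pi_{\CM}^{\Hod})_*\SA^{\Hod}\cong\HO^*\SA^{\Dol}\otimes\BoQ_{\BoA^1}[2],
\]
a constant complex on $\BoA^1$; its cohomology sheaves are therefore constant, a fortiori locally constant, which is the assertion.

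The point worth isolating is that $(\pi_{\CM}^{\Hod})_*$ is an ordinary ($*$-)pushforward while the fibre $\CM^{\Dol}$ is non-proper, so proper base change is unavailable; it is the smoothness of $\BoA^1\to\pt$ that legitimises the base-change identification above. I expect this to be the only subtlety: all the substantive content, namely that $\SA^{\Hod}$ is an external product over the trivialization, is already carried by Proposition \ref{proposition:trivializationCoHA} (resting in turn on Lemma \ref{lemma:morphismonfiber}). The phrase \emph{locally constant and therefore constant} in the statement records that on the contractible base $\BoA^1$ local constancy would upgrade to constancy in any case; here the computation yields constancy directly.
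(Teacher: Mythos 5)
Your argument is correct, and it reaches the conclusion by a slightly different route than the paper. The paper's proof goes through the PBW decomposition of Theorem \ref{theorem:relativeCoHA}: it writes $\underline{\SA}^{\Hod}$ as $\Sym_{\boxdot_{\BoA^1}}$ of the BPS Lie algebra, observes that every simple summand of $\BPS^{\Hod}_{\Lie}$ is a shift of $\IC(\CM^{\Hod})$, and uses the topological triviality of $\CM^{\Hod}\rightarrow\BoA^1$ (\S\ref{subsubsection:preferredtrivialization}) to see that each such summand pushes forward to something with locally constant cohomology sheaves; implicitly one must also check that the fibrewise convolution $\boxdot_{\BoA^1}$ interacts well with $(\pi_{\CM}^{\Hod})_*$. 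You instead invoke the global product decomposition $\SA^{\Hod}\cong\SA^{\Dol}\boxtimes\BoQ_{\BoA^1}[2]$ of Proposition \ref{proposition:trivializationCoHA} (which is available at this point, and which itself rests on Theorem \ref{theorem:relativeCoHA} and Lemma \ref{lemma:morphismonfiber}), and then a single K\"unneth/base-change computation for the projection to $\BoA^1$. So the two proofs lean on the same structural input, but your packaging avoids having to track the symmetric-product/convolution step summand by summand, and it delivers a slightly stronger output: the pushforward is identified outright with the constant complex $\HO^*\!\SA^{\Dol}\otimes\BoQ_{\BoA^1}[2]$, not merely shown to have locally constant cohomology sheaves. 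Your closing remark is also well taken: since the trivialization is only a homeomorphism, the $*$-pushforward must be computed topologically and proper base change is not available, so the justification really does come from the K\"unneth compatibility of $\boxtimes$ with pushforward for constructible complexes (the same fact the paper uses in Lemma \ref{lemma:morphismonfiber}), applied over the smooth, contractible base $\BoA^1$.
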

\begin{proof}
 This is a consequence of Theorem \ref{theorem:relativeCoHA} since all simple direct summands $\SF$ of $\BPS_{\Lie}^{\Hod}$ are such that $(\pi_{\CM}^{\Hod})_*\SF$ has locally constant cohomology sheaves. Indeed, $\SF=\IC(\CM^{\Hod})$ up to a shift and $\FM^{\Hod}\rightarrow\BoA^1$ is a topologically trivial fibration (\S\ref{subsubsection:preferredtrivialization}).
\end{proof}

\begin{remark}
 Proposition \ref{proposition:Hodgelocconstant} is nicely surprising, as it states that the Borel--Moore homology of the stacks of $\lambda$-connections is constant with $\lambda$, although the stack of $\lambda$-connection does not itself seem to be topologically trivial over $\BoA^1$ (in a sense to be defined), see \cite[Counterexample pp.38-39]{simpson1994moduliII} and \cite[Question p.12]{simpson1996hodge}.
\end{remark}

\section{Nonabelian Hodge isomorphisms}

\subsection{de Rham and Betti}
\label{subsection:deRhamBetti}
In this section, we explain how to enhance the isomorphism between the cohomologies of the de Rham and Betti moduli space induced by the Riemann--Hilbert correspondence to an isomorphism of algebras between the corresponding cohomological Hall algebras.

\subsubsection{Comparison of the (co)homologies}
\begin{theorem}[Simpson]
\label{theorem:simpsondrBetti}
Let $C$ be a genus $g$ curve. We have isomorphisms
\[
 \HO^*(\FM_{r}^{\dR}(C))\cong\HO^*(\FM_{g,r}^{\Betti})
\]
and
\[
 \HO^{\rmBM}_*(\FM_r^{\dR}(C))\cong \HO^{\rmBM}_*(\FM_{g,r}^{\Betti}).
\]
\end{theorem}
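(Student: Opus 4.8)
The plan is to reduce both statements to the Riemann--Hilbert correspondence at the level of the framed representation spaces. As recalled in the introduction, Simpson proved that the bijection $R^{\dR}_r(C)\to X^{\Betti}_{g,r}$ induced by Riemann--Hilbert — sending a framed flat connection to its framed monodromy representation — is a $\GL_r$-equivariant complex-analytic isomorphism, and in particular a $\GL_r$-equivariant homeomorphism of the underlying (analytic) topological spaces \cite{simpson1994moduliII}. The two moduli stacks are the global quotients $\FM^{\dR}_r(C)=R^{\dR}_r(C)/\GL_r$ and $\FM^{\Betti}_{g,r}=X_{g,r}^{\Betti}/\GL_r$, so by the equivariant formalism of \cite{bernstein2006equivariant} their cohomology and Borel--Moore homology are, by definition, the $\GL_r$-equivariant cohomology and Borel--Moore homology of $R^{\dR}_r(C)$ and $X_{g,r}^{\Betti}$ respectively.

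Next, I would observe that $\GL_r$-equivariant cohomology and Borel--Moore homology are topological invariants of the $\GL_r$-space. Concretely, computing them through the Borel construction with finite-dimensional algebraic approximations $(E\GL_r)_N$ of the universal space, a $\GL_r$-equivariant homeomorphism $R^{\dR}_r(C)\cong X_{g,r}^{\Betti}$ induces homeomorphisms of the approximating quotients $R^{\dR}_r(C)\times_{\GL_r}(E\GL_r)_N\cong X_{g,r}^{\Betti}\times_{\GL_r}(E\GL_r)_N$ for every $N$. Passing to (co)homology in each fixed degree, where the approximation has stabilized, yields the desired isomorphisms $\HO^*(\FM^{\dR}_r(C))\cong\HO^*(\FM^{\Betti}_{g,r})$ and $\HO^{\rmBM}_*(\FM^{\dR}_r(C))\cong\HO^{\rmBM}_*(\FM^{\Betti}_{g,r})$. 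Here one uses that Borel--Moore homology, while not a homotopy invariant, \emph{is} a homeomorphism invariant, and that the relevant real dimensions agree automatically since the spaces are homeomorphic, so that no degree shift intervenes.

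The conceptual crux, rather than any computation, is that the Riemann--Hilbert correspondence is only complex-analytic and not algebraic: one cannot conclude an isomorphism of algebraic stacks, and the comparison is genuinely one of topological (analytic) invariants. The point requiring care is therefore the identification of the algebraically-defined (equivariant) cohomology and Borel--Moore homology of the stacks with the topological invariants of the analytified Borel constructions; this is the standard comparison between algebraic and analytic (co)homology, and once it is in place the argument above applies verbatim to the analytifications. I expect this bookkeeping — matching the conventions for equivariant Borel--Moore homology of stacks and checking the absence of a dimensional shift — to be the only real subtlety, the geometric input being entirely supplied by Simpson's equivariant homeomorphism.
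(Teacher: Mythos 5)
Your argument is exactly the paper's: Simpson's description of both stacks as global quotients $R_r^{\dR}/\GL_r$ and $X_{g,r}^{\Betti}/\GL_r$, the $\GL_r$-equivariant homeomorphism supplied by the Riemann--Hilbert correspondence, and the translation into equivariant cohomology and equivariant Borel--Moore homology via the Borel construction. Your write-up simply spells out the finite-dimensional approximation step that the paper leaves implicit; there is no gap and no difference in method.
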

\begin{proof}
With the preliminary study of Simpson of these moduli stacks \cite{simpson1994moduliII}, the proof is straightforward. Indeed, the stack $\FM_r^{\dR}$ (resp. $\FM_r^{\Betti}$) is described by Simpson as the global quotient stack $R_r^{\dR}/\GL_r$ (resp. $R_r^{\Betti}/\GL_r$) of the moduli space space of framed connection (resp. framed local systems) and the Riemann--Hilbert correspondence establishes a $\GL_r$-equivariant homeomorphism $\Phi'\colon R_r^{\dR}\rightarrow R_r^{\Betti}$. Translated in terms of equivariant cohomology (resp. equivariant Borel--Moore homology), the statements of the theorem are now obvious.
\end{proof}

\subsubsection{Comparison of the cohomological Hall algebra structures}
\begin{theorem}
\label{theorem:reldRBetticoincide}
 Let $C$ be a smooth projective curve. We let $\Phi\colon\CM^{\dR}\rightarrow\CM^{\Betti}$ be the homeomorphism induced by the Riemann--Hilbert correspondence. Then, we have an isomorphism of algebra objects
 \[
  \Phi_*(\JH^{\dR})_*\BD\BoQ_{\FM^{\dR}}^{\vir}\cong(\JH^{\Betti})_*\BD\BoQ_{\FM^{\Betti}}^{\vir}\in\CD^+_{\rmc}(\CM^{\Betti}).
 \]
 It induces an isomorphism of algebra objects $\HO^*(\SA^{\dR})\cong\HO^*(\SA^{\Betti})$ between the de Rham and Betti CoHAs.
\end{theorem}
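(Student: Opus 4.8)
\subsection*{Proof proposal}

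The plan is to argue entirely at the level of constructible complexes. Note first that since $\Phi$ is transcendental (a homeomorphism but not an algebraic map), there is no mixed Hodge module lift available, which is precisely why the statement is phrased in $\CD^+_{\rmc}$ and why $\Phi_*$ makes sense only for constructible sheaves. I would begin by promoting Simpson's $\GL_r$-equivariant homeomorphism $\Phi'\colon R_r^{\dR}\to R_r^{\Betti}$ (Theorem \ref{theorem:simpsondrBetti}) to a homeomorphism of the entire CoHA correspondence diagram. Because the Riemann--Hilbert correspondence is an equivalence of the abelian categories of connections and local systems which preserves direct sums and short exact sequences, $\Phi'$ is compatible with the stacks of short exact sequences: it induces homeomorphisms $\FM^{\dR}\cong\FM^{\Betti}$, $\mathfrak{Exact}^{\dR}\cong\mathfrak{Exact}^{\Betti}$ and $\CM^{\dR}\cong\CM^{\Betti}$ intertwining the maps $q^{\dR},p^{\dR},\oplus,\JH^{\dR}$ with $q^{\Betti},p^{\Betti},\oplus,\JH^{\Betti}$, and in particular intertwining the relative monoidal products $\boxdot$. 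Under these identifications the comparison of algebra objects reduces to comparing the two ingredients of the product recalled in the proof of Theorem \ref{theorem:CoHAdeRham}: the proper pushforward along $p$ and the virtual pullback along $q$.

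The pushforward ingredient $p_*$ is purely topological and is therefore automatically intertwined by the homeomorphisms of the previous step. The substance of the argument is to check that the two virtual pullback maps \eqref{equation:virpbdR} match under $\Phi$. Both are built, following \cite[\S4.4.3]{davison2022BPS}, as the refined Euler class of the $3$-term complex $\CC^{\sharp}=\RHom[1]$ over $\FM^{\sharp}\times\FM^{\sharp}$, whose total space has classical truncation $\mathfrak{Exact}^{\sharp}$. The key point is that the Riemann--Hilbert correspondence lifts to an equivalence of dg-categories (\S\ref{subsection:Sigmacollections}), so it identifies the full $\RHom$ complexes, not merely their cohomology: the fiber of $\CC^{\dR}$ over $(E,F)$ computes $\BH_{\rmDR}(C,E\otimes F^{\vee}[1])$, cf. \eqref{equation:tangentextensions}, and this is carried continuously and functorially to the group-cohomology complex computing $\Ext^{\bullet}$ of the corresponding local systems. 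Moreover the virtual ranks agree, since both categories share the same Euler form $\chi(E,F)=2(1-g)rs$ (\S\ref{subsection:Eulerformconnections}, \S\ref{subsubsection:theeulerformfdga}), so $\vrank(\CC^{\dR})=\vrank(\CC^{\Betti})$. Consequently $\Phi^*\CC^{\Betti}$ and $\CC^{\dR}$ are identified as topological $3$-term complexes of vector bundles up to quasi-isomorphism, and since the refined Euler class in Borel--Moore homology depends only on the quasi-isomorphism class of this complex together with the map $q$, the two virtual pullbacks coincide under $\Phi$.

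Combining the two ingredients shows that $\Phi_*$ intertwines the degree-$(r,s)$ product maps $m_{r,s}$, giving the desired isomorphism of algebra objects in $\CD^+_{\rmc}(\CM^{\Betti})$. Passing to derived global sections, which is preserved by $\Phi_*$ (as $\Phi$ is a homeomorphism) and which sends $\boxdot$ to the absolute convolution product, then yields the isomorphism of algebras $\HO^*(\SA^{\dR})\cong\HO^*(\SA^{\Betti})$. The main obstacle is the virtual-pullback comparison of the second paragraph: because $\Phi$ is not algebraic, one cannot invoke functoriality of virtual pullbacks in derived algebraic geometry directly, and must instead argue topologically. I expect the cleanest way to make the identification $\Phi^*\CC^{\Betti}\simeq\CC^{\dR}$ fully rigorous is through the derived Riemann--Hilbert correspondence of Pantev--To\"en \cite{pantev2019moduli} and Porta--Sala \cite{porta2022two}, which identifies the derived stacks $\bm{\mathfrak{Exact}}^{\dR}$ and $\bm{\mathfrak{Exact}}^{\Betti}$ together with their quasi-smooth structure maps to $\FM\times\FM$, hence their relative cotangent complexes and the refined Euler classes underlying the virtual pullbacks.
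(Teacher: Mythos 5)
Your proposal is correct and follows essentially the same route as the paper: the paper's proof likewise invokes the derived Riemann--Hilbert correspondence of Porta--Sala (their Proposition 7.21) to identify the analytified correspondence diagrams $\bm{\FM}\times\bm{\FM}\leftarrow\bm{\mathfrak{Exact}}\rightarrow\bm{\FM}$, deduces $\Phi^*\CC^{\Betti}\cong\CC^{\dR}$ from the resulting identification of the cotangent complexes $\mathbb{L}_{\bm{q}^{\Betti}}$ and $\mathbb{L}_{\bm{q}^{\dR}}$, and concludes that the virtual pullbacks and hence the CoHA products coincide. The extra remarks you add (topological invariance of the proper pushforward, matching of the Euler forms and virtual ranks) are consistent with and implicit in the paper's argument.
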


\begin{proof}
 The formalism needed was developped in \cite{porta2022two}. We let $\bm{\FM}^{\dR}$ be the derived stack of connections on $C$ and $\bm{\FM}^{\Betti}$ the derived stack of local systems. We have the corresponding stacks of extensions $\bm{\mathfrak{Exact}}^{\dR}$ and $\bm{\mathfrak{Exact}}^{\Betti}$ and the diagrams
 \[
  \bm{\FM}^{\dR}\times\bm{\FM}^{\dR}\xleftarrow{\bm{q}^{\dR}}\bm{\mathfrak{Exact}}^{\dR}\xrightarrow{\bm{p}^{\dR}}\bm{\FM}^{\dR}
 \]
and
\[
  \bm{\FM}^{\Betti}\times\bm{\FM}^{\Betti}\xleftarrow{\bm{q}^{\Betti}}\bm{\mathfrak{Exact}}^{\Betti}\xrightarrow{\bm{p}^{\Betti}}\bm{\FM}^{\Betti},
 \]
and one of the key results of \cite{porta2022two}, see Proposition 7.21 of \emph{loc. cit.} is that the analytifications of these diagrams are isomorphic, via the derived Riemann--Hilbert correspondence $\bm{\Phi}$. This implies that the cotangent complexes $\mathbb{L}_{\bm{q}^{\dR}}$ and $\mathbb{L}_{\bm{q}^{\Betti}}$ are isomorphic (through the Riemann--Hilbert correspondence) and hence,
\[
 \bm{\Phi}^*\mathbb{L}_{\bm{q}^{\Betti}}\cong\mathbb{L}_{\bm{q}^{\dR}}
\]
and in particular, since the complexes $\CC^{\Betti}$ and $\CC^{\dR}$ are the restrictions to the classical truncations $\FM^{\dR}\times\FM^{\dR}$ (resp. $\FM^{\Betti}\times\FM^{\Betti}$) of $\mathbb{L}_{\bm{q}^{\dR}}$ (resp. $\mathbb{L}_{\bm{q}^{\Betti}}$),

\[
 {\Phi}^*\CC^{\Betti}\cong\CC^{\dR}.
\]
From this and the construction of virtual pullbacks using the $3$-term complexes $\CC^{\dR}$ and $\CC^{\Dol}$, if follows that through the Riemann--Hilbert correspondence, the CoHA multiplications coincide: we have an isomorphism of algebra objects
\[
 \Phi_*\SA^{\dR}\cong \SA^{\Betti}.
\]
The theorem follows.
\end{proof}

\begin{remark}
One could use the strategy employed in \cite{davison2022BPS} for the Betti and Dolbeault moduli spaces to compare the Borel--Moore homologies of the Betti and de Rham moduli stacks. This would give an isomorphism of constructible complexes
\[
 \Phi_*(\JH^{\dR})_*\BD\BoQ_{\FM^{\dR}}^{\vir}\cong \Phi_*(\JH^{\Betti})_*\BD\BoQ_{\FM^{\Betti}}^{\vir}.
\]
It is immediate, using Theorem \ref{theorem:reldRBetticoincide}, that this isomorphism coincides with the one given by Theorem \ref{theorem:reldRBetticoincide}.
\end{remark}

\subsection{de Rham and Dolbeault}
\label{subsection:deRhamDolbeault}
The comparison between the de Rham and Dolbeault CoHAs was deduced in \S\ref{subsection:CoHAHodgemodulistack} from the study of the Hodge--Deligne CoHA. We refer to Corollaries \ref{corollary:reldRdolcoincide} and \ref{corollary:abdDoldRcoincide}.

Nevertheless, our results do not give a comparison between the cohomologies. This leads to the following question.
\begin{question}
\label{question:isocohdRDolbeault}
Do we have an isomorphism
\[
 \HO^*(\FM_{g,r,0}^{\dR})\cong\HO^*(\FM_{r,0}^{\Dol}(C))?
\]
\end{question}

\subsection{Betti and Dolbeault}
One of the objectives of \cite{davison2022BPS} was to establish connections between the various versions of the $\mathrm{P}=\mathrm{W}$ conjecture, which concerns the Betti and Dolbeault moduli spaces (namely, to relate it to the $\mathrm{IP}=\mathrm{IW}$ conjecture regarding intersection cohomology (first made in \cite[Question 4.1.7]{de2018perverse}); and to the $\mathrm{SP}=\mathrm{SW}$ conjecture considering the Borel--Moore homologies of the stacks, see \cite{davison2023nonabelian}). Our main result there concerning the Betti and Dolbeault moduli stacks is the following.

\begin{theorem}[{Davison--Hennecart--Schlegel Mejia, \cite[Theorem 1.7]{davison2022BPS}}]
\label{theorem:DHS2022}
We have a canonical isomorphism of vector spaces
\[
 \HO_*^{\rmBM}(\FM_{g,r,d}^{\Betti})\cong \HO^{\rmBM}_*(\FM_{r,d}^{\Dol}(C)).
\]
\end{theorem}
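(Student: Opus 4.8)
The plan is to deduce the Betti--Dolbeault comparison by composing the two comparisons already established along the de Rham vertex of the nonabelian Hodge triangle. By Theorem~\ref{theorem:reldRBetticoincide} the Riemann--Hilbert homeomorphism $\Phi\colon\CM^{\dR}\to\CM^{\Betti}$ transports the de Rham relative CoHA to the Betti one, $\Phi_*\SA^{\dR}\cong\SA^{\Betti}$, and by Corollary~\ref{corollary:reldRdolcoincide} the nonabelian Hodge homeomorphism $\Psi\colon\CM^{\dR}\to\CM^{\Dol}$ transports it to the Dolbeault one, $\Psi_*\SA^{\dR}\cong\SA^{\Dol}$, both as isomorphisms in the relevant constructible derived categories. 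Since the substance of both statements has already been absorbed into these inputs (via the derived Riemann--Hilbert comparison of \cite{porta2022two} and via the Hodge--Deligne interpolation of \S\ref{subsection:CoHAHodgemodulistack}), the remaining argument is formal.

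First I would form the composite homeomorphism $\Xi\coloneqq\Psi\circ\Phi^{-1}\colon\CM^{\Betti}\to\CM^{\Dol}$. Because $\Phi$ is a homeomorphism, $\Phi^{-1}_*$ is inverse to $\Phi_*$ on constructible complexes, so $\Phi^{-1}_*\SA^{\Betti}\cong\SA^{\dR}$; applying $\Psi_*$ and using $\Psi_*\SA^{\dR}\cong\SA^{\Dol}$ gives
\[
 \Xi_*\SA^{\Betti}=\Psi_*\Phi^{-1}_*\SA^{\Betti}\cong\Psi_*\SA^{\dR}\cong\SA^{\Dol}
\]
as objects of $\CD^+_{\rmc}(\CM^{\Dol})$, in fact as algebra objects, which is exactly the constructible part of Theorem~\ref{theorem:stackyNAHiso}.

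Next I would take derived global sections. For $\sharp\in\{\Betti,\Dol\}$ one has $\HO^*(\CM^{\sharp},\SA^{\sharp})=\HO^*(\FM^{\sharp},\BD\BoQ^{\vir}_{\FM^{\sharp}})$, since $\SA^{\sharp}=\JH^{\sharp}_*\BD\BoQ^{\vir}_{\FM^{\sharp}}$ and global sections factor through $\JH^{\sharp}_*$; this is the Borel--Moore homology $\HO^{\rmBM}_*(\FM^{\sharp})$ up to the Tate twist $\BoL^{(1-g)r^2}$ carried by the $\vir$ decoration on the rank $r$ component. As $\Xi$ is a homeomorphism, $\HO^*(\CM^{\Betti},\SA^{\Betti})\cong\HO^*(\CM^{\Dol},\Xi_*\SA^{\Betti})\cong\HO^*(\CM^{\Dol},\SA^{\Dol})$; because the genus and the rank agree on matching components the virtual twists on the two sides coincide and cancel, yielding $\HO^{\rmBM}_*(\FM^{\Betti}_{g,r})\cong\HO^{\rmBM}_*(\FM^{\Dol}_{r})$ for $d=0$. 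For arbitrary degree $d$ I would run the identical argument with the parabolic and twisted inputs of \S\ref{subsubsection:parabolicversions}, matching the $\zeta_\theta$-twisted Betti stack with the slope-$\theta$ Dolbeault stack.

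The delicate step is not the composition itself but the bookkeeping that makes it rigorous: one must check that the Tate twists attached to $\vir$ on the Betti and Dolbeault sides genuinely agree component by component, so that the final isomorphism is an honest, shift-free identification of Borel--Moore homologies rather than merely a twisted one; and, for general $d$, one must verify that every ingredient used along the de Rham vertex — the Hodge--Deligne interpolation and the derived Riemann--Hilbert comparison — really does possess the parabolic analogue asserted in \S\ref{subsubsection:parabolicversions}, where the construction of the moduli stacks on the punctured curve requires genuine extra care.
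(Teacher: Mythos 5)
Your proposal is correct and follows essentially the same route as the paper, which explicitly retrieves Theorem~\ref{theorem:DHS2022} as the combination of Theorem~\ref{theorem:reldRBetticoincide} (Riemann--Hilbert along the de Rham--Betti edge) and Corollary~\ref{corollary:reldRdolcoincide} (the Hodge--Deligne interpolation along the de Rham--Dolbeault edge), followed by taking derived global sections. Your closing caveats about the matching of the virtual Tate twists and about the twisted/parabolic inputs needed for general degree $d$ are at the same level of detail as the paper's own treatment in \S\ref{subsubsection:parabolicversions}.
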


Our method in \cite{davison2022BPS} to prove this isomorphism is of representation theoretic nature and uses the cohomological algebra structures on both sides (after summing over all pairs $(r,d)$ such that $d/r=\mu$ is constant). The cohomological Hall algebra structure does not exist on the cohomology on the Dolbeault and Betti moduli spaces, which makes it not possible to prove an isomorphism in cohomology using our methods (and therefore we cannot answer Question \ref{question:isocohdRDolbeault}).

In this paper, we retrieve Theorem \ref{theorem:DHS2022} as a combination of Theorem \ref{theorem:reldRBetticoincide} and Corollary \ref{corollary:reldRdolcoincide}, but in addition we are able to compare the cohomological Hall algebra structures. Namely, the isomorphism of Theorem \ref{theorem:DHS2022} is an isomorphism of algebras.

\subsection{Affinized BPS Lie algebras}
Given a Lie algebra $\mathfrak{g}$, an affinization of $\mathfrak{g}$ is a Lie algebra $\tilde{\mathfrak{g}}$ which as a vector space is isomorphic to $\mathfrak{g}[u]$, that is polynomials in one variable with coefficients in the original Lie algebra. The Lie bracket on $\tilde{\mathfrak{g}}$ could be (and is, in general) more complicated than the obvious extension of the Lie bracket of $\mathfrak{g}$ to $\mathfrak{g}[u]$. This terminology is convenient for us. In the literature \cite[Chapter 7]{kac1990infinite}, affine Lie algebras often refer to central extensions of the Lie algebra of loops over $\mathfrak{g}$, which are different objects than the ones considered here.

Affinized BPS Lie algebras made their appearance through the example of the tripled Jordan quiver with potential in \cite{davison2022affine}. The ultimate definition of affinized BPS Lie algebras relies on a coproduct on the cohomological Hall algebra. Since the definition of this coproduct has not yet appeared, we formulate our results independently of this coproduct.

\subsubsection{Isomorphism between affinized BPS Lie algebras}
We reformulate Theorem \ref{theorem:actiondetlinebundle} as follows.

\begin{theorem}
\label{theorem:actionChernclassdet}
 Through the nonabelian Hodge homeomorphisms $\Phi\colon\CM^{\dR}\rightarrow\CM^{\Betti}$ and $\Psi\colon\CM^{\dR}\rightarrow\CM^{\Dol}$, the actions of the algebra $\HO^*_{\BoC^*}$ on $\SA^{\dR}$, $\SA^{\Dol}$ and $\SA^{\Betti}$ coincide with each other.
\end{theorem}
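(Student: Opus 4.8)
The statement is the reformulation, announced just before it, of Theorem \ref{theorem:actiondetlinebundle}: recall from \S\ref{subsection:thedetlb} that the action of $\HO^*_{\BoC^*}$ on $\SA^{\sharp}$ is the one attached to the morphism $\tilde\chi\colon\FM_r^{\sharp}\to\FM_r^{\sharp}\times\rmB\BoC^*$ induced by the determinant character $\det\colon\GL_r\to\BoC^*$; concretely, writing $u$ for the degree two generator of $\HO^*_{\BoC^*}$, the action of $u$ is cupping with the first Chern class of the determinant line bundle $\CL_{\det}^{\sharp}$ on the quotient stack $\FM_r^{\sharp}=R_r^{\sharp}/\GL_r$. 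Since this operation depends only on the presentation of $\FM_r^{\sharp}$ as a $\GL_r$-quotient together with the fixed character $\det$, the plan is to check that each of the two nonabelian Hodge comparisons respects exactly these data. I would treat $\Phi$ and $\Psi$ separately; the remaining identification, between the Betti and Dolbeault actions, then follows by composing the two.

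For the Riemann--Hilbert comparison $\Phi$ I would use that, as recalled in the proof of Theorem \ref{theorem:simpsondrBetti}, the correspondence is realised by a $\GL_r$-equivariant homeomorphism $\Phi'\colon R_r^{\dR}\to R_r^{\Betti}$ of the representation spaces. Because $\Phi'$ is $\GL_r$-equivariant and the determinant is literally the same character of $\GL_r$ on both sides, $\Phi'$ carries $\tilde\chi^{\dR}$ to $\tilde\chi^{\Betti}$ and identifies $\CL_{\det}^{\dR}$ with $\Phi^*\CL_{\det}^{\Betti}$ as analytic line bundles, so that their first Chern classes correspond. Combined with the isomorphism $\Phi_*\SA^{\dR}\cong\SA^{\Betti}$ of Theorem \ref{theorem:reldRBetticoincide}, which is itself induced by $\Phi'$, this yields the agreement of the two $\HO^*_{\BoC^*}$-actions under $\Phi$.

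For the comparison $\Psi$ between the de Rham and Dolbeault sides I would work over the Hodge moduli stack. The framing $\beta\colon\CF_p\cong\BoC^r$ and the change-of-framing $\GL_r$-action are defined uniformly in $\lambda$, so the determinant character produces a single determinant line bundle $\CL_{\det}^{\Hod}$ on $\FM_r^{\Hod}=R_r^{\Hod}/\GL_r$ whose restrictions to the fibres over $0$ and $1$ are $\CL_{\det}^{\Dol}$ and $\CL_{\det}^{\dR}$. Cupping with $c_1(\CL_{\det}^{\Hod})$ then defines a degree two endomorphism of $\SA^{\Hod}$ in $\CD^+_{\rmc}(\CM^{\Hod})$ whose $!$-restrictions to the fibres over $0$ and $1$ are, by the base-change identifications used in Lemma \ref{lemma:HodgeCoHAnonnegative}, exactly the Dolbeault and de Rham actions. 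Now Proposition \ref{proposition:trivializationCoHA} trivialises $\SA^{\Hod}\cong\SA^{\Dol}\boxtimes\BoQ_{\BoA^1}[2]$ via the nonabelian Hodge homeomorphism, and the rigidity Lemma \ref{lemma:morphismonfiber} (using contractibility of $\BoA^1$) shows that endomorphisms of such an externally constant complex are determined by their restriction to a single fibre. Hence the restrictions of this endomorphism over $0$ and over $1$ coincide through the trivialisation, which on fibres is $\Psi$, giving the equality of the Dolbeault and de Rham actions.

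The hard part will be the $\Psi$ step, and in particular making precise that cupping with $c_1(\CL_{\det}^{\Hod})$ is a morphism in the relative derived category over $\BoA^1$ that is compatible with $!$-restriction to the fibres, so that Lemma \ref{lemma:morphismonfiber} genuinely applies; once this relative formulation is set up, the conclusion is formal. I would also need to verify that the preferred trivialisation of \S\ref{subsubsection:preferredtrivialization} underlying Proposition \ref{proposition:trivializationCoHA} restricts on fibres to the homeomorphism $\Psi$ and not to some other identification, so that the resulting comparison is the intended one.
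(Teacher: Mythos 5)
Your proposal is correct and follows essentially the same route as the paper: the Betti/de Rham comparison via the $\GL_r$-equivariant homeomorphism of representation spaces from the Riemann--Hilbert correspondence, and the de Rham/Dolbeault comparison via the determinant line bundle on the Hodge moduli stack $\FM_r^{\Hod}=R_r^{\Hod}/\GL_r$, restriction of the resulting endomorphism of $\SA^{\Hod}$ to the fibres over $0$ and $1$, and the rigidity of Lemma \ref{lemma:morphismonfiber} combined with Corollary \ref{corollary:reldRdolcoincide}. The two caveats you flag at the end are exactly the points the paper's proof relies on implicitly, so there is no gap.
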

\begin{proof}
 We first compare the $\HO^*_{\BoC^*}$-actions through the Riemann--Hilbert correspondence. According to \cite[Theorem~7.1]{simpson1994moduliII}, the (framed) Riemann--Hilbert correspondence gives a $\GL_r$-equivariant homeomorphism of topological spaces $\tilde{\Phi}\colon R^{\dR}\rightarrow R^{\Betti}$. Therefore, working with the equivariant Borel--Moore homology and the commutative square
 \[
  \begin{tikzcd}
	{R^{\dR}} & {R^{\Betti}} \\
	{\CM^{\dR}} & {\CM^{\Betti}}
	\arrow["\Phi", from=2-1, to=2-2]
	\arrow["{\tilde{\Phi}}", from=1-1, to=1-2]
	\arrow["{\JH^{\dR}}"', from=1-1, to=2-1]
	\arrow["{\JH^{\Betti}}", from=1-2, to=2-2]
\end{tikzcd},
 \]
whose horizontal arrows are homeomorphisms, the statement becomes obvious as the determinant map is obtained as the limit as $N\rightarrow\infty$ of the maps
\[
 (R^{\sharp}_r\times_{\GL_r} E_N)\rightarrow (R^{\sharp}_r\times_{\GL_r} E_N)\times E_N/\GL_r
\]
for $\sharp\in\{\dR,\Betti\}$, where $E_N$ is a smooth variety with no cohomology in degrees $2\leq i\leq N$ on which $\GL_r$ acts freely.

We turn to the comparison of determinant line bundles between the de Rham and Dolbeault moduli spaces. Again, we use the Hodge--Deligne moduli stack and its relative CoHA over $\BoA^1$.

By \cite{simpson1994moduliII} (see \S\ref{subsection:Hodgemodulistack}), it can be described as the quotient stack $\FM_r^{\Hod}=R^{\Hod}_r/\GL_r\rightarrow\BoA^1$. The determinant character $\det\colon\GL_r\rightarrow\BoC^*$ gives the map
\[
 \tilde{\det}\colon\FM_r^{\Hod}\rightarrow\FM_r^{\Hod}\times_{\BoA^1}\BoA^1/\GL_r\cong \FM_r^{\Hod}\times\rmB\BoC^*.
\]
where the action of $\GL_r$ on $\BoA^1$ is trivial.

This gives the action map
\[
 \SA^{\Hod}_r\otimes\HO^*_{\BoC^*}\rightarrow\SA_r^{\Hod}
\]
as in \S\ref{subsection:thedetlb}. By taking a generator $u\in\HO^*_{\BoC^*}$ (which sits in cohomological degree $2$) and combining all $r\in\BoN$ together, we have a morphism
\[
 \cdot u \colon \SA^{\Hod}[-2]\rightarrow\SA^{\Hod}.
\]

The restriction of this morphism over $0\in\BoA^1$ (resp. $1\in\BoA^1$) is the action of the first Chern class of the determinant line bundle on $\SA^{\Dol}$ (resp. $\SA^{\dR}$). By Lemma \ref{lemma:morphismonfiber}, the nonabelian Hodge isomorphism between the de Rham and Dolbeault relative CoHas (Corollary \ref{corollary:reldRdolcoincide}) intertwines these actions. This concludes the proof.
\end{proof}
The statement concerning the absolute CoHAs (Corollary \ref{corollary:actiondetlbabsolute}) follows by taking derived global sections.

\section{$\chi$-independence phenomena}
\label{section:chiindependenceph}
In this section, partly conjectural, we introduce new $\chi$-independence questions and results for Dolbeault and Betti CoHAs. In addition to the now classical problem of comparing the cohomology or BPS cohomology when the Euler characteristic varies, we also ask for the (Lie or associative) algebra structures to coincide. Using the nonabelian Hodge theory isomorphisms for CoHAs, one could transfer $\chi$-independence results (at the cost of forgetting the mixed Hodge structures) between Betti, de Rham and Dolbeault sides.

\subsection{CoHA product for character varieties in the $\ell$-adic setting}

\subsubsection{Classical construction}
\label{subsubsection:classicalmultladic}
In this section, we explain briefly how to define the CoHA for character varieties in $\ell$-adic Borel--Moore homology. The definitions work in the exact same way as for Borel--Moore homology in the analytic topology.

Let $\mu=d/r$ with $\gcd(r,d)=1$. We $\zeta_r\coloneqq\exp\left(\frac{2\pi i}{r}\right)$ be a primitive $r$th root of unity. We let $K=\BoQ[\zeta_r]$ and $\overline{K}=\overline{\BoQ}$ be its algebraic closure. We are interested in the stack of representations of the twisted fundamental group algebra $K[\pi_1(\Sigma_g),\zeta_r^d]\coloneqq K[x_i,y_i\colon 1\leq i\leq g]/\langle\zeta_r^d\prod_{i=1}^gx_iy_ix_i^{-1}y_i^{-1}=1\rangle$. This is a stack over $K$, which we denote by $\FM_{r,d,K}^{\Betti}$. It admits a good moduli space (defined in terms of GIT quotient) $\JH_K\colon \FM_{r,d,K}^{\Betti}\rightarrow\CM_{r,d,K}^{\Betti}$. We may base-change all stacks and schemes to $\overline{K}$. In this way, we obtain the good moduli space $\JH_{\overline{K}}\colon\FM_{r,d,\overline{K}}^{\Betti}\rightarrow\CM_{r,d,\overline{K}}^{\Betti}$.

The multiplication on $\JH_{\overline{K}}\BD(\BoQ_{\ell})_{\FM_{r,d,\overline{K}}^{\Betti}}$ may be constructed as for preprojective algebras of quivers, see \cite[Appendix by Ben Davison, \S4.1]{ren2017cohomological}. The moment map is replaced in this context by the \emph{multiplicative} moment map
\[
 \begin{matrix}
  \mu&\colon& \GL_{r,\overline{K}}^{2g}&\rightarrow&\GL_{r,\overline{K}}\\
  &&(x_i,y_i)_{1\leq i\leq g}&\mapsto& \prod_{i=1}^gx_iy_ix_i^{-1}y_i^{-1}.
 \end{matrix}
\]
By taking derived global sections, this gives the CoHA product on $\HO^{\rmBM}_*((\FM_{\mu,\overline{K}})_{\et},\BoQ_{\ell})$. 

The formalism of virtual pullbacks in $\ell$-adic Borel--Moore homology is developed in great details in \cite[\S3]{olsson2015borel}. The $\ell$-adic derived categories of algebraic stacks have been introduced in \cite{behrend2003derived}. All stacks appearing in the construction are quotient stacks, allowing an approach to the $\ell$-adic derived category via simplicial schemes, for example. We also refer to the series of papers \cite{laszlo2008six,laszlo2008sixb,laszlo2009perverse}. See \S\ref{subsection:elladic}.

\subsubsection{Multiplication in terms of the $3$-term complex}
In the case of the stacks of representations over the field of complex numbers $\BoC$, the multiplication can be defined using a $3$-term complex of vector bundles over the product $\FM\times\FM$ of the stack considered, as in \cite[\S9.1]{davison2022BPS}. It is explained in detail in \cite[Appendix A]{davison2022BPS} that the multiplications obtained in this way and in the more classical way \S\ref{subsubsection:classicalmultladic} coincide. As noticed in \cite{davison2022BPS} after Corollary 6.3, the same proof adapts to multiplicative preprojective algebras and twisted fundamental group algebras of Riemann surfaces. When working over $K=\BoQ[\zeta_r]$, the $3$-term complex of \cite[\S2.5.2]{davison2022BPS} (for preprojective algebras), see also \cite[Appendix C]{davison2022BPS}, is defined over $K$. Therefore, the formalism of \cite{davison2022BPS} combined with virtual pullbacks in the $\ell$-adic derived categories \cite{olsson2015borel} give the CoHA product on $(\JH_{\overline{K}})_*\BD(\BoQ_{\ell})_{\FM_{d/r},\overline{K}}^{\vir}$. By the arguments of \cite[Appendix A]{davison2022BPS}, it coincides with the CoHA product of \S\ref{subsubsection:classicalmultladic}.

\subsection{Galois conjugation and $\chi$-independence for character varieties}

Galois conjugate algebraic varieties may fail to have the same topology, but remarkably they have isomorphic cohomologies (with $\BoC$-coefficients).

\begin{definition}
 Let $K$ be a number field and $X$ an algebraic variety over $K$. For any embedding $\sigma\colon K\rightarrow \BoC$, we may form the algebraic variety $X_{\sigma}\coloneqq X\times_{\Spec(K)}\Spec(\BoC)$. If $\sigma'\colon K\rightarrow\BoC$ is another embedding, the varieties $X_{\sigma}$ and $X_{\sigma'}$ are called \emph{Galois conjugate}.
\end{definition}

Let $X$ be an algebraic variety defined over an algebraically closed field $K$. There are various comparisons result regarding $\ell$-adic cohomology when we extend the scalars from $K$ to a bigger algebraically closed field $L$ \cite[\S4.2.9]{beilinson2018faisceaux}. There are also comparison results between $\ell$-adic and singular cohomology when $X$ is defined over $\BoC$ (Artin's comparison theorem, see also \cite[\S6.1]{beilinson2018faisceaux} for more general statements).

\begin{proposition}
 Let $\mu=d/r$ and $\mu'=d'/r$ with $\gcd(r,d)=\gcd(r,d')=1$. Then, the cohomological Hall algebras $\HO^*(\SA_{\mu}^{\Betti},\BoC)$ and $\HO^*(\SA_{\mu'}^{\Betti},\BoC)$ are isomorphic.
\end{proposition}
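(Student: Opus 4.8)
The plan is to run the Galois conjugation argument foreshadowed in \S\ref{subsubsection:classicalmultladic}. Write $K=\BoQ[\zeta_r]$, so that $\mathrm{Gal}(K/\BoQ)\cong(\BoZ/r\BoZ)^{\times}$ acts by $\zeta_r\mapsto\zeta_r^{k}$ for $k$ coprime to $r$. Since $\gcd(r,d)=\gcd(r,d')=1$, the residue $k\equiv d'd^{-1}\pmod{r}$ is a unit modulo $r$ and the corresponding $\sigma\in\mathrm{Gal}(K/\BoQ)$ satisfies $\sigma(\zeta_r^{d})=\zeta_r^{d'}$, equivalently $\sigma(\exp(2\pi i\mu))=\exp(2\pi i\mu')$. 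First I would note that, since the scalar twist $\zeta_r^{d}=\exp(2\pi i\mu)$ depends only on the slope $\mu$ and not on the rank, this single $\sigma$ simultaneously carries the defining relation $\zeta_r^{d}\prod_{i=1}^{g}x_iy_ix_i^{-1}y_i^{-1}=1$ of $K[\pi_1(\Sigma_g),\zeta_r^{d}]$ to that of $K[\pi_1(\Sigma_g),\zeta_r^{d'}]$. Thus $\sigma$ induces a $\sigma$-semilinear isomorphism intertwining the entire package of $K$-data of \S\ref{subsubsection:classicalmultladic}---the representation stacks $\FM_{r,d,K}^{\Betti}$, the good moduli space morphisms $\JH_K$, the multiplicative moment maps, the direct sum maps $\oplus$, and the $3$-term complex of \cite[\S2.5.2, Appendix C]{davison2022BPS}---taking the slope-$\mu$ datum to the slope-$\mu'$ datum for every rank.

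Next I would base change to $\overline{K}=\overline{\BoQ}$ and pass to $\ell$-adic Borel--Moore homology. Because $\ell$-adic cohomology of a finite-type object over an algebraically closed field is insensitive to the abstract $\sigma$-twist of the ground field \cite[\S4.2.9]{beilinson2018faisceaux}, the conjugation yields an isomorphism
\[
 \gamma\colon\HO^{\rmBM}_*((\FM_{\mu,\overline{K}}^{\Betti})_{\et},\overline{\BoQ}_{\ell})\xrightarrow{\ \sim\ }\HO^{\rmBM}_*((\FM_{\mu',\overline{K}}^{\Betti})_{\et},\overline{\BoQ}_{\ell}).
\]
The point is that $\gamma$ is an \emph{algebra} isomorphism: the CoHA product is assembled (\S\ref{subsubsection:classicalmultladic}) from the virtual pullback of \cite[\S3]{olsson2015borel}, a proper pushforward, and the pushforward along the finite map $\oplus$, all defined over $K$ and all functorial for base-field conjugation. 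Hence $\gamma$ intertwines the two products and gives an isomorphism of $\overline{\BoQ}_{\ell}$-algebras $\HO^*(\SA_{\mu}^{\Betti},\overline{\BoQ}_{\ell})\cong\HO^*(\SA_{\mu'}^{\Betti},\overline{\BoQ}_{\ell})$.

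Finally I would transport the result to $\BoC$-coefficients. Fixing an abstract field isomorphism $\overline{\BoQ}_{\ell}\cong\BoC$ and invoking Artin's comparison theorem together with the invariance of $\ell$-adic cohomology under extension of algebraically closed fields, the $\ell$-adic Borel--Moore homology of each stack is identified, compatibly with the six operations and hence with the CoHA product, with its singular Borel--Moore homology with $\BoC$-coefficients. Composing these comparisons with $\gamma$ produces the desired algebra isomorphism $\HO^*(\SA_{\mu}^{\Betti},\BoC)\cong\HO^*(\SA_{\mu'}^{\Betti},\BoC)$.

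The hard part will be the compatibility of all these identifications with the CoHA \emph{multiplication} rather than merely with the underlying graded vector spaces; this is exactly why the product is reformulated $\ell$-adically and via the $3$-term complex in \S\ref{subsubsection:classicalmultladic}. Once the product is written purely through operations (virtual pullback, proper pushforward, pushforward along $\oplus$) that possess matching $\ell$-adic and Betti incarnations and that commute with conjugation of the base field, multiplicativity of every identification in the chain becomes formal. One should also keep in mind that Galois conjugate varieties need not be homeomorphic, so this isomorphism lives genuinely at the level of (co)homology and cannot be promoted to respect finer structure such as the mixed Hodge structure---consistent with the failure of $\chi$-independence for those refinements.
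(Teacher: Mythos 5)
Your proposal is correct and follows essentially the same route as the paper: Galois conjugation of the defining convolution diagrams over $K=\BoQ[\zeta_r]$, reformulation of the product $\ell$-adically via the $3$-term complex so that multiplicativity is preserved under conjugation and the comparison isomorphisms, and a final choice of isomorphism $\overline{\BoQ}_{\ell}\cong\BoC$ together with Artin comparison. The only cosmetic difference is that the paper phrases the conjugation as two embeddings $\overline{\BoQ}\to\BoC$ of a single diagram over $\overline{\BoQ}$ (normalised to the $d=1$ case), whereas you work with an explicit $\sigma\in\mathrm{Gal}(K/\BoQ)$ sending $\zeta_r^{d}$ to $\zeta_r^{d'}$; these are equivalent.
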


\begin{proof}
 We let $r,d,d'$ as in the statement of the proposition and $\zeta_r=\exp\left(\frac{2\pi i}{r}\right)$ a primitive $r$th root of unity. We let $K\coloneqq\BoQ[\zeta_r]$ and $\overline{\BoQ}$ its algebraic closure. The convolution diagrams giving the CoHA product on $\HO^*(\SA_{\mu}^{\Betti})$ and $\HO^*(\SA_{\mu'}^{\Betti})$ are Galois conjugate. Therefore, they both come from the same convolution diagram over $\overline{\BoQ}$ by extending the scalars to $\BoC$ using different embeddings $\overline{\BoQ}\rightarrow\BoC$ and so by the comparison results mentioned above, we have isomorphisms of algebras
 \[
  \HO^{\rmBM}_{*}\left(\left(\FM_{1/r,\overline{\BoQ}}^{\Betti})\right)_{\et},\overline{\BoQ}_{\ell}\right)\cong \HO^{\rmBM}_*(\FM_{d/r}^{\Betti},\overline{\BoQ}_{\ell})
 \]
for any $d\in\BoZ$ coprime with $r$, where $ \HO^{\rmBM}_{*}\left(\left(\FM_{1/r,\overline{\BoQ}}^{\Betti})\right)_{\et},\overline{\BoQ}_{\ell}\right)$ is defined using the $\ell$-adic derived category $\CD^+_{\rmc}(\FM_{1/r,\overline{\BoQ}}^{\Betti},\overline{\BoQ}_{\ell})$ and $\HO^{\rmBM}_*(\FM_{d/r}^{\Betti},\overline{\BoQ}_{\ell})$ using the analytic constructible derived category $\CD^+_{\rmc}(\FM_{d/r}^{\Betti},\overline{\BoQ}_{\ell})$. To come back to complex coefficients, we choose an isomorphism $\overline{\BoQ}_{\ell}\cong\BoC$.
\end{proof}

\subsection{$\chi$-independence for Higgs bundles}

We start by recalling the $\chi$-independence result for the BPS cohomology of the moduli space of Higgs bundles.

Let $r,d,d'\in\BoZ$ be such that $r>0$. Recall the BPS sheaves $\underline{\BPS}_{\Lie,r,d}^{\Dol}$ and $\underline{\BPS}_{\Lie,r,d'}^{\Dol}$ on $\CM_{r,d}^{\Dol}$ and $\CM_{r,d'}^{\Dol}$ respectively. We let $\mathtt{h}\colon \CM_{\bullet,\bullet}^{\Dol}=\bigsqcup_{r,d\in\BoZ}\CM_{r,d}^{\Dol}\rightarrow B_r$ be the Hitchin morphism.

\begin{theorem}[Kinjo--Koseki {\cite{kinjo2021cohomological}}]
\label{theorem:kinjokosekichiindep}
 There is an isomorphism $\mathtt{h}_*\underline{\BPS}_{\Lie,r,d}^{\Dol}\cong \mathtt{h_*}\underline{\BPS}_{\Lie,r,d'}^{\Dol}$.
\end{theorem}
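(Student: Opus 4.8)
The plan is to reduce the statement to the open locus of the Hitchin base over which the spectral curves are integral, where the Hitchin fibres become compactified Jacobians that are manifestly degree-independent, and then to propagate the resulting isomorphism to all of $B_r$ by means of a support theorem. The three ingredients are purity (to apply the decomposition theorem), the Beauville--Narasimhan--Ramanan correspondence (for the generic comparison), and a support theorem for the singular Hitchin fibration.

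First I would invoke purity: both $\underline{\BPS}_{\Lie,r,d}^{\Dol}$ and $\underline{\BPS}_{\Lie,r,d'}^{\Dol}$ are pure, and the Hitchin map is projective, so by the decomposition theorem the pushforwards $\Hit_*\underline{\BPS}_{\Lie,r,d}^{\Dol}$ and $\Hit_*\underline{\BPS}_{\Lie,r,d'}^{\Dol}$ are finite direct sums of shifts of intersection complexes of closed subvarieties of $B_r$, each carrying a semisimple local system on a smooth open part of its support. Thus each pushforward is determined, up to canonical isomorphism, by its restriction to any dense open subset of $B_r$ meeting every support.

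Next I would work over the open locus $B_r^{\circ}\subset B_r$ parametrising integral spectral curves $C_b\subset\Tan^*C$. Via the Beauville--Narasimhan--Ramanan correspondence, the Hitchin fibre over $b\in B_r^{\circ}$ is the compactified Jacobian $\overline{\Jac}^{\,e(d)}(C_b)$ of rank-one torsion-free sheaves of the appropriate degree on $C_b$, and tensoring by a degree-one line bundle on $C_b$ gives an isomorphism $\overline{\Jac}^{\,e(d)}(C_b)\cong\overline{\Jac}^{\,e(d')}(C_b)$. Globally over $B_r^{\circ}$ the two families are torsors under the same relative Jacobian group scheme $\Jac^0\to B_r^{\circ}$, and since translation by this connected group acts trivially on the intersection complex (hence on the BPS sheaf) up to canonical isomorphism, the torsor structure is immaterial; one obtains a canonical isomorphism $\left(\Hit_*\underline{\BPS}_{\Lie,r,d}^{\Dol}\right)_{|B_r^{\circ}}\cong\left(\Hit_*\underline{\BPS}_{\Lie,r,d'}^{\Dol}\right)_{|B_r^{\circ}}$.

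Finally, I would apply the support theorem for the singular Hitchin fibration --- in the spirit of Ng\^o, Chaudouard--Laumon and Maulik--Shen --- which asserts that every simple summand occurring in $\Hit_*\underline{\BPS}^{\Dol}_{\Lie,r,\bullet}$ has full support, i.e. its support meets the integral locus $B_r^{\circ}$; equivalently, no summand is supported on $B_r\setminus B_r^{\circ}$. Combined with the first step, this shows that both pushforwards are recovered from their restrictions to $B_r^{\circ}$, so the generic isomorphism of the third step extends uniquely to the desired global isomorphism. The hard part will be precisely this support theorem in the singular setting: controlling the codimension of the non-integral locus and ruling out summands supported there requires the $\delta$-regularity and dimension estimates specific to the Hitchin system, and is the deep geometric input, whereas the degree-independence over $B_r^{\circ}$ and the decomposition-theorem bookkeeping are comparatively formal.
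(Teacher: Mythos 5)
First, a bookkeeping remark: the paper does not prove this statement; it is imported wholesale from Kinjo--Koseki, so your proposal can only be weighed against the known arguments (Maulik--Shen for intersection cohomology, Kinjo--Koseki for the BPS sheaves). Your outline is exactly the Maulik--Shen strategy: purity and the decomposition theorem, a generic identification over the integral-spectral-curve locus $B_r^{\circ}$ via the BNR correspondence and compactified Jacobians, and a support theorem to propagate. Steps one and two are sound in outline, modulo the standard caveat that there is no global degree-one line bundle on the universal spectral curve over $B_r^{\circ}$, so your translation isomorphism exists only \'etale-locally and must be glued using the (nontrivial but standard) homogeneity of the relevant local systems under the relative Jacobian.

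The genuine gap is the third step. For the BPS \emph{Lie algebra} sheaf of this paper the asserted full-support statement is false as soon as $(r,d)$ is not a primitive class. Indeed, by the free Lie algebra description recalled in Theorem \ref{theorem:dhsBettiDolbeault}, $\underline{\BPS}^{\Dol}_{\Lie,r,d}$ is the $(r,d)$-component of a free Lie algebra on $\bigoplus_{k\geq 1}\underline{\IC}(\CM^{\Dol}_{kr_0,kd_0})$, and for $\gcd(r,d)>1$ it therefore contains nonzero direct summands given by brackets of lower-rank generators; these are supported on the image of $\oplus\colon\CM^{\Dol}_{r_1,d_1}\times\CM^{\Dol}_{r_2,d_2}\rightarrow\CM^{\Dol}_{r,d}$, and since the spectral curve of $F_1\oplus F_2$ is the union of the spectral curves of the $F_i$, these summands live entirely over $B_r\setminus B_r^{\circ}$. (Concretely, $\wedge^2_{\boxdot}\underline{\IC}(\CM^{\Dol}_{1,0})$ is a nonzero summand of $\underline{\BPS}^{\Dol}_{\Lie,2,0}$ whose pushforward is supported on the $2g$-dimensional reducible locus inside the $(4g-3)$-dimensional $B_2$.) Consequently $\mathtt{h}_*\underline{\BPS}^{\Dol}_{\Lie,r,d}$ is \emph{not} determined by its restriction to $B_r^{\circ}$, and your argument says nothing about precisely those summands that distinguish the primitive from the non-primitive case --- which is where all the content of the theorem lies; when $\gcd(r,d)=\gcd(r,d')=1$ your outline does essentially reduce to the Maulik--Shen proof. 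The actual arguments handle the non-integral locus by a different mechanism (Kinjo--Koseki work with vanishing cycles on a local Calabi--Yau threefold, prove a support lemma tailored to the BPS sheaf rather than to the full pushforward, and induct on the rank; Maulik--Shen's support theorem likewise rests on their vanishing-cycle dimension estimates rather than on Ng\^o's $\delta$-regularity alone). So ``ruling out summands supported on $B_r\setminus B_r^{\circ}$'' is not merely the hard part: as stated it is false for the object in question, and the proof has to be restructured around that fact.
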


We formulate the following set of conjectures regarding the interactions of the CoHA structure with the $\chi$-independence.

Let $(r,d)\in\BoZ$. Then, $\underline{\BPS}^{\Dol}_{\Lie,\BoZ_{\geq 1}\cdot (r,d)}\coloneqq\bigoplus_{l\in\BoZ_{\geq 1}}\underline{\BPS}_{\Lie,lr,ld}^{\Dol}\subset \underline{\BPS}^{\Dol}_{\Lie,d/r}$ is a sub-Lie algebra. We let $\underline{\BPS}_{\Alg,\BoZ_{\geq 1}\cdot(r,d)}^{\Dol}\coloneqq\Sym_{\boxdot}\left(\underline{\BPS}_{\Lie,\BoZ_{\geq 1}\cdot(r,d)}^{\Dol}\right)\subset \underline{\BPS}_{\Alg,d/r}^{\Dol}$ (an equality and inclusion of mixed Hodge modules respectively). By the inclusion of Lie algebras above, and since the BPS algebra is the enveloping algebra of the BPS Lie algebra, this inclusion gives $\underline{\BPS}_{\Alg,\BoZ_{\geq 1}\cdot(r,d)}^{\Dol}$ an algebra structure, indentifying it with the enveloping algebra of $\underline{\BPS}_{\Lie,\BoZ_{\geq 1}\cdot(r,d)}^{\Dol}$.

\begin{conjecture}
\label{conjecture:chiindependence}
 \begin{enumerate}
 \item \label{item:isoLieconj}Let $r,d,d'\in\BoZ$ with $r\geq 1$. Then, by Theorem \ref{theorem:kinjokosekichiindep}, we have an isomorphism of mixed Hodge modules
 \[
  \mathtt{h}_*\underline{\BPS}^{\Dol}_{\Lie,\BoZ_{\geq 1}\cdot (r,d)}
\cong \mathtt{h}_*\underline{\BPS}^{\Dol}_{\Lie,\BoZ_{\geq 1}\cdot (r,d')}.
 \]
This is an isomorphism of Lie algebras.
\item \label{item:isoalgconj}The $\chi$-independence induces an isomorphism of algebras
\[
 \mathtt{h}_*\underline{\BPS}^{\Dol}_{\Alg,\BoZ_{\geq 1}\cdot (r,d)}
\cong \mathtt{h}_*\underline{\BPS}^{\Dol}_{\Alg,\BoZ_{\geq 1}\cdot (r,d')}
\]
\item \label{item:symissubalg} For any $(r,d)$ with $r>0$,
\[
 \SA^{\Dol}_{\BoZ_{\geq 1}\cdot (r,d)}\coloneqq\Sym_{\boxdot}\left(\underline{\BPS}^{\Dol}_{\Lie,\BoZ_{\geq 1}\cdot (r,d)}\otimes\HO^*_{\BoC^*}\right)\subset \SA_{d/r}^{\Dol}
\]
is a sub-algebra object.
\item \label{item:chiindfullcoh} For any $r,d,d'$ with $r>0$, assuming \eqref{item:symissubalg} we have an isomorphism of algebra objects
\[
\mathtt{h}_*\SA_{\BoZ_{\geq 1}\cdot (r,d)}^{\Dol}\cong \mathtt{h}_*\SA_{\BoZ_{\geq 1}\cdot (r,d')}^{\Dol}\in\CD^+(\MHM(B_r))
\]
induced by the $\chi$-independence Theorem \ref{theorem:kinjokosekichiindep}.
\end{enumerate}
\end{conjecture}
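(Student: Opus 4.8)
The plan is to reduce all four statements to the bracket-compatibility assertion in \eqref{item:isoLieconj} and then to obtain the remaining parts by the formal machinery of relative cohomological Hall algebras, now taken relative to the Hitchin base. The key structural input is that $\oplus$ is compatible with the Hitchin fibration: since the characteristic polynomial of a direct sum is the product of the characteristic polynomials, there is a finite \emph{sum} morphism $s\colon B_{r_1}\times B_{r_2}\to B_{r_1+r_2}$ of Hitchin bases through which $\mathtt{h}\circ\oplus$ factors, i.e. $\mathtt{h}\circ\oplus = s\circ(\mathtt{h}\times\mathtt{h})$. Because $\mathtt{h}$ is proper, proper base change together with the compatibility of external products with pushforward shows that $\mathtt{h}_*$ is a monoidal functor from $(\MHM(\CM^{\Dol}),\boxdot)$ to $(\MHM(\bigsqcup_r B_r),\boxdot_B)$, where $\boxdot_B$ is the relative product over the graded monoid $(\bigsqcup_r B_r, s)$ as in \S\ref{subsubsection:monoidalstroveralgvar}. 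Hence the relative CoHA product and the BPS Lie bracket push forward to an associative algebra, respectively Lie algebra, structure on $\bigoplus_{r,d}\mathtt{h}_*\underline{\SA}^{\Dol}_{r,d}$ and $\bigoplus_{r,d}\mathtt{h}_*\underline{\BPS}^{\Dol}_{\Lie,r,d}$, and every claim becomes a statement about these pushed-forward structures.

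For \eqref{item:isoLieconj} I would assemble the Kinjo--Koseki isomorphisms of Theorem \ref{theorem:kinjokosekichiindep} over all multiples $l(r,d)$, $l\geq 1$, to obtain an isomorphism of mixed Hodge modules $\mathtt{h}_*\underline{\BPS}^{\Dol}_{\Lie,\BoZ_{\geq 1}\cdot(r,d)}\cong \mathtt{h}_*\underline{\BPS}^{\Dol}_{\Lie,\BoZ_{\geq 1}\cdot(r,d')}$; the entire content is that this respects the bracket pushed forward above. My strategy would be a support-theorem reduction: by Ng\^o/Maulik--Shen-type results the object $\mathtt{h}_*\underline{\BPS}^{\Dol}_{\Lie}$ is a direct sum of shifted intersection complexes with full support, hence semisimple and determined up to canonical isomorphism by its restriction to any dense open of $B$. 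One works on the locus of reduced spectral curves, where the shift $d\to d'$ is realized geometrically by tensoring with a fixed line bundle on the spectral curve, and checks that this geometric operation intertwines the correspondence defining $\oplus$. \emph{This is the main obstacle, and the reason the statement is only conjectural:} Kinjo--Koseki produce an \emph{abstract} isomorphism, so to control the bracket one must first identify it with the geometric $\chi$-shift, and then contend with the fact that the image of the bracket is supported over the \emph{reducible} part of $B$ — precisely away from the integral locus where the shift is transparent. I expect one genuinely needs a rigidity argument (the semisimplicity and full-support properties forcing uniqueness of the isomorphism once it is fixed generically) combined with a compatibility check of the two descriptions of the shift carried out on a normalization of the reducible spectral data.

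Granting \eqref{item:isoLieconj}, the remaining parts are formal. For \eqref{item:isoalgconj}, the relative BPS algebra is the enveloping algebra of the relative BPS Lie algebra in $(\MHM(\CM^{\Dol}),\boxdot)$ (the category of Higgs sheaves being totally negative, as recalled in Theorem \ref{theorem:dhsBettiDolbeault}), so the Lie-algebra isomorphism of \eqref{item:isoLieconj} induces an isomorphism of enveloping algebras; pushing to the Hitchin bases and using that $\mathtt{h}_*$ is monoidal for $\boxdot$ via $s$ yields the algebra isomorphism. For \eqref{item:symissubalg}, since $\underline{\BPS}^{\Dol}_{\Lie,\BoZ_{\geq 1}\cdot(r,d)}\subset \underline{\BPS}^{\Dol}_{\Lie,d/r}$ is closed under the bracket, the relative PBW isomorphism $\Sym_{\boxdot}(\underline{\BPS}^{\Dol}_{\Lie}\otimes\HO^*_{\BoC^*})\cong\underline{\SA}^{\Dol}$ identifies $\Sym_{\boxdot}(\underline{\BPS}^{\Dol}_{\Lie,\BoZ_{\geq 1}\cdot(r,d)}\otimes\HO^*_{\BoC^*})$ with the sub-symmetric-algebra generated by the sub-Lie algebra, which is automatically closed under the CoHA product — the $\HO^*_{\BoC^*}$-linear analogue of the fact that a sub-Lie algebra generates a sub-enveloping-algebra.

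Finally, \eqref{item:chiindfullcoh} combines \eqref{item:symissubalg} with \eqref{item:isoLieconj}. After pushing forward along $\mathtt{h}$, the sub-CoHA $\mathtt{h}_*\SA^{\Dol}_{\BoZ_{\geq 1}\cdot(r,d)}$ is built from its generating Lie algebra $\mathtt{h}_*\underline{\BPS}^{\Dol}_{\Lie,\BoZ_{\geq 1}\cdot(r,d)}$ by the relative PBW recipe (using that $\mathtt{h}_*$ is monoidal and commutes with the $\Sym_{\boxdot_B}$ built from the generating object once expressed as a sum of external products over $B$), so the $\chi$-independence isomorphism of the generating Lie algebras extends, through $\Sym_{\boxdot_B}$, to the desired isomorphism of algebra objects in $\CD^+(\MHM(B_r))$. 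The whole argument thus hinges on \eqref{item:isoLieconj}; the other three parts are consequences of the relative PBW theorem and the monoidality of the Hitchin pushforward.
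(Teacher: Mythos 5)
The statement you are proving is stated in the paper as a \emph{conjecture}: the paper offers no proof of it, and only records (in the proposition that follows it) that parts (1) and (2) are equivalent to one another and are known when $\gcd(r,d)=\gcd(r,d')=1$, the obstruction being the lack of a description of generators of $\mathtt{h}_*\underline{\BPS}^{\Dol}_{\Lie,\BoZ_{\geq 1}\cdot(r,d)}$ in the non-coprime case. Your proposal is therefore rightly a strategy rather than a proof, and you correctly isolate the genuine mathematical content in part (1) --- namely that the abstract Kinjo--Koseki isomorphism must be shown to intertwine the pushed-forward Lie brackets, with the difficulty concentrated over the reducible locus of the Hitchin base. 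Your reduction of (2) to (1) via enveloping algebras matches the paper's own observation that (1) and (2) are equivalent, and your use of the finiteness of the sum map on Hitchin bases to make $\mathtt{h}_*$ monoidal is consistent with the paper's framework.

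However, your treatment of part (3) contains a real gap: you assert that $\Sym_{\boxdot}\bigl(\underline{\BPS}^{\Dol}_{\Lie,\BoZ_{\geq 1}\cdot(r,d)}\otimes\HO^*_{\BoC^*}\bigr)$ is ``automatically closed under the CoHA product'' because a sub-Lie algebra generates a sub-enveloping algebra. This does not follow. The relative PBW map $\Sym_{\boxdot}(\underline{\BPS}^{\Dol}_{\Lie}\otimes\HO^*_{\BoC^*})\to\underline{\SA}^{\Dol}$ is an isomorphism of complexes of mixed Hodge modules, not of algebras, so the image of a sub-symmetric-algebra need not be multiplicatively closed. For your argument to apply one would first need to know that $\underline{\BPS}^{\Dol}_{\Lie}\otimes\HO^*_{\BoC^*}$ is itself closed under the Lie bracket induced by the CoHA product; the paper explicitly flags exactly this point as resting on the unpublished Davison--Kinjo coproduct and the support property for the BPS sheaf of the $3$-Calabi--Yau completion (see the discussion of affinized BPS Lie algebras in the introduction, where the closure under the bracket is stated only as a consequence of the existence of that coproduct). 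This is precisely why (3) is left as a conjecture; your claim that it is formal is an overstatement, and (4), which you derive from (3), inherits the same gap.
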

The conjectures \eqref{item:isoLieconj} and \eqref{item:isoalgconj} are about finding generators of the Lie algebra $\mathtt{h}_*\underline{\BPS}^{\Dol}_{\Lie,\BoZ_{\geq 1}\cdot (r,d)}$. Indeed, assuming the genus of the curve is $\geq 2$, then both sides are free (Lie) algebras (by Shirshovś theorem, stating that a sub-Lie algebra of a free Lie algebra is itself a free Lie algebra), but we only have a good description of generators when $(r,d)$ is coprime (thanks \cite{davison2022BPS}). Once a description of generators for $(r,d)$ non-coprime is found, one has to check that the chi-independence isomorphisms induce isomorphisms between the generating subobjects. This remark immediately leads to the following proposition (since generators for  free Lie algebra give generators for its enveloping algebra), using the chi-independence for the IC-complexes \cite[Corollary 14.9]{davison2022BPS}.

\begin{proposition}
 \begin{enumerate}
 \item Conjecture \ref{conjecture:chiindependence} \eqref{item:isoLieconj} and Conjecture \ref{conjecture:chiindependence} \eqref{item:isoalgconj} equivalent.
  \item Conjecture \ref{conjecture:chiindependence} \eqref{item:isoLieconj} and \eqref{item:isoalgconj} are true for $(r,d)$ coprime.
 \end{enumerate}
\end{proposition}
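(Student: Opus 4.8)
The plan is to reduce both conjectures to a single statement about the \emph{generating objects} of the relevant free Lie algebras, and then to verify that statement in the coprime case directly from the already-available $\chi$-independence for intersection complexes.

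For the equivalence (1), I would work in genus $g\geq 2$, where the Euler form of the slope-$\mu$ category of semistable Higgs sheaves is totally negative, so that by the nonzero-slope analogue of Theorem \ref{theorem:dhsBettiDolbeault} (as established in \cite{davison2022BPS}) the BPS Lie algebra of slope $\mu=d/r$ is free, and hence its sub-Lie algebra $\underline{\BPS}_{\Lie,\BoZ_{\geq 1}\cdot(r,d)}^{\Dol}$ is free by Shirshov's theorem. A preliminary point is the monoidality of $\mathtt{h}_*$: since the Hitchin base is additive and the direct-sum map $\oplus$ on $\CM^{\Dol}$ covers this addition, $\mathtt{h}_*$ is monoidal for $(\boxdot,\boxdot_{B})$ (with $B$ the Hitchin base), so that $\mathtt{h}_*\underline{\BPS}_{\Lie,\BoZ_{\geq 1}\cdot(r,d)}^{\Dol}$ is again a free Lie algebra over $B$ and $\mathtt{h}_*\underline{\BPS}_{\Alg,\BoZ_{\geq 1}\cdot(r,d)}^{\Dol}=U(\mathtt{h}_*\underline{\BPS}_{\Lie,\BoZ_{\geq 1}\cdot(r,d)}^{\Dol})$ is the free associative (tensor) algebra on the same generating object. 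Because the universal enveloping algebra of a free Lie algebra is the free associative algebra on the identical generating object, and because a morphism of free (Lie or associative) graded algebras is an isomorphism exactly when it induces an isomorphism on the space of generators (the indecomposables, which are recovered functorially from either structure), the $\chi$-independence map is a Lie algebra isomorphism if and only if the induced map is an associative algebra isomorphism — both being equivalent to its being an isomorphism of generating objects. This yields (1).

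For (2), in the coprime case the pairs of slope $\mu=d/r$ are precisely $(lr,ld)$, $l\geq 1$, and \cite{davison2022BPS} identifies the generating object of $\underline{\BPS}_{\Lie,\BoZ_{\geq 1}\cdot(r,d)}^{\Dol}$ with $\bigoplus_{l\geq 1}\underline{\IC}(\CM_{lr,ld}^{\Dol})$. The $\chi$-independence for IC complexes, \cite[Corollary 14.9]{davison2022BPS}, supplies isomorphisms $\mathtt{h}_*\underline{\IC}(\CM_{lr,ld}^{\Dol})\cong\mathtt{h}_*\underline{\IC}(\CM_{lr,ld'}^{\Dol})$, i.e. exactly an isomorphism of these generating objects. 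Applying $\Free_{\boxdot_{B}-\Lie}$ (respectively the tensor-algebra functor) to this isomorphism produces the desired Lie algebra (respectively associative algebra) isomorphism; by (1) it suffices to produce one of them. I would also dispatch genus $\leq 1$ separately: there the Euler form is isotropic, the BPS Lie algebra is abelian and its enveloping algebra is the symmetric algebra, so both statements follow from the commutative descriptions of Theorem \ref{theorem:genusone} and \cite[Proposition 5.11]{davison2023nonabelian}.

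The main obstacle is twofold. The routine-but-technical input is the monoidality of $\mathtt{h}_*$ for $(\boxdot,\boxdot_{B})$, guaranteeing that pushing forward along the Hitchin map carries free Lie and enveloping algebras to their relative analogues over $B$; this should follow from additivity of the Hitchin base together with proper base change, as used implicitly in the $\chi$-independence literature. The genuine conceptual difficulty — and the reason (2) is confined to the coprime case — is that one must match the Kinjo--Koseki isomorphism of Theorem \ref{theorem:kinjokosekichiindep} on the full BPS Lie algebra with the generator-level isomorphism of Corollary 14.9, which in turn presupposes \emph{knowing} that the generators are the IC complexes. For non-coprime $(r,d)$ the space of generators is not understood, so although Shirshov still forces freeness and hence the equivalence (1) persists, one cannot exhibit the required isomorphism of generating subobjects; this is precisely the gap identified in the discussion preceding the statement.
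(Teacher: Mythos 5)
Your proposal is correct and follows essentially the same route as the paper's own argument (which is given as the remark immediately preceding the proposition): Shirshov's theorem forces freeness of the sub-Lie algebra, so both conjectures reduce to an isomorphism of the generating objects, which in the coprime case are the intersection complexes whose $\chi$-independence is supplied by \cite[Corollary 14.9]{davison2022BPS}. The extra details you supply (monoidality of $\mathtt{h}_*$, the genus $\leq 1$ case) are elaborations the paper leaves implicit rather than a different approach.
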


\begin{remark}
 We can formulate Conjecture \ref{conjecture:chiindependence} for the Betti CoHAs instead. The difference is that the $\chi$-independence is not know for the Betti moduli stack when $\gcd(r,d)\neq\gcd(r,d')$. This would be a preliminary step for parts \eqref{item:isoLieconj}, \eqref{item:isoalgconj} and \eqref{item:chiindfullcoh} of Conjecture \ref{conjecture:chiindependence} (which would give an isomorphism between the complexes of mixed Hodge modules considered). Part \eqref{item:symissubalg} does not require the $\chi$-independence for its formulation. 
\end{remark}

\subsection{$\chi$-independence and module structure}
There is one additional structure one may consider on the cohomological Hall algebras, that of a module over the ring of tautological classes. The $\chi$-independence results and conjectures suggest that these module structures should agree. In this section, we present results going in this direction.

\subsubsection{Module structure over the ring of tautological classes: Dolbeault}
\label{subsubsection:moduleovertautclassesDol}
Let $C$ be a smooth projective curve, and $\mathfrak{Coh}_{r,d}(C)$ be the stack of coherent sheaves of rank $r$ and degree $d$ on $C$. We let $\CF_{r,d}$ be the universal coherent sheaf on $\mathfrak{Coh}_{r,d}(C)\times C$. We choose a basis $1,\gamma_1,\hdots,\gamma_{2g},[\pt]\in\HO^*(C)$, where $1\in\HO^0(C)$ is the unit, $\gamma_i\in\HO^1(C)$ and $[\pt]$ is the Poincar\'e dual of the class of a point. We define the cohomology classes $a_i,b_{i,j},f_{i}\in\HO^*(\mathfrak{Coh}_{r,d}(C))$ using the K\"unneth isomorphism:
\[
 c_i(\CF_{r,d})=a_i\otimes 1+\sum_{i=1}^{2g}b_{i,j}\otimes \gamma_i+f_i\otimes[\pt]\in\HO^{2i}(\mathfrak{Coh}_{r,d}(C)\times C).
\]
Heinloth described the cohomology algebra $\HO^*(\mathfrak{Coh}_{r,d}(C))$ in \cite{heinloth2012cohomology}.

\begin{theorem}[Heinloth]
\label{theorem:heinloth}
 The algebra $\HO^*(\mathfrak{Coh}_{r,d}(C))$ is freely generated as a graded commutated algebra by the elements $a_i,b_{i,j},f_i$ where $\deg(a_i)=2i$, $\deg(b_{i,j})=2i-1$ and $\deg(f_i)=2i-2$.
\end{theorem}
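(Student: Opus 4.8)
The plan is to exhibit the tautological classes as free generators by constructing a comparison map from an abstract free graded-commutative algebra and proving it is an isomorphism via a generation argument together with a Poincar\'e-series count. Let $\CR$ be the free graded-commutative $\BoQ$-algebra on symbols $a_i$ ($1\leq i\leq r$), $b_{i,j}$ ($1\leq i\leq r$, $1\leq j\leq 2g$) and $f_i$ ($2\leq i\leq r$) in the stated degrees (the degree-zero symbol $f_1$ is omitted, being the constant determined by $d$), and let $\Phi\colon\CR\to\HO^*(\mathfrak{Coh}_{r,d}(C))$ send each symbol to the corresponding K\"unneth component of $c_i(\CF_{r,d})$. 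Since the target is graded-commutative and the tautological classes sit in the indicated degrees, $\Phi$ is a well-defined algebra homomorphism; the whole content is that it is bijective.

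First I would handle the infinite-type nature of the stack. The stack $\mathfrak{Coh}_{r,d}(C)$ is an increasing union of open substacks of bounded sheaves, each realised as a global quotient stack $[\mathrm{Quot}/\GL_M]$ for a suitable Quot scheme of quotients of $\CO_C(-N)^{\oplus M}$; the restriction maps in cohomology are compatible with the tautological classes and stabilise to isomorphisms in each fixed degree as $N\to\infty$. It therefore suffices to prove generation and the dimension count on these finite-type approximations and pass to the limit, using that Quot schemes for curves are smooth.

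For surjectivity of $\Phi$ I would follow the Atiyah--Bott method. On each presentation $[\mathrm{Quot}/\GL_M]$ the cohomology is the $\GL_M$-equivariant cohomology of a smooth Quot scheme, and the equivariant Chern classes of the universal quotient sheaf --- whose K\"unneth components over $C$ are exactly the $a_i,b_{i,j},f_i$ --- generate it. The structural mechanism is the Harder--Narasimhan stratification of the stack of all sheaves: the open semistable stratum together with the unstable strata (each cohomologically built from lower-rank data) fit into a Gysin recursion which is perfect, so that the compatibly restricting tautological classes generate the whole cohomology. This step invokes the Euler-class computations for the normal bundles of the HN strata.

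The main obstacle is freeness, that is the injectivity of $\Phi$, for which I would compute the Poincar\'e series of $\HO^*(\mathfrak{Coh}_{r,d}(C))$ independently. This can be extracted either from the same Harder--Narasimhan recursion (the Atiyah--Bott/Desale--Ramanan-type formula expressing the series of the full stack in terms of the semistable loci) or, after reduction to a finite field $\BF_q$ and using purity together with the Weil conjectures, from the Siegel/Harder--Narasimhan point count of $\mathfrak{Bun}_{r,d}$ over $\BF_q$ (comparing via the open inclusion $\mathfrak{Bun}_{r,d}\hookrightarrow\mathfrak{Coh}_{r,d}(C)$). Matching this series with the Poincar\'e series of $\CR$, namely
\[
 \frac{\prod_{i=1}^{r}(1+t^{2i-1})^{2g}}{\prod_{i=1}^{r}(1-t^{2i})\,\prod_{i=2}^{r}(1-t^{2i-2})},
\]
dictated by the generator degrees, shows the two graded vector spaces have equal finite dimensions in every degree; combined with surjectivity this forces $\Phi$ to be an isomorphism. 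The delicate points are controlling the stabilisation of these series across the bounded exhaustion and ensuring the purity needed to convert point counts into Betti numbers without error terms.
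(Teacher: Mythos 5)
The paper offers no proof of this statement --- it is quoted verbatim from Heinloth's paper \cite{heinloth2012cohomology} --- so the only meaningful comparison is with Heinloth's actual argument. Measured against that, your proposal contains a genuine error: you have silently replaced $\mathfrak{Coh}_{r,d}(C)$ by the substack $\mathrm{Bun}_{r,d}(C)$ of vector bundles. The generator set you take (indices $i\leq r$, with $f_1$ omitted) and the Poincar\'e series you write down are exactly those of Atiyah--Bott for $\mathrm{Bun}_{r,d}(C)$, not those of the stack of \emph{all} coherent sheaves. On $\mathfrak{Coh}_{r,d}(C)$ the universal sheaf is not locally free --- it acquires torsion over the locus of non-locally-free sheaves --- so its Chern classes $c_i$ for $i>r$ do not vanish, and their K\"unneth components furnish genuinely new free generators for every $i\geq 1$. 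Concretely, the torsion locus has codimension $r$, so the restriction $\HO^*(\mathfrak{Coh}_{r,d}(C))\to\HO^*(\mathrm{Bun}_{r,d}(C))$ is an isomorphism only in degrees $<2r-1$, and a new class (essentially $f_{r+1}$, tied to the fundamental class of the torsion locus) appears in degree $2r$. The correct Poincar\'e series is the infinite product
\[
\frac{\prod_{i\geq 1}(1+t^{2i-1})^{2g}}{\prod_{i\geq 1}(1-t^{2i})\prod_{i\geq 2}(1-t^{2i-2})},
\]
so your ``equal dimensions in each degree'' step fails from degree $2r$ onward, and the claim that one can read off the count of $\mathfrak{Coh}_{r,d}$ from that of $\mathfrak{Bun}_{r,d}$ ``via the open inclusion'' is a non sequitur, since the complement contributes.

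The missing ingredient is precisely the torsion strata. On a curve every coherent sheaf splits as $T\oplus E$ with $T$ torsion and $E=F/T$ locally free (because $\Ext^1(E,T)=0$), which stratifies $\mathfrak{Coh}_{r,d}(C)$ by the length $\ell$ of $T$ into pieces of codimension $r\ell$ fibred affinely over $\mathfrak{Coh}_{0,\ell}(C)\times\mathrm{Bun}_{r,d-\ell}(C)$; one must additionally compute $\HO^*$ of the stack of torsion sheaves. Feeding this into the point-count/purity argument yields the infinite product above, which matches the free algebra on $a_i,b_{i,j}$ ($i\geq1$) and $f_i$ ($i\geq2$) --- the statement as Heinloth proves it and as the paper (which gives no range for $i$) intends it. Your overall template (comparison map from a free algebra, surjectivity plus a Poincar\'e-series count) is the right shape, but as executed it proves the Atiyah--Bott theorem for $\mathrm{Bun}_{r,d}(C)$ rather than Heinloth's theorem for $\mathfrak{Coh}_{r,d}(C)$; note also that the Harder--Narasimhan recursion you invoke for surjectivity computes the semistable locus from the whole stack and does not by itself reach the torsion strata.
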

We denote by $\mathbb{H}_{r,d}$ the ring of tautological classes of rank $r$ and degree $d$.

Let $r,d\in\BoZ$, $r>0$. The Borel--Moore homology of the Dolbeault moduli stack $\HO^{\rmBM}_*(\FM_{r,d}^{\Dol})$ is acted on by the ring of tautological classes $\mathbb{H}_{r,d}$. This action can be described as follows. We let
\[
 \phi\colon\FM_{r,d}^{\Dol}\rightarrow\mathfrak{Coh}_{r,d}(C)\times\FM_{r,d}^{\Dol}
\]
be the map sending a Higgs pair $(\CF,\theta)$ to $(\CF,(\CF,\theta))$. Using the fact that the composition of this map with the second projection is the identity and that $\mathfrak{Coh}_{r,d}(C)$ is a smooth stack of dimension $(g-1)r^2$, we obtain $\phi^*(\BoQ_{\mathfrak{Coh}_{r,d}(C)}\boxtimes\BD\BoQ_{\FM_{r,d}^{\Dol}})\cong\BD\BoQ_{\FM_{r,d}^{\Dol}}$. By adjunction $(\phi^*,\phi_*)$ and pushing down to the Dolbeault moduli space $\CM_{r,d}^{\Dol}$ (and taking into account the shift by the virtual dimension), we obtain a morphism
\[
 \mathbb{H}_{r,d}\otimes \SA_{r,d}^{\Dol}\rightarrow\SA_{r,d}^{\Dol},
\]
which is by definition the \emph{action by tautological classes}. By pushing down to the point, we obtain the action on the Borel--Moore homology $\mathbb{H}_{r,d}\otimes\HO^{\rmBM}_*(\FM_{r,d}^{\Dol})\rightarrow\HO^{\rmBM}_*(\FM_{r,d}^{\Dol})$.

The following result is very easy given Heinloth's theorem.
\begin{lemma}[$\chi$]
 For $r,d,d'\in\BoZ$, $r>0$ such that $\gcd(r,d)=\gcd(r,d')$, we have an isomorphism $\mathbb{H}_{r,d}\rightarrow\mathbb{H}_{r,d'}$, $a_i\mapsto a_i$, $b_{i,j}\mapsto b_{i,j}$ and $f_i\mapsto f_i$.
\end{lemma}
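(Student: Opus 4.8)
The plan is to read the lemma off directly from Heinloth's description (Theorem \ref{theorem:heinloth}). By definition $\mathbb{H}_{r,d}=\HO^*(\mathfrak{Coh}_{r,d}(C))$, and Heinloth's theorem identifies this with the \emph{free} graded-commutative algebra on the tautological generators $a_i$, $b_{i,j}$, $f_i$, of degrees $2i$, $2i-1$, $2i-2$ respectively. Thus both $\mathbb{H}_{r,d}$ and $\mathbb{H}_{r,d'}$ are free graded-commutative algebras, and the task reduces to comparing their generating data.

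The key observation I would make is that this generating data is \emph{independent of the degree} $d$: the classes $a_i,b_{i,j},f_i$ are extracted by the K\"unneth formula from $c_i(\CF_{r,d})$ using the fixed basis $1,\gamma_1,\ldots,\gamma_{2g},[\pt]$ of $\HO^*(C)$, so the index ranges of $i$ and $j$ (governed by $r$ and the genus $g$) and the cohomological degrees $2i,2i-1,2i-2$ are the same for $\mathbb{H}_{r,d}$ and $\mathbb{H}_{r,d'}$. Given this, I would invoke the universal property of free graded-commutative algebras: the assignment $a_i\mapsto a_i$, $b_{i,j}\mapsto b_{i,j}$, $f_i\mapsto f_i$ extends uniquely to a homomorphism of graded algebras, and since it carries a free generating set bijectively onto a free generating set in matching degrees, it is an isomorphism (its inverse is the evidently defined homomorphism in the reverse direction).

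There is essentially no obstacle here: the entire content sits in Heinloth's theorem, and what remains is the bookkeeping verification that the two generating sets coincide in cardinality and degree, which is manifest from the $d$-independence of the defining K\"unneth expansion. I would only remark that the hypothesis $\gcd(r,d)=\gcd(r,d')$ is not actually used in producing this isomorphism of tautological rings (Heinloth's presentation holds for every $d$); it is recorded here because it is the hypothesis under which the induced isomorphism is compatible with the $\chi$-independence comparison of the module structures on the Borel--Moore homologies $\HO^{\rmBM}_*(\FM_{r,d}^{\Dol})$ discussed in \S\ref{subsubsection:moduleovertautclassesDol}.
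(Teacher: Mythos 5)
Your proof is correct and takes essentially the same route as the paper, which simply observes that the claim follows immediately from Heinloth's theorem: both rings are free graded-commutative algebras on tautological generators whose index ranges and degrees are independent of $d$, so matching generators yields the isomorphism. Your additional remark that the hypothesis $\gcd(r,d)=\gcd(r,d')$ is not actually used is accurate and consistent with the paper's own (one-line) argument.
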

\begin{proof}
 This follows immediately from Heinloth's theorem (Theorem \ref{theorem:heinloth}).
\end{proof}
From now on, we will write $\mathbb{H}_{r}=\mathbb{H}_{r,d}$ for any $d\in\BoZ$ using these isomorphisms.

\subsubsection{Module structure over the ring of tautological classes: Betti}
For the Betti moduli space, for convenience, we define the ring of tautological classes differently. See \cite[\S1.2]{de2021cohomology} for an equivalent description of tautological classes.

Let $r,d\in\BoZ$ with $r>0$. The stack of $\zeta_r^d$-twisted representations of $\pi_1(C,p)$ is realised as a quotient $\FM_{r,d}^{\Betti}=X_{r,d}^{\Betti}/\GL_r$. Therefore, we have a map $\FM_{r,d}^{\Betti}\times C\rightarrow\pt/\GL_r$ which corresponds to the tautological $\GL_r$-bundle $\CT_{r,d}$ over $\FM_{r,d}^{\Betti}\times C$. The ring of tautological classes is defined to be the supercommutative ring freely generated by the classes $c_j(\gamma,\CT_{r,d})$, where we write the Total Chern class $c(\CT_{r,d})=\sum_{j\in\BoZ,\{\gamma\}\subset\HO^*(C,\BoQ_{\ell})}c_j(\gamma,\CT_{r,d})\otimes \gamma$ once a basis $\{\gamma\}\subset \HO^*(C,\BoQ_{\ell})$ is chosen. The same procedure used in \S\ref{subsubsection:moduleovertautclassesDol} gives the actions by tautological classes
\[
 \mathbb{H}_{r,d}\otimes\SA_{r,d}^{\Betti}\rightarrow\SA_{r,d}^{\Betti}, \quad \mathbb{H}_{r,d}\otimes\HO^{\rmBM}_*(\FM_{r,d}^{\Betti})\rightarrow\HO^{\rmBM}_*(\FM_{r,d}^{\Betti}).
\]

\subsubsection{$\chi$-independence of the module structure}

\begin{proposition}
\label{proposition:chiindmodstructureBetti}
 Let $r,d,d'\in\BoZ$ be such that $\gcd(r,d)=\gcd(r,d')$. Then, the isomorphism $\HO^{\rmBM}_*(\FM_{r,d}^{\Betti})\cong\HO^{\rmBM}_*(\FM_{r,d'}^{\Betti})$ given by Galois conjugation is a morphism of $\mathbb{H}_{r,d}$-modules.
\end{proposition}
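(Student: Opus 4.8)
The plan is to follow the construction of the Galois conjugation isomorphism $\gamma\colon \HO^{\rmBM}_*(\FM_{r,d}^{\Betti})\isoto \HO^{\rmBM}_*(\FM_{r,d'}^{\Betti})$ step by step and to verify $\mathbb{H}_r$-linearity at each stage; the essential point is that the whole recipe defining the tautological action descends to a common $\overline{\BoQ}$-model of the two stacks. Recall how $\gamma$ is assembled. Since $\gcd(r,d)=\gcd(r,d')$, the roots of unity $\zeta_r^d$ and $\zeta_r^{d'}$ have equal order and are therefore conjugate under $\mathrm{Gal}(\overline{\BoQ}/\BoQ)$; fix $\sigma\in\mathrm{Aut}(\overline{\BoQ})$ with $\sigma(\zeta_r^d)=\zeta_r^{d'}$. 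Base change along $\Spec(\sigma)$ yields a semilinear isomorphism $\Xi\colon \FM_{r,d,\overline{\BoQ}}^{\Betti}\isoto \FM_{r,d',\overline{\BoQ}}^{\Betti}$, and $\gamma$ is the composite of the isomorphism it induces on $\ell$-adic Borel--Moore homology with the extension-of-scalars comparison $\overline{\BoQ}\to\BoC$ (\cite[\S4.2.9]{beilinson2018faisceaux}), the Artin comparison with singular Borel--Moore homology, and a chosen isomorphism $\overline{\BoQ}_\ell\cong\BoC$.

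First I would record that the target $\pt/\GL_r$ of the structural map $\pi_{r,d}\colon \FM_{r,d}^{\Betti}=X_{r,d}^{\Betti}/\GL_r\to \pt/\GL_r$ --- out of which the action by tautological classes is produced via the map $\phi$ of \S\ref{subsubsection:moduleovertautclassesDol} --- is defined over $\Spec(\BoZ)$, hence independent of $d$ and preserved by $\sigma$ (the morphism $\pi_{r,d}$ itself being defined over $K=\BoQ[\zeta_r]$). Therefore $\Xi$ sits in a commuting square
\[
 \begin{tikzcd}
	{\FM_{r,d,\overline{\BoQ}}^{\Betti}} & {\FM_{r,d',\overline{\BoQ}}^{\Betti}} \\
	{\pt/\GL_{r,\overline{\BoQ}}} & {\pt/\GL_{r,\overline{\BoQ}}}
	\arrow["\Xi", from=1-1, to=1-2]
	\arrow["{\pi_{r,d}}"', from=1-1, to=2-1]
	\arrow["{\pi_{r,d'}}", from=1-2, to=2-2]
	\arrow["\sigma"', from=2-1, to=2-2]
\end{tikzcd}
\]
whose bottom arrow is the semilinear self-equivalence of $\pt/\GL_{r,\overline{\BoQ}}$ induced by $\sigma$. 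Thus $\Xi$ (and its base change to $\BoC$) transports the map $\phi$ built for $d$ to the one built for $d'$.

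The next step is to observe that the action $\mathbb{H}_r\otimes\HO^{\rmBM}_*(\FM_{r,d}^{\Betti})\to\HO^{\rmBM}_*(\FM_{r,d}^{\Betti})$ is manufactured from $\phi$ by the formal operations of pullback, adjunction $(\phi^*,\phi_*)$ and direct image to the point, all of which are natural for isomorphisms of stacks and are intertwined by the comparison functors: extension of algebraically closed fields and the Artin comparison are compatible with the six operations on the $\ell$-adic derived categories of the quotient stacks at hand (\cite{olsson2015borel,behrend2003derived,laszlo2008six}). Since the square above intertwines the two copies of $\phi$, and the comparison isomorphisms are natural transformations commuting with these action maps, $\gamma$ will intertwine the two tautological actions, provided $\sigma$ acts on $\HO^*(\pt/\GL_{r,\overline{\BoQ}},\overline{\BoQ}_\ell)$ compatibly with the identification $\mathbb{H}_r=\mathbb{H}_r$ used on the two sides.

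That last compatibility is the only genuine computation, and I expect it to be the (mild) main obstacle. The generators of $\mathbb{H}_r=\HO^*(\pt/\GL_r)$ are the Chern classes of the universal bundle, which are Tate classes defined over $\BoQ$; hence $\sigma$ acts on $\HO^{2i}(\pt/\GL_{r,\overline{\BoQ}},\overline{\BoQ}_\ell)$ by the $i$-th power of the cyclotomic character, a pure Tate twist, and after the chosen identification $\overline{\BoQ}_\ell\cong\BoC$ these $\ell$-adic Chern classes correspond to the singular tautological generators on both the $d$- and the $d'$-side. Consequently the induced automorphism of $\mathbb{H}_r$ is the identity matching $\mathbb{H}_{r,d}$ with $\mathbb{H}_{r,d'}$, and combining this with the commuting square and the naturality of the comparison isomorphisms shows that $\gamma$ is a morphism of $\mathbb{H}_r$-modules, as desired.
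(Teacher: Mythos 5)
Your argument is correct and is in substance the same as the paper's: the paper disposes of the proposition in one line by citing \cite[Proof of Proposition 2.1]{de2021cohomology} for the compatibility of Galois conjugation with the tautological classes, which is precisely what you establish directly via the commuting square over $\pt/\GL_{r,\overline{\BoQ}}$, the naturality of the comparison functors, and the Galois-stability of the Chern classes of the universal bundle. Nothing essential is missing; you have simply written out the argument that the paper outsources to the citation.
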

\begin{proof}
 This follows from \cite[Proof of Proposition 2.1]{de2021cohomology}, where it is explained that Galois conjugation is compatible with the tautological bundles.
\end{proof}

\begin{conjecture}[$\chi$-independence of the module structure for the Dolbeault stacks]
\label{conjecture:Dolchiindmodstructure}
Let $r,d\in\BoZ$, $r>0$. Then, for any $l\geq 1$, $(\SA_{\BoZ_{\geq 1}\cdot(r,d)}^{\Dol})_{(lr,ld)}$ is stable under the $\mathbb{H}_{lr}$-action and if $d'\in\BoZ$ is such that $\gcd(r,d)=\gcd(r,d')$, then the isomorphism $\mathtt{h}_*(\SA_{\BoZ_{\geq 1}\cdot(r,d)}^{\Dol})_{(lr,ld)}\cong \mathtt{h}_*(\SA_{\BoZ_{\geq 1}\cdot(r,d)}^{\Dol})_{(lr,ld')}$ induced by the $\chi$-independence is an isomorphism of $\mathbb{H}_{lr}$-modules.
\end{conjecture}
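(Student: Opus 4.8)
The plan is to assume Conjecture \ref{conjecture:chiindependence} throughout, so that $\SA_{\BoZ_{\geq 1}\cdot(r,d)}^{\Dol}$ is already known to be a subalgebra and the $\chi$-independence isomorphism exists at the level of the full CoHA, and to deduce the two assertions — stability under the tautological action, and $\mathbb{H}_{lr}$-linearity of the isomorphism — from the interplay between the tautological action and the CoHA product. The central structural input I would isolate first is that the action morphisms $\mathbb{H}_{lr}\otimes\SA^{\Dol}\to\SA^{\Dol}$ endow $\SA^{\Dol}$ with the structure of a \emph{module algebra} over the tautological rings, compatibly with the coproduct induced by the direct-sum maps $\oplus\colon\mathfrak{Coh}_{a}(C)\times\mathfrak{Coh}_{b}(C)\to\mathfrak{Coh}_{a+b}(C)$. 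Concretely this is a Leibniz-type identity $\alpha\cdot m(x\boxdot y)=\sum m\bigl((\alpha_{(1)}\cdot x)\boxdot(\alpha_{(2)}\cdot y)\bigr)$ comparing the maps $\phi$ in ranks $a,b,a+b$; I would prove it by base change in the square relating $\mathfrak{Exact}^{\Dol}$ to the direct-sum diagram on the stacks of coherent sheaves, exactly as the CoHA product itself is built in \cite{davison2022BPS}.

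Granting this module-algebra compatibility, the stability assertion reduces to a statement about generators. Since $\SA_{\BoZ_{\geq 1}\cdot(r,d)}^{\Dol}=\Sym_{\boxdot}(\underline{\BPS}_{\Lie,\BoZ_{\geq 1}\cdot(r,d)}^{\Dol}\otimes\HO^*_{\BoC^*})$, the Leibniz rule carries a product of generators into a sum of products of their tautological transforms, so it suffices to check that $\mathbb{H}_{lr}$ sends each piece $\underline{\BPS}_{\Lie,lr,ld}^{\Dol}$ back into $\SA_{\BoZ_{\geq 1}\cdot(r,d)}^{\Dol}$ and that the coproduct of $\mathbb{H}_{lr}$ distributes only onto ranks that are again multiples of $r$. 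The latter is automatic, since in fixed rank $lr$ the action only couples to subsheaves whose ranks add to $lr$. For the former I would use that the determinant Chern class generates the $\HO^*_{\BoC^*}$ factor (so the loop direction is internal to the subalgebra, cf. Theorem \ref{theorem:actionChernclassdet}) and that the remaining classes $a_i,b_{i,j},f_i$ act through rank-preserving operators, hence respect the sublattice $\BoZ_{\geq 1}\cdot(r,d)$.

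For the $\mathbb{H}_{lr}$-linearity of the $\chi$-independence isomorphism, the model is the Betti case (Proposition \ref{proposition:chiindmodstructureBetti}), where Galois conjugation is compatible with the tautological bundles by \cite[Proof of Proposition 2.1]{de2021cohomology}. In the Dolbeault setting the isomorphism is instead the Kinjo--Koseki isomorphism (Theorem \ref{theorem:kinjokosekichiindep}), enhanced to the CoHA by Conjecture \ref{conjecture:chiindependence}\eqref{item:chiindfullcoh}. I would argue that it is natural with respect to the defining morphism $\phi\colon\FM_{lr,ld}^{\Dol}\to\mathfrak{Coh}_{lr,ld}(C)\times\FM_{lr,ld}^{\Dol}$: the universal sheaf on $\mathfrak{Coh}_{lr,ld}(C)\times C$ and the generator-preserving identification $\mathbb{H}_{lr,ld}\cong\mathbb{H}_{lr,ld'}$ provided by Heinloth's theorem (Theorem \ref{theorem:heinloth}) are insensitive to the degree twist, so the action factors through data that the $\chi$-independence isomorphism respects. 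Relative to the Hitchin base $B_r$, this reduces to checking that $\mathtt{h}_*\underline{\BPS}_{\Lie,lr,ld}^{\Dol}\cong\mathtt{h}_*\underline{\BPS}_{\Lie,lr,ld'}^{\Dol}$ commutes with cup product by tautological classes, where on the IC generators $\chi$-independence is already known to be compatible by \cite[Corollary 14.9]{davison2022BPS}.

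The main obstacle is this last step: the Kinjo--Koseki isomorphism is produced through the cohomological integrality, vanishing-cycle and dimensional-reduction machinery of \cite{kinjo2021cohomological}, and its compatibility with the tautological action is not manifest in that construction. The genuinely hard part is to show the isomorphism is natural with respect to the maps $\phi$ — equivalently, that it intertwines the $\HO^*(\mathfrak{Coh}_{lr,ld}(C))$-module structures — since tracking classes pulled back from $\mathfrak{Coh}(C)$ through the BPS/support formalism requires a functorial refinement of \cite{kinjo2021cohomological} that is not yet in the literature. A cleaner route I would pursue in parallel is to pin the isomorphism down entirely on BPS generators, where $\chi$-independence of IC sheaves is available, and then use the module-algebra structure of the first step to propagate $\mathbb{H}_{lr}$-linearity from generators to all of $\SA_{\BoZ_{\geq 1}\cdot(r,d)}^{\Dol}$; this trades the functoriality problem for determining the isomorphism on generators compatibly with the tautological action, which is precisely the content of Conjecture \ref{conjecture:chiindependence}\eqref{item:isoLieconj}.
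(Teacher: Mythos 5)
This statement is posed in the paper as an open conjecture: the paper gives no proof of it, only a following remark relating it to Galois conjugation in the coprime case. So there is no argument of the author's to compare yours against, and the first thing to say is that what you have written is a research plan rather than a proof --- which you yourself acknowledge in your final paragraph.

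Concretely, the gaps are these. First, your whole argument is conditional on Conjecture \ref{conjecture:chiindependence}, in particular on part \eqref{item:symissubalg} (that $\SA_{\BoZ_{\geq 1}\cdot(r,d)}^{\Dol}$ is a subalgebra) and part \eqref{item:chiindfullcoh} (that $\chi$-independence lifts to the full CoHA); both are open, so nothing unconditional is established. Second, the ``module-algebra'' Leibniz identity $\alpha\cdot m(x\boxdot y)=\sum m\bigl((\alpha_{(1)}\cdot x)\boxdot(\alpha_{(2)}\cdot y)\bigr)$ that you use to reduce stability to generators is itself an unproven structural claim in this setting: the paper defines the tautological action via the map $\phi\colon\FM_{r,d}^{\Dol}\to\mathfrak{Coh}_{r,d}(C)\times\FM_{r,d}^{\Dol}$ but establishes no compatibility of that action with the virtual pullback $v_{r,s}$ and proper pushforward $w_{r,s}$ defining the product; verifying the base-change you invoke against the three-term-complex construction of \cite{davison2022BPS} is a genuine piece of work, not a formality. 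Third, and most seriously, the $\mathbb{H}_{lr}$-linearity of the Kinjo--Koseki isomorphism is exactly the content of the conjecture, and your proposed reduction to generators does not discharge it: on the BPS generators the $\chi$-independence of IC sheaves gives an isomorphism of objects, but compatibility of that isomorphism with cup product by the classes $a_i,b_{i,j},f_i$ is not a consequence of \cite[Corollary 14.9]{davison2022BPS}, since the isomorphism there is over the Hitchin base and need not intertwine classes pulled back from $\mathfrak{Coh}(C)$. In short, every step of your plan either assumes one of the paper's other conjectures or asserts a compatibility that is precisely what would need to be proved; the plan is a sensible outline of how one might attack the conjecture, but it does not constitute a proof and should not be presented as one.
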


\begin{remark}
 For $r,d,d'\in\BoZ$ such that $\gcd(r,d)=\gcd(r,d')=1$, by \cite{de2021cohomology}, Galois conjugation on the Betti side induces an isomorphism $\HO^*(\CM_{r,d}^{\Dol})\cong\HO^*(\CM_{r,d}^{\Dol})$ preserving renormalised tautological classes. By the fact $\FM_{r,d}^{\Dol}$ is a $\mathbf{G}_{\mathrm{m}}$-gerb over $\CM_{r,d}^{\Dol}$ in this case and so is a smooth stack, one can identify its Borel--Moore homology with its cohomology and one can deduce a version of Conjecture \ref{conjecture:Dolchiindmodstructure} where the isomorphism is given by Galois conjugation (rather than $\chi$-independence). It is natural to expect that Galois conjugation and $\chi$-independence give the same isomorphism, which does not appear to be obvious.
\end{remark}

Proposition \ref{proposition:chiindmodstructureBetti} solves gives an answer for the $\chi$-independence of the module structure under a condition on the ranks and degrees. We may formulate a conjectural generalisation as follows.

\begin{conjecture}[$\chi$-independence of the module structure for the Betti stack]
 Let $r,d\in\BoZ$, $r>0$. Then, for any $l\geq 1$, the homogeneous component $(\SA_{\BoZ_{\geq 1}\cdot(r,d)}^{\Betti})_{(lr,ld)}$ is stable under the $\mathbb{H}_{lr}$-action and if $d'\in\BoZ$ is such that $\gcd(r,d)=\gcd(r,d')$, then there is an isomorphism $\HO^*((\SA_{\BoZ_{\geq 1}\cdot(r,d)}^{\Betti})_{(lr,ld)})\cong \HO^*((\SA_{\BoZ_{\geq 1}\cdot(r,d)}^{\Betti})_{(lr,ld')})$ of $\mathbb{H}_{lr}$-modules.
\end{conjecture}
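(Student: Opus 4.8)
The plan is to derive both assertions from Galois conjugation, upgrading the argument behind Proposition~\ref{proposition:chiindmodstructureBetti} so that it respects the BPS-generated subalgebra and its tautological module structure. Note first that when $\gcd(r,d)=1$ the pair $(r,d)$ is primitive, so $\BoZ_{\geq 1}\cdot(r,d)$ exhausts the slope $d/r$ and $\SA_{\BoZ_{\geq 1}\cdot(r,d)}^{\Betti}=\SA_{d/r}^{\Betti}$; in that case the stability is vacuous and the module isomorphism is exactly Proposition~\ref{proposition:chiindmodstructureBetti}. The new content is therefore concentrated in the case $h:=\gcd(r,d)>1$, where, writing $(r,d)=h(r_0,d_0)$ with $\gcd(r_0,d_0)=1$, the component $(\SA_{\BoZ_{\geq 1}\cdot(r,d)}^{\Betti})_{(lr,ld)}$ is the \emph{proper} subspace of the full rank-$lr$ CoHA spanned by products of BPS classes whose individual ranks are all multiples of $r$.

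Concretely, I would fix $K=\BoQ[\zeta_r]$ and present $\FM_{r,d}^{\Betti}$ as the quotient stack $X_{r,d,K}^{\Betti}/\GL_{r,K}$ over $K$ as in \S\ref{subsubsection:classicalmultladic}. The hypothesis $\gcd(r,d)=\gcd(r,d')$ is exactly what guarantees a Galois automorphism $\sigma\in\mathrm{Gal}(K/\BoQ)$ with $\sigma(\zeta_r)=\zeta_r^{k}$, $\gcd(k,r)=1$ and $dk\equiv d'\pmod r$, under which the defining relation $\zeta_r^d\prod_i x_iy_ix_i^{-1}y_i^{-1}=1$ is carried to the one for $\zeta_r^{d'}$. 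Base-change along $\sigma$ then identifies the two convolution diagrams over $\overline{K}$, so via the $\ell$-adic comparison theorems one obtains an algebra isomorphism $\gamma\colon\HO^*\SA_{d/r}^{\Betti}\xrightarrow{\sim}\HO^*\SA_{d'/r}^{\Betti}$. Because $\gamma$ arises from an isomorphism of the geometric data over $\JH^{\Betti}$, it is compatible with the support decomposition and hence with the BPS Lie algebra, which is cut out canonically by the perverse filtration; thus $\gamma$ sends $\underline{\BPS}_{\Lie,lr,ld}^{\Betti}$ to $\underline{\BPS}_{\Lie,lr,ld'}^{\Betti}$ and restricts to $\SA_{\BoZ_{\geq 1}\cdot(r,d)}^{\Betti}\cong\SA_{\BoZ_{\geq 1}\cdot(r,d')}^{\Betti}$ in each bidegree.

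For the module statement, I would check that $\gamma$ intertwines the tautological actions. The tautological ring $\mathbb{H}_{lr}=\HO^*(\pt/\GL_{lr})$ is generated by the Chern classes $c_i$ of the universal $\GL_{lr}$-representation, which is defined over $K$ and $\sigma$-equivariant; consequently the compatibility of Galois conjugation with tautological classes established in \cite[Proof of Proposition 2.1]{de2021cohomology} (the content of Proposition~\ref{proposition:chiindmodstructureBetti}) applies verbatim to each homogeneous summand, yielding the desired $\mathbb{H}_{lr}$-linear isomorphism once stability is known.

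The main obstacle is the stability claim itself, that $(\SA_{\BoZ_{\geq 1}\cdot(r,d)}^{\Betti})_{(lr,ld)}$ is preserved by $\mathbb{H}_{lr}$ when $h>1$. Capping with Chern classes of the universal local system does not alter the semisimplification, so it respects the coarse decomposition of $\SA_{d/r}^{\Betti}$ along $\JH^{\Betti}$; the trouble is that the subalgebra is cut out by the finer PBW/BPS decomposition $\Sym_{\boxdot}(\underline{\BPS}_{\Lie}\otimes\HO^*_{\BoC^*})$, and a priori a higher tautological class could mix the ``all ranks divisible by $r$'' summands with the remaining summands of the same total rank and slope. Only the determinant (first Chern) class is currently controlled, through Theorem~\ref{theorem:actiondetlinebundle} and the $\HO^*_{\BoC^*}$-factor of the PBW presentation. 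I would attempt to close the gap by expressing the higher tautological operators intrinsically in terms of the CoHA --- ideally as iterated products/coproducts against the rank-one generators, so that $\mathbb{H}_{lr}$-stability becomes a formal consequence of the subalgebra being closed under the CoHA operations --- which is precisely where the (still unavailable) Davison--Kinjo coproduct would be decisive; alternatively one could transport the question to the Dolbeault side via Corollary~\ref{corollary:NAHTabsintro} and attack the analogous Conjecture~\ref{conjecture:Dolchiindmodstructure} there, where the tautological generators $a_i,b_{i,j},f_i$ of Theorem~\ref{theorem:heinloth} are geometrically more transparent.
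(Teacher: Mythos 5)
This statement is stated in the paper as a \emph{conjecture}; the paper offers no proof of it, so there is nothing to compare your argument against except the surrounding discussion (Proposition~\ref{proposition:chiindmodstructureBetti} and the remarks after Conjecture~\ref{conjecture:chiindependence}). Your proposal is not a proof either, and you say so yourself: the first assertion of the conjecture --- that $(\SA_{\BoZ_{\geq 1}\cdot(r,d)}^{\Betti})_{(lr,ld)}$ is stable under the full tautological ring $\mathbb{H}_{lr}$ --- is exactly the point you leave open when $\gcd(r,d)>1$, and the second assertion cannot even be formulated as a statement about $\mathbb{H}_{lr}$-modules until the first is settled. What you do establish (the coprime case reducing to Proposition~\ref{proposition:chiindmodstructureBetti}, the existence of the Galois automorphism carrying $\zeta_r^d$ to $\zeta_r^{d'}$ under the hypothesis $\gcd(r,d)=\gcd(r,d')$, and the compatibility of Galois conjugation with the tautological classes via \cite[Proof of Proposition 2.1]{de2021cohomology}) is consistent with what the paper already records, but it is the easy part.

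Two further cautions about the proposed routes to close the gap. First, the appeal to the Davison--Kinjo coproduct is explicitly flagged in the paper as not yet available, so it cannot carry the argument. Second, transporting the problem to the Dolbeault side via Corollary~\ref{corollary:NAHTabsintro} trades one open conjecture for another (Conjecture~\ref{conjecture:Dolchiindmodstructure}), and it introduces an additional mismatch you would have to resolve: the Betti tautological ring is defined as $\HO^*(\pt/\GL_{lr})$ while the Dolbeault one is Heinloth's ring generated by $a_i,b_{i,j},f_i$, and the paper only proves that the \emph{determinant} (first Chern class) actions agree across the nonabelian Hodge isomorphisms (Theorem~\ref{theorem:actiondetlinebundle}); matching the higher tautological operators under $\Phi$ and $\Psi$ is itself unproven. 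So the proposal should be regarded as a plausible strategy sketch with the central step missing, which is precisely why the paper states this as a conjecture rather than a theorem.
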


\appendix

\section{Curves of genus zero}
We briefly explain the case of genus zero curves (the projective line) separately. In this case, the content of the nonabelian Hodge correspondence is rather empty and the geometry is quite trivial -- we explain the situation for completeness.

Let $C=\BoP^1$ be the projective line. Then, for $r\in\BoN$, the Betti, de Rham and Dolbeault moduli spaces are isomorphic to a point $\pt$ (the projective line is simply connected). The moduli stacks are
\[
 \FM_r^{\Betti}\cong\FM_r^{\dR}\cong\FM_r^{\Dol}\cong\pt/\GL_r,
\]
\[
 \CM_r^{\Betti}\cong\CM_r^{\dR}\cong\CM_r^{\Dol}\cong\pt.
\]
For convenience, we let $\FM_r\coloneqq \pt/\GL_r$ and $\CM_r\coloneqq \pt$ for any $r\geq 1$.
The following is essentially contained in \cite[\S5.3]{davison2023nonabelian} (for Betti and Dolbeault) and the de Rham version is obtained by the same methods.
\begin{theorem}[\cite{davison2023nonabelian}]
Let $C=\BoP^1$ be the projective line.
 \begin{enumerate}
  \item $\underline{\BPS}_{\Lie}^{\dR}\cong \underline{\BPS}_{\Lie}^{\Dol}\cong\underline{\BPS}_{\Lie}^{\Betti}\cong \underline{\BoQ}_{\CM_1}$,
  \item $\underline{\BPS}_{\Alg}^{\dR}\cong \underline{\BPS}_{\Alg}^{\Dol}\cong\underline{\BPS}_{\Alg}^{\Betti}\cong \Sym(\underline{\BoQ}_{\CM_1})\cong\bigoplus_{r\geq 0}\underline{\BoQ}_{\CM_r}$,
  \item $\rmBPS_{\Lie}^{\dR}\cong\rmBPS_{\Lie}^{\Betti}\cong\rmBPS_{\Lie}^{\Dol}\cong\BoQ$ is a one dimensional (necessarily Abelian) Lie algebra,
  \item $\rmBPS_{\Alg}^{\dR}\cong\rmBPS_{\Alg}^{\Betti}\cong\rmBPS_{\Alg}^{\Dol}\cong\BoQ[x]$ is a polynomial algebra in one variable.
  \item We have isomorphisms between cohomological Hall algebras
  \[
   \HO^{\rmBM}_*(\FM^{\Dol})\cong\HO^{\rmBM}_*(\FM^{\Betti})\cong\HO^{\rmBM}_*(\FM^{\dR})
  \]
and isomorphisms of vector spaces
\[
 \HO^{*}(\FM^{\Dol},\BoQ^{\vir})\cong\HO^*(\FM^{\Betti},\BoQ^{\vir})\cong\HO^*(\FM^{\dR},\BoQ^{\vir}).
\]
\end{enumerate}
\end{theorem}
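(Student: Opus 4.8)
The plan is to reduce the whole theorem to the single observation that, for $C=\BoP^1$, the three moduli stacks are canonically isomorphic to $\rmB\GL_r=\pt/\GL_r$, compatibly with direct sums. First I would record why each side collapses. On the Betti side $\pi_1(\BoP^1)$ is trivial, so $X_{0,r}^{\Betti}=\pt$ and $\FM_r^{\Betti}\cong\pt/\GL_r$. On the Dolbeault side, a semistable slope-$0$ Higgs bundle has underlying bundle $\CO^{\oplus r}$ by Grothendieck's splitting theorem together with semistability, and its Higgs field lies in $\Hom(\CO^{\oplus r},\CO^{\oplus r}\otimes\Omega^1_{\BoP^1})$, which vanishes since $\HO^0(\BoP^1,\CO(-2))=0$; hence $\FM_r^{\Dol}\cong\pt/\GL_r$. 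On the de Rham side the Riemann--Hilbert correspondence (equivalently, simple-connectedness) identifies flat connections with local systems, giving $\FM_r^{\dR}\cong\pt/\GL_r$. All three identifications are canonical, send the good moduli space to $\pt$, and intertwine the direct-sum map with the block-diagonal inclusion $\GL_r\times\GL_s\hookrightarrow\GL_{r+s}$.

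Given these identifications, I would next argue that the three CoHA structures all coincide with the single convolution product on $\bigoplus_{r\geq0}\HO^{\rmBM}_*(\rmB\GL_r)$. The defining correspondence $\FM\times\FM\longleftarrow\mathfrak{Exact}\longrightarrow\FM$ becomes, in each case, the flag correspondence for finite-dimensional vector spaces, and the three-term complexes $\CC^{\sharp}$ resolving $\RHom$ (Proposition \ref{proposition:resolutionRHomdR} and its Betti and Dolbeault analogues) agree because the relevant $\Ext$-complexes are computed by the same cohomology of $\BoP^1$. Hence the virtual pullbacks agree and $\underline{\SA}^{\dR}$, $\underline{\SA}^{\Dol}$, $\underline{\SA}^{\Betti}$ are literally the same algebra object; this is the content of Theorem \ref{theorem:CoHAdeRham} specialised to $g=0$. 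Since the homeomorphisms $\Psi$ and $\Phi$ are maps between points, part (5) and the comparison assertions then follow at once.

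The genuinely non-formal point is the structure of the BPS objects, parts (1)--(4), and this is where the main obstacle lies: the Euler form is $\chi(M,N)=2(1-g)rs=2rs$, so the category is neither totally negative nor totally isotropic, and the structural theorems used for $g\geq2$ (Theorem \ref{theorem:freenessPBWconnections}) and $g=1$ (Theorem \ref{theorem:genusone}) do not apply. The way around it is the direct computation of \cite[\S5.3]{davison2023nonabelian}: this is exactly the BPS computation for the preprojective algebra of the $A_1$ quiver (one vertex, no loops), whose BPS Lie algebra is one-dimensional and concentrated in rank $1$. Concretely, $\CM_r\cong\pt$ forces $\underline{\IC}(\CM_r)\cong\underline{\BoQ}_{\CM_r}$, and one identifies $\bigoplus_r\HO^{\rmBM}_*(\rmB\GL_r)$ with the free graded-commutative algebra $\Sym(\HO^*_{\BoC^*})$ placed in rank $1$; this yields $\HO^*\SA\cong\Sym(\rmBPS_{\Lie}\otimes\HO^*_{\BoC^*})$ with $\rmBPS_{\Lie}\cong\BoQ$ abelian, hence $\rmBPS_{\Alg}\cong\BoQ[x]$, together with the relative statements $\underline{\BPS}_{\Lie}\cong\underline{\BoQ}_{\CM_1}$ and $\underline{\BPS}_{\Alg}\cong\Sym_{\boxdot}(\underline{\BoQ}_{\CM_1})\cong\bigoplus_{r\geq0}\underline{\BoQ}_{\CM_r}$. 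The Betti and Dolbeault cases are precisely those of \cite[\S5.3]{davison2023nonabelian}; the de Rham case is obtained by transporting the computation along the canonical, CoHA-respecting isomorphism $\FM_r^{\dR}\cong\rmB\GL_r$, completing the proof.
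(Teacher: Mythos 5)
Your proposal is correct and follows essentially the same route as the paper, which records the identifications $\FM_r^{\Betti}\cong\FM_r^{\dR}\cong\FM_r^{\Dol}\cong\pt/\GL_r$, $\CM_r\cong\pt$ and then defers the BPS computation to \cite[\S5.3]{davison2023nonabelian}, transporting it to the de Rham side exactly as you do. Your write-up merely supplies the details the paper leaves implicit (the vanishing of the Higgs field on $\BoP^1$ via semistability and $\HO^0(\BoP^1,\CO(-2))=0$, the compatibility with direct sums, and the reduction to the $A_1$-quiver preprojective algebra), all of which are accurate.
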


\section{Virtual pullbacks}
In this paper, we made use of the virtual pullbacks for 3-term complexes between categories of mixed Hodge modules defined in \cite{davison2022BPS}. Working with mixed Hodge modules was crucial for us to show that the intersection cohomology of the Hodge moduli space has a canonical morphism to the (complex of mixed Hodge modules underlying the) relative CoHA $\underline{\SA}^{\Hod}$. Then, the structural results Theorems \ref{theorem:CoHAdeRhamintro}, \ref{theorem:CoHAderhamstructure}, \ref{theorem:CoHAHodge} and \ref{theorem:CoHAHodgestructure} are proven in the categories of mixed Hodge modules.

There is a different approach to define pullbacks for a quasi-smooth morphism of derived Artin stacks. It gives a construction of the virtual pullback in Borel--Moore homology or in the constructible derived categories.

We refer to \cite[\S\S3,4]{porta2022non} for description of this virtual pullback (in the setting of non-archimedean geometry), following \cite{khan2019virtual} in the algebraic setting. We could use their definition for the virtual pullback at the cost of forgetting the enriched mixed Hodge module structures on the cohomological Hall algebras considered.

In the setting of \cite{davison2022BPS}, which is adapted to the study of cohomological Hall algebras, the virtual pullback can be constructed \emph{classically} since the relevant cotangent complexes have tor-amplitude $[-1,1]$ and this allowed the author together with Ben Davison and Sebastian Schlegel Mejia to define the CoHA structures at the level of mixed Hodge modules. The construction is adapted from the virtual pullbacks defined in \cite{kapranov2019cohomological} to define CoHAs of surfaces.

\section{Purity for (monodromic) mixed Hodge modules}
\label{section:purityforMMHM}
\subsection{Purity of mixed Hodge modules}
\label{subsection:purityMHM}
The goal of this section is to give an alternative perspective on the purity of mixed Hodge modules, using $*$ or $!$ restrictions to points and the purity of mixed Hodge structures. This will be convenient to give bounds on weights of mixed Hodge modules for which we have a good understanding of the restrictions to points (which is the case for example for families of stacks). This leads to the notion of \emph{punctual weights}. We eventually show that there is no difference between the standard notions of weights (using the weight filtration, part of the datum defining a mixed Hodge module) and the notion of punctual weights (that we define here) for complexes of mixed Hodge modules.

If $Y\subset X$ is a locally closed subset, we uniformly denote by $\imath_Y\colon Y\rightarrow X$ the locally closed immersion. If $x\in X$ is a closed point, we write $\imath_x=\imath_{\{x\}}$.

We work with bounded complexes of mixed Hodge modules, and we explain which statements are valid for unbounded complexes (in one direction) in \S\ref{subsection:unbounded}.

We turn \cite[Proposition 5.1.9]{beilinson2018faisceaux}, valid for $\ell$-adic sheaves, into a definition for complexes of mixed Hodge modules.

All varieties considered are defined over the field of complex numbers.

\begin{definition}[Punctual weights and purity]
\label{definition:punctualweights}
 Let $X$ be an algebraic variety and $w\in\BoZ$ an integer. We say that a complex of mixed Hodge modules $\SF\in\CD^{\rmb}(\MHM(X))$ 
 \begin{enumerate}
  \item has punctual weights $\geq w$ if for any $x\in X$ and any $i\in\BoZ$, the mixed Hodge structure $\CH^i(\imath_x^!\SF)$ has weights $\geq w+i$.
  \item has of punctual weights $\leq w$ if for any $x\in X$ and any $i\in\BoZ$, the mixed Hodge structure $\CH^i(\imath_x^*\SF)$ has weights $\leq w+i$.
  \item pure of punctual weight $w$ if it has punctual weights $\geq w$ and $\leq w$.
 \end{enumerate}
\end{definition}

\begin{proposition}[Weights and distinguished triangles]
\label{proposition:weightstriangles}
Let $\SF\rightarrow\SG\rightarrow\SH\rightarrow$ be a distinguished triangle in $\CD^{\rmb}(\MHM(X))$. Then,
 \begin{enumerate}
  \item If $\SG,\SH$ have punctual weights $\leq w$, then $\SF$ has punctual weights $\leq w$,
  \item If $\SF,\SH$ have punctual weights $\leq w$, then $\SG$ has punctual weights $\leq w$,
  \item If $\SF,\SG$ have punctual weights $\leq w$, then $\SH$ has punctual weights $\leq w+1$.
 \end{enumerate}
We have the dual statements:
 \begin{enumerate}
  \item If $\SG,\SH$ have punctual weights $\geq w$, then $\SF$ has punctual weights $\geq w-1$,
  \item If $\SF,\SH$ have punctual weights $\geq w$, then $\SG$ has punctual weights $\geq w$,
  \item If $\SF,\SG$ have punctual weights $\geq w$, then $\SH$ has punctual weights $\geq w$.
 \end{enumerate}
\end{proposition}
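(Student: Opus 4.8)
The plan is to reduce all six assertions to elementary weight bookkeeping in the category of mixed Hodge structures, exploiting the fact that punctual weights are by definition tested on the $*$- and $!$-restrictions to closed points. First I would observe that for a closed point $\imath_x\colon\{x\}\to X$ the functors $\imath_x^*$ and $\imath_x^!$ are triangulated, so applying them to the distinguished triangle $\SF\to\SG\to\SH\to$ yields distinguished triangles $\imath_x^*\SF\to\imath_x^*\SG\to\imath_x^*\SH\to$ and $\imath_x^!\SF\to\imath_x^!\SG\to\imath_x^!\SH\to$ in $\CD^{\rmb}(\MHM(\{x\}))$, the bounded derived category of mixed Hodge structures. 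The first family of statements (punctual weights $\leq w$) is tested on $*$-restrictions and the dual family (punctual weights $\geq w$) on $!$-restrictions; the two arguments are formally identical after reversing every weight inequality, so it suffices to treat the $*$-case and then remark that the $!$-case follows mutatis mutandis, or alternatively by Verdier duality, which exchanges $\imath_x^*$ and $\imath_x^!$, sends weights $\leq w$ to weights $\geq -w$, and reverses the triangle.

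For the $*$-case I would write down the long exact cohomology sequence of the triangle $\imath_x^*\SF\to\imath_x^*\SG\to\imath_x^*\SH\to$, namely
\[
 \cdots\to\CH^{i-1}(\imath_x^*\SH)\to\CH^i(\imath_x^*\SF)\to\CH^i(\imath_x^*\SG)\to\CH^i(\imath_x^*\SH)\to\CH^{i+1}(\imath_x^*\SF)\to\cdots,
\]
and in each case exhibit the relevant cohomology as an extension of a subobject of one term by a quotient of an adjacent term. The only structural input required is that morphisms of mixed Hodge structures are strictly compatible with the weight filtration, from which, by passing to $\gr^W$, one reads off that subobjects and quotients of a MHS with weights $\leq m$ again have weights $\leq m$, and that an extension of two such has weights $\leq m$. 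Combined with the hypotheses this is immediate: in part (1), $\CH^i(\imath_x^*\SF)$ is an extension of a subobject of $\CH^i(\imath_x^*\SG)$ (weights $\leq w+i$) by a quotient of $\CH^{i-1}(\imath_x^*\SH)$ (weights $\leq w+i-1$), hence has weights $\leq w+i$; parts (2) and (3) are the same argument applied to the segments centred at $\CH^i(\imath_x^*\SG)$ and $\CH^i(\imath_x^*\SH)$.

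I would emphasise where the asymmetric shifts originate, which is the one point demanding attention rather than a genuine obstacle. In part (3) the term $\CH^i(\imath_x^*\SH)$ receives a contribution from $\CH^{i+1}(\imath_x^*\SF)$, whose bound $\leq w+(i+1)=(w+1)+i$ forces the passage from $w$ to $w+1$; dually, in the $\geq$-version of part (1) the connecting map brings in $\CH^{i-1}(\imath_x^!\SH)$ with bound $\geq w+(i-1)=(w-1)+i$, producing the drop to $w-1$. (Under $\BD$ these two cases correspond exactly, which is a useful consistency check.) This mirrors precisely the $\ell$-adic statement of \cite[Proposition 5.1.9]{beilinson2018faisceaux} that motivates Definition \ref{definition:punctualweights}, and once the reduction to $\CD^{\rmb}(\MHM(\{x\}))$ and the strictness of the weight filtration are in place, all six assertions follow directly from the long exact sequence.
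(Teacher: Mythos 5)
Your proposal is correct and follows essentially the same route as the paper's proof: restrict the triangle to each point via $\imath_x^*$ (resp. $\imath_x^!$), take the long exact sequence of mixed Hodge structures, and conclude by weight bookkeeping for subobjects, quotients and extensions, with the remaining cases handled by translation of the triangle and Verdier duality. The only difference is cosmetic — the paper writes out just case $(1)$ of the first series and delegates the rest, while you spell out all six cases and the origin of the $w\pm1$ shifts.
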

\begin{proof}
The proofs are all similar, and could be deduced from one another by translation of triangles and Verdier duality. Therefore we only prove the statement $(1)$ of the first series.
 
Let $x\in X$. We have a distinguished triangle
\[
 \imath_x^*\SF\rightarrow\imath_x^*\SG\rightarrow\imath_x^*\SH\rightarrow
\]
which induces a long exact sequence in cohomology
\[
\CH^{j-1}(\imath_x^*\SH)\rightarrow \CH^j(\imath_x^*\SF)\rightarrow\CH^{j}(\imath_x^*\SG)\rightarrow\CH^j(\imath_x^*\SH).
\]
One can now use that $\CH^j(\imath_x^*\SG)$ and $\CH^{j-1}(\imath_x^*\SH)$ are mixed Hodge structures with respective weights $\leq j+w$ and $\leq j-1+w$ to deduce that $\CH^j(\imath_x^*\SF)$ is a mixed Hodge structure with weights $\leq j+w$.
\end{proof}

\begin{proposition}[Stability properties for (punctual) weights]
\label{proposition:stabilitypropertiesweights}
Let $\SF\in\CD^{\rmb}(\MHM(X))$ be a complex of mixed Hodge modules. The following statements hold for the usual notion of weights and for the notion of punctual weights.
 \begin{enumerate}
   \item If $\SF$ has (punctual) weights $\leq w$, then $\BD\SF$ has (punctual) weights $\geq w$.
  \item If $\SF$ has (punctual) weights $\leq w$, then $f_!\SF$ and $f^*\SF$ have (punctual) weights $\leq w$,
  \item If $\SF$ has (punctual) weights $\geq w$, then $f^!\SF$ and $f_*\SF$ have (punctual) weights $\geq w$,
  \item If $\SF$ (resp. $\SG$) has (punctual) weights $\leq w$ (resp. $\leq w'$), $\SF\otimes\SG$ has punctual weight $\leq w+w'$,
  \item If $\SF$ (resp. $\SG$) has (punctual) weights $\geq w$ (resp. $\leq w'$), then $\intHom(\SF,\SG)$ has (punctual) weights $\geq -w+w'$,
 \end{enumerate}
\end{proposition}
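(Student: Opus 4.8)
The plan is to treat the two notions of weight separately. For the \emph{usual} weights, every one of the five assertions is one of Saito's weight estimates for the six operations on mixed Hodge modules (the mixed-Hodge-module incarnation of the Weil~II stability properties, \cite{saito1990mixed}, cf. \cite[Prop.~5.1.14]{beilinson2018faisceaux} in the $\ell$-adic setting): $\BD$ exchanges the conditions ``weights $\leq w$'' and ``weights $\geq w$'' (up to the normalisation of $\BD$ adopted here), $f_!$ and $f^*$ preserve ``weights $\leq w$'', $f_*$ and $f^!$ preserve ``weights $\geq w$'', and the estimates for $\otimes$ and $\intHom$ are read off from these together with the identity $\intHom(\SF,\SG)\cong\BD(\SF\otimes\BD\SG)$. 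So for the usual notion I would simply quote Saito.

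For the \emph{punctual} weights the strategy is to push each operation through a restriction to a closed point $\imath_x\colon\{x\}\hookrightarrow X$ and reduce to the elementary weight behaviour of complexes of mixed Hodge structures over a point, where ``punctual weights $\leq w$'' means precisely that $\CH^i$ has usual weight $\leq w+i$. The pullback cases are formal: if $g\colon X\to Y$ and $x\in X$ maps to $y=g(x)$, then $\imath_x^*g^*\cong\imath_y^*$ and $\imath_x^!g^!\cong\imath_y^!$ as functors to the point, so $g^*$ preserves ``punctual weights $\leq w$'' and $g^!$ preserves ``punctual weights $\geq w$'' with no further input. Part~$(1)$ follows from $\imath_x^!\BD\SF\cong\BD\imath_x^*\SF$ and the weight-reversal of Verdier duality on $\CD^{\rmb}(\MHM(\pt))$. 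Parts~$(4)$ and~$(5)$ follow from $\imath_x^*(\SF\otimes\SG)\cong\imath_x^*\SF\otimes\imath_x^*\SG$ and the identity above, once one notes that over a point $\CH^n$ of a tensor product splits as $\bigoplus_{i+j=n}\CH^i\otimes\CH^j$ (the tensor product being exact over $\BoQ$, no higher terms intervene) and that usual weights of a tensor product of mixed Hodge structures are additive; this is just bookkeeping of the shifts $w+i$.

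The substantive point is the behaviour of $f_!$ (part~$(2)$) and $f_*$ (part~$(3)$) for punctual weights, since these functors do not localise to a single point of the source. Here I would use proper (resp. smooth) base change: for $y\in Y$ with fibre $X_y=f^{-1}(y)$ and structure map $a_{X_y}\colon X_y\to\pt$, one has $\imath_y^*f_!\SF\cong (a_{X_y})_!\,\imath_{X_y}^*\SF$, and $\imath_{X_y}^*\SF$ still has punctual weights $\leq w$ by the $g^*$ case already treated; dually $\imath_y^!f_*\SF\cong (a_{X_y})_*\,\imath_{X_y}^!\SF$ with $\imath_{X_y}^!\SF$ of punctual weights $\geq w$ by the $g^!$ case. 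Thus the claim for $f_!$ reduces to the assertion that compactly supported cohomology $(a_{X_y})_!$ carries a complex of punctual weights $\leq w$ to one whose $\CH^i$ has weight $\leq w+i$, and symmetrically for $f_*$. This is exactly Saito's weight estimate for open/proper pushforward along $X_y\to\pt$, and it is the one step that genuinely requires the hard weight theory rather than a formal manipulation of restrictions. I expect this $f_!/f_*$ reduction, together with checking that the base-change isomorphisms respect the weight normalisation, to be the main obstacle; all remaining cases are routine once the pointwise dictionary is in place.
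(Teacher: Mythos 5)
Your treatment of the usual weights (quote Saito), of $(1)$, of the pullback halves of $(2)$ and $(3)$, and of $(4)$--$(5)$ matches the paper's proof essentially verbatim: restrict to a point, use that $\imath_x^*g^*\cong\imath_{g(x)}^*$, that $\BD$ exchanges $\imath_x^!$ and $\imath_x^*$, and that $\imath_x^*$ commutes with $\otimes$. The one place where you diverge is the only genuinely delicate case, namely $f_!$ (and dually $f_*$) for \emph{punctual} weights, and there your argument has a gap. After base change you are left with $\imath_y^*f_!\SF\cong (a_{X_y})_!\,\imath_{X_y}^*\SF$, where $\imath_{X_y}^*\SF$ is known only to have \emph{punctual} weights $\leq w$ (as you say yourself). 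Saito's weight estimate for $(a_{X_y})_!$ takes as input a bound on the \emph{usual} weights, i.e.\ on the weight filtrations of the cohomology mixed Hodge modules of $\imath_{X_y}^*\SF$ on the fibre $X_y$. The implication ``punctual weights $\leq w$ $\Rightarrow$ usual weights $\leq w$'' is precisely the hard direction of the comparison (Lemma \ref{lemma:coincidenceweightsforMHM}, Proposition \ref{proposition:puritycomplexcohomology}, Corollary \ref{corollary:coincidenceweights}), which at this point in the development is not yet available; the easy direction (Lemma \ref{lemma:punctualweights}) goes the other way. So the step you describe as ``exactly Saito's weight estimate'' is not: it silently uses the equivalence of the two notions of weight on the fibre.

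This is exactly why the paper \emph{declines} to prove the $f_!$ case directly: it states that the compatibility of punctual weights with $f_!$ will follow from Corollary \ref{corollary:coincidenceweights}, and is careful to record that the proof of that corollary does not use the $f_!$ case, so there is no circularity. Your argument can be repaired in the same way --- first prove the coincidence of punctual and usual weights, then your base-change reduction becomes legitimate (or is no longer needed, since one can apply Saito to $f_!$ directly) --- but as written the $f_!/f_*$ step does not close. Everything else in your proposal is correct and coincides with the paper's route.
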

\begin{proof}
We only prove the statements for the notion of punctual purity as their analogue for the classical notion are known.
\begin{enumerate}
 \item The Verdier duality $\BD$ exchanges the functors $\imath_x^!$ and $\imath_x^*$ and if $V$ is a complex of mixed Hodge structures with weights $\leq w$ or $\geq w$, its dual has weights $\geq w$ or $\leq w$.
 \item We let $f\colon Y\rightarrow X$. Let $y\in Y$. then, $\imath_x^*f^*\SF\cong\imath_{f(x)}^*\SF$ and by definition, it has weights $\leq w$ if $\SF$ has punctual weights $\leq w$.
 
We let $f\colon X\rightarrow Y$. We refrain from proving the statement concerning the weights of $f_!\SF$ as it will follow from Corollary \ref{corollary:coincidenceweights}, stating that the punctual notion of weights coincide with the natural one. We leave it to the reader to check that we do not use the compatibility of punctual weights and the pushforward with proper supports to prove Corollary \ref{corollary:coincidenceweights}.
 \item This follows from $(2)$ by duality $(1)$.
 \item For any $x\in X$, $\imath_x^*(\SF\otimes\SG)\cong (\imath_x^*\SF)\otimes(\imath_x^*\SG)$ and so we are reduced to the properties of the weights of the tensor products of mixed Hodge structures.
 \item This follows from $(4)$ and $(1)$ since $\intHom(\SF,\SG)=(\BD\SF)\otimes\SG$.
\end{enumerate}
\end{proof}

\begin{lemma}
\label{lemma:punctualweights}
 Let $\SF\in\CD^{\rmb}(\MHM(X))$ be a complex of mixed Hodge modules.
\begin{enumerate}
 \item If $\SF$ has weights $\geq w$, then $\SF$ has punctual weights $\geq w$.
 \item If $\SF$ has weights $\leq w$, then $\SF$ has punctual weights $\leq w$.
 \item If $\SF$ is pure of weight $w$, then, $\SF$ is punctually pure of weight $w$.
\end{enumerate}
\end{lemma}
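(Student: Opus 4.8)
The plan is to deduce all three statements from Saito's stability of weights under the six operations (see \cite{saito1990mixed}), combined with the observation that on a point the \emph{punctual} notion of weights is definitionally the standard one. Recall the weight formalism: for a morphism $f\colon Y\to X$, the functors $f_!$ and $f^*$ preserve the property of having weights $\leq w$, while $f_*$ and $f^!$ preserve the property of having weights $\geq w$. I would apply this to the closed immersions $\imath_x\colon\{x\}\to X$ for each closed point $x\in X$.

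For $(2)$, suppose $\SF$ has weights $\leq w$ in the standard sense. Then for every $x\in X$ the complex $\imath_x^*\SF\in\CD^{\rmb}(\MHM(\{x\}))$ has weights $\leq w$, again in the standard sense, since $\imath_x^*$ preserves ``weights $\leq w$''. Now $\MHM(\{x\})$ is the abelian category of (polarizable) mixed Hodge structures placed in degree zero, so the natural $t$-structure on $\CD^{\rmb}(\MHM(\{x\}))$ agrees with the cohomological one and its perverse cohomology is simply $\CH^i$. Unwinding the definition of the weights of a complex --- namely that $\CH^i$ carries weights $\leq w+i$ --- this is exactly the condition that $\SF$ have punctual weights $\leq w$ in the sense of Definition \ref{definition:punctualweights}. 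Statement $(1)$ follows by the same argument with $\imath_x^!$ in place of $\imath_x^*$, using the stability of ``weights $\geq w$'' under the exceptional pullback $f^!$; and statement $(3)$ is the conjunction of $(1)$ and $(2)$.

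The only point requiring care --- rather than a genuine obstacle --- is the compatibility of conventions on the point: one must check that ``the complex of mixed Hodge structures $\imath_x^{\flat}\SF$ has weights $\leq w$ (resp. $\geq w$)'' in Saito's sense is \emph{literally} the assertion that $\CH^i(\imath_x^{\flat}\SF)$ has weights $\leq w+i$ (resp. $\geq w+i$) for all $i$, where $\flat\in\{*,!\}$. This holds because the heart of the natural $t$-structure on $\CD^{\rmb}(\MHM(\{x\}))$ is precisely the category of mixed Hodge structures, on which the weight filtration is the defining datum, and the shift convention entering the definition of the weights of a complex produces exactly the $+i$ appearing in Definition \ref{definition:punctualweights}. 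I expect no further difficulty; in particular, no use of the more delicate estimate for $f_!$ with $f$ non-proper is needed here, which is consistent with the forward reference to Corollary \ref{corollary:coincidenceweights} being logically independent of this lemma.
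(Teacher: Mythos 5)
Your proof is correct and follows essentially the same route as the paper, which simply invokes the permanence of weight bounds under $\imath_x^*$ and $\imath_x^!$ and deduces $(3)$ from $(1)$ and $(2)$. Your additional check that the weight conventions on a point match Definition \ref{definition:punctualweights} is a reasonable elaboration of the same argument, not a different approach.
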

\begin{proof}
 This comes from the classical permanence properties of the bounds for weights of mixed Hodge modules under the functors $\imath^!, \imath^*$.
 
 $(3)$ is clearly implied by $(1)$ and $(2)$.
\end{proof}

What we aim for is a converse to the statements of Lemma \ref{lemma:punctualweights}.

For mixed Hodge modules, the notion of purity is defined globally in terms of the weight filtration. In constrast, for $\ell$-adic sheaves, it is first defined in terms of punctual purity (see \cite[\S 5.1.5]{beilinson2018faisceaux} and the references therein) and the weight filtration is obtained as a property \cite[Th\'eor\`eme 5.3.5]{beilinson2018faisceaux}.

The following is the analogue of \cite[Proposition 5.1.9]{beilinson2018faisceaux} for mixed Hodge modules and gives a description of purity in terms of punctual purity.
\begin{lemma}
\label{lemma:coincidenceweightsforMHM}
 Let $X$ be an algebraic variety and $M\in\MHM(X)$ a mixed Hodge module. Let $w\in\BoZ$. Then,  $M$ has weights $\geq w$ (resp. $\leq w$) if and only if for any $x\in X$, the complex of mixed Hodge structures $\imath_x^!M$ (resp. $\imath_x^*M$) has weights $\geq w$ (resp. $\leq w$), i.e. the punctual weights of $M$ are $\geq w$ (resp. $\leq w$).
\end{lemma}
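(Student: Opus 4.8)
The plan is to deduce the statement for a single mixed Hodge module from the classical permanence properties of weights under the six functors, combined with the structure theory of pure Hodge modules. One direction (global weight bound implies punctual weight bound) is already recorded in Lemma \ref{lemma:punctualweights}, so the content is the converse: punctual weights $\geq w$ (resp. $\leq w$) imply weights $\geq w$ (resp. $\leq w$). By Verdier duality (Proposition \ref{proposition:stabilitypropertiesweights}(1)) and the fact that $\BD$ exchanges the two conditions and the two inequalities, it suffices to treat only the case of upper bounds: I would prove that if $\imath_x^*M$ has weights $\leq w$ for every point $x$, then $M$ has weights $\leq w$.

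First I would reduce to the case $w=0$ by a Tate twist, and set up an induction on $\dim\supp M$. The base case $\dim\supp M=0$ is clear, as a mixed Hodge module supported on points is just a collection of mixed Hodge structures, for which the punctual and global notions literally coincide. For the inductive step, I would choose a dense smooth open immersion $\jmath\colon U\into \supp M$ such that $\jmath^*M$ is (a shift of) a variation of mixed Hodge structure, with closed complement $\imath\colon Z\into \supp M$ of strictly smaller dimension. The key diagram is the recollement triangle
\[
 \imath_*\imath^!M\rightarrow M\rightarrow \jmath_*\jmath^*M\rightarrow.
\]
On the open part, $\jmath^*M$ is a variation, and on a smooth variety the punctual weight $\leq 0$ condition for a variation of mixed Hodge structure is exactly the condition that its weights (as a variation) are $\leq 0$ fibrewise; by the standard structure of admissible variations and Saito's theory, $\jmath^*M$ then has weights $\leq 0$, so $\jmath_{!*}$-type estimates give that $\jmath_*\jmath^*M$ has weights $\leq 0$ (using that $\jmath_*$ preserves the upper bound, Proposition \ref{proposition:stabilitypropertiesweights}(2) together with the classical fact for open pushforward). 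I would then restrict the triangle to $Z$, noting $\imath^*\imath_*\imath^!M\cong \imath^!M$ and comparing punctual weights: since $M$ and $\jmath_*\jmath^*M$ have punctual weights $\leq 0$, Proposition \ref{proposition:weightstriangles} forces $\imath_*\imath^!M$ to have punctual weights $\leq 0$ as well, and by the inductive hypothesis (applied on $Z$, whose dimension is smaller) this upgrades to an honest weight bound $\leq 0$ for $\imath^!M$. Finally, feeding the global bounds on the two outer terms back into the triangle and using the classical (non-punctual) version of Proposition \ref{proposition:weightstriangles} for mixed Hodge modules yields weights $\leq 0$ for $M$.

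The main obstacle I anticipate is the passage on the open dense locus from the punctual (fibrewise) weight bound to a genuine weight bound for the variation of mixed Hodge structure $\jmath^*M$; this is where one must invoke the precise compatibility between Saito's weight filtration and the weight filtration on the underlying variation, and check that the $*$-restriction to a point computes the fibre of the variation together with its weight filtration without an unexpected shift. One must be careful that $\imath_x^*$ of a shifted variation $V[\dim U]$ introduces the cohomological shift accounted for in Definition \ref{definition:punctualweights}, so the bookkeeping of the index $i$ in ``weights $\leq w+i$'' has to match the convention under which Saito's purity for variations is stated. Once this local comparison is secured, the rest is a formal dévissage using the recollement triangle and Propositions \ref{proposition:weightstriangles} and \ref{proposition:stabilitypropertiesweights}, exactly paralleling the $\ell$-adic argument of \cite[Proposition 5.1.9]{beilinson2018faisceaux}, with Saito's theorems replacing the Frobenius weight formalism.
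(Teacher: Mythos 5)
Your reduction to the upper-bound case via Verdier duality and your identification of the key local input (matching the punctual weight bound with Saito's weight filtration on the underlying variation over a dense smooth open) are both sound, but the d\'evissage contains a genuine error: you claim that $\jmath_*\jmath^*M$ has weights $\leq 0$ ``using that $\jmath_*$ preserves the upper bound, Proposition \ref{proposition:stabilitypropertiesweights}(2)''. That proposition concerns $f_!$ and $f^*$; the open pushforward $\jmath_*$ preserves \emph{lower} bounds on weights, not upper bounds. Concretely, for $\jmath\colon\mathbf{G}_{\mathrm{m}}\into\BoA^1$ the stalk of $\jmath_*\underline{\BoQ}_{\mathbf{G}_{\mathrm{m}}}$ at the origin is $\HO^*(\mathbf{G}_{\mathrm{m}})$, whose $\CH^1$ is $\BoQ(-1)$ of weight $2>0+1$, so the upper bound is destroyed. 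You have paired the upper-bound statement with the recollement triangle adapted to lower bounds: for weights $\leq w$ you should use $\jmath_!\jmath^*M\to M\to\imath_*\imath^*M\to$ (both $\jmath_!$ and the closed $\imath_*=\imath_!$ preserve upper bounds), or equivalently prove the lower-bound statement directly with the triangle $\imath_*\imath^!M\to M\to\jmath_*\jmath^*M\to$ you wrote down. A secondary issue is that your inductive hypothesis is stated for a single mixed Hodge module on $Z$, whereas $\imath^!M$ (or $\imath^*M$) is a complex; upgrading the induction to complexes requires Proposition \ref{proposition:puritycomplexcohomology}, which in the paper is deduced \emph{from} this lemma, so you would need to restructure the induction to avoid circularity.

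For comparison, the paper's argument avoids recollement and induction entirely: it proves the lower-bound direction directly by taking the lowest nonzero step $M_{w'}$ of the weight filtration, restricting to a smooth open $U$ of its support where $\imath_U^!M$ has locally constant cohomology, and producing for $x\in U$ a monomorphism from the pure weight $w'$ object $\imath_x^!M_{w'}$ into $\CH^{2\dim U}(\imath_x^!M)[-2\dim U]$; the punctual hypothesis then forces $w'\geq w$ in one step, and the upper-bound case follows by duality. If you repair the triangle mismatch and the induction on complexes, your approach would also work and is closer in spirit to the $\ell$-adic argument of \cite[Proposition 5.1.9]{beilinson2018faisceaux}, but it is substantially longer than what is needed.
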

\begin{proof}
 The direct implication is straightforward since $!$-pullbacks preserve lower bounds of weights (Proposition \ref{proposition:stabilitypropertiesweights}).
 
 For the reverse implication, we let $0\subset M_{w'}\subset \hdots\subset M$ be the weight filtration of $M$. We let $U$ be a smooth open subvariety in the support of $M_{w'}$ such that $\imath_U^!M$ has locally constant cohomology mixed Hodge modules $\CH^j(\imath_U^!M)$, $j\in\BoZ$, i.e. $\CH^j(\imath_U^!M)$ is an admissible variation of mixed Hodge structures on $U$.
 
 By left $t$-exactness of $\imath_U^!$, we have a monomorphism of mixed Hodge modules
\[
 \imath_U^!M_{w'}=\CH^{0}(\imath_U^!M_{w'})\rightarrow\CH^0(\imath_U^!M).
\]
Let $x\in U$. By applying $\imath_x^!$, we obtain a monomorphism of $(2\dim U)$-shifted mixed Hodge modules
\[
 \imath_x^!M_{w'}\rightarrow \imath_x^!\CH^0(\imath_U^!M)=\CH^{2\dim U}(\imath_x^!M)[-2\dim U]
\]
where the last equality follows from the fact that for a locally closed inclusion $f\colon Y\rightarrow X$ between smooth algebraic varieties, $f^!$ induces a functor $\VHS(X)\rightarrow \VHS(Y)[-2(\dim X-\dim Y)]$. The complex of mixed Hodge structures $\imath_x^!M_{w'}$ is pure of weight $w'$ (by Lemma \ref{lemma:punctualweights}) while by assumption on $M$, $\CH^{2\dim U}(\imath_x^!M)[-2\dim U]$ has weights $\geq w$. We must then have $w'\geq w$ and so $M$ has weights $\geq w$.

The statement regarding upper bounds for weights follows by Verdier duality and Proposition \ref{proposition:stabilitypropertiesweights} (1).
\end{proof}
The previous lemma implies that for mixed Hodge modules, the intrinsic notion of weights (defined using the weight filtration) and the notion of weights defined using punctual weights coincide. Therefore, we will just say ``weights" to mean weights or punctual weights in the sequel when we consider a mixed Hodge module. We shall extend in Corollary \ref{corollary:coincidenceweights} this comparison to complexes of mixed Hodge modules.

\begin{lemma}[Augmentation of Lemma \ref{lemma:coincidenceweightsforMHM}]
\label{lemma:augmentation}
 A mixed Hodge module $\SF\in\MHM(X)$ is of weights $\leq w$ (resp. $\geq w$) if and only if any irreducible subvariety $Y$ of $X$ has a dense open subset $U$ such that $\imath_U^*\SF$ (resp. $\imath_U^!\SF$) has punctual weights $\leq w$ (resp. $\geq w$).
\end{lemma}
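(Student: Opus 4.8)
The plan is to prove the statement for upper bounds (weights $\leq w$, tested by $\imath_U^*$) and to obtain the lower-bound case for free by Verdier duality, using Proposition \ref{proposition:stabilitypropertiesweights}(1) together with the interchange of $\imath_U^*$ and $\imath_U^!$ under $\BD$. The forward implication is immediate: if $\SF$ has weights $\leq w$, then for any irreducible subvariety $Y$ and any dense open locally closed immersion $\imath_U\colon U\hookrightarrow Y\hookrightarrow X$, the $*$-pullback $\imath_U^*\SF$ still has weights $\leq w$ by Proposition \ref{proposition:stabilitypropertiesweights}(2), hence punctual weights $\leq w$ by Lemma \ref{lemma:punctualweights}(2). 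So any choice of dense open $U\subseteq Y$ works.

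For the converse, by Lemma \ref{lemma:coincidenceweightsforMHM} it suffices to show that $\SF$ has punctual weights $\leq w$, i.e. that $\CH^i(\imath_x^*\SF)$ has weights $\leq w+i$ for every closed point $x\in X$ and every $i\in\BoZ$. First I would fix an algebraic stratification $X=\bigsqcup_j S_j$ into smooth connected locally closed subvarieties adapted to $\SF$, in the sense that each cohomology mixed Hodge module $\CH^i(\imath_{S_j}^*\SF)$ is an admissible variation of mixed Hodge structure on $S_j$; such a stratification exists for any mixed Hodge module by Saito's structure theory. The fiber of this variation at a closed point $x\in S_j$ is exactly $\CH^i(\imath_x^*\SF)$, since $\imath_x$ factors through $S_j$.

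The key geometric input is that the weights of an admissible variation of mixed Hodge structure give a locally constant bound along a connected base: the graded pieces $\gr^W_k$ are pure polarizable variations of Hodge structure whose ranks are locally constant on $S_j$, so the largest weight occurring in a fiber is independent of the point. Now for each $j$ I would apply the hypothesis to the irreducible subvariety $Y=\overline{S_j}$, obtaining a dense open $U\subseteq\overline{S_j}$ with $\imath_U^*\SF$ of punctual weights $\leq w$. Since $S_j$ is open and dense in $\overline{S_j}$, the intersection $U\cap S_j$ is a nonempty open subset of $S_j$, and at any closed point $x'\in U\cap S_j$ one has $\CH^i(\imath_{x'}^*\SF)$ of weights $\leq w+i$. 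By the local constancy of the weight bound over the connected stratum $S_j$, the same bound holds at every closed point of $S_j$. As the strata cover $X$, this yields punctual weights $\leq w$ for $\SF$ at all closed points, and Lemma \ref{lemma:coincidenceweightsforMHM} upgrades this to weights $\leq w$.

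The main obstacle I anticipate is the justification of the local constancy of weights, namely that a smooth mixed Hodge module restricted to a connected stratum has a constant punctual weight bound; this rests on the fact that an admissible graded-polarizable variation of mixed Hodge structure carries a weight filtration by subvariations whose graded pieces are pure polarizable variations of Hodge structure of locally constant rank, so I would invoke Saito's theory together with the standard rigidity of weights for variations of Hodge structure. A secondary point requiring care is the bookkeeping of the Tate shift, tracking the condition \emph{weights $\leq w+i$ on $\CH^i$} separately for each stratum and each cohomological degree.
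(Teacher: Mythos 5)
Your argument is correct, but it takes a genuinely different route from the paper's. The paper proves the contrapositive: if $\SF$ fails to have weights $\leq w$, it admits a simple quotient $\SG=(\imath_Y)_{!*}V[d]$ pure of weight $w'>w$, and restricting to a suitable dense open of $Y$ where $\imath_Y^*\SF$ has locally constant cohomology produces an epimorphism $\CH^0(\imath_Y^*\SF)\rightarrow V$ that forces $w'\leq w$, a contradiction. You instead argue directly: stratify $X$ adapted to $\SF$, apply the hypothesis to each $\overline{S_j}$, and propagate the punctual weight bound from the nonempty open $U\cap S_j$ to all of $S_j$ using the rigidity of weights of admissible variations (the weight filtration is by flat sub-local-systems, so the set of weights of the fibers is constant on a connected stratum), then conclude via Lemma \ref{lemma:coincidenceweightsforMHM}. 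Both rest on Saito's structure theory, just on different pieces of it: the paper uses the classification of simple mixed Hodge modules as intermediate extensions of pure polarizable variations, while you use the existence of adapted stratifications and the flatness of the weight filtration of an admissible variation. Your route is slightly longer but makes the mechanism (local constancy of fiberwise weights) explicit and establishes the punctual bound at every point rather than deriving a contradiction; the paper's is more economical. Two small points of care in your write-up: the shift you need to track is the cohomological shift by $\dim S_j$ (a smooth mixed Hodge module on $S_j$ is $V[\dim S_j]$ for a variation $V$), not a Tate twist, though the normalisation ``weights $\leq w+i$ on $\CH^i$'' absorbs it consistently as you indicate; and you should take the strata connected (or pass to connected components) so that the local constancy argument applies.
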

\begin{proof}
The direct implication is immediate by definition since by Proposition \ref{proposition:stabilitypropertiesweights}, $\imath_U^*$ preserves upper bounds for weights and $\imath_U^!$ preserves lowear bounds for weights.
 
The reverse implication follows from Lemma~\ref{lemma:coincidenceweightsforMHM} by taking $Y=\{x\}$ for varying $x\in X$.
\end{proof}

\begin{proposition}
\label{proposition:puritycomplexcohomology}
 Let $\SF\in\CD^{\rmb}(\MHM(X))$. Then $\SF$ has punctual weights $\leq w$ (resp. $\geq w$) if and only if each mixed Hodge module $\CH^i\SF$ has weights $\leq w+i$ (resp. $\geq w+i$). 
\end{proposition}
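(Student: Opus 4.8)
The statement is an equivalence, and I would treat the ``$\leq w$'' case in full and deduce the ``$\geq w$'' case from it by Verdier duality: since $\BD$ exchanges punctual weights $\leq w$ and $\geq -w$ (Proposition \ref{proposition:stabilitypropertiesweights}(1)) and satisfies $\BD\,\CH^i\SF\cong\CH^{-i}\BD\SF$, the assertion ``$\CH^i\SF$ has weights $\geq w+i$ for all $i$'' for $\SF$ is equivalent to ``$\CH^i\BD\SF$ has weights $\leq -w+i$ for all $i$'' for $\BD\SF$, and likewise for the punctual conditions. So it suffices to prove: $\SF$ has punctual weights $\leq w$ if and only if each $\CH^i\SF$ has weights $\leq w+i$.

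The direction ``cohomology bounds $\Rightarrow$ punctual bounds'' is the easy one, and I would prove it by induction on the amplitude of $\SF$. By Lemma \ref{lemma:coincidenceweightsforMHM} the hypothesis gives that each $\CH^i\SF$ has \emph{punctual} weights $\leq w+i$, whence each shifted module $\CH^i\SF[-i]$ has punctual weights $\leq w$. Using the truncation triangles $\tau_{\leq i-1}\SF\to\tau_{\leq i}\SF\to\CH^i\SF[-i]\xrightarrow{+1}$ for the standard $t$-structure on $\CD^{\rmb}(\MHM(X))$ and applying Proposition \ref{proposition:weightstriangles}(2) repeatedly then yields that $\SF$ has punctual weights $\leq w$.

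The substantial direction is the converse: assuming $\SF$ has punctual weights $\leq w$, show each $\CH^i\SF$ has weights $\leq w+i$. Here I would induct on $d=\dim\supp(\SF)$. When $d=0$ the claim is immediate, since $\SF$ is then a direct sum of terms $\imath_{x*}K_x$ with $K_x=\imath_x^*\SF$ a complex of mixed Hodge structures, for which ``punctual weights $\leq w$'' is literally ``$\CH^i K_x$ of weight $\leq w+i$''. For $d\geq 1$, choose an open immersion $\jmath\colon U\to X$ meeting $\supp(\SF)$ in a smooth dense locus over which every $\CH^i\SF$ is smooth (a local system of mixed Hodge structures), with closed complement $\imath\colon Z\to X$ satisfying $\dim(\supp(\SF)\cap Z)<d$. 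On $U$, where $\jmath^*$ is $t$-exact, I would run the hypercohomology spectral sequence $\CH^p(\imath_x^*\CH^q(\jmath^*\SF))\Rightarrow\CH^{p+q}(\imath_x^*\SF)$ for $x\in U$: smoothness concentrates it in the single column $p=-\dim U$, so it degenerates and identifies the stalk of $\CH^q(\jmath^*\SF)$ with $\CH^{q-\dim U}(\imath_x^*\SF)$; the punctual hypothesis and Lemma \ref{lemma:coincidenceweightsforMHM} then give that $\CH^q(\jmath^*\SF)=\jmath^*\CH^q\SF$ has weights $\leq w+q$. Thus $\jmath^*\SF$ is a complex of (classical) weights $\leq w$, and applying the induction hypothesis to $\imath^*\SF$ on $Z$ (which inherits punctual weights $\leq w$) shows $\imath^*\SF$ likewise has weights $\leq w$.

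To assemble these I would invoke Proposition \ref{proposition:stabilitypropertiesweights}(2) for the classical notion, recalling that a complex has classical weights $\leq w$ exactly when its cohomology modules $\CH^q$ have weights $\leq w+q$: since $\jmath^*\SF$ and $\imath^*\SF$ have weights $\leq w$, so do $\jmath_!\jmath^*\SF$ and $\imath_*\imath^*\SF=\imath_!\imath^*\SF$. The recollement triangle $\jmath_!\jmath^*\SF\to\SF\to\imath_*\imath^*\SF\xrightarrow{+1}$ then produces a long exact sequence in which $\CH^q\SF$ is an extension of a subobject of $\CH^q(\imath_*\imath^*\SF)$ by a quotient of $\CH^q(\jmath_!\jmath^*\SF)$; as the subcategory of mixed Hodge modules of weights $\leq w+q$ is closed under subquotients and extensions, $\CH^q\SF$ has weights $\leq w+q$, closing the induction. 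The main obstacle is exactly this converse: extracting the top cohomology via a single truncation triangle only controls the degree-zero stalk $\CH^0(\imath_x^*\CH^b\SF)$ and gives no information on a stratum of positive dimension, so one genuinely needs the stratumwise spectral-sequence computation together with the stability of complex-weights under $\jmath_!$ and $\imath_*$, rather than a purely formal dévissage. (Lemma \ref{lemma:augmentation} repackages the generic step, but the induction on $\dim\supp$ above seems the most transparent route.)
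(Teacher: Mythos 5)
Your proof is correct, but the substantial direction (punctual weights $\leq w$ imply that each $\CH^i\SF$ has weights $\leq w+i$) follows a genuinely different route from the paper's. The paper argues by descending induction on the cohomological degree $n$: having handled degrees $>n$, it deduces from the easy direction and Proposition \ref{proposition:weightstriangles} that $\tau^{\leq n}\SF$ has punctual weights $\leq w$, then feeds the adjunction map $\tau^{\leq n}\SF\to\CH^n\SF[-n]$ into Lemma \ref{lemma:augmentation}: restricting to a generic open $U$ of an arbitrary irreducible subvariety over which all cohomology sheaves are locally constant, the right $t$-exactness of $\jmath_U^*$ yields an epimorphism from a top cohomology of $\jmath_U^*\SF$ onto (a shift of) $\jmath_U^*\CH^n\SF$, and the weight bound transfers through that epimorphism. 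You instead induct on $\dim\supp\SF$, prove the bound on a generic smooth locus via the degenerating stalk spectral sequence together with Lemma \ref{lemma:coincidenceweightsforMHM}, and assemble via the recollement triangle $\jmath_!\jmath^*\SF\to\SF\to\imath_*\imath^*\SF$, importing from classical Saito theory that $\jmath_!$ and $\imath_*$ preserve weights $\leq w$. Both are valid, and there is no circularity in your argument: you only use the classical (known) stability under $\jmath_!$, not the punctual one, whose proof the paper deliberately defers until after Corollary \ref{corollary:coincidenceweights}. What the paper's route buys is exactly this self-containedness — it never touches lower-shriek functors at all — and its induction on cohomological degree is what makes the extension to semi-bounded complexes in \S\ref{subsection:unbounded} immediate; what your route buys is the more standard and geometric d\'evissage-by-support, bypassing Lemma \ref{lemma:augmentation} as you note. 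One slip: the cohomology modules are smooth along $U\cap\supp\SF$, a closed smooth subvariety of $U$, not along $U$ itself, so your spectral sequence is concentrated in the column $p=-\dim(U\cap\supp\SF)$ rather than $p=-\dim U$; this is harmless, since your weight bookkeeping is insensitive to the actual value of the concentration degree (it cancels between the stalk identification and the shift in the punctual definition).
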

\begin{proof}
 The statement with lower bounds for weights is dual to the statement with upper bounds. Therefore, we only concentrate on upper bounds. We first prove the reverse implication. For any $j\in\BoZ$, we have a distinguished triangle
 \[
  \CH^j(\SF)\rightarrow\tau^{\geq j}\SF\rightarrow\tau^{>j}\SF\rightarrow.
 \]
 We can therefore prove that $\tau^{\geq j}\SF$ has punctual weights $\leq w$ by descending induction on $j\in\BoZ$ using the fact that $\tau^{\geq j}\SF=0$ for $j$ big enough using Proposition \ref{proposition:weightstriangles}.
 
 We now prove the direct implication. For $j$ big enough, $\CH^j\SF=0$ and so the statement is trivial. We prove by descending induction on $j$ that $\CH^j\SF$ has (punctual) weigths $\leq w+j$. Let $n\in\BoZ$. We assume that for any $j>n$, $\CH^j\SF$ has weights $\leq w+j$. Therefore, $\tau^{>n}\SF$ has punctual weights $\leq w$. We prove now that $\CH^n\SF$ has weights $\leq w+n$.
 
 By Proposition \ref{proposition:weightstriangles} and the distinguished triangle $\tau^{\leq n}\SF\rightarrow\SF\rightarrow\tau^{>n}\SF\rightarrow$, $\tau^{\leq n}\SF$ has punctual weights $\leq w$. We have a natural adjunction morphism $\tau^{\leq n}\SF\rightarrow\CH^n\SF[-n]$. It is an isomorphism after applying $\tau^{\geq n}$ or, equivalently, $\CH^n$. It fits in a distinguished triangle
 \begin{equation}
  \label{equation:proofdisttriang}
  \tau^{<n}\SF\rightarrow\tau^{\leq n}\SF\rightarrow\CH^n\SF[-n]\rightarrow
 \end{equation}
If $\CH^n\SF$ has weights $>w+n$, there is a surjection $\CH^n\SF\rightarrow\SG$ where $\SG$ is simple and pure of weight $w'+n>w+n$. We let $x$ in the interior of the support of $\SG$ that is not contained in smaller dimensional supports of simple constituents of $\tau^{\leq n}\SF$. We let $k$ be the dimension of the support of $\SG$. Then, $\imath_x^*\tau^{<n}\SF$ is in cohomological degrees $<n-k$ and $\imath_x^*\CH^n\SF$ is in cohomological degrees $\leq n-k$ with $\HO^k(\imath_x^*\CH^n\SF)$ non pure, with some weight $w'+n-k$. The long exact sequence in cohomology associated to the triangle
\[
   \imath_x^*\tau^{<n}\SF\rightarrow\imath_x^*\tau^{\leq n}\SF\rightarrow\imath_x^*\CH^n\SF[-n]\rightarrow
\]
gives an exact sequence $\HO^{n-k}(\imath_x^*\tau^{\leq n}\SF)\rightarrow \HO^{n-k}(\imath_x^*\CH^n\SF[-n])\rightarrow 0$. This proves that $\HO^{n-k}(\imath_x^*\tau^{\leq n}\SF)$ has some weight $w'+n-k>w+n-k$, and so $\tau^{\leq n}\SF$ has not punctual weights $\leq w$ which is a contradiction.
\end{proof}

\begin{corollary}
\label{corollary:coincidenceweights}
 The notions of weights for mixed Hodge modules obtained from the weight filtration on the cohomology and from the notion of punctual weights coincide.
\end{corollary}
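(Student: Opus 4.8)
The plan is to reduce the statement for an arbitrary complex to the coincidence already established for a single mixed Hodge module. The key observation is that, with Saito's conventions, the weight filtration on a complex $\SF\in\CD^{\rmb}(\MHM(X))$ is compatible with the cohomology functors of the standard $t$-structure; consequently, by definition, $\SF$ has weights $\leq w$ (resp. $\geq w$) in the usual (weight-filtration) sense exactly when each cohomology mixed Hodge module $\CH^i(\SF)$ has weights $\leq w+i$ (resp. $\geq w+i$). I would begin by recording this reformulation explicitly, emphasising that on the right-hand side the word ``weights'' refers to the intrinsic notion for a single mixed Hodge module, given by its own weight filtration.

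Next I would invoke the two results already in hand. Lemma \ref{lemma:coincidenceweightsforMHM} shows that for a single mixed Hodge module $M\in\MHM(X)$ the intrinsic notion of weights and the punctual notion (via $\imath_x^!$ and $\imath_x^*$) agree. Proposition \ref{proposition:puritycomplexcohomology} shows that $\SF$ has punctual weights $\leq w$ (resp. $\geq w$) if and only if each $\CH^i(\SF)$ has weights $\leq w+i$ (resp. $\geq w+i$). Combining these, the punctual weight condition on the complex $\SF$ becomes literally the same as the cohomologywise condition on the $\CH^i(\SF)$, which in turn is precisely the definition of the intrinsic weight condition on $\SF$.

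Concretely, I would assemble the chain of equivalences: $\SF$ has intrinsic weights $\leq w$ iff each $\CH^i(\SF)$ has intrinsic weights $\leq w+i$ (definition), iff each $\CH^i(\SF)$ has punctual weights $\leq w+i$ (Lemma \ref{lemma:coincidenceweightsforMHM}), iff $\SF$ has punctual weights $\leq w$ (Proposition \ref{proposition:puritycomplexcohomology}). This settles the case of upper bounds; the case of lower bounds is identical, and can alternatively be deduced from it by Verdier duality using Proposition \ref{proposition:stabilitypropertiesweights}~(1), while purity follows by intersecting the upper and lower bound statements.

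I do not anticipate a genuine obstacle, as all the substantive content is already packaged in Proposition \ref{proposition:puritycomplexcohomology} and Lemma \ref{lemma:coincidenceweightsforMHM}. The only point requiring care is the opening step: one must invoke the correct definition of the weights of a complex of mixed Hodge modules and verify that it is the cohomologywise one, so that Proposition \ref{proposition:puritycomplexcohomology} can be read off directly as the desired equivalence. Everything else is a formal concatenation of the two preceding results.
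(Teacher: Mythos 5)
Your proposal is correct and follows essentially the same route as the paper, which simply combines Lemma \ref{lemma:coincidenceweightsforMHM} with Proposition \ref{proposition:puritycomplexcohomology}; you merely make explicit the (standard Saito) definitional step that the intrinsic weight bounds on a complex are the cohomologywise bounds on the $\CH^i$, which the paper leaves implicit.
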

\begin{proof}
 This is a combination of Lemma \ref{lemma:coincidenceweightsforMHM} and Proposition \ref{proposition:puritycomplexcohomology}.
\end{proof}

\subsection{Unbounded complexes}
\label{subsection:unbounded}
We explain that some statements remain valid for complexes that are unbounded below or above. To define the unbounded below or above derived categories of mixed Hodge modules, we proceed as in \cite[\S2.3]{davison2021purity}. Namely, for any $n\in\BoZ$, we let $\CD^{\rmb,\geq n}(\MHM(X))$ be the category of complexes of mixed Hodge modules $\CF$ on $X$ such that $\CH^i(\CF)=0$ for $i<n$ and $i\gg0$. Then, for $n\leq m$, we have a truncation functor $\tau_{n,m}\colon \CD^{\rmb,\geq n}(\MHM(X))\rightarrow\CD^{\rmb,\geq m}(\MHM(X))$. For $l\leq n\leq m$, we have a canonical isomorphism of functors $\tau_{l,m}\cong \tau_{l,n}\circ\tau_{n,m}$. We may define $\CD^-{\MHM(X)}$ by the limit of the direct system of categories defined.

We also define $\CD^+(\MHM(X))$ by the dual procedure. The Verdier duality functor exchanges the categories $\CD^+(\MHM(X))$ and $\CD^-(\MHM(X))$.

We have the following functors between these categories, following from the classical $t$-exactness properties of the functors. If $f\colon X\rightarrow Y$ is a morphism between algebraic varieties for which the fibers have dimension $\leq d$, $f^*[d]$ and $f_![d]$ are right $t$-exact while $f_*[-d]$ and $f^![-d]$ are left $t$-exact. In particular, we have the corresponding $t$-exactness properties for locally closed immersions.

\begin{enumerate}
 \item Definition \ref{definition:punctualweights} makes sense for complex unbounded below or above.
 \item Proposition \ref{proposition:weightstriangles} holds for complexes unbounded below or above.
 \item Proposition \ref{proposition:stabilitypropertiesweights} holds for complexes unbounded below or above.
 \item Lemma \ref{lemma:punctualweights} holds for complexes unbounded below or above.
 \item Lemmas \ref{lemma:coincidenceweightsforMHM} and \ref{lemma:augmentation} are only about mixed Hodge modules on $X$ and not complexes.
 \item In Proposition \ref{proposition:puritycomplexcohomology}, the statement concerning upper bounds for weights remains valid for complexes that are bounded above. The statement regarding lower bounds for weights remains valid for complexes that are bounded below. For the first statement, this comes from the fact that the proof proceeds by induction, using that the complexes considered have vanishing cohomology sheaves in high enough degrees.
\end{enumerate}

\subsection{Purity of monodromic mixed Hodge modules}
\subsubsection{Monodromic mixed Hodge modules}

Following the approach of \cite{davison2020cohomological}, the category of monodromic mixed Hodge modules on $X$, denoted by $\MMHM(X)$, is constructed from the category of mixed Hodge modules $\MHM(X\times\BoA^1)$. Namely, it is constructed as a quotient $\CB_X/\CC_X$ of two Serre subcategories of $\MHM(X\times\BoA^1)$. More precisely,
\begin{enumerate}
 \item $\CB_X$ is the category of mixed Hodge modules $\SF\in\MHM(X\times\BoA^1)$ such that for any $x\in X$, the total cohomology of $\imath_{\{x\}\times\mathbf{G}_{\mathrm{m}}}^*\SF$ is an admissible variation of mixed Hodge structures on $\mathbf{G}_{\mathrm{m}}$,
 \item $\CC_X$ is the full subcategory of $\CB_X$ of mixed Hodge modules $\SF\in\MHM(X\times\BoA^1)$ such that for any $x\in X$, the total cohomology of $\imath_{\{x\}\times\BoA^1}\SF$ is an admissible variation of mixed Hodge structures on $\BoA^1$.
\end{enumerate}

The derived category of monodromic mixed Hodge modules is $\CD^{\rmb}(\MMHM(X))$. It can also be constructed from $\CD^{\rmb}(\MHM(X\times\BoA^1))$ using the Abelian categories $\CB_X$ and $\CC_X$ and their derived categories.

If $f\colon X\rightarrow Y$ is a morphism between algebraic varieties, we have functors $f^*, f_*, f^!, f_!$ between the corresponding derived categories.

The tensor product is built as follows. For $\SF, \SG\in\CD^{\rmb}(\MMHM(X))$, we let
\[
 \SF\otimes\SG\coloneqq (X\times\BoA^1\times\BoA^1\xrightarrow{\id\times +} X\times\BoA^1)_*(\pr_{1,2}^*\SF\otimes\pr_{1,3}^*\SG)
\]
where $+\colon \BoA^1\times\BoA^1\rightarrow\BoA^1$ is the sum morphism, and $\pr_{1,j}\colon X\times\BoA^1\times\BoA^1\rightarrow X\times\BoA^1$ is the projection on the $1$st and $j$th factors for $j=2,3$.

For $X,Y$ two scheme over $Z$, we also defined the external tensor product $\boxtimes_Z\colon \CD^{\rmb}(\MMHM(X))\times\CD^{\rmb}(\MMHM(Y))\rightarrow\CD^{\rmb}(\MMHM(X\times_ZY))$ as follows
\[
 \SF\boxtimes_{Z}\SG\coloneqq(X\times_ZY\times\BoA^1\times\BoA^1\xrightarrow{\id\times +} X\times_ZY\times\BoA^1)_*(\pr_{1,3}^*\SF\otimes\pr_{2,4}^*\SG).
\]

Monodromic mixed Hodge modules extend the category of mixed Hodge modules, in the sense that there is a fully faithful embedding
\[
 \imath_*\colon\MHM(X)\rightarrow\MMHM(X).
\]
At the level of derived categories, we have the fully faithful embedding
\[
 \imath_*\colon\CD^{\rmb}(\MHM(X))\rightarrow\CD^{\rmb}(\MMHM(X)).
\]

We now explain how to obtain a forgetful functor
\[
 \forg^{\mon}_X\colon\MMHM(X)\rightarrow\Perv(X)
\]
extending the functor $\rat_X$.

We define
\[
\begin{matrix}
 \Psi_X&\colon&\MMHM(X)&\rightarrow&\MMHM(X)\\
       &      &\SF     &\mapsto    &(X\times\BoA^2\xrightarrow{\id\times +}X\times\BoA^1)_*[\SF\boxtimes(\mathbf{G}_{\mathrm{m}}\rightarrow\BoA^1)_!\BoQ_{\mathbf{G}_{\mathrm{m}}}[1]]
\end{matrix}
\]
and
\[
\begin{matrix}
 \Theta_X&\colon&\MMHM(X)&\rightarrow&\MHM(X\times\mathbf{G}_{\mathrm{m}})\\
         &      &\SF     &\mapsto    &(X\times\mathbf{G}_{\mathrm{m}}\rightarrow X\times\BoA^1)^*\Psi_X(\SF).
\end{matrix}
\]
The functor $\Theta_X$ becomes an equivalence of categories when we restrict the target to the full subcategory of monodromic objects in $\MHM(X\times\mathbf{G}_{\mathrm{m}})$ (i.e. of mixed Hodge modules $\SF\in\MHM(X\times\mathbf{G}_{\mathrm{m}})$ such that for any $x\in X$, the cohomology mixed Hodge modules of the pullback $(\{x\}\times\mathbf{G}_{\mathrm{m}})^*\SF$ are admissible variations of Hodge structures on $\mathbf{G}_{\mathrm{m}}$).

Then, we let $\forg_X^{\mon}\coloneqq \rat_Xe^*\Theta_X[-1]\colon \MMHM(X)\rightarrow\Perv(X)$, where $e\colon X\rightarrow X\times\mathbf{G}_{\mathrm{m}}$, $x\mapsto (x,1)$.

\subsubsection{Purity}
The discussion of \S\ref{subsection:purityMHM} can be extended readily to monodromic mixed Hodge modules. In this case, the $*$ and $!$ restrictions of a monodromic mixed Hodge module to a point are monodromic mixed Hodge structures on the point, and punctual purity or bounds on weigths refer to properties for these monodromic mixed Hodge structures. For more details, we refer to \cite[\S2.1]{davison2020cohomological}. We only mention this for the sake of future use as monodromic mixed Hodge modules do not appear in the body of this paper.
\printbibliography
\end{document}